
\documentclass[11pt,oneside,english,reqno]{amsart}

\usepackage[T1]{fontenc}
\usepackage{lmodern}
\usepackage[a4paper]{geometry}
\geometry{verbose,lmargin=2.5cm,rmargin=2.5cm,bmargin=3.5cm}
\usepackage{mathrsfs}
\usepackage{amstext}
\usepackage{amsthm}
\usepackage{amssymb}
\usepackage{bm}
\usepackage{bbm}
\usepackage{color}
\usepackage{hyperref}
\usepackage{dsfont}
\usepackage{enumerate}
\usepackage{verbatim} 
\usepackage{stmaryrd}
\usepackage{enumitem}
\usepackage[dvipsnames]{xcolor}
\usepackage{mathtools}
\usepackage{nicefrac}
\usepackage{cleveref}

\makeatletter
\numberwithin{equation}{section}
\numberwithin{figure}{section}
\theoremstyle{plain}
\newtheorem{thm}{\protect\theoremname}
\theoremstyle{definition}
\newtheorem{defn}[thm]{\protect\definitionname}
\theoremstyle{remark}
\newtheorem{rem}[thm]{\protect\remarkname}
\theoremstyle{plain}
\newtheorem{lem}[thm]{\protect\lemmaname}
\theoremstyle{plain}
\newtheorem{prop}[thm]{\protect\propositionname}
\theoremstyle{plain}
\newtheorem{cor}[thm]{\protect\corollaryname}
\theoremstyle{plain}
\newtheorem{ex}[thm]{\protect\examplename}
\theoremstyle{plain}
\newtheorem{ass}[thm]{\protect\assumptionname}
\numberwithin{thm}{section}
\makeatother
\crefname{lem}{Lemma}{Lemmas}
\usepackage{babel}
\providecommand{\corollaryname}{Corollary}
\providecommand{\definitionname}{Definition}
\providecommand{\lemmaname}{Lemma}
\providecommand{\propositionname}{Proposition}
\providecommand{\remarkname}{Remark}
\providecommand{\theoremname}{Theorem}
\providecommand{\examplename}{Example}
\providecommand{\assumptionname}{Assumption}

\newcommand{\bra}[1]{{\llbracket #1 \rrbracket}}

\newcommand{\cA}{\mathcal{A}}
\newcommand{\caa}{\mathcal{A}}

\newcommand{\cC}{\mathcal{C}}
\newcommand{\C}{\mathcal{C}}

\newcommand{\cF}{\mathcal{F}}
\newcommand{\cff}{\mathcal{F}}
\newcommand{\cG}{\mathcal{G}}

\newcommand{\cL}{\mathcal{L}}
\newcommand{\cll}{\mathcal{L}}
\newcommand{\cM}{\mathcal{M}}

\newcommand{\cP}{\mathcal{P}}

\newcommand{\cS}{\mathcal{S}}
\newcommand{\cT}{\mathcal{T}}

\newcommand{\cZ}{\mathcal{Z}}
\newcommand{\czz}{{\cZ}}


\renewcommand{\AA}{\mathbb{A}}

\newcommand{\EE}{\mathbb{E}}
\newcommand{\E}{\EE}
\newcommand{\FF}{\mathbb{F}}
\newcommand{\GG}{\mathbb{G}}

\newcommand{\JJ}{\mathbb{J}}

\newcommand{\NN}{\mathbb{N}}

\newcommand{\PP}{\mathbb{P}}

\newcommand{\RR}{\mathbb{R}}

\newcommand{\WW}{\mathbb{W}}

\newcommand{\ZZ}{\mathbb{Z}}




\newcommand{\mcC}{\mathcal{C}}

\newcommand{\mcF}{\mathcal{F}}

\newcommand{\mcL}{\mathcal{L}}

\newcommand{\mcS}{\mathcal{S}}

\newcommand{\mbA}{\mathbb{A}}

\newcommand{\mbE}{\mathbb{E}}

\newcommand{\mbG}{\mathbb{G}}

\newcommand{\mbN}{\mathbb{N}}

\newcommand{\mbP}{\mathbb{P}}

\newcommand{\mbR}{\mathbb{R}}





\newcommand{\vertiii}[1]{{\left\vert\kern-0.25ex\left\vert\kern-0.25ex\left\vert #1 
		\right\vert\kern-0.25ex\right\vert\kern-0.25ex\right\vert}}

\newcommand{\RN}[1]{%
	\textup{\uppercase\expandafter{\romannumeral#1}}%
}

\DeclarePairedDelimiter\floor{\lfloor}{\rfloor}

\newcommand{\eps}{\varepsilon}
\newcommand{\dd}{\mathop{}\!\mathrm{d}}

\newcommand{\var}{\mathrm{var}}
\newcommand{\law}{\cll}

\newcommand{\variable}{\,\cdot\,}

\let\div\relax
\DeclareMathOperator{\div}{div}

\DeclareMathOperator{\esssup}{ess sup}


\hypersetup{
	colorlinks   = true,
	citecolor    = blue,
	linkcolor    = blue
}

\title[Quantitative Chaos for singular systems driven by fBm]{Quantitative Propagation of Chaos for Singular Interacting Particle Systems Driven by Fractional Brownian Motion}

\author{Lucio Galeati}
\address{University of L'Aquila, Italy}
\email{lucio.galeati@univaq.it}

\author{Khoa L\^e}
\address{University of Leeds, UK}
\email{k.le@leeds.ac.uk}

\author{Avi Mayorcas}
\address{University of Bath, UK}
\email{am2735@bath.ac.uk}

\date{\today}

\begin{document}

\begin{abstract}
	We consider interacting particle systems driven by i.i.d. fractional Brownian motions, subject to irregular, possibly distributional, pairwise interactions.
	We show propagation of chaos and mean field convergence to the law of the associated McKean--Vlasov equation, as the number of particles $N\to\infty$, with quantitative sharp rates of order $N^{-1/2}$.
	Our results hold for a wide class of possibly time-dependent interactions, which are only assumed to satisfy a Besov-type regularity, related to the Hurst parameter $H\in (0,+\infty)\setminus \NN$ of the driving noises.
	In particular, as $H$ decreases to $0$, interaction kernels of arbitrary singularity can be considered, a phenomenon frequently observed in regularization by noise results.
	Our proofs rely on a combinations of Sznitman's direct comparison argument with stochastic sewing techniques.\\[1ex]
	\textbf{MSC2020 Subject Classification:} 60H10, 82C22, 60H50, 60G22, 60L90.\\[1ex]
	\textbf{Keywords:} Propagation of chaos, Quantitative rates, Singular interactions, Fractional Brownian motion, Regularization by noise, Stochastic sewing.
\end{abstract}

\maketitle

\section{Introduction}\label{sec:introduction}
Consider a system of $N\geq 2$ stochastic particles $X^{(N)} = \{X^{i;N}\}_{i=1}^N$ on $\mbR^d$, starting from independent initial positions $X^{(N)}_0 = \{X^{i}_0\}_{i=1}^N$,  driven by a collection of independent, additive, idiosyncratic noises $W^{(N)} =\{W^{i}\}_{i=1}^N$, whose evolution is described by the system of coupled SDEs 
\begin{equation}\label{eq:intro_IPS}
	X^{i;N}_t = X_0^i + \frac{1}{N-1}\sum_{\substack{j=1\\ j\neq i}}^N\int_0^t b_s(X^{i;N}_s,X^{j;N}_s) \dd s + W^{i}_t,\qquad \text{for all } i=1,\ldots,N,
\end{equation}
for some time-dependent function $b:[0,+\infty)\times\RR^d\times\RR^d\to \RR^d$, $b=b_t(x,y)$.
We will often refer to $b$ as an interaction kernel, or simply an interaction, or drift; notice that in \eqref{eq:intro_IPS} there is no self-interaction, since the $i=j$ term has been removed from the sum.

The study of such systems is of widespread importance in mathematics and applied sciences,
for example in relation to galactic dynamics \cite{jeans_15_star}, plasma physics \cite{vlasov_38}, fluid dynamics \cite{onsager_49_stat}, bacterial chemotaxis in biology, as well as approximation theory, random matrix theory and crystallisation \cite{serfaty_18_systems} and social sciences \cite[Sec.~5]{chaintron_22_propagation_II}.
We refer to the reviews \cite{chaintron_22_propagation_II,jabin2017mean} for more details and further examples.

As system \eqref{eq:intro_IPS} is $Nd$-dimensional, with very large $N$, it is often impractical or almost impossible to solve it directly, even numerically.
Besides, in relevant applications one is often not interested in obtaining a full description of the system $X^{(N)}_t$, but rather understanding the evolution of macroscopic quantities, or the statistical behaviour of a typical particle $X^{1,N}$ in the system.
In particular, one often seeks an asymptotic \emph{effective description} of \eqref{eq:intro_IPS} as $N\to\infty$, which hopefully can be captured by a much lower-dimensional system.

To this end, it is convenient to introduce the \emph{empirical measure} associated to \eqref{eq:intro_IPS}, given by
%
\begin{equation*}
	\mu^N_t \coloneqq \frac{1}{N}\sum_{i=1}^N \delta_{X^{i;N}_t}.
\end{equation*} 
Notice that, for fixed $N$, $\mu^N_t$ is a random probability measure on $\RR^d$, describing in an averaged way the whole ensemble $X^{(N)}$, rather than single particles.
The \emph{mean field formalism} asserts that, under suitable assumptions on $X^{(N)}_0$ and $b$, the sequence $\{\mu^N_t\}$ should converge as $N\to\infty$ to a deterministic measure $\bar{\mu}_t$, in a law of large number fashion; this is often referred to as a mean field limit or mean field convergence.
Furthermore, $\bar{\mu}_t$ should be given by the law $\cL(\bar X_t)$ of the \emph{non-linear process} $\bar{X}$, solving the McKean--Vlasov equation
\begin{equation}\label{eq:intro_MKV}
	\bar{X}_t = X_0 + \int_0^t \int_{\mbR^d} b_s(\bar{X}_s,y) \dd \bar{\mu}_s(y)\dd s + W_t,\qquad \bar{\mu}_t = \mcL(\bar{X}_t),
\end{equation}
where $(X_0,W)$ is sampled as $(X^1_0,W^1)$.
Observe that the SDE \eqref{eq:intro_MKV} is set on $\RR^d$, instead of $\RR^{Nd}$ like \eqref{eq:intro_IPS}, but it is non-local in the probability space; in other words, we are trading the higher-dimensional, simpler description given by \eqref{eq:intro_IPS} for the lower-dimensional, more complex one of \eqref{eq:intro_MKV}.

An alternative asymptotic description, similarly capturing the idea that the particles $X^{i;N}$ become ``asymptotically independent'' and behave like $\bar{X}$ as $N\to \infty$, is captured by the concept of \emph{propagation of chaos};
loosely speaking, it asserts that if $\{X^i_0\}$ are sampled i.i.d., then the same holds for $t>0$ asymptotically in the system size.
More precisely, for any fixed $k\in\NN$, one requires that
\begin{equation}\label{eq:intro_poc_definition}
	\cL(X^{1;N}_t,\ldots, X^{k;N}_t)\to \bar\mu_t^{\otimes k}\quad \text{as } N\to\infty,
\end{equation}
with convergence of measures being understood in an appropriate weak sense.
In many cases, mean field convergence and propagation of chaos can be shown to be equivalent, see \cite{sznitman1991topics,jabin2017mean,chaintron_diez_22_propagation_I,chaintron_22_propagation_II}. 

In the specific case where either the evolution is deterministic or the noises are Brownian, namely for $W^{i}=\sqrt{2\eps} B^i$, under mild assumptions, one can typically provide an alternative self-contained description of $\bar\mu_t$ by solving the \emph{non-linear Fokker--Planck equation}
\begin{equation}\label{eq:intro_FP}
	\partial_t \bar{\mu}_t + \nabla \cdot \big(\bar{\mu}_t\, b_t(\,\cdot\,,\bar{\mu}_t)\big) = \eps \Delta \bar{\mu}_t,\quad
	b_t(x,\bar{\mu}_t) \coloneqq \int_{\mbR^d} b_t(x,y) \dd \bar{\mu}_t(y)
\end{equation}
 with prescribed initial condition $\bar\mu_0\coloneq\cL(X_0)$.
In this case, under appropriate regularity assumptions, \eqref{eq:intro_MKV} and \eqref{eq:intro_FP} are in one-to-one correspondence: a solution to \eqref{eq:intro_MKV} can be used to construct one to \eqref{eq:intro_FP} and vice versa, see for example \cite{barbu_rockner_20_nonlinear}.

Early works in the literature established, in a fully rigorous manner, the connection between \eqref{eq:intro_IPS}, \eqref{eq:intro_MKV} and \eqref{eq:intro_FP}, showing both mean field convergence and propagation of chaos, in the case where the interaction $b$ is sufficiently regular, say Lipschitz; see among others \cite{mckean1966class,braun_hepp_77_vlasov,sznitman1991topics}.
In more recent years, there has been tremendous advance in the case of \emph{singular interactions}, which arise naturally in the aforementioned applications. Most of the relevant works exploit crucially the hypothesis that the noise is Brownian, either by leveraging It\^o calculus, or the well-posedness and regularity of the PDE \eqref{eq:intro_FP}; we postpone a deeper discussion on the topic to Section \ref{sec:literature}.

Interestingly, it was pointed out recently in \cite{coghi2020pathwise}, revisiting old ideas by Tanaka \cite{tanaka1984Limits}, that for regular $b$ the connection between \eqref{eq:intro_IPS} and \eqref{eq:intro_MKV} is valid for almost \emph{any} choice of the driving noises $W^i$.
In other words, mean field convergence is a statistically universal phenomenon, intrinsic to the mean field structure of the interaction and the imput data $\{X^{i;N}_0,W^i\}$ satisfying a law of large numbers, but not specific to the noise being Brownian, or an underlying PDE connection being present.
In fact, one can allow many choices of non-Markovian, non-martingale noises $W^i$, for which any link to \eqref{eq:intro_FP} and It\^o calculus are immediately lost; a prominent example, the one we will focus on in this paper, is given by fractional Brownian motion (fBm) $W^H$ of Hurst parameter $H\in (0,+\infty)\setminus\NN$. See Section \ref{subsec:fbm} for its definition.

The work \cite{coghi2020pathwise} leaves open the question of what happens for less regular interactions, the most relevant ones for applications.
In this case, one can hope to exploit the regularizing effect of fBm trajectories on SDEs, a phenomenon often referred to as \emph{regularization by noise}, to recover well-posedness of \eqref{eq:intro_IPS}-\eqref{eq:intro_MKV} and subsequently prove mean field convergence and propagation of chaos.
We started this program in \cite{GaHaMa2023}, which established the well-posedness of the McKean--Vlasov equation \eqref{eq:intro_MKV} for a large class of singular interactions $b$; but the question of establishing \eqref{eq:intro_poc_definition} was left open therein.
The aim of the current paper is to close this gap.

Our results are summarised by the next theorem, which for the sake of presentation is stated somewhat loosely.
Therein, $B^\alpha_{\infty,\infty}(\RR^{2d};\RR^d)$ denote inhomogeneous Besov--H\"older spaces; we refer to Section~\ref{sec:notation} for a thorough explanation of the main relevant function spaces, notations and conventions adopted in this paper.

\begin{thm}\label{thm:main}
	Let $T\in (0,\infty)$, $(\Omega,\mcF,\mbP)$ be a probability space carrying $\{X^{i}_0\}_{i=1}^\infty, X_0$, a collection of i.i.d random variables in $\mbR^d$, and $\{W^{i,H}\}_{i=1}^\infty, W^H$, a collection of i.i.d fractional Brownian motions of Hurst parameter $H\in (0,+\infty)\setminus \NN$. Further assume that $b\in L^q([0,T];B^\alpha_{\infty,\infty}(\mbR^{2d};\mbR^d))$, for some $\alpha,\,q$ satisfying
	\begin{align}\label{eq:intro_assumption}
		\alpha\in (-\infty,1),\quad  q\in (1,2],\quad \alpha>1-\frac{1}{H {q^\prime}},\quad \frac{1}{q'}\coloneq 1-\frac{1}{q}.
	\end{align}
	Then the following hold:
	\begin{enumerate}[label=\roman*)]
		\item\label{it:intro_thm_i} For any $N\geq 2$, there exists a unique strong solution $X^{(N)} =\{X^{i;N}\}_{i=1}^N$ to the interacting particle system \eqref{eq:intro_IPS}.
		\item\label{it:intro_thm_ii} There exists a unique strong solution $\bar{X}$ to the McKean--Vlasov equation \eqref{eq:intro_MKV}.
		\item\label{it:intro_thm_iii} Let $\bar{X}^1$ denote the unique solution to \eqref{eq:intro_MKV}, with $X_0,\, W^H$ replaced by $X_0^1$, $W^{1,H}$, coming from Point~\ref{it:intro_thm_ii}; then for any $m\in [1,\infty)$ and $N\geq 2$ it holds that
		\begin{align}\label{eq:intro_poc}
			\mbE\Bigg[ \,\sup_{t\in [0,T]} |X^{1;N}_t-\bar{X}^{1}_t|^m \Bigg]^{\tfrac{1}{m}} \lesssim N^{-1/2}.
		\end{align}
		\item\label{it:intro_thm_iv} For any $m\in [1,\infty)$, $N\geq 2$ and $\varphi \in W^{1,\infty}(\mbR^d;\mbR)$ it holds that
		\begin{align}\label{eq:intro_mfc}
			\mbE\Bigg[\,\sup_{t\in [0,T]} |\langle \varphi,\mu_t^N\rangle-\langle\varphi,\bar \mu_t\rangle|^m \Bigg]^{\tfrac{1}{m}}\,
			\lesssim \|\varphi\|_{W^{1,\infty}_x} N^{-1/2}
		\end{align}
		where $\langle \varphi,\nu\rangle\coloneq \int_{\RR^d} \varphi(x) \nu(\dd x)$ for any finite signed measure $\nu$.
	\end{enumerate}
\end{thm}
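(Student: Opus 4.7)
The plan is to handle the four assertions in sequence, with most of the effort devoted to the quantitative coupling bound~\eqref{eq:intro_poc}. Well-posedness of the McKean--Vlasov equation in part~\ref{it:intro_thm_ii} is already available from \cite{GaHaMa2023} under condition~\eqref{eq:intro_assumption}, so I would begin by recalling it together with the H\"older regularity it provides for $\bar X$. Part~\ref{it:intro_thm_i} then follows by viewing~\eqref{eq:intro_IPS} as a single SDE on $\mbR^{Nd}$ driven by the concatenated fBm $(W^{i,H})_{i=1}^N$ with drift having $i$-th block $\tfrac{1}{N-1}\sum_{j\neq i}b_s(x^i,x^j)$; the tensorial structure ensures that this new drift still sits in $L^q_tB^\alpha_{\infty,\infty}$ on $\mbR^{Nd}$, so the same stochastic-sewing well-posedness machinery underlying~\ref{it:intro_thm_ii} applies.

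For the main bound~\ref{it:intro_thm_iii} I would introduce auxiliary i.i.d.\ McKean--Vlasov copies $\bar X^j$ driven by $(X^j_0,W^{j,H})$, coupled to $X^{j;N}$ through the same noise, and expand $D_t\coloneq X^{1;N}_t-\bar X^1_t$ via Sznitman's direct decomposition
\begin{align*}
D_t \,=\, \underbrace{\frac{1}{N-1}\sum_{j\neq 1}\int_0^t\!\bigl[b_s(X^{1;N}_s,X^{j;N}_s)-b_s(\bar X^1_s,\bar X^j_s)\bigr]\dd s}_{\mcS_t}\,+\,\underbrace{\frac{1}{N-1}\sum_{j\neq 1}\int_0^t\!\bigl[b_s(\bar X^1_s,\bar X^j_s)-b_s(\bar X^1_s,\bar\mu_s)\bigr]\dd s}_{\mcE_t}.
\end{align*}
For the sampling term $\mcE_t$, the summands $Z^j_s\coloneq b_s(\bar X^1_s,\bar X^j_s)-\int b_s(\bar X^1_s,y)\dd\bar\mu_s(y)$ are, conditionally on $(\bar X^1,W^{1,H})$, centred and independent across $j\neq 1$. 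Although $b$ is only distributional, the stochastic sewing lemma combined with the local non-determinism of $W^{j,H}$ gives sense to $\int_s^t Z^j_r\dd r$ and, as in~\cite{GaHaMa2023}, yields the increment estimate $\|\int_s^t Z^j_r\dd r\|_{L^m(\Omega)}\lesssim\|b\|_{L^q_tB^\alpha_{\infty,\infty}}(t-s)^{\gamma}$ with $\gamma>1/2$ under~\eqref{eq:intro_assumption}. Conditional independence across $j$ is precisely what produces the $N^{-1/2}$ gain, and a Garsia--Rodemich--Rumsey upgrade promotes it to the supremum in time.

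For the stability term $\mcS_t$ the crucial ingredient is a difference version of the same sewing estimate: for adapted processes $(Y,\tilde Y),(Z,\tilde Z)$ sharing the regularity of the true dynamics,
\begin{equation*}
\Bigl\|\int_s^t\!\bigl[b_r(Y_r,Z_r)-b_r(\tilde Y_r,\tilde Z_r)\bigr]\dd r\Bigr\|_{L^m(\Omega)}\,\lesssim\,\bigl(\|Y-\tilde Y\|_{L^\infty_{[s,t]}L^m}+\|Z-\tilde Z\|_{L^\infty_{[s,t]}L^m}\bigr)(t-s)^{\gamma}.
\end{equation*}
Applying this with $(Y,\tilde Y)=(X^{1;N},\bar X^1)$ and $(Z,\tilde Z)=(X^{j;N},\bar X^j)$, averaging over $j$ and using exchangeability to identify $\|X^{j;N}-\bar X^j\|_{L^m}$ with $\|D\|_{L^m}$, gives $\|\mcS_t\|_{L^m}\lesssim\sup_{r\le t}\|D_r\|_{L^m}(t-s)^\gamma$ over short intervals. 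A Gr\"onwall iteration over a dyadic partition of $[0,T]$ then closes~\eqref{eq:intro_poc}. Part~\ref{it:intro_thm_iv} follows by splitting $\langle\varphi,\mu^N_t\rangle-\langle\varphi,\bar\mu_t\rangle$ into a coupling piece controlled by $\|\nabla\varphi\|_\infty\sup_t|D_t|$, and an i.i.d.\ averaging piece $\tfrac{1}{N}\sum_i[\varphi(\bar X^i_t)-\mbE\varphi(\bar X^1_t)]$ which is $N^{-1/2}$ in $L^m$ pointwise and upgraded to the supremum in $t$ via the pathwise H\"older regularity of $\bar X^1$ inherited from the driving fBm.

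The hard part will be proving the sewing increments with constants that are \emph{simultaneously} uniform in $N$ and linear in the $L^m$-norms of the differences, so that the $N^{-1/2}$ gain coming from the sampling term is not destroyed when the Gr\"onwall argument is closed. This hinges on the conditional independence of the $(W^{j,H})_{j\ge 2}$, which is what substitutes, in the non-Markovian fBm setting, for the PDE and Girsanov arguments customarily employed in the Brownian case.
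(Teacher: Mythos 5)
Your global strategy for parts~(iii)--(iv) --- Sznitman's direct coupling, stochastic sewing for both the sampling and the stability terms, and an asymmetric stability estimate that demands the ``true'' regularity only of one of the two compared processes --- is the right one and matches the paper's in spirit (compare Proposition~\ref{prop:generic_pairwise_stabillity} and Proposition~\ref{prop:comparison_integrals}). The recasting of~\eqref{eq:intro_IPS} as a single $\mbR^{Nd}$-valued SDE is also a sound way to establish existence and uniqueness at fixed $N$. However, there is one genuine gap in the way you propose to enter the argument.

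You begin by asserting that well-posedness of~\eqref{eq:intro_MKV} under~\eqref{eq:intro_assumption} ``is already available from~\cite{GaHaMa2023}.'' The paper explicitly disclaims this in Remark~\ref{rem:intro_MKV}: for the class of pairwise drifts $b\in L^q_T B^\alpha_{\infty,\infty}(\mbR^{2d};\mbR^d)$ with $\alpha<0$, one cannot set up the fixed-point argument of~\cite{GaHaMa2023}, because when one ``freezes'' the measure flow $\{\bar\mu_t\}_t$ and forms the drift $\tilde b_t(x)\coloneq \langle b_t(x,\,\cdot\,),\bar\mu_t\rangle$, the pairing on the right is not a priori well defined as a distribution in $x$ --- one has no regularity information on $\bar\mu_t$ to justify it. Everything downstream in your plan (the existence of the auxiliary McKean--Vlasov copies $\bar X^j$, their H\"older/Sobolev regularity, and the sewing increment estimates for the sampling term $\mcE_t$) is predicated on having a solution to~\eqref{eq:intro_MKV} with the ``right'' structure, so this is not a cosmetic citation error but a missing foundation.

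The way the paper resolves this is worth noting, because it inverts your order of operations. Rather than constructing $\bar X$ independently and then coupling, the paper works \emph{from the particle system upward}: first a priori and conditional-increment estimates for~\eqref{eq:intro_IPS} with smooth $b$ that are uniform in $N$ (Lemmas~\ref{lem:reg_apri}--\ref{lem:remainder_singular_apriori}), then a generic stability estimate uniform in $N$ (Proposition~\ref{prop:generic_pairwise_stabillity}), then the McKean--Vlasov solution and its regularity are obtained \emph{as the $N\to\infty$ limit} of these (Lemma~\ref{lem:pairwise_mkv_apriori}, Proposition~\ref{prop:pairwise_mkv_stability}), and only afterwards does one pass $b^n\to b$ along smooth approximations. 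The last step also requires working in the closure spaces $\cC^\alpha_x$ rather than $B^\alpha_{\infty,\infty}$, since $C^\infty_b$ is not dense in the latter; this is a small but necessary technical device your sketch omits. If you want to keep your order of steps, you would need to replace the citation of~\cite{GaHaMa2023} with a genuine construction of $\bar X$ (e.g.~via the same particle--limit or via a sewing-based fixed point on the coupled level), and you would still need the $N$-uniform conditional-increment a priori bounds on $\theta^{i;N}=X^{i;N}-W^{i,H}$ that make the asymmetric sewing estimate for $\mcS_t$ applicable; these uniform bounds are the technical core and are not supplied merely by viewing~\eqref{eq:intro_IPS} as one SDE on $\mbR^{Nd}$.
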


While Theorem~\ref{thm:main} faithfully captures the spirit of our main results, its formulation is intentionally imprecise.
Firstly, since we can allow for distributional drifts whenever $\alpha<0$, pointwise evaluations of the form $b_s(X^{i;n}_s,X^{j;N}_s)$ are not meaningful, so even the notion of solution to \eqref{eq:intro_IPS}-\eqref{eq:intro_MKV} is non-obvious.
Secondly, we have not yet specified exactly what we mean by uniqueness of solutions.
Both of these issues will be properly addressed in Section~\ref{sec:main_proofs}, which contains more rigorous and precise statements, overall comprising Theorem~\ref{thm:main}.

We will illustrate the main ideas of the proof in Section~\ref{subsec:ideas_proof}; let us first collect a number of remarks and examples discussing the relevance of the result.

\begin{rem}\label{rem:meaning_assumption_intro}
	Condition~\eqref{eq:intro_assumption} first appeared in the study of standard SDEs driven by fBm in~\cite{galeati2022subcritical}.
	Arguing as therein, it can be shown that $\alpha>1-1/(Hq')$ identifies a \emph{subcritical regime}, for both~\eqref{eq:intro_IPS} and~\eqref{eq:intro_MKV}, obtained by rescaling the interaction $b$ in accordance with the self-similarity of the driving noise $W^H$.
	In practical terms, it gives a quantitative version of the known principle ``the rougher the noise, the better the regularization'' from the regularization by noise literature: at fixed $q$, as $H\downarrow 0$, \eqref{eq:intro_assumption} allows for more negative $\alpha$, namely more singular interactions $b$. In contrast, as $H\uparrow +\infty$, the condition degenerates to $\alpha>1$, which gives back the standard Lipschitz regime where the result holds classically. In particular, the roughness of the noise $W^H$ (quantified by $H$) is inversely proportional to its regularising effect on systems \eqref{eq:intro_IPS}-\eqref{eq:intro_MKV}.
	The restriction $q\in (1,2]$ instead does not come from scaling considerations; we refer to Section~\ref{subsec:open_problems} for a deeper discussion on this point.
\end{rem}

\begin{rem}\label{rem:large_q_embedding}
	Since we always work on a given, finite time interval $[0,T]$, for $q>2$ one can use the embedding $L^q([0,T];B^\alpha_{\infty,\infty}(\mbR^{2d};\mbR^d)) \hookrightarrow L^2([0,T];B^\alpha_{\infty,\infty}(\mbR^{2d};\mbR^d))$ to modify \eqref{eq:intro_assumption} as follows:
	\begin{align*}
		\alpha\in (-\infty,1),\quad  q\in (1,\infty],\quad \alpha>1-\frac{1}{H (2\vee {q^\prime})}.
	\end{align*}	  
	In particular, for autonomous interactions $b\in B^\alpha_{\infty,\infty}(\RR^{2d};\RR^d)$, one should take
	\begin{equation}\label{eq:intro_condition_autonomous}
		\alpha>1-\frac{1}{2H}\quad \iff \quad H<\frac{1}{2(1-\alpha)}; 
	\end{equation}
	this is in line with the condition first obtained in \cite{Catellier2016} for standard SDEs driven by fBm.
\end{rem}
\begin{rem}\label{rem:intro_MKV}
	Although the main focus of this paper are the mean field convergence and propagation of chaos underlying systems \eqref{eq:intro_IPS}-\eqref{eq:intro_MKV}, even the well-posedness of \eqref{eq:intro_MKV} for our class of interactions $b$ is a partially novel result, not covered by our previous works~\cite{GaHaMa2023,galeati2022subcritical}.	In fact, one cannot develop the same fixed point argument as therein, which first requires to ``freeze'' the measures $\{\bar\mu_t\}_t$ and consider the drift $\tilde b_t(x)\coloneq \langle b_t(x,\cdot), \bar\mu_t\rangle$; this is because, without any information on $\bar{\mu}_t$, one cannot make sense of the duality pairing $\langle b_t(x,\cdot), \bar\mu_t\rangle$, so that it is not a priori clear if $\tilde b$ is well-defined, even in the sense of distributions.
\end{rem}
\begin{rem}\label{rem:convergence_mode}
	We formulated our mean field convergence rate in \eqref{eq:intro_mfc} by fixing a Lipschitz observable $\varphi$, and then estimating the $m$-th moment of the $\RR$-valued random variable $|\langle \varphi,\mu^N_t-\bar \mu_t\rangle|$.
	This is not the only option, as similar arguments allow for the use of other quantities, e.g. by considering $\| \mu^N_t-\bar\mu_t\|_{H^{-s}}$ for suitable negative Sobolev spaces $H^{-s}(\RR^d)$; see Corollary~\ref{cor:mean_field_negative_sobolev} for more details.
\end{rem}

\begin{rem}\label{rem:rate}
	The convergence rate $N^{-1/2}$ appearing in \eqref{eq:intro_mfc} is optimal.
	Indeed, even when the interaction $b$ is smooth, it was shown by Tanaka \cite{tanaka1984Limits} in the Brownian setting that that the sequence $N^{-1/2}(\mu^N-\bar{\mu})$ converges in law to a non-trivial Gaussian measure; the result was then extended to arbitrary additive noise (including fBm as considered here) in \cite[Sec.~5]{coghi2020pathwise}.
	Let us point out that this is not in contradiction with the possibility to achieve better rates for other quantities of interest; indeed, Lacker \cite{Lacker2023} recently showed that, in the Brownian setting, for suitable interactions $b$ it holds that
	\begin{equation}\label{eq:intro_lacker}
		\Big\| \cL(X^{1;N}_t,\ldots, X^{k;N}_t)- \bar\mu_t^{\otimes k} \Big\|_{TV} \lesssim \frac{k}{N}.
	\end{equation}
\end{rem}

\begin{rem}
Entropy methods were recently employed by Han \cite{han2022entropic} to show propagation of chaos for fBm driven SDEs with rough drifts; therein, a relative entropy decay of order $N^{-1}$ (implying one of order $N^{-1/2}$ in total variation) was shown, for either locally bounded drifts of at most linear growth when $H\in (0,\nicefrac{1}{2}]$, or suitable space-time H\"older drifts when $H\in (\nicefrac{1}{2},1)$.
However, \cite{han2022entropic} requires subGaussian initial conditions $X_0^i$, and the estimate only holds on some interval $[0,T^\ast]$, where $T^\ast$ depends on the distribution of $X_0$; moreover, differently from our setting, \cite{han2022entropic} does not cover drifts of negative regularity, nor with poor time integrability.

Since completion of this manuscript, \cite{hu2024mimicking} obtained a similar result to \eqref{eq:intro_lacker} in the case of interacting particle systems driven by fBm, see \cite[Thm.~5.5]{hu2024mimicking}.
The assumptions imposed on $b$ are quite abstract, but it seems plausible for them to be satisfied by the same class of drifts considered in \cite{han2022entropic}, cf. \cite[Ex.~2.6]{hu2024mimicking}, and to work on any finite time interval $[0,T]$.
\end{rem}

\begin{rem}\label{rem:intro_poc}
	Estimate \eqref{eq:intro_poc} encodes a quantitative propagation of chaos convergence.
	Indeed, since system \eqref{eq:intro_IPS} is exchangeable, the same estimate holds with $X^{1;N}$, $\bar{X}^1$ replaced by $X^{i;N}$, $\bar{X}^i$ respectively, for each $i=1,\ldots, N$.
	Since the $\bar{X}^i$ solve the same equation \eqref{eq:intro_MKV} but associated to independent data $(X_0^i,W^{i,H})$, we have $\cL(\bar{X}^1_t,\ldots,\bar{X}^k_t)=\bar\mu^{\otimes k}_t$; combined with \eqref{eq:intro_poc} and the triangle inequality, this yields that
	\begin{align*}
		\WW_m\Big( \cL(X^{1;N}_t,\ldots,X^{k;N}_t) - \bar\mu^{\otimes k}_t\Big)
		\leq \Big\| (X^{1;N}_t,\ldots,X^{k;N}_t) - (\bar{X}^1_t,\ldots,\bar{X}^k_t)\Big\|_{L^m(\Omega)}
		\lesssim \frac{k}{\sqrt N}
	\end{align*}
	where $\WW_m$ denotes the $m$-th Wasserstein distance. Note however that, in light of \eqref{eq:intro_lacker}, we do not expect this rate to be sharp.
\end{rem}

\begin{rem}
	Theorem \ref{thm:main} allows for a large class of initial data and quite general interactions.
	Indeed, we do not require anything on $\{X^i_0\}_i$ apart from being i.i.d. (an assumption which we expect it to be possible to relax, cf. Section \ref{subsec:open_problems}). In particular, $\cL(X^i_0)$ does not need to be regular, nor possess finite moments or finite entropy.
	Similarly, as soon as $b$ satisfies \eqref{eq:intro_assumption}, no further structural assumptions are needed.
	For instance, we do not impose conditions on the divergence of $b$, nor require that the kernel be attractive or repulsive, etc.
	Finally, let us point out that estimates \eqref{eq:intro_poc}-\eqref{eq:intro_mfc} provide \emph{functional} mean field limits and propagation of chaos rates, since suprema over $t\in [0,T]$ always appear inside the expectation. 
\end{rem}

We next discuss the most relevant classes of interactions $b$ satisfying condition~\eqref{eq:intro_assumption}.

\begin{ex}[Convolutional, Pointwise and Statistical Interactions]\label{ex:convolutional}
	An important class of interactions $b\in L^q([0,T];B^\alpha_{\infty,\infty}(\RR^{2d};\RR^d))$ covered by Theorem \ref{thm:main} are those which formally decompose as
	\begin{equation}\label{eq:intro_drift_decomposition}
		b_t(x,y)\coloneqq f_t(x) + g_t(y)+h_t(x-y) \vspace{0.5em}
	\end{equation}
	for $f,\,g,\,h \in L^q([0,T]; B^\alpha_{\infty,\infty}(\mbR^{d};\mbR^d))$, with $(q,\alpha)$ satisfying \eqref{eq:intro_assumption}; see Lemma~\ref{lem:besov_dimension_reduction} in Appendix~\ref{app:holder_besov} for a self-contained proof of this fact.
	For $b$ decomposing as in \eqref{eq:intro_drift_decomposition}, we may formally write 
	\begin{align*}
		b_t(x,\mu) = f_t(x) + \langle g_t,\mu\rangle + (h_t\ast \mu)(x).
	\end{align*}
	We see that $f$ takes the role of a potential in which particles are immersed, $g$ is a statistic of the measure $\mu$, and most importantly $h$ is the convolutional kernel associated to an homogeneous pairwise interaction.
\end{ex}

\begin{ex}[H\"older and Bounded Interactions]\label{ex:bounded}
	 We recall that, for $\alpha\in (0,1)$, $B^\alpha_{\infty,\infty}(\RR^{2d};\RR^d)$ coincides with the space $C^\alpha_b(\RR^{2d};\RR^d)$ of bounded, $\alpha$-H\"older continuous functions.
	For instance, in the Brownian case $H=1/2$, Theorem~\ref{thm:main} allows for interactions $b\in L^2([0,T]; C^\alpha_b)$ for arbitrary $\alpha>0$. We point out that this result, in the Brownian case, was already essentially obtained by \cite[Thm.~2.1]{holding_16_propagation}. On the other hand, since $L^\infty(\RR^{2d};\RR^d)\hookrightarrow B^{-\delta}_{\infty,\infty}(\RR^{2d};\RR^d)$ for all $\delta>0$, we can allow for spatially bounded $b\in L^2([0,T]; L^\infty_x)$ as soon as $H<1/2$.
\end{ex}

\begin{ex}[Integrable Riesz Potentials]\label{ex:coloumb}
	An important family of interactions, in the context of Example~\ref{ex:convolutional}, are those associated to Riesz potentials. Consider
	\begin{equation}
		b(x,y) = (\mbA \nabla  h)(x-y) \quad \forall \,\, x\neq y  \in \mbR^d,
	\end{equation}
	where $\mbA$ is any unitary matrix and
	\begin{equation*}
		|h(x)| \leq  \frac{1}{|x|^{s}}\quad \text{with}\quad s \in (0,d) \quad \text{or} \quad |h(x)| \leq  1\wedge \log(|x|),\quad \forall\, x \neq 0.
	\end{equation*}
	Since $h$ is locally integrable, bounded at infinity and is bounded by an $(-s)$-homogeneous distribution (where we regard the $log$ case as $s=0$), it can be readily checked that it defines an element of $B^{-s}_{\infty,\infty}(\mbR^d;\mbR^d)$.
	Interpreting $\nabla h$ in the sense of distributions, one then has $b \in B^{-s-1}_{\infty,\infty}(\mbR^d;\mbR^d)$.
	In light of condition~\eqref{eq:intro_condition_autonomous} and Example~\ref{ex:convolutional}, the assumptions of Theorem~\ref{thm:main} are satisfied with $q =2$, provided that
	\begin{equation}\label{eq:intro_coulomb_condition}
		H<\frac{1}{2(2+s)}.
	\end{equation}
	The particular case of Coulomb interactions, related to gravitational attraction and electro-magnetic repulsion, corresponds to taking $\AA=\pm I_d$ and $s= d-2$ for $d\geq 3$, or the logarithmic potential when $d=2$; condition \eqref{eq:intro_coulomb_condition} then becomes
	\begin{equation}\label{eq:intro_coulomb_specific}
		H<\frac{1}{2d}.
	\end{equation}
	Similarly, in $d=2$ if we take $h\sim \log(|x|)$ and $\AA=\JJ$, where $\JJ$ denotes the symplectic matrix, then $b$ corresponds to the Biot--Savart kernel, related to dynamics of point vortices in $2$-dimensional fluids; condition \eqref{eq:intro_coulomb_specific} becomes $H<1/4$.
\end{ex}
\begin{ex}[Dirac Interactions]\label{ex:dirac}
	Another class of convolutional type kernel are those formally given by
	\begin{equation*}
		b(x,y) = v \delta_0(x-y)\quad \text{for some } \, v\in \mbR^d.
	\end{equation*}
	Since the Dirac delta lies in $B^{-d}_{\infty,\infty}(\mbR^d;\mbR^d)$, by eq.~\eqref{eq:intro_condition_autonomous} Theorem~\ref{thm:main} applies as soon as
	\begin{equation*}
		H <\frac{1}{2(1+d)}.
	\end{equation*}
	We refer the reader to \cite{butkovsky2023stochastic} for a discussion on SDEs with Dirac delta drifts in relation to \emph{skew processes}, and to \cite[Ch.~II]{sznitman1991topics} for their relevance in particle models of one dimensional fluid dynamics, see also \cite{bossy_talay_97_stochastic}.
	Similarly, one may consider distributional derivatives of the Dirac delta $\nabla^k \delta \in B^{-d-k}_{\infty,\infty}(\mbR^d;\mbR^d)$, for all $k\in \mbN$.  
\end{ex}

\begin{ex}[Non-Integrable Riesz Interactions]\label{ex:singular_riesz}
	One can extend the class of Riesz-type interactions allowing potentials which are non-integrable at the origin.
	As above, let us formally write 
	\begin{equation*}
		b(x,y) = (\mbA \nabla  h)(x-y) \quad \forall \,\, x\neq y  \in \mbR^d,
	\end{equation*}
	for any unitary matrix $\mbA$ and
	\begin{equation*}
		h(x) = \frac{1}{|x|^s} \quad \text{with}\quad  s \in (d,\infty) \setminus \mbN, \quad \forall \, x\neq 0.
	\end{equation*}
	In this case, there exists a unique extension of $\nabla  h$ to a genuine distribution on all of $\mbR^d$ which respects the natural scaling of $\nabla h$. Furthermore, this extension defines an element of $B^{-s-1}_{\infty,\infty}(\mbR^d;\mbR^d)$, see Lemma~\ref{lem:non_int_homogeneous} of Appendix~\ref{app:holder_besov}.
	As a consequence, Theorem~\ref{thm:main} still applies under condition~\eqref{eq:intro_coulomb_condition}, in this extended range of values $s \in (d,\infty) \setminus \mbN$.
	The case of integer values is somewhat more subtle, see Remark~\ref{rem:int_homogeneous} for its discussion.
	
	For relevance of the these more singular interactions in the context of approximation theory and best packings, we refer the reader to \cite[Sec.~2.2]{serfaty_18_systems} and \cite{saff_kuijlaars_97_distributing,brauchart_hardin_saff_12_next}.
\end{ex}
Since Examples~\ref{ex:dirac}-\ref{ex:singular_riesz} are all interactions of convolution type, one could expect to bootstrap on the regularity of the McKean--Vlasov law to relax the regularity assumptions on $b$, as discussed in the Brownian case for instance in \cite{deraynal_jabir_manozzi_25_multidimensional}. In the fBm case, after the completion of this manuscript this was achieved in \cite[Thm.~5.2 \& Rem.~5.3]{anzeletti2025density}, establishing novel results for the well-posedness of the McKean--Vlasov equation; however, establishing propagation of chaos in the regularity regime considered in \cite{anzeletti2025density} is currently open.
\subsection{Main Ideas of the Proof}\label{subsec:ideas_proof}

The core idea of our proof strategy is to follow Sznitman's direct coupling argument, c.f. \cite[Thm.~1.4]{sznitman1991topics}.
That is, we would like to couple a solution $X^{(N)}$ of the particle system \eqref{eq:intro_IPS} to a collection of $N$ i.i.d. solutions $\bar{X}^{(N)}=\{\bar X^i\}_{i=1}^N$, obtained by solving for each $i$ the McKean--Vlasov equation \eqref{eq:intro_MKV} associated to $(X^i_0,W^{i,H})$.
For simplicity, here we fix $N$ and drop it from the superscript.
As the noises $W^{i,H}$ are additive, they cancel out whenever taking differences $X^i-\bar X^i$, leaving us with
\begin{equation*}\begin{split}	
		X^i_t-\bar{X}^i_t
		&=  \frac{1}{N-1} \sum_{j\neq i } \int_0^t [b(X^i_s,X^j_s) - b(\bar{X}^i_s,\bar{X}^j_s)] \dd s 
		+ \frac{1}{N-1} \sum_{j\neq i} \int_0^t [b(\bar{X}^i_s,\bar{X}^j_s) - b(\bar{X}^i_s,\bar{\mu}_s)] \dd s\\
		& \eqcolon R^{1,N}_t + R^{2,N}_t
\end{split}\end{equation*}
Sznitman's original argument would then exploit two crucial passages:
\begin{enumerate}
	\item\label{it:sznit_1} Since $b$ is Lipschitz, taking modulus on both sides, one can bring it inside the integrals to find a pointwise estimate
	\begin{align*}
		\Big| \int_0^t [b(X^i_s,X^j_s) - b(\bar{X}^i_s,\bar{X}^j_s)] \dd s\Big|
		\lesssim \| b\|_{W^{1,\infty}} \sup_{j=1,\ldots,N} \sup_{s\leq t} | X^j_s-\bar{X}^j_s|	
	\end{align*}
	which eventually allows to control the term $R^{1,N}$ in a Gr\"onwall-like fashion, and reduce the problem to estimating $R^{2,N}$.
	\item\label{it:sznit_2} Since $\bar{X}^j$ are i.i.d. and $\cL(\bar{X}^j_t)=\bar\mu_t$, $R^{2,N}$ can be regarded as a sum of mean-zero i.i.d. random variables, for which classical concentration inequalities yield a bound of the form
	\begin{align*}
		\EE\bigg[\, \sup_{t\in [0,T]} |R^{2,N}_t|^2\, \bigg]^{\frac{1}{2}} \lesssim \| b\|_{L^\infty} N^{-1/2}.
	\end{align*}
\end{enumerate}
This strategy is not available in our setting, for multiple reasons.
Firstly, we never work with Lipschitz coefficients (nor e.g. Sobolev ones), which precludes us from performing a passage like~\eqref{it:sznit_1}. In the regime $\alpha<0$, the situation is even worse, because in general for a distribution $b$ we cannot define $|b|$; in particular, we can never bring moduli inside integrals when performing estimates.
Similarly, it's unclear a priori how to perform a step like~\eqref{it:sznit_2}. What's even worse, it is not clear if we can guarantee the well-posedness of solutions $\bar X^i$ to \eqref{eq:intro_MKV}, as explained in Remark~\ref{rem:intro_MKV}, which is necessary in setting up the direct coupling to begin with.

Our strategy overcomes both issues by exploiting the stochasticity of the system, in a regularization by noise fashion.
Indeed, our reference objects $X^i$, $\bar X^i$ are not just any paths, but rather candidate solutions to \eqref{eq:intro_IPS}-\eqref{eq:intro_MKV}.
As argued in \cite{Catellier2016,le2020stochastic,galeati2022subcritical}, this should imply a decomposition of solutions $X^i=\theta^i+W^{i,H}$, where $\theta^i$ are some ``slowly varying remainders''. If such a decomposition holds, $X^i$ should inherit the rough, oscillatory behaviour or $W^{i,H}$, allowing to make sense of stochastic processes of the form
\begin{equation}\label{eq:intro_integrals}
	\int_0^\cdot b_s(X^i_s,X^j_s) \dd s\quad \text{with }i\neq j,
\end{equation}
and show their stability with respect to the arguments $(b, X^i,X^j)$, even in situations where $b$ is no longer a classical function.
Observe in particular that we do not make sense of the pointwise evaluation $b_s(X^i_s,X^j_s)$, but only its integral over time; this is in analogy to well-studied stochastic processes like local times, formally corresponding to $b=\delta_0$.

A second key aspect of our strategy is that we work from the ground up; namely, our primary object of study is the particle system \eqref{eq:intro_IPS} itself, and all relevant estimates will be first performed at this level.
In turn, this informs us of the correct solution Ansatz needed to make sense of the McKean--Vlasov equation \eqref{eq:intro_MKV}, and all properties of solutions to \eqref{eq:intro_MKV} will be derived from those enjoyed by solutions to \eqref{eq:intro_IPS}, through a limiting procedure.
This inverts the standard paradigm, where usually \eqref{eq:intro_MKV} can be solved ex nihilo (as done e.g. in \cite{GaHaMa2023,galeati2022subcritical}); indeed, the analysis for \eqref{eq:intro_MKV} is typically expected to be easier than the one for \eqref{eq:intro_IPS}, and solutions often enjoy increased regularity by bootstrap arguments (possibly leveraging connections to the parabolic PDE \eqref{eq:intro_FP} as in the Brownian case).
This is in stark contrast with our setting, where without the correct solution Ansatz, it is not even clear how to define solutions to \eqref{eq:intro_MKV}.

In practical terms, our procedure to obtain a priori estimates for \eqref{eq:intro_IPS} is inspired by \cite{galeati2022subcritical}, which considered the standard SDE case.
A natural quantification of slowly varying property of $\theta^i$ is by means of conditional expectations and conditional moments, measuring how well $\theta^i$ can be predicted at time $t$ given the history up to $s$.
This can be combined with the \emph{local nondeterminism} of fBm (recalled in Section~\ref{subsec:fbm}) and most importantly \emph{stochastic sewing arguments}, to achieve estimates for integrals of the form \eqref{eq:intro_integrals} which are suited to our task.

The Stochastic Sewing Lemma (SSL) was first introduced by one of the authors in \cite{le2020stochastic} as a generalization of the classical sewing lemma by Gubinelli and Feyel--La Pradelle \cite{Gubinelli2004,FeyLaP2006}.
Despite its recent introduction, it has already found numerous applications in regularization by noise \cite{le2020stochastic,butkovsky2023stochastic,ButGal2023}, numerical schemes for SDEs \cite{BuDaGe2021,le2021taming,DaGeLe2023}, singular SPDEs \cite{ABLM,BuDaGe2023}, convergence of slow-fast systems \cite{hairer2019averaging,LiPaSi2023} and the development of a hybrid theory of rough SDEs \cite{friz2021rough}.

Armed with suitable versions of SSL (recalled in Section~\ref{subsec:SSL}), we will define and provide stability estimates for general integral processes of the form \eqref{eq:intro_integrals} in Section~\ref{subsec:integral_estimates}.

After that, our strategy will consist of the following steps:
%
\begin{enumerate}
	\item[i)] We first work with smooth and bounded approximations $\{b^n\}_{n}$ of a given irregular drift $b$.
	For each $b^n$, classical results ensure strong existence and pathwise uniqueness for~\eqref{eq:intro_IPS} and~\eqref{eq:intro_MKV}, as well as propagation of chaos and mean field convergence. Such results are recalled for convenience in Appendix~\ref{app:lipschitz_p_system}.
	\item[ii)] Performing the analysis at the level of smooth interactions $b$, we obtain sufficiently strong a priori and stability estimates for solutions to \eqref{eq:intro_IPS}, which crucially depend only on $\|b\|_{L^q_T B^\alpha_{\infty,\infty}}$ and are uniform in $N$. This is accomplished in Section~\ref{subsec:pairwise_a_priori}, where we obtain a far reaching stability estimate in Proposition~\ref{prop:generic_pairwise_stabillity}; this will be our substitute for Step~\eqref{it:sznit_1} in Sznitman's argument.
	\item[iii)] Since in the regular case we know that propagation of chaos holds, we can take the limit $N\to\infty$ in the particle systems, giving the same estimates at the level of McKean--Vlasov equations \eqref{eq:intro_MKV}.
	See Section~\ref{sub:mkv}.
	\item[iv)] As the estimates only depend on $\|b\|_{L^q_T B^\alpha_{\infty,\infty}}$, we can eventually pass to the limit along approximation sequences $\{b^n\}_n$ to treat the case of truly singular drifts. This is first shown for particle systems, and then in a similar vein for McKean--Vlasov equations. In particular, we not only obtain well-posedness of both equations, but also show that they enjoy the same stability estimates as their regular counterparts.
	See Sections~\ref{sec:sing_drifts_sols} and~\ref{sec:reg_drifts_sols}. 
	\item[v)] Finally, we piece everything together in Section~\ref{sec:poc_proofs}, where we prove mean field convergence and propagation of chaos with quantitative rates for singular interactions.
	Here we employ a concentration inequality in martingale type $2$ spaces to readapt Step~\eqref{it:sznit_2} from Sznitman's argument; see Theorem~\ref{th:pairwise_mean_field}.
\end{enumerate}

Let us finally mention a small technical detail, related to step iv) above.
As the spaces $B^\alpha_{\infty,\infty}$ do not coincide with the closure of smooth functions under the $\|\,\cdot\,\|_{B^\alpha_{\infty,\infty}}$-norm, it will be convenient throughout the paper to work with another scale of closely related spaces, which we denote by $\cC^\alpha_x$ and are rigorously defined by eq.~\eqref{eq:defn_cC_alpha} in Section~\ref{sec:notation}. This comes without loss of generality, as will be shown in the proof of Theorem~\ref{thm:main} at the very end of Section \ref{sec:main_proofs}.
\subsection{Context and Wider Literature}\label{sec:literature}

%
%
The mathematical study of interacting particle systems dates back to Kac~\cite{kac_56_foundations}, in relation to mathematical foundations of kinetic theory, formalising earlier work by Boltzmann.
As mentioned, relevant examples in the physics literature already appeared in earlier works by Jeans~\cite{jeans_15_star}, Vlasov~\cite{vlasov_38} and Onsager~\cite{onsager_49_stat}.
We refer to \cite{sznitman1991topics,jabin2017mean} for a more detailed discussion of this history. 

First rigorous results addressing Kac's program are usually attributed to McKean \cite{mckean1966class}.
For Lipschitz interactions, similar results were then achieved with and without (Brownian) noise in a number of works, see \cite{braun_hepp_77_vlasov,dobrushin1979vlasov,tanaka1984Limits}.
In this regular regime, G\"artner's results \cite{gartner1988mckean} remain among the most general, based on monotonicity and Lyapunov-type conditions; among recent extensions in this direction, see \cite{hammersley2018mckean}.
As already mentioned, \cite{coghi2020pathwise} revisited the argument from \cite{tanaka1984Limits} to show that in the Lipschitz case, the connection between \eqref{eq:intro_IPS} and \eqref{eq:intro_MKV} holds rigorously for general, continuous or càdlàg noises $W^i$.
This approach was built upon by \cite{GaHaMa2022}, obtaining mean field convergence under a number of different conditions, in the style of \cite{gartner1988mckean}.

Subsequently, more challenging systems with singular interactions $b$ have started receiving more attention; among relevant precursors let us mention \cite{osada_85_stoch,osada_87_propagation} for stochastic $2$D point vortices (in relation to $2$D Navier--Stokes) and \cite{sznitman1991topics} for Dirac delta interactions (in relation to $1$D Burgers).
See \cite{hauray2007n,fournier2014propagation,fournier_jourdain_17_stochastic} for a small selection of later works and \cite{golse_16_dynamics} for a survey of the extensive literature available up to 2016.

%
%
In the last ten years, the field has witnessed tremendous progress, thanks to many newly introduced PDE-based arguments.
Among them, let us mention:
i) the modulated free energy method by Serfaty \cite{serfaty2017mean,serfaty2020mean}, covering deterministic particles interacting through singular, repulsive or conservative kernels of Coulomb and singular Riesz-type \cite{serfaty2020mean,nguyen_rosenzweig_serfaty_22_mean}, then extended to $2$D point vortices in \cite{Rosenzweig2022};
ii) the relative entropy methods by Jabin--Wang \cite{Jabin2018} allowing for Brownian particles with Biot--Savart or bounded interactions;
iii) the works \cite{bresch_jabin_wang_19_application,bresch_jabin_wang_23_mean} which combined the two approaches, treating both deterministic and Brownian particles and a wide range of singular interactions.
Such results have subsequently been upgraded to allow for uniform-in-time propagation of chaos estimates, see \cite{Guillin2022,Guillin2024,decourcel_rosenzweig_serfaty_23_attractive}; 
other recent readaptations include \cite{nikolaev2024quantitative,chen2023well}.

Another more stochastic approach to Brownian particles, based on the BBGKY hierarchy along with entropy estimates, was recently proposed by Lacker \cite{Lacker2023}.
As mentioned, for H\"older continuous coefficients, the author therein is able to show the improved total variation rate \eqref{eq:intro_lacker}, disproving the widely held belief that the previously obtained rate $N^{-1/2}$ was optimal.
For yet another approach, also leveraging a hierarchical description, see \cite{bresch_duerinckxx_jabin_24_duality}.

%
%

In a different direction, several works in the stochastic analysis community have devoted their attention to the McKean--Vlasov equation \eqref{eq:intro_MKV}, in relation to regularisation by noise.
This terminology refers to the frequently observed phenomenon where SDEs driven by Brownian motion are strongly well-posed even when the same does not holds for their deterministic counterparts, due to poor regularity of the drift.
Early contributions in the field are due to Zvonkin \cite{Zvonkin1974} and Veretennikov \cite{Veretennikov1981}; we refer to \cite{Flandoli2011} for a survey.
Leveraging on stochastic tools like It\^o calculus, Girsanov transform and Malliavin calculus, as well as the connection between \eqref{eq:intro_MKV} and \eqref{eq:intro_FP}, similar well-posedness results for McKean--Vlasov equations driven by non-degenerate Brownian noise have been established e.g. in \cite{rockner2018well,bauer_meyerBrandis_proske_18_strong,huang2019distribution,mishura_veretennikov_20_existence}. 
Only fewer works have been able to use the same techniques to achieve propagation of chaos results for general classes of interactions $b$; among them, let us single out the particularly noteworthy contributions \cite{Lacker2018,hao2022strong}, as well as the underlying large deviations derived in \cite{hoeksema2020large}. For specific choices of $b$, several works exploited It\^o calculus and Girsanov's theorem to obtain improved results, see \cite{fournier_jourdain_17_stochastic,tomasevic2020propagation,fournier_tomasevic_23_particle,tardy_23_weak}. 
Analysis of this type has also led to detailed results on precise behaviour of the interacting particles themselves, for instance categorising the probability of collision and collapse in the case of singular, attractive kernels, \cite{fournier_tardy_23_collisions,kinzebulatov_24_critical}.

%
%

Of course, the separation of recent literature presented above, into a more PDE-based community on one side, focusing predominantly on the nonlinear Fokker--Planck equation \eqref{eq:intro_FP}, the Liouville equation and BBGKY hierarchies, and another more stochastic analytic community on the other, focusing on the non-linear SDE \eqref{eq:intro_MKV} and the regularizing properties of Brownian motion, is partially artificial.
There are several works not fitting this classification; let us mention a few.
Rather than a collection of idiosyncratic noises $\{W^i\}_i$, \cite{CogFla2016} considered particles undergoing a Lipschitz interaction, subject to a common, environmental noise (which then converts \eqref{eq:intro_FP} into an SPDE with transport noise); for singular interactions, the same type of model was covered in \cite{nguyen_rosenzweig_serfaty_22_mean}.
In the context of rough path theory, the works \cite{CasLyo14,BaCaDe2020,bailleul2021propagation} obtained well-posedness of particle systems and McKean--Vlasov equations, as well as propagation of chaos, in the case of smooth interactions and so called second-order rough drivers.
These results have recently been extended to general rough paths, see \cite{delarue_salkeld_23_example} and the references therein.

%
%
Since the work of Davie \cite{davie2007uniqueness}, it has become increasingly clear that regularization by noise phenomena are related to the pathwise properties of the process $W$, and can likely be extended beyond the Brownian or Markovian cases.
The focus on fractional Brownian motion, besides being a natural test field for the mathematical theory, is motivated by its ubiquitous appearance when modelling signals with correlated increments; applications include statistical models of turbulence, hydrology, anomalous polymer dynamics, telecommunication networks and stochastic volatility, see e.g. the discussion in \cite{galeati2022subcritical} and the references therein.
Since the standard stochastic analysis and PDE tools are not available for $H\neq 1/2$, one must develop new methods to capture and exploit the regularising effect of fBm.

Early works in this direction are due to \cite{nualart2002regularization}, based on Malliavin calculus techniques and Girsanov transform.
A robust pathwise approach to regularisation of SDEs by fBm was then developed in \cite{Catellier2016}, based on the formalism of nonlinear Young integrals.
Following on \cite{Catellier2016}, there has been a significant growth of research in this area; for a small selection, we refer to \cite{galeati_harang_22_multiplicative,dareiotis_gerenscer_22_multiplicative,le_22_quantitative,butkovsky2023stochastic}.
As mentioned in Section \ref{subsec:ideas_proof}, a major boost in the field was given by the introduction of stochastic sewing in \cite{le2020stochastic,le2023banach}.

%
%
We are only aware of few other results in the literature concerning the well-posedness of singular McKean--Vlasov equation driven by fBm.
The approach from \cite{bauer_meyerBrandis_proske_18_strong} has been readapted in \cite{bauer2019mckean}, to establish  weak existence and uniqueness for \eqref{eq:intro_MKV} possibly on infinite-dimensional Hilbert spaces, for suitable H\"older regular coefficients.
Our previous work \cite{GaHaMa2023} obtained strong well-posedness and stability estimates for \eqref{eq:intro_MKV} for general, measure-dependent non-linearities, roughly speaking extending the results of \cite{Catellier2016} to the McKean--Vlasov case.
These results were further extended in \cite{han2022solving}, based on the entropy methods from \cite{Lacker2018}, and in \cite[Sec.~7]{galeati2022subcritical}, which refined the stability estimates from \cite{GaHaMa2023} to improve the contraction argument performed therein.
To the best of our knowledge, the only other work addressing the question of propagation of chaos in this setting is \cite{han2022entropic}, which readapts the hierarchical approach from \cite{Lacker2023}, to obtain again a convergence rate of the form \eqref{eq:intro_lacker}.
In this case however the result only holds on a sufficiently small time interval $[0,T^\ast]$, which depends on the data of the problem.

Since the first instance of this manuscript, \cite{amorino_nourdin_shevchenko_25_fractional} studied the problem of parameter estimation in interacting particle systems driven by fractional Brownian motion. This was motivated in part by their relevance in modelling turbulence in geophysical fluid flows. Notably, without access to PDE representation results, their approach relies on a propagation of chaos result for the Malliavian derivative of the interacting particle system. Whilst they only treat the case of regular interactions, many physically relevant models involve singular interaction kernels and a potential bridge between this work and  \cite{amorino_nourdin_shevchenko_25_fractional}  may allow for the treatment of this practically relevant class.

\subsection{Further Directions and Open Problems}\label{subsec:open_problems}

We collect here some interesting questions and potential future research topics stemming from our main result, Theorem~\ref{thm:main}, and the techniques employed in its proof.

As mentioned in Remark~\ref{rem:rate}, for regular interactions it is well-known (cf. \cite{tanaka1984Limits,FerMel1997,coghi2020pathwise}) that the random measures $\eta^N\coloneq \sqrt{N}(\mu^N-\bar\mu)$ admit a non-trivial, Gaussian limit $\eta$ as $N\to\infty$, describing the fluctuations of $\mu^N$ around $\bar{\mu}$. It is thus natural to wonder whether the same result can be obtained in our case. For Brownian noise, Gaussian fluctuations of $\mu^N$ around $\bar{\mu}$ were recently extended to a suitable class of singular interaction kernels (most notably those covered in \cite{Jabin2018}) in \cite{WaZhZH2023}; the proof therein heavily exploits the fact that $\eta$ is the solution to a linear SPDE, thanks to It\^o calculus.
For general fBm, a similar description for candidate limit points of $\{\eta^N\}_N$ is not available; the alternative one provided by \cite[Cor.~43]{coghi2020pathwise} for regular $b$ is quite abstract and seems harder to readapt to the singular case.
Nevertheless, the estimates from Corollary~\ref{cor:mean_field_negative_sobolev} already imply tightness of the sequence $\{\eta^N\}_N$ in suitable negative Sobolev spaces; we leave the problem of characterizing its limit for future investigations.

A closely related question concerns large deviations associated to the convergence $X^{1,N}\to \bar{X}^1$ or $\mu^N\to\bar\mu$; for regular drifts and general driving noise $W$, both are classical \cite{tanaka1984Limits,coghi2020pathwise}.
We currently lack the right tools to address this problem in our setting; let us mention however that, depending on the value of $H$ and the (ir)regularity of $b$, Girsanov transform can be available (see e.g. \cite[App.~C]{galeati2022subcritical} for a deeper discussion), which in turn may allow one to develop entropy and concentration estimates in the style of \cite[Thm.~2.6]{Lacker2018}, or to perform a Gibbs-type analysis directly yielding an LDP, as done for the Brownian case in \cite{hoeksema2020large}. 

In a different direction, it would be interesting to understand if our results, which currently cover arbitrarily large but finite time intervals $[0,T]$, may be improved to uniform estimates on $[0,+\infty)$.
Typically, this requires one to first understand the long-time behaviour of the McKean--Vlasov equation \eqref{eq:intro_MKV} and in particular to show convergence of $\bar\mu_t$ to a stationary equilibrium measure $\bar\mu$, as $t\to\infty$.
In the non-Markovian context of fBm driven SDEs, establishing this fact already requires major efforts, see \cite{Hairer2005,DePaTi2019,LiPaSi2023} and the references therein; these works only treat standard SDEs, and we are not aware of any result concerning singular McKean--Vlasov SDEs instead.

As mentioned in Remark~\ref{rem:meaning_assumption_intro}, while the regularity constraint $\alpha>1-1/(Hq')$ from \eqref{eq:intro_assumption} is natural from a scaling point of view, the restriction $q\in (1,2]$ is not.
Even for standard SDEs, a major question is to go beyond the threshold $\alpha>1-1/(2H)$ first obtained in \cite{Catellier2016}; for autonomous kernels $b$ (amounting to $q=\infty$), scaling suggests $\alpha>1-1/H$ to be the correct subcritical condition, under which one may expect to develop a robust solution theory for \eqref{eq:intro_IPS}-\eqref{eq:intro_MKV}.
This problem is open even in the Brownian case ($H=1/2$), where the condition becomes $b\in B^\alpha_{\infty,\infty}$ with $\alpha>-1$; see however the recent advances from \cite{hao2023sdes}, under additional regularity requirements on $\div b$ and possibly $\cL(X_0)$. For $\alpha<0$, improved results may be expected upon replacing $B^\alpha_{\infty,\infty}(\mbR^{d};\mbR^d)$ with better spaces of distributions (or even functions) with the same scaling behaviour, such as $B^{\alpha+d/p}_{p,\infty}(\RR^d;\RR^d)$ or $L^{-d/\alpha}(\RR^d;\RR^d)$. In this direction, let us mention the works \cite{ABLM,AnRiTa2023}, where well-posedness for SDEs is established for roughly $\alpha\geq 1-1/(2H)$; under their hypothesis, it seems reasonable to expect well-posedness of \eqref{eq:intro_IPS}-\eqref{eq:intro_MKV} as well, as well a qualitative propagation of chaos statement.
However, achieving a quantitative rate may prove challenging, depending on whether one can establish therein sufficiently strong stability estimates as in our case (cf. Proposition~\ref{prop:generic_pairwise_stabillity}).

Concerning existence results, we expect the a priori estimates and compactness arguments developed in \cite{butkovsky2023stochastic,ButGal2023} to carry over to our setting as well, thus providing weak solutions to \eqref{eq:intro_IPS}-\eqref{eq:intro_MKV} for drifts $b\in L^q([0,T];L^p(\RR^d;\RR^d))$ with
\begin{align*}
	\frac{1-H}{q}+\frac{H d}{p}<1-H, \quad H\in (0,1).
\end{align*}
Note however that weak solutions are not enough to implement Sznitman's comparison approach, which is crucial for us in obtaining estimates of the form \eqref{eq:intro_poc}-\eqref{eq:intro_mfc}.
Concerning the structure of noise, in this paper we only considered the case of additive $W^{i,H}$ in \eqref{eq:intro_IPS}; based on the recent progress made in \cite{dareiotis_gerenscer_22_multiplicative,catellier2022regularization}, it would be interesting to cover non-degenerate multiplicative fBm noise as well, e.g. systems of the form
\begin{equation*}
	\dd X^{i;N}_t = \frac{1}{N-1}\sum_{\substack{j=1,\, j\neq i}}^N b_t(X^{i;N}_t,X^{j;N}_t) \dd t + \sigma(X^{i;N}_t) \dd W^{i,H}_t,\qquad \text{for all } i=1,\ldots,N,
\end{equation*}
similarly for the McKean--Vlasov counterpart of \eqref{eq:intro_MKV}; we leave all these questions for future research.

Finally, let us mention other mean field models, related to \eqref{eq:intro_IPS}, which we expect to be amenable to our techniques.
First of all, although in this paper we focus solely on first order systems \eqref{eq:intro_IPS} (namely, with particles only described by their position $X^i$), we expect a similar analysis to hold for second order, kinetic ones (with particles $Z^i=(X^i,V^i)$ described by position and velocity).
To explain what we mean, without making the notation too burdensome, let us focus on the toy model of an SDE
\begin{equation}\label{eq:intro_toy_kinetic}
	\begin{cases}
		\dd X_t = V_t,\\
		\dd V_t = b_t(X_t) \dd t + \dd W^H_t.
	\end{cases}
\end{equation}
The second order system \eqref{eq:intro_toy_kinetic} can be rewritten in an integral formulation, where only the $X$ variable appears, as
\begin{equation}\label{eq:intro_toy_kinetic_2}
	X_t = X_0 + t V_0 +\int_0^t \int_0^s b_r(X_r) \dd r \dd s + \int_0^t W^H_s \dd s
\end{equation}
where we see that the noise driving the system is now of the form $W^{H+1}_t$. Note in particular that, even if we start with $H\in (0,1)$, the reformulation \eqref{eq:intro_toy_kinetic_2} naturally yields an fBm of parameter $H>1$, providing further reason to study SDEs with general values $H\in (0,+\infty)\setminus \NN$.
Due to the reformulation \eqref{eq:intro_toy_kinetic_2}, we expect our techniques to readapt in the case of similarly defined interacting particle systems and McKean--Vlasov equations, yielding well-posedness and stability estimates, as well as mean field and propagation of chaos estimates, as soon as
\begin{align*}
	b\in L^q([0,T];B^\alpha_{\infty,\infty}(\RR^{2d};\RR^d)), \quad \alpha>1-\frac{1}{(1+H)q'}, \quad q\in (1,2];
\end{align*}
this is nothing else than condition \eqref{eq:intro_assumption} with $H$ replaced by $H+1$.
We do not expect however this result to be optimal: due to the double integral in time appearing in \eqref{eq:intro_toy_kinetic_2}, this system should be better behaved than the corresponding first-order one driven by $W^{H+1}_t$. In any case, in the Brownian case $H=1/2$, this regularity counting suggests that our methods allow to deduce propagation of chaos for kinetic equations with $b\in B^\alpha_{\infty,\infty}$ with $\alpha>2/3$, which is on par with the results from \cite{holding_16_propagation}.

Besides second-order systems, one may consider other variants of first-order ones, by allowing different interactions $b$.
For instance, although in the regime $\alpha>0$ we always worked with globally bounded drifts (due to the definition of the inhomogeneous Besov-H\"older spaces $B^\alpha_{\infty,\infty}$), this assumption is not crucial, and it could likely be replaced by a linear growth condition, combined with a bound on the H\"older seminorm of $b$.
Similarly, we restricted ourselves to the study of pairwise interactions, but the same arguments can be applied to $k$-wise interactions of the form
\begin{align*}
	\frac{1}{(N-1)\cdot\ldots\cdot (N-k+1)} \sum_{i\neq j_1\neq \ldots\neq j_{k-1}} b(X^{i;N}_t,X^{j_1;N}_t,\ldots,X^{j_{k-1};N}_t)
\end{align*}
with $b\in L^q([0,T];B^\alpha_{\infty,\infty}(\RR^{kd};\RR^d)$ still satisfying \eqref{eq:intro_assumption}.
Finally, we expect it to be possible to cover functionals $B(x,\mu)$ with a more general dependence on $\mu$, e.g. through some of its statistics, namely $B(x,\mu)=F(x,\langle \phi,\mu\rangle)$; for some well-posedness results of \eqref{eq:intro_MKV} in this case, we refer to \cite{GaHaMa2023,galeati2022subcritical}.

Last but not least, let us point out that although we take i.i.d. initial conditions, leading to exchangeability of the system \eqref{eq:intro_IPS}, this property doesn't play a key role in our computations; for instance, it is not used in the derivation of the key Proposition \ref{prop:generic_pairwise_stabillity}.
For this reason, it would be interesting to extend these results to the case of non-exchangeable systems, for instance those of the form
\begin{equation}\label{eq:intro_IPS_weights}
	X^{i;N}_t = X_0^i + \frac{1}{N-1}\sum_{\substack{j=1\\ j\neq i}}^N w^N_j \int_0^t b(X^{i;N}_s,X^{j;N}_s) \dd s + W^{i,H}_t
\end{equation}
where $\{w^N_j\}$ are deterministic weights satisfying suitable assumptions; \eqref{eq:intro_IPS} corresponds to the case $w^N_j\equiv 1$.
For regular interactions $b$, system \eqref{eq:intro_IPS_weights} has been studied in \cite{Crevat2019} in the absence of noise, recovering the classical Dobrushin's estimate, and for general additive noise in \cite[Sec.~3.4]{coghi2020pathwise}.
In the Brownian case, singular interactions have been recently treated in \cite{wang2022mean}.
A major point in all of these works is to regard the weights $\{w^N_j\}_{j,N}$ as part of the dynamic itself and therefore consider an extended phase space; interestingly, this allows to obtain mean field limits for a large class of weighted empirical measures of the form
\begin{equation}\label{eq:intro_weighted_empirical}
	\tilde{\mu}^N_t \coloneq \frac{1}{N} \sum_{i=1}^N \tilde w^N_j \delta_{X^{i;N}_t}
\end{equation}
where $X^{i;N}$ evolve according to \eqref{eq:intro_IPS_weights}, but $\{\tilde w^N_j\}_{j,N}$ are different from $\{w^N_j\}_{j,N}$. We leave the analysis of \eqref{eq:intro_IPS_weights} and the mean field limit of \eqref{eq:intro_weighted_empirical} for future works.

\subsection{Structure of the Paper}

Section~\ref{sec:notation} gathers notations and conventions that are consistently used in the paper as well as some basic setup.
All necessary preliminaries are recalled in Section~\ref{sec:preliminaries}; these include properties of fBm, in particular its local nondeterminism, 
spaces of $\kappa$-variation, controls and stochastic sewing techniques.
They are combined together in Section~\ref{subsec:integral_estimates}, which provides key estimates on integrals of the form $\int_0^{\,\cdot\,}b_s(Y_s)\dd s$, for paths $Y$ which decompose into an fBm and slowly varying remainder.
Section~\ref{sec:reg.drift} contains the main analytic work of the paper, obtaining necessary a priori estimates and convergence rates for \eqref{eq:intro_IPS}-\eqref{eq:intro_MKV} in the case of smooth interactions.
All our main results, ranging from well-posedness of \eqref{eq:intro_IPS}-\eqref{eq:intro_MKV} to mean field convergence, are proved in Section~\ref{sec:main_proofs}, along with some important corollaries.
Finally, we gather some useful ancillary results in the appendices. Appendix~\ref{app:lipschitz_p_system} recaps the classical propagation of chaos argument for regular interactions;
Appendices~\ref{app:useful} and \ref{app:holder_besov} contain useful results which 
included for the sake of completeness.

\subsection{Notation, Conventions and Setup}\label{sec:notation}
\begin{itemize}[leftmargin=6mm]
	\item From now on we always work on a fixed time interval $[0,T]$, where $T\in (0,+\infty)$ can be taken arbitrarily large but finite.
	\item Whenever the specification of the domain and range are not important or clear from the context, we denote by $C^0_x$ the Banach space $C_b(\RR^\ell;\RR^j)$ of continuous, bounded functions $f:\RR^\ell\to\RR^j$, endowed with the supremum norm $\| f\|_{C^0_x}=\sup_{x\in\RR^\ell} |f(x)|$. Similarly, for $k\geq 1$, we denote by $C^k_x$ the Banach space of continuous functions $f$, with continuous and bounded derivatives up to order $k$, with norm
	\begin{equation*}
		\|f\|_{C^k_x} \coloneqq \max_{l\leq k}\|D^l_x f\|_{C^0_x}.
	\end{equation*}
	\item We denote by $C^\infty_b=C^\infty_b(\RR^\ell;\RR^j)$ the space of infinitely differentiable functions $f:\RR^\ell\to\RR^j$, with all bounded derivatives; similarly for $C^\infty_b([0,T]\times \RR^\ell;\RR^j)$.
	\item We denote by $BUC_x=BUC(\RR^\ell;\RR^j)$ the space of uniformly continuous, bounded functions $f:\RR^\ell\to\RR^j$; arguing as in \cite[Thm.~6.8]{Heinonen}, it's easy to check that $BUC_x$ corresponds to the closure of $C^\infty_b$ with respect to the supremum norm $\| \cdot\|_{C^0_x}$. By definition, $BUC_x\hookrightarrow C^0_x$.
	\item For $\alpha\in\RR$, we denote by $B^\alpha_{\infty,\infty}=B^\alpha_{\infty,\infty}(\RR^\ell;\RR^k)$ the inhomogeneous Besov--H\"older space, as defined for instance in \cite[Def.~2.6.7]{BahCheDan}. For $\alpha\in (0,+\infty)\setminus\NN$, $B^\alpha_{\infty,\infty}$ corresponds to the more classical H\"older space $C^\alpha_x=C^\alpha_b(\RR^\ell;\RR^j)$ of bounded functions $f$ whose derivatives of order $|l|\leq \lfloor\alpha\rfloor$ are bounded and $\{\alpha\}$-H\"older continuous; here $\lfloor\cdot\rfloor$ and $\{\cdot\}$ respectively stand for integer and fractional parts.
	However, for $\alpha=k\in\NN_{\geq 0}$, only the strict embedding $C^k_x \subsetneq B^k_{\infty,\infty}$ holds (cf. \cite[p.~99]{BahCheDan}). 
	\item For $\alpha\in (-\infty,1)$, we define the Banach space $\cC^\alpha_x(\RR^\ell;\RR^j)$ as
	\begin{equation}\label{eq:defn_cC_alpha}
		\cC^\alpha_x =	
		\begin{cases}
			BUC_x \quad & \text{if } \alpha=0,\\
			\text{the closure of $C^\infty_b$ under the norm } \|\cdot\|_{B^\alpha_{\infty,\infty}} \ \ & \text{otherwise};
		\end{cases}
	\end{equation}
	as a consequence of the definition and Besov embeddings, for any $\alpha\in\RR$ and $\delta>0$ one has 
	\begin{equation}\label{eq:holder_complete_embed}
		\cC^\alpha_x\hookrightarrow B^\alpha_{\infty,\infty} \hookrightarrow \cC^{\alpha-\delta}_x.
	\end{equation}
	\item We let $L^\infty_x = L^\infty(\mbR^\ell;\mbR^j)$ denote the space of essentially bounded functions $f:\mbR^\ell \to \mbR^j$, equipped with the usual norm $\|f\|_{L^\infty_x} \coloneqq \esssup_{x\in \mbR^{\ell}}|f(x)|$. $W^{1,\infty}_x = W^{1,\infty}(\mbR^\ell;\mbR^j)$ denotes the Sobolev space of essentially bounded functions with essentially bounded first, weak derivatives, equipped with the norm
	\begin{equation*}
		\|f\|_{W^{1,\infty}_x} = \|f\|_{L^\infty_x} + \|\nabla f\|_{L^\infty_x} \eqcolon \|f\|_{L^\infty_x} + \|f\|_{\dot W^{1,\infty}_x}.
	\end{equation*}
	Recall that $W^{1,\infty}_x$ is equivalent to the space of bounded, Lipschitz continuous functions, and the optimal Lipscihtz constant of $f$ is given by $\|f\|_{\dot W^{1,\infty}_x}$.
	\item Given a Banach space $E$ and $q\in [1,\infty)$, we use the shorthand notation $L^q_T E=L^q([0,T];E)$ to denote the Bochner--Lebesgue space comprising all strongly measurable functions $f:[0,T]\to E$ such that
	\begin{equation*}
		\|f\|_{L^q_T E}\coloneqq \bigg(\int_0^T \|f_t\|^q_{E}\dd t\bigg)^{\frac{1}{q}}<\infty.
	\end{equation*}
	Similarly, we define $L^\infty_T E$ by taking the essential supremum in $t\in [0,T]$.
	We use $C_T E$ to denote the space of all continuous maps $f:[0,T]\to E$, equipped with the usual supremum norm. When $E = \mbR^\ell$ and there is no cause of confusion, we suppress it and simply write $L^q_T,\, C_T$ in place of $L^q_T  \mbR^\ell,\, C_T  \mbR^\ell$.
	\item A case of particular importance for us will be $E=\cC^\alpha_x$ as defined above.	
	The motivation to use $\cC^\alpha_x$ is that by construction, $C^\infty_b$ is dense in $\cC^\alpha_x$, for any $\alpha\in (-\infty,1)$; as a consequence, for any $q\in [1,\infty)$, $\alpha\in (-\infty,1)$ and $f\in L^q_T \cC^\alpha_x \coloneq L^q([0,T];\cC^\alpha_x)$, there exists a sequence $\{f^n\}_n\subset C^\infty_b([0,T]\times \RR^\ell;\RR^j)$ such that $f^n\to f$ in $L^q_T \cC^\alpha_x$.
	In fact, throughout the majority of the paper (especially Sections~\ref{sec:reg.drift} and~\ref{sec:main_proofs}) we will work with $L^q_T \cC^\alpha_x$ rather than $L^q_T B^\alpha_{\infty,\infty}$.
	This comes essentially without loss of generality, thanks to the embedding \eqref{eq:holder_complete_embed}, as will be shown at the end of the paper in the proof of Theorem~\ref{thm:main} in Section~\ref{sec:poc_proofs}.
	\item Given $\beta \in (0,1)$, $p\in [1,\infty)$ and $f:[0,T]\to \mbR^{\ell}$, we define the Gagliardo seminorm
	\begin{equation}\label{eq:gagliardo_seminorm}
		\bra{f}_{W^{\beta,p}_{T}}\coloneq \left( \iint_{[0,T]^2} \frac{\|f_t -f_s\|_E^p}{|t-s|^{1+\beta p}} \dd t\dd s\right)^{\frac{1}{p}}.
	\end{equation}
	The fractional Sobolev space $W^{\beta,p}([0,T];\RR^\ell)=W^{\beta,p}_T$ is defined as the collection of all $f\in L^p_T$ such that $\bra{f}_{W^{\beta,p}_{T}}<\infty$, with norm $\| f\|_{W^{\beta,q}_{T}} \coloneq \| f\|_{L^p_T} + \bra{f}_{W^{\beta,p}_{T}}$. For a comprehensive overview on such spaces, we refer to \cite{DiPaVa2012}.
	\item 
	Given $q\in [1,\infty]$, we denote by $q' \in [1,\infty]$ its H\"older conjugate, namely such that
	\begin{equation*}
		\frac{1}{q}+\frac{1}{q'}=1,
	\end{equation*}
	with the convention that $q'=\infty$ if $q=1$ and vice versa.
	\item Given a symmetric nonnegative matrix $\Sigma\in \RR^{\ell\times\ell}$, $G_\Sigma$ denotes the convolution with the Gaussian density $g_\Sigma$ associated to $\mathcal{N}(0,\Sigma)$.
	For simplicity, we will write $G_t$ in place of $G_{t I_d}$ to denote the standard heat kernel semigroup on $\RR^\ell$. Namely,
	\begin{align*}
		G_t f = g_t\ast f, \quad \text{where} \quad g_t(x)=(2\pi t)^{-\frac{\ell}{2}} e^{-\frac{|x|^2}{2t}}.
	\end{align*}
	\item For $0\leq \cS\leq \cT\leq T$, we define the two and three dimensional simplexes by
	\begin{equation}\label{eq:simplex}
		[\cS,\cT]^2_{\leq} := \{ (s,t) \in [\cS,\cT]^2 \,:\, s\leq t\},\quad
		[\cS,\cT]^3_{\leq} := \{ (s,u,t) \in [\cS,\cT]^3\,:\, s\leq u\leq t\}.
	\end{equation}
	%
	%
	In addition, we define the restricted simplexes by
	\begin{align}
		& \overline{[\cS,\cT]}^2_{\leq} := \{(s,t)\in [\cS,\cT]^2_{\leq} \,:\, s-(t-s) \geq \cS\},\label{eq:simplex_restricted_2} \\
		& \overline{[\cS,\cT]}^3_{\leq} := \left\{(s,u,t)\in [\cS,\cT]^3_{\leq} \,:\, (u-s)\wedge (t-u) \geq (t-s)/3,\,s-(t-s) \geq \cS  \right\}. \label{eq:simplex_restricted_3}
	\end{align}
	\item For $s,t \in [\cS,\cT]_\leq$ and $Y:[\cS,\cT]\to E$, we write $Y_{s,t}\coloneq Y_t-Y_s$.
	\item Given $A:[\cS,\cT]^2_{\leq}\to E$, we define a map $\delta A:[\cS,\cT]^3_{\leq}\to E$ by
	$$\delta A_{s,u,t} = A_{s,t}-A_{s,u}-A_{u,t}.$$
	\item Whenever working with a filtered probability space $(\Omega,\cF,\FF=\{\cF_t\}_{t\in [0,T]},\PP)$, we will tacitly assume that $\cF_0$ is complete and that $\FF$ is right-continuous. 
	\item For an $E$-valued random variable $Y$ on a probability space $(\Omega,\cF,\PP)$, we write $\|Y\|_{L^m_\omega E} := \EE[\|Y\|_E^m]^{1/m}$  for $m\in [1,\infty)$; we set  $L^m_\omega E\coloneqq L^m(\Omega,\cF,\PP;E)$. The definitions extend as usual to $m=\infty$. 
	Moreover, we denote the law of $Y$ on $E$ by $\mcL(Y)\coloneqq Y_{\#} \mbP = \mbP(Y^{-1})$.
	\item Given $Y\in L^1_\omega E$ and a filtration $\FF$, whenever clear we will use the shorthand $\EE_sY := \EE[Y|\cF_s]$ to denote its conditional expectation, for any $s\in [0,T]$.
	\item Given a measure $\mu \in \cM(\RR^d)$ and a function $b\in C^\infty_b(\RR^{2d};\RR^d)$, we adopt the notation
	\begin{equation}\label{eq:measure_functional}
		b(x,\mu):= \int_{\RR^d} b(x,y)\dd \mu(y).
	\end{equation}
	\item 
	We will often write $A \lesssim _{\alpha,\,q\,H,\,\ldots} B$ to indicate that the inequality holds up to a constant depending on the parameters $\alpha,\,q,\,H,\,\ldots$, namely that there exists a constant $C\coloneqq C(\alpha,q,H,\ldots)>0$ such that $A\leq CB$.
	If the constant depends on some norms (typically of the form $\|b\|_{L^q_T \mcC^{\alpha}_x}$), we always mean that the constant is non-decreasing in these quantities. We simply write $A\lesssim B$ if the constant depends on unimportant parameters, which we do not track.
	\item Given collections of random variables such as $\{X^{i;N}\}_{i=1}^N$, or $\{W^{i;N}\}_{i=1}^N$, we will often denote them for short by $X^{(N)}$, $W^{(N)}$.
\end{itemize}

\section{Preliminaries}\label{sec:preliminaries}

\subsection{Fractional Brownian motion, local nondeterminism and conditional norms}\label{subsec:fbm}

We recall here several facts concerning fractional Brownian motion (fBm); we refer to \cite{nualart2006malliavin,picard2011representation} for some standard references.

An $\RR^\ell$-valued fractional Brownian motion (fBm) $W^H$ of Hurst parameter $H\in (0,1)$ is defined as the unique centred Gaussian process with covariance
\begin{equation*}
	\EE(W^H_t\otimes W^H_s)=\tfrac{1}{2}\big(|t|^{2H}+|s|^{2H}-|t-s|^{2H}\big) I_\ell
\end{equation*}
where $I_\ell$ denotes the $\ell\times \ell$ identity matrix; in other words, its components are i.i.d. one dimensional fBms. FBm paths are known to be $\PP$-a.s. $(H-\eps)$-H\"older continuous for all $\eps>0$, but nowhere $H$-H\"older continuous, and $H$-self-similar, that is $\cL(\lambda^{-H} W^H_{\lambda \,\cdot\,})=\cL(W^H)$.

FBm admits a canonical Volterra representation: given an fBm $W^H$ defined on a probability space $(\Omega,\cF,\PP)$, one may construct a standard Brownian motion $B$, defined on the same space, such that 
\begin{equation}\label{eq:volterra}
	W^H_t=\int_0^t K_H(t,r)\dd B_r\quad\forall\, t \geq 0,
\end{equation}
for a suitable family of deterministic kernels $\{K_H(t,\cdot)\}_{t\in [0,T]} \subset L^2([0,T])$; we omit the exact formula for $K_H$, which can be found e.g. in \cite{NUALART2002}. It is known that $W^H$ and $B$ generate the same filtration.
Given a filtration $\FF$, we say that $W^H$ is a $\FF$-fBM if the associated $B$ given by \eqref{eq:volterra} is a $\FF$-Brownian motion.

As a consequence of the Volterra representation \eqref{eq:volterra} and formula for $K_H$, if $W^H$ is a $\FF$-fBM, then there exists a constant $c_H\in (0,+\infty)$ such that for any $s\leq t$ one has 
\begin{equation}\label{eq:LND_fBm}
	{\rm Cov} \big(W^H_t|\cF_s)
	= {\rm Cov} \big(W^H_t - \EE_s W^H_t \big)
	= \int_s^t K^H(t,r)^2 \dd r	\, I_\ell
	= c_H |t-s|^{2H} I_\ell,
\end{equation}
which is a stronger version of the property usually referred to as \emph{local nondeterminism} (LND).
We refer to \cite{galeati2022subcritical} and the references therein for a deeper discussion on the connection between \eqref{eq:LND_fBm} and regularisation by noise.
In relation to \eqref{eq:LND_fBm}, let us also mention that the kernel $K^H$ satisfies the pointwise inequality
\begin{equation}\label{eq:volterra_kernel_lower_bound}
	K^H(t,s)\gtrsim (t-s)^{H-1/2},
\end{equation}
cf. \cite[Prop.~B.2-(ii)]{butkovsky2023stochastic}.

Following \cite{gerencser2023}, we inductively extend the definition of fBm to all values $H\in (0,+\infty)\setminus\NN$  by setting $W^{H+1}_t:=\int_0^t W^H_s\dd s$\footnote{FBm of parameter $H=1$ is somewhat trivial or ill-defined, see \cite{picard2011representation}; this is why we exclude all integer values of $H$.}.
This definition is consistent with most aforementioned properties: it is still a centred, Gaussian process, with trajectories  $\mbP$-a.s. in $C^{H-\eps}_t$ but nowhere $C^H_t$, satisfying the scaling relation $\cL( \lambda^{-H} W^H_{\lambda \cdot}) = \cL(W^H)$; using a stochastic version of Fubini's theorem, one can easily derive a similar representation as \eqref{eq:volterra} and prove \eqref{eq:volterra_kernel_lower_bound}.
Most importantly, the LND property \eqref{eq:LND_fBm} is also still valid, cf. \cite[Prop.~2.1]{gerencser2023} (up to defining the correct constant $c_H$ appearing in \eqref{eq:LND_fBm} also for $H>1$).

Since conditional expectations are $L^2_\omega$-projections, the Gaussian random variables $W^H_t-\EE_s W^H_t$ and $\EE_s W^H_t$ are $L^2_\omega$-orthogonal, thus independent; more generally, $W^H_t-\EE_sW^H_t$ is independent of $\cF_s$.  Therefore for any $s\leq t$, any bounded measurable function $f:\RR^\ell \to\RR$ and any other $\cF_s$-measurable random variable $Z$, it holds that
\begin{equation}\label{eq:conditioning}
	\EE_s f(W^H_t+Z)
	= G_{ {\rm Cov}(W^H_t - \EE_s W^H_t)} f(\EE_s W^H_t+Z)
	= G_{c_H |t-s|^{2H}} f(\EE_s W^H_t+Z)
\end{equation}
where in the last passage we applied \eqref{eq:LND_fBm}; in the rest of the paper, it will be notationally convenient to define the rescaled heat operator
\begin{align*}
	P^H_t := G_{c_H t}
\end{align*}
so that e.g. $P^H_{|t-s|^{2H}} f(\EE_s W^H_t+Z)$ appears on the right hand side of \eqref{eq:conditioning}. Whenever the value of $H$ is clear, we will further write $P_t$ in place of $P^H_t$.

Expression \eqref{eq:conditioning} is particularly useful in combination with standard heat kernel estimates; that is, for any $\beta\in \RR$ and any $k\in\NN_{\geq 0}$ such that $\beta\leq k$, there exists a constant $C\coloneqq C(\ell,\beta,k)>0$ such that for all $t>0$ and all smooth $f$, it holds
\begin{equation}\label{eq:heat_kernel_estimates}
	\|D^k_x G_t f\|_{L^\infty_x} \leq C (1\wedge t)^{\frac{\beta-k}{2}} \|f\|_{\cC^\beta_x};
\end{equation}
see e.g. \cite[Lem.~A.10]{galeati2020noiseless} or \cite[Lem.~A.3]{ABLM} for the proof of similar statements. Estimate \ref{eq:heat_kernel_estimates} automatically carries over to $P^H_t$, up to allowing the constant $C$ to depend on $H$.

In our setting, the relevant information in \eqref{eq:heat_kernel_estimates} is related to its explosive behaviour as $t \ll 1$, therefore with a small abuse of notation we will systematically write $t$ in place of $1\wedge t$.
Combining \eqref{eq:conditioning}, \eqref{eq:heat_kernel_estimates} and the aforementioned notational conventions, for any $\beta<0$ it holds that
\begin{equation}\label{eq:LND+heat}
	\begin{aligned}
	\EE_s f(W^H_t+Z) &= P_{|t-s|^{2H}} f (\EE_s W^H_t + Z), \\
	\| D^k P_{|t-s|^{2H}} f\|_{L^\infty_x} &\lesssim |t-s|^{(\beta-k) H} \| f\|_{\cC^\beta_x}.
\end{aligned}
\end{equation}
Estimates in the style of \eqref{eq:LND+heat} will play a crucial role when applying sewing techniques, which will be presented in Section~\ref{subsec:SSL}; we will use them in combination with conditional norms, which we recall here briefly as well, referring to \cite{gerencser2023,friz2021rough} for more details.

Given a Banach space $E$, a filtered probability space $(\Omega,\cF,\FF,\PP)$ and $Y\in L^m_\omega E$, we define
\begin{align*}
	\|Y\|_{L^m|\cF_s} = \EE\left[\|Y\|_E^m \big|\cF_s\right]^{1/m}
\end{align*}
with the usual convention for $m=\infty$.
Given $Y,\,Z\in L^m_\omega E$ such that $Z$ is $\cF_s$-measurable, one has
\begin{equation}\label{eq:pseudo_projection}
	\|Y-\EE_s Y\|_{L^m|\cF_s} \leq 2 \|Y-Z\|_{L^m|\cF_s}.
\end{equation}
We often use moment and conditional moment norms in tandem; for $m$, $n\in [1,\infty]$, we write
\begin{equation*}
	\left\| \|Y\|_{L^m|\cF_s}\right\|_{L^n_\omega E} := \EE\left[ \EE\left[ \|Y\|_E^m\big| \cF_s\right]^{n/m}\right]^{1/n}.
\end{equation*}
For $m\leq n$, by Jensen's inequality for conditional expectations, we have
\begin{equation}\label{eq:moment_cond_moment_equiv}
	\|Y\|_{L^m_\omega E}\leq \left\|\|Y\|_{L^m|\cF_s}\right\|_{L^n_\omega E}\leq \|Y\|_{L^n_\omega E},
\end{equation}
with equality trivially attained if $m=n$. Furthermore, for $s \leq t$ it holds that
\begin{align*}
	\left\|\|Y\|_{L^m|\cF_s}\right\|_{L^n_\omega E} \leq \left\|\|Y\|_{L^m|\cF_t}\right\|_{L^n_\omega E}.
\end{align*}
Finally, for $m\leq n$ the mixed norms $\|\|\,\cdot\,\|_{L^m|\cF_s}\|_{L^n_\omega E}$ still satisfy natural analogues of classical inequalities like Jensen's, H\"older's and Minkowski's, as can be verified using properties of conditional expectation.

\subsection{Controls and spaces of bounded variation}\label{subsec:variation}

We start by recalling the concepts of controls and paths of finite $\kappa$-variation; see the monograph \cite{friz2010multidimensional} for a general overview.

\begin{defn}\label{defn:control}
	A function $w :[0,T]^2_\leq  \rightarrow \RR_+$ is a \emph{control} on $[0,T]$ if it is continuous, zero on the diagonal and superadditive, namely $w(s,s)=0$ and $w(s,u) + w(u,t)\leq w(s,t)$ whenever $s\leq u \leq t$.
\end{defn}

\begin{rem}\label{rem:properties_controls} The following properties of controls will be often used throughout this work.
	If $w_1$, $w_2$ are controls, then so are $a w_1 + b w_2$ for all $a,\, b\geq 0$ and $w_1 w_2$.
	More generally, if $w$ is a control and $\gamma$ is an increasing function, in the sense that $\gamma(s',t')\leq \gamma(s,t)$ whenever $[s',t']\subset [s,t]$, then $\gamma\, w$ is a control.
	It is slightly less obvious that, for any $a,\,b>0$, there exists another control $w_3$ such that
	\begin{equation*}
		w_1(s,t)^a w_2(s,t)^b = w_3(s,t)^{a+b} \quad \forall\, [s,t]\subset [0,T]
	\end{equation*}
	which is precisely given by $w_3=w_1^{a/a+b} w_2^{b/a+b}$; see \cite[Ex.~1.9]{friz2010multidimensional}. Moreover if $w$ is a control, then so is $w^p$ for any $p\geq 1$, cf. \cite[Ex.~1.8]{friz2010multidimensional}.
\end{rem}

\begin{defn}\label{defn:kappa_variation}
	Let $E$ be a Banach space, $\kappa\in [1,\infty)$; we say that $Y:[0,T]\to E$ is a continuous $E$-valued path of bounded $\kappa$-variation, $Y\in C^{\kappa-\var}_T E$ for short, if $Y\in C_T E$ and
	\begin{equation*}
		\llbracket Y\rrbracket_{C^{\kappa-\var}_{[s,t]} E} := \Bigg(\sup_{\pi\in \Pi([s,t])} \sum_{(t_i,t_{i+1})\in \pi} \| Y_{t_i,t_{i+1}}\|_E^\kappa \Bigg)^{\frac{1}{\kappa}} <\infty \quad \forall\, [s,t]\subset [0,T]
	\end{equation*}
	where $\Pi([s,t])$ denotes the set of all finite partitions of $[s,t]$.
	
	We denote by $\dot C^{\kappa-\var}_T E$ the space of paths $Y$ such that $Y-Y_0\in C^{\kappa-\var}_T E$, or equivalently such that $Y-Y_0\in C_T E$ and $\bra{Y}_{C^{\kappa-\var}_T}$ is finite. Observe that this does not require $Y_0$ to belong to $E$.
\end{defn}

Given $Y\in C^{\kappa-\var}_T E$, we set
\begin{equation*}
	\|Y\|_{C^{\kappa-\var}_{[s,t]} E} := \| Y_s\|_E + \llbracket Y\rrbracket_{C^{\kappa-\var}_{[s,t]}E}
\end{equation*}
and make use of the shorthands,
\begin{equation*}
	\llbracket Y\rrbracket_{C^{\kappa-\var}_T E} := \llbracket Y\rrbracket_{C^{\kappa-\var}_{[0,T]} E},
	\quad \| Y\|_{C^{\kappa-\var}_T E} := \|Y\|_{C^{\kappa-\var}_{[0,T]} E}.
\end{equation*}
It is easy to check that the space $C^{\kappa-\var}_T E$ is Banach with norm $ \|\cdot\|_{C^{\kappa-\var}_T E}$ and that
\begin{equation}\label{eq:inequalities_variation_control}
	\| Y_{s,t}\|_E \leq \llbracket Y\rrbracket_{C^{\kappa-\var}_{[s,t]} E}, \quad\sup_{r\in [s,t]} \| Y_r\|_E \leq \|Y\|_{C^{\kappa-\var}_{[s,t]} E}\quad \forall\, [s,t]\subset [0,T].
\end{equation}
Further observe that, if $\kappa<\tilde{\kappa}$, then $C^{\kappa-\var}_T E\hookrightarrow C^{\tilde\kappa-\var}_T E$ and
\begin{equation}\label{eq:inequality_variations}
	\llbracket Y\rrbracket_{C^{\tilde{\kappa}-\var}_{[s,t]}E} \leq \llbracket Y\rrbracket_{C^{\kappa-\var}_{[s,t]}E}, \quad
	\| Y\|_{C^{\tilde{\kappa}-\var}_{[s,t]}E} \leq \| Y\|_{C^{\kappa-\var}_{[s,t]}E} \quad
	\forall\, [s,t]\subset [0,T].
\end{equation}
\begin{rem}\label{rem:link_control_variation}
	It follows from \cite[Prop.~5.8]{friz2010multidimensional} that, if $Y\in \dot C^{\kappa-\var}_T E$, then the function
	\begin{equation}\label{eq:def_control_Y}
		(s,t)\mapsto w_Y(s,t):= \llbracket Y\rrbracket_{C^{\kappa-\var}_{[s,t]} E}^\kappa
	\end{equation}
	defines a control on $[0,T]$. By \cite[Prop.~5.10]{friz2010multidimensional}, $w_Y$ is optimal, in the sense that if $Y-Y_0\in C_T E$ and there exists a control $w$ such that $\| Y_{s,t}\|_E \leq w(s,t)^{1/\kappa}$, then necessarily $Y\in \dot C^{\kappa-\var}_T E$ and
	\begin{equation*}
		\llbracket Y\rrbracket_{C^{\kappa-\var}_{[s,t]} E} = w_Y(s,t)^{\frac{1}{\kappa}} \leq w(s,t)^{\frac{1}{\kappa}} \quad \forall\, [s,t]\subset [0,T].
	\end{equation*}
\end{rem}

Throughout the paper, we will mostly work with $C^{\kappa-\var}_T E$ where either $E=L^m(\Omega,\mathcal{F},\PP;\RR^\ell)$ or $E=\RR^\ell$, for some $m\in [1,\infty]$ and $\ell\in\NN$.
We repeatedly use the respective shorthands, $C^{\kappa-\var}_T L^m_\omega$ and $C^{\kappa-\var}_T$, similarly for their (semi)norms and for $\dot C^{\kappa-\var}_T L^m_\omega$.
In the case of stochastic processes, unless specified otherwise, we will implicitly assume $Y$ is adapted with respect to the reference filtration $\FF$; observe that such processes form a closed linear subspace of $C^{\kappa-\var}_T L^m_\omega$. 

A connection between the process $Y$ belonging to $C^{\kappa-\var}_T L^m_\omega$ and its realisations being of finite $\tilde{\kappa}$-variation is given by the following variant of Kolmogorov's lemma.

\begin{lem}\label{lem:kolmogorov_control}
	Let $Y\in C^{\kappa-\var}_T L^m_\omega$ be a process with continuous paths; denote by $w_Y$ the associated control as defined in \eqref{eq:def_control_Y} and assume that $m >\kappa$. Then for any $\tilde{\kappa}\in [1,\infty)$ satisfying
	\begin{equation}\label{eq:kolmogorov_parameters}
		\frac{1}{\tilde \kappa} < \frac{1}{\kappa}-\frac{1}{m},
	\end{equation}  
	it holds that $\PP$-a.s. $Y\in C^{\tilde\kappa-\var}_T$. Moreover, there exist a constant $C\coloneqq C(\kappa,\tilde\kappa)>0$ and a random variable $K$ such that $\PP$-a.s.
	\begin{equation}\label{eq:kolmogorov_control_eq1}
		|Y_{s,t}| \leq K w_Y(s,t)^{\frac{1}{\tilde \kappa}} \quad\forall\, [s,t]\subset [0,T]
		\quad\text{and}\quad \| K\|_{L^m_\omega} \leq C w_Y(0,T)^{\frac{1}{\kappa}-\frac{1}{\tilde\kappa}}.
	\end{equation}
	In particular, for any $[s,t]\subset [0,T]$, it holds that
	\begin{equation}\label{eq:kolmogorov_control_eq2}
		\Big\| \llbracket Y\rrbracket_{C^{\tilde\kappa-\var}_{[s,t]}}\Big\|_{L^m_\omega} \leq C\, \llbracket Y\rrbracket_{C^{\kappa-\var}_{[s,t]} L^m_\omega}.
	\end{equation}
	A similar estimate to \eqref{eq:kolmogorov_control_eq2} holds when $\llbracket Y\rrbracket_{C^{\tilde\kappa-\var}_{[s,t]}}$ and $\llbracket Y\rrbracket_{C^{\kappa-\var}_{[s,t]}}$ are replaced by $\| Y\|_{C^{\tilde\kappa-\var}_{[s,t]}}$ and $\| Y\|_{C^{\kappa-\var}_{[s,t]}}$ respectively.
\end{lem}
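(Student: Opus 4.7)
The plan is to derive this Kolmogorov--Chentsov type criterion for paths of finite $\kappa$-variation by a time reparametrization matched to the control $w_Y$, followed by a dyadic chaining argument, in the spirit of \cite[App.~A]{friz2010multidimensional}. First, introduce $\phi(t) := w_Y(0,t)$, which is continuous and nondecreasing from $[0,T]$ onto $[0, w_Y(0,T)]$, and let $\sigma$ denote a measurable right-inverse. Defining $Z_u := Y_{\sigma(u)}$ for $u \in [0, w_Y(0,T)]$, superadditivity of $w_Y$ yields $w_Y(\sigma(u),\sigma(v)) \leq \phi(\sigma(v)) - \phi(\sigma(u)) = v-u$, so $\| Z_v - Z_u\|_{L^m_\omega} \leq (v-u)^{1/\kappa}$. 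The problem thereby reduces to a classical H\"older-type $L^m$-estimate for $Z$.

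Next, I would perform dyadic chaining on the grid $u^n_k := k\, w_Y(0,T)\, 2^{-n}$: a union bound gives $\| M_n\|_{L^m_\omega} \lesssim w_Y(0,T)^{1/\kappa}\, 2^{-n(1/\kappa - 1/m)}$ for $M_n := \max_k |Z_{u^n_{k+1}} - Z_{u^n_k}|$. Since $1/\tilde\kappa < 1/\kappa - 1/m$, the weighted series $\sum_n 2^{n/\tilde\kappa} M_n$ is summable in $L^m_\omega$, producing via standard chaining a random variable $K$ with $|Z_v - Z_u| \leq K(v-u)^{1/\tilde\kappa}$ almost surely. The sharp scaling $\| K\|_{L^m_\omega} \leq C(\kappa,\tilde\kappa)\, w_Y(0,T)^{1/\kappa-1/\tilde\kappa}$ is obtained by rescaling the time interval $[0, w_Y(0,T)]$ to $[0,1]$ before invoking the classical Kolmogorov criterion and tracking the resulting exponent.

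Pulling back to $Y$ yields only $|Y_{s,t}| \leq K(\phi(t) - \phi(s))^{1/\tilde\kappa}$, which is weaker than \eqref{eq:kolmogorov_control_eq1} since in general $\phi(t) - \phi(s) \geq w_Y(s,t)$, with equality only when $w_Y$ is additive. To recover the sharper statement, apply the same reparametrization--chaining argument to $Y$ restricted to each subinterval $[s,t]$, using the local reparametrization $\phi^{[s,t]}(u) := w_Y(s,u)$, which satisfies $\phi^{[s,t]}(t) - \phi^{[s,t]}(s) = w_Y(s,t)$ exactly. This produces $|Y_{u,v}| \leq K_{[s,t]}(w_Y(s,v) - w_Y(s,u))^{1/\tilde\kappa}$ for $u,v \in [s,t]$ with $\| K_{[s,t]}\|_{L^m_\omega} \leq C\, w_Y(s,t)^{1/\kappa - 1/\tilde\kappa}$; specializing to the endpoints yields \eqref{eq:kolmogorov_control_eq1}. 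Finally \eqref{eq:kolmogorov_control_eq2} follows by summing $|Y_{v_i, v_{i+1}}|^{\tilde\kappa} \leq K_{[s,t]}^{\tilde\kappa}(w_Y(s,v_{i+1}) - w_Y(s,v_i))$ over any partition of $[s,t]$ and telescoping to $w_Y(s,t)$, then taking $L^m_\omega$-norms; the $\| Y\|$-norm version is obtained by additionally estimating $\| Y_s\|_E$. The main technical obstacle is the careful scaling of the Kolmogorov constant yielding the factor $w_Y(0,T)^{1/\kappa - 1/\tilde\kappa}$, combined with the subinterval argument needed to convert the natural $\phi$-H\"older bound into the sharper $w_Y$-bound, necessitated by the fact that $w_Y$ is in general only superadditive.
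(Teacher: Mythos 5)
Your overall strategy of time reparametrization by the control is a natural one, and your subinterval argument does correctly establish \eqref{eq:kolmogorov_control_eq2} — in fact it bypasses \eqref{eq:kolmogorov_control_eq1} entirely, which is a legitimate alternative route to the main conclusion used later in the paper. You also correctly identified the pitfall of the global reparametrization, namely that $\phi(t)-\phi(s)\geq w_Y(s,t)$ goes the wrong way. However, the step ``specializing to the endpoints yields \eqref{eq:kolmogorov_control_eq1}'' has a genuine quantifier gap, so the proposal does not actually prove that estimate.

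The issue is the order of quantifiers. Your argument, applied to the local reparametrization $\phi^{[s,t]}(u)=w_Y(s,u)$, produces for each \emph{fixed} $[s,t]$ a random variable $K_{[s,t]}$ satisfying $|Y_{u,v}|\leq K_{[s,t]}(\phi^{[s,t]}(v)-\phi^{[s,t]}(u))^{1/\tilde\kappa}$ and $\|K_{[s,t]}\|_{L^m_\omega}\leq C\,w_Y(s,t)^{1/\kappa-1/\tilde\kappa}$. Plugging in the endpoints gives $|Y_{s,t}|\leq K_{[s,t]}\,w_Y(s,t)^{1/\tilde\kappa}$, i.e.\ ``for every $[s,t]$ there exists $K_{[s,t]}$.'' But \eqref{eq:kolmogorov_control_eq1} asserts the opposite order: a \emph{single} random variable $K$ (equivalently $K=\sup_{s<t}|Y_{s,t}|/w_Y(s,t)^{1/\tilde\kappa}$), valid $\PP$-a.s.\ for all $[s,t]$, with the stated moment bound. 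The family $\{K_{[s,t]}\}$ is genuinely interval-dependent because the reparametrization $\phi^{[s,t]}$ changes with $s$: for a superadditive but non-additive control, $w_Y(s,v)-w_Y(s,u)$ and $w_Y(s',v)-w_Y(s',u)$ differ, so $K_{[s,t]}$ and $K_{[s',t']}$ are unrelated random variables. A supremum over uncountably many random variables, each bounded in $L^m_\omega$, need not be in $L^m_\omega$, so there is no way to extract the single $K$ from what you have. The paper sidesteps this by invoking \cite[Prop.~A.1]{le2023banach}, which performs a chaining argument directly adapted to the control (building dyadic grids by halving the $w_Y$-mass, not by reparametrizing time) and hence manufactures the single $K$ in one pass. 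If you want to keep your reparametrization framework, you would need to argue that the chaining bound on $[0,T]$ already gives control on $|Y_{u,v}|$ measured against $w_Y(u,v)$ for all $u<v$ simultaneously — which, as you observed, the global reparametrization cannot do, and the subinterval fix does not repair.

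One secondary point: the paper's proof also handles the case where $w_Y$ fails to be strictly increasing by perturbing to $w_Y^\eps(s,t)=w_Y(s,t)+\eps(t-s)$ and passing to the limit via Fatou. Your right-inverse construction for $\sigma$ should be treated with similar care (though it is salvageable here, since $Y$ is a.s.\ constant on any interval where $\phi$ is flat).
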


\begin{proof}
	Observe that, upon shifting and scaling, we only need to prove \eqref{eq:kolmogorov_control_eq2} for $[s,t]=[0,T]$.
	
	First assume that $w_Y$ is strictly increasing, in the sense that $w_Y(u,v)<w_Y(s,t)$ whenever $[u,v]\subsetneq [s,t]$.
	We can then apply \cite[Prop.~A.1]{le2023banach} for $\alpha=1/\kappa$, $\beta=1/\tilde\kappa$ to deduce the existence of a random variable $K$ such that \eqref{eq:kolmogorov_control_eq1} holds, exactly given by
	\begin{equation}\label{eq:kolmogorov_kappa}
		K := \sup_{0\leq s<t\leq T} \frac{|Y_{s,t}|}{w(s,t)^{\frac{1}{\tilde \kappa}}};
	\end{equation}
	since we are already assuming $Y$ to be continuous, we don't need to pass to a modification. 
	From the bounds in \eqref{eq:kolmogorov_control_eq1} and Remark~\ref{rem:link_control_variation}, we can deduce that
	\begin{align*}
		\Big\| \llbracket Y\rrbracket_{C^{\tilde\kappa-\var}_T} \Big\|_{L^m_\omega}
		\leq \Big\| K\, w_Y(0,T)^{\frac{1}{\tilde \kappa}}\Big\|_{L^m_\omega}
		= w_Y(0,T)^{\frac{1}{\tilde \kappa}} \| K\|_{L^m_\omega}
		\leq C w_Y(0,T)^{\frac{1}{\kappa}} = C \llbracket Y\rrbracket_{C^\kappa_T L^m_\omega}
	\end{align*}
	overall proving \eqref{eq:kolmogorov_control_eq2} in this case.
	
	If $w_Y$ is not strictly increasing, then for any $\eps>0$ consider $w^\eps_Y(s,t)=w_Y(s,t) + \eps (t-s)$; since $w_Y\leq w^\eps_Y$ and the latter is strictly increasing, \cite[Prop.~A.1]{le2023banach} can be applied and so we can obtain analogues of \eqref{eq:kolmogorov_control_eq1}, \eqref{eq:kolmogorov_control_eq2} with $K$ and $w_Y$ replaced by $K^\eps$, $w_Y^\eps$. Observe that since $w^\eps_Y\searrow w_Y$, the variables $K^\eps$ as defined by \eqref{eq:kolmogorov_kappa} can be taken increasing in $\eps$, so that $K=\lim_{\eps\to 0} K^\eps$ exists as a pointwise limit.
	By Fatou's lemma it then holds
	\begin{align*}
		\| K\|_{L^m_\omega} \leq \liminf_{\eps\to 0} \| K^\eps\|_{L^m_\omega}
		\leq C \liminf_{\eps\to 0} \big(w_Y(0,T) + \eps T\big)^{\frac{1}{\kappa}-\frac{1}{\tilde \kappa}}
		= C w_Y(0,T)^{\frac{1}{\kappa}-\frac{1}{\tilde \kappa}}
	\end{align*}
	yielding \eqref{eq:kolmogorov_control_eq1} also in this case.
	
	Finally, the claimed bound relating the $L^m_\omega C^{\tilde\kappa-\var}_{[s,t]}$ and $C^{\kappa-\var}_{[s,t]}L^m_\omega$-norms follows from the ones for the associated seminorms and Minkowski's inequality.
\end{proof}

\begin{rem}
	For $w_Y(s,t)=t-s$, Lemma~\ref{lem:kolmogorov_control} recovers the standard Kolmogorov continuity theorem.
\end{rem}

Interestingly, one can also relate regularity conditions involving controls and stochastic processes to fractional Sobolev spaces $W^{\beta,p}_T$. For $\beta\in (0,1)$ and $q\in [1,\infty)$, we recall that the Gagliardo seminorm $\bra{Y}_{W^{\beta,q}_T}$ was defined in \eqref{eq:gagliardo_seminorm}.

\begin{prop}\label{prop:kolmogorov_sobolev}
	Let $Y$ be a process with measurable paths; assume that there exist $\beta\in (0,1)$, $q\in [1,\infty)$, $m\in[q,\infty)$ and a control $w$ such that 
	\begin{equation}\label{eq:kolmogorov_sobolev_assumption}
		\| Y_{s,t}\|_{L^m_\omega} \leq w(s,t)^{\frac{1}{q}}  |t-s|^{\beta-\frac{1}{q}}\, \quad \forall\, [s,t]\subset [0,T].
	\end{equation}
	Then for any $\tilde\beta \in(0,\beta)$ it holds that $Y\in W^{\tilde\beta,q}_T$ $\PP$-a.s. and there exists $C\coloneqq C(\beta,\tilde \beta,q)>0$  such that
	\begin{equation}\label{eq:kolmogorov_sobolev_conclusion}
		\big\| \llbracket Y \rrbracket_{W^{\tilde\beta ,q}_T}\big\|_{L^m_\omega}
		\leq C T^{\beta-\tilde\beta} w(0,T)^{\frac{1}{q}}.
	\end{equation}
\end{prop}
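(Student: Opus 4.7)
The plan is to estimate $\big\| \llbracket Y\rrbracket_{W^{\tilde\beta,q}_T}\big\|_{L^m_\omega}^q$ by first interchanging the order of $L^m_\omega$ and the Gagliardo double integral via Minkowski, then invoking the hypothesis to pass to a deterministic integral of the control $w$, and finally handling the spatial integration of $w$ through a slicing argument that exploits superadditivity.

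Since $m/q\geq 1$, applying Minkowski's integral inequality to the $L^{m/q}_\omega$-norm of the non-negative random function $(s,t)\mapsto |Y_{s,t}|^q/|t-s|^{1+\tilde\beta q}$ yields
\begin{equation*}
\big\| \llbracket Y\rrbracket_{W^{\tilde\beta,q}_T}\big\|_{L^m_\omega}^q = \Big\| \iint_{[0,T]^2}\frac{|Y_{s,t}|^q}{|t-s|^{1+\tilde\beta q}} \dd t \dd s\Big\|_{L^{m/q}_\omega} \leq \iint_{[0,T]^2}\frac{\|Y_{s,t}\|_{L^m_\omega}^q}{|t-s|^{1+\tilde\beta q}} \dd t \dd s.
\end{equation*}
Invoking assumption \eqref{eq:kolmogorov_sobolev_assumption}, symmetrizing in $(s,t)$ and changing variable to $h=t-s$, the right-hand side is bounded by
\begin{equation*}
2\int_0^T h^{(\beta-\tilde\beta)q-2}\Big(\int_0^{T-h} w(s,s+h) \dd s\Big) \dd h.
\end{equation*}

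The main estimate is the claim that $\int_0^{T-h} w(s,s+h) \dd s \leq h\, w(0,T)$ for every $h\in (0,T]$. To see this, write
\begin{equation*}
\int_0^{T-h} w(s,s+h) \dd s = \int_0^h \sum_{k\geq 0,\, kh+r\leq T-h} w(kh+r,(k+1)h+r) \dd r.
\end{equation*}
For each fixed $r\in [0,h)$, the intervals $\{[kh+r,(k+1)h+r]\}_k$ appearing in the sum are pairwise disjoint subintervals of $[0,T]$, so superadditivity of $w$ (Definition~\ref{defn:control}) gives $\sum_k w(kh+r,(k+1)h+r)\leq w(0,T)$. Integrating this over $r\in[0,h)$ produces the claimed bound.

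Combining, and using $\tilde\beta<\beta$ so that the exponent $(\beta-\tilde\beta)q-1$ is strictly greater than $-1$,
\begin{equation*}
\big\|\llbracket Y\rrbracket_{W^{\tilde\beta,q}_T}\big\|_{L^m_\omega}^q \leq 2\, w(0,T)\int_0^T h^{(\beta-\tilde\beta)q-1} \dd h = \frac{2\, T^{(\beta-\tilde\beta)q}}{(\beta-\tilde\beta)q}\, w(0,T),
\end{equation*}
and taking $q$-th roots yields \eqref{eq:kolmogorov_sobolev_conclusion} with $C=[2/((\beta-\tilde\beta)q)]^{1/q}$. Finiteness of the left-hand side forces $\llbracket Y\rrbracket_{W^{\tilde\beta,q}_T}<\infty$ almost surely, giving the membership statement.

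The main obstacle is the slicing argument for the control: the naive bound $w(s,s+h)\leq w(0,T)$ leaves an integral $\int_0^T h^{(\beta-\tilde\beta)q-2}\dd h$, which diverges at $h=0$ as soon as $(\beta-\tilde\beta)q\leq 1$, a regime that certainly includes small $q$. It is exactly the extra factor of $h$ gained from superadditivity of $w$ (via the $\int_0^h (\cdots) \dd r$ decomposition above) that renders the resulting power-law integrable for every $\tilde\beta<\beta$, regardless of $q\in [1,\infty)$.
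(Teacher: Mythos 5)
Your proof is correct and follows essentially the same route as the paper's: apply Minkowski to pass $L^m_\omega$ inside the Gagliardo double integral, symmetrize and change variables to $h=t-s$, then gain an extra factor of $h$ from the spatial integral $\int_0^{T-h} w(s,s+h)\,\dd s$ using superadditivity of the control. Your slicing argument via the Fubini decomposition $s=kh+r$, $r\in[0,h)$ is marginally cleaner than the paper's version with $k=\lfloor(T-u)/u\rfloor$ (and avoids their harmless but slightly lossy factor of $2$), but the key idea is identical.
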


\begin{proof}
	Since $m\geq q$, we can apply Minkowski's inequality to find
	\begin{align*}
		\Big\| \llbracket Y \rrbracket_{W^{\tilde\beta,q}_T}\Big\|_{L^m_\omega}
		= \left\| \bigg\| \frac{|Y_{s,t}|}{|t-s|^{1/q+\tilde\beta}}\bigg\|_{L^q([0,T]^2)} \right\|_{L^m_\omega}
		\leq \left\| \frac{\|Y_{s,t}\|_{L^m_\omega}}{|t-s|^{1/q+\tilde\beta}} \right\|_{L^q([0,T]^2)}
		\eqcolon I^1.
	\end{align*}
	By symmetry and assumption \eqref{eq:kolmogorov_sobolev_assumption}, we have
	\begin{align*}
		I^1
		\lesssim \left( \iint_{0<s<t<T} \frac{\|Y_{s,t}\|_{L^m_\omega}^q}{|t-s|^{1+ \tilde\beta q}} \dd s \dd t \right)^{1/q}
		& \leq \left( \iint_{0<s<t<T} |t-s|^{q(\beta-\tilde\beta)-2} w(s,t) \dd s \dd t \right)^{1/q}\\
		& \leq \left( \int_0^T u^{q(\beta-\tilde\beta)-2} \int_0^{T-u} w(s,s+u) \dd s\dd u\right)^{1/q}\\
		&\eqcolon I^2.
	\end{align*}
	Let $u\in [0,T]$ be fixed and set $k=\lfloor (T-u)/u\rfloor$; then it holds
	\begin{align*}
		\int_0^T w(s,s+u) \dd s
		& = \sum_{i=0}^{k-1} \int_{[iu, (i+1)u]} w(s,s+u) \dd s + \int_{[k u,T-u]} w(s,s+u) \dd s\\
		& = \int_{[0, u]} \sum_{i=0}^{k-1} w(s + iu,s+(i+1)u) \dd s + \int_{[k u,T-u]} w(s,s+u) \dd s\\
		& \leq \int_{[0, u]} w(s,s+ku) + \int_{[k u,T-u]} w(0,T) \dd s
		\leq 2\, u\, w(0,T),
	\end{align*}	
	where we used the superadditivity of the control $w$. Inserting this estimate in the above, we obtain
	\begin{align*}
		I^2
		\lesssim w(0,T)^{1/q} \Bigg( \int_0^T u^{q(\beta-\tilde\beta)-1} \dd u\Bigg)^{1/q}
		\lesssim w(0,T)^{1/q}\, T^{\beta-\tilde\beta},
	\end{align*}
	which yields \eqref{eq:kolmogorov_sobolev_conclusion}.
\end{proof}

\begin{rem}
	The relations between spaces like $C^{\kappa-\var}_T$ and $W^{\beta,q}(0,T)$, or even Besov spaces $B^\beta_{q_1,q_2}$, is well explored in the literature, see for instance \cite{FriVic2006,LiPrTe2020} and the references therein.
	To the best of our knowledge, estimates based on mixed terms $|t-s|^{\beta-1/q} w^{1/q}$ have received much less attention; the associated space $D^{\beta,q}(0,T)$, based on a deterministic variant of \eqref{eq:kolmogorov_sobolev_assumption}, has been introduced in \cite[App.~C]{galeati2022subcritical}. Proposition~\ref{prop:kolmogorov_sobolev} can be regarded as a (partial) stochastic counterpart of \cite[Prop.~C.1]{galeati2022subcritical}.
\end{rem}

We conclude this section with a version of the rough Gr\"onwall lemma.
The statement below differs from the standard one (cf. \cite[Lem.~2.12]{DGHT2019}) in that the function $\gamma$ appearing in \eqref{eq:rough-gronwall-hypothesis} is not assumed to be a control; the result is taken from \cite[Lem.~A.2]{HLN2021}.

\begin{lem}\label{lem:rough-gronwall}
	Let $G:[0,T]\to [0,+\infty)$ be a path such that for some constants $C$, $L>0$, $\kappa\geq 1$, some control $w$ and some increasing function $\gamma$ on $[0,T]^2_{\leq}$ (as defined in Remark~\ref{rem:properties_controls}), one has 
	\begin{equation}\label{eq:rough-gronwall-hypothesis}
		G_t-G_s \leq C \, \sup_{r\leq t}\, G_r \, w(s,t)^{\frac{1}{\kappa}} + \gamma(s,t)
	\end{equation}
	for every $s<t$ satisfying $w(s,t)\leq L$. Then, it holds that
	\begin{equation}\label{eq:rough-gronwall-conclusion}
		\sup_{t\in [0,T]} G_t \leq 2 \exp\bigg( \frac{w(0,T)}{\lambda L}\bigg) \Big( G_0 + \gamma(0,T) \Big)
	\end{equation}
	where $\lambda$ is defined as
	\begin{equation*}
		\lambda = 1 \wedge \big[L (2C e^2)^{\kappa}\big]^{-1}.
	\end{equation*}
\end{lem}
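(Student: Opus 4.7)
The plan is to reduce the problem to a discrete recursion by slicing $[0,T]$ into a finite number of subintervals on which the hypothesis \eqref{eq:rough-gronwall-hypothesis} can be iterated with a small absorbing factor. Throughout set $H_t := \sup_{r\in [0,t]} G_r$, which is nondecreasing in $t$; rewriting the hypothesis as
\begin{equation*}
G_t \leq H_s + C\, H_t\, w(s,t)^{1/\kappa} + \gamma(s,t)
\end{equation*}
and noting that $H_t = \max(H_s, \sup_{r\in[s,t]} G_r)$, one obtains, whenever $w(s,t)\leq L$, the bound
\begin{equation*}
H_t \leq H_s + C\, H_t\, w(s,t)^{1/\kappa} + \gamma(0,T),
\end{equation*}
where I used monotonicity of $\gamma$ in the sense of Remark~\ref{rem:properties_controls} to replace $\gamma(s,t)$ by $\gamma(0,T)$.

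The next step is to choose $\lambda$ so that, whenever $w(s,t)\leq \lambda L$, the coefficient $C w(s,t)^{1/\kappa}$ is strictly less than $1$ and can be absorbed on the left-hand side; the specific choice $\lambda = 1\wedge [L(2Ce^2)^\kappa]^{-1}$ ensures $C(\lambda L)^{1/\kappa}\leq (2e^2)^{-1}$, and in particular we can rearrange to get
\begin{equation*}
H_t \leq \frac{1}{1 - (2e^2)^{-1}}\bigl(H_s + \gamma(0,T)\bigr) \leq a\, H_s + a\,\gamma(0,T),
\end{equation*}
with $a := (1-(2e^2)^{-1})^{-1} \leq \exp(1/\lambda L \cdot \lambda L)$ behaving like $1 + O(e^{-2})$.

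For the slicing, one uses that $w$ is continuous with $w(s,s)=0$ together with superadditivity to build a finite partition $0=\tau_0<\tau_1<\cdots<\tau_N=T$ with $w(\tau_i,\tau_{i+1})\leq \lambda L$ for each $i$ and $N\leq \lceil w(0,T)/(\lambda L)\rceil$; indeed, one iteratively defines $\tau_{i+1}$ as the largest $t\leq T$ with $w(\tau_i,t)\leq \lambda L$, and superadditivity forces the process to terminate after the claimed number of steps. Applying the absorbed bound on each $[\tau_i,\tau_{i+1}]$ gives the scalar recursion $H_{\tau_{i+1}} \leq a\, H_{\tau_i} + a\,\gamma(0,T)$, which telescopes into
\begin{equation*}
H_T = H_{\tau_N} \leq a^N H_0 + \frac{a^{N+1}-a}{a-1}\gamma(0,T) \leq 2\, a^{N+1}\bigl(G_0 + \gamma(0,T)\bigr).
\end{equation*}
Finally $a^{N+1}\leq \exp((N+1)\log a) \leq \exp(w(0,T)/(\lambda L))$ up to the constants hidden in $\log a$, which yields \eqref{eq:rough-gronwall-conclusion}.

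The only real subtlety is the constant bookkeeping: one must be attentive that $w(\tau_i,\tau_{i+1})\leq \lambda L\leq L$ so that the original hypothesis \eqref{eq:rough-gronwall-hypothesis} is applicable, and that $\lambda$ is chosen small enough both to enable absorption and to make $\log a$ small enough to contribute the correct exponent. Apart from this, the argument is completely mechanical; no pathwise regularity of $G$ beyond the given inequality is needed, which is why the result remains valid even when the forcing $\gamma$ is only an increasing function rather than a genuine control, unlike in the classical formulation of \cite[Lem.~2.12]{DGHT2019}.
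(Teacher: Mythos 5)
Your overall strategy---rewriting the hypothesis in terms of $H_t := \sup_{r\le t}G_r$, slicing $[0,T]$ into intervals of $w$-mass at most $\lambda L$, absorbing the coefficient $Cw(s,t)^{1/\kappa}\le (2e^2)^{-1}$, and telescoping the resulting scalar recursion $H_{\tau_{i+1}}\le a\,H_{\tau_i}+a\,\gamma(0,T)$ with $a:=\bigl(1-(2e^2)^{-1}\bigr)^{-1}$---is exactly the right one; the paper itself does not prove the lemma but cites \cite[Lem.~A.2]{HLN2021}. The reduction step and the partition construction are sound, and the counting bound $N-1\le w(0,T)/(\lambda L)$ follows correctly from superadditivity.

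However, the final display contains a genuine error: the inequality
$a^N H_0 + \frac{a^{N+1}-a}{a-1}\gamma(0,T)\le 2a^{N+1}\bigl(G_0+\gamma(0,T)\bigr)$
is false. Since $a-1 = a/(2e^2)$, the geometric factor is
$\frac{a^{N+1}-a}{a-1} = 2e^2\bigl(a^N-1\bigr),$
which is on the order of $2e^2 a^N\approx 14.8\,a^N$, whereas $2a^{N+1}\approx 2.1\,a^N$. Rearranging, your claimed inequality for the $\gamma$-term is equivalent to $3\le 2a + a^{-N}$, which fails for every $N\ge 2$ since $a\approx 1.07$. The closing remark that this is ``up to the constants hidden in $\log a$'' cannot cure it: the discrepancy is a multiplicative factor of order $e^2$, and it is precisely this factor that the tuning $\lambda = 1\wedge\bigl[L(2Ce^2)^\kappa\bigr]^{-1}$ is designed to swallow, but your estimate never uses that tuning beyond the crude bound $a<e$.

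The repair is short. Use $a/(a-1)=2e^2$ together with $N-1\le w(0,T)/(\lambda L)$, and verify directly that $e^2\bigl(a^N-1\bigr)\le e^{N-1}$ for all integers $N\ge 1$ (base case: $e^2(a-1)=a/2<1$; the increment of the left side is $a^{N+1}/2$, that of the right is $e^{N-1}(e-1)$, and $a^{N+1}/2\le e^{N-1}(e-1)$ since $(a/e)^{N-1}\le 1\le 2(e-1)/a^2$). Together with the elementary $a^N\le 2e^{N-1}$ (the ratio $a^N/e^{N-1}=e(a/e)^N$ is maximised at $N=1$, where it equals $a<2$), this gives
\begin{equation*}
H_T \le a^N G_0 + 2e^2\bigl(a^N-1\bigr)\gamma(0,T) \le 2e^{N-1}\bigl(G_0+\gamma(0,T)\bigr) \le 2\exp\Bigl(\tfrac{w(0,T)}{\lambda L}\Bigr)\bigl(G_0+\gamma(0,T)\bigr),
\end{equation*}
which is the claimed conclusion. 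So the architecture of your argument is correct, but the last two inequalities must be replaced by this tuned bookkeeping.
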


\subsection{Stochastic sewing}\label{subsec:SSL}
We will need two versions of the Stochastic Sewing Lemma (SSL), a fundamental tool first introduced in \cite{le2020stochastic}.
We will frequently use the two and three-dimensional simplex $[\cS,\cT]^2_\leq$, $[\cS,\cT]^3_\leq$ as defined in \eqref{eq:simplex}. The following version of SSL with controls and conditional norms is a particular subcase of \cite[Thm.~3.1]{le2023banach} for $\mathfrak{p}=2$ therein.

\begin{lem}\label{lem:SSL}
	Let $w_1,\,w_2$ be controls, $m,\,n$ be real numbers such that $2\leq m\leq n \leq \infty$ with $m<\infty$ and $0\leq \cS<\cT<\infty$.
	Assume that $\{A_{s,t} : (s,t)\in [\cS,\cT]^2_{\leq}\}$ is a continuous mapping from $[\cS,\cT]^2_{\leq}$ to $L^m_\omega\RR^\ell$ such that $A_{s,t}$ is $\cF_t$-measurable for all $(s,t) \in [\cS,\cT]^2_{\leq}$.
	Suppose that there exist $\eps_1,\,\eps_2>0$ such that for all $(s,u,t)\in [\cS,\cT]^3_{\leq}$ one has the bounds
	\begin{align}
		\left\| \|A_{s,t}\|_{L^m|\cF_s} \right\|_{L^n_\omega} &\leq w_1(s,t)^{\frac{1}{2} + \eps_1},\label{eq:base_SSL_cond_1} \\
		\|\EE_s \delta A_{s,u,t}\|_{L^n_\omega}&\leq w_2(s,t)^{1+\eps_2}. \label{eq:base_SSL_cond_2}
	\end{align}
	Then there exists a unique adapted stochastic process $\{\cA_t: t\in [\cS,\cT]\}$, continuous as a mapping from $[\cS,\cT]$ to $L^m_\omega$, such that $\cA_{\cS} =0$ and 
	\begin{equation}\label{eq:SSL_convergence}
		\lim_{k\to\infty} \bigg\| \cA_t-\cA_s - \sum_{j=0}^{2^k-1} A_{s+j2^{-k}(t-s),s+(j+1)2^{-k}(t-s)} \bigg\|_{L^m_\omega}=0 \quad \forall\, (s,t)\in [\cS,\cT]^2_\leq.
	\end{equation}
	Moreover, there exists a constant $C\coloneqq C(\eps_1,\eps_2,m,n,\ell)>0$ such that
	\begin{align}
		\left\| \|\cA_t -\cA_s \|_{L^m|\cF_s}\right\|_{L^n_\omega} &\leq C w_1(s,t)^{\frac{1}{2}+\eps_1} + C w_2(s,t)^{1+\eps_2}, \label{eq:base:SSL_bnd_1}\\
		\|\EE_{s}(\cA_t-\cA_s -A_{s,t}) \|_{L^n_\omega} &\leq C w_2(s,t)^{1+\eps_2}. \label{eq:base_SSL_bnd_2}
	\end{align}
\end{lem}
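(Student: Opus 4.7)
The plan is to construct $\cA$ as the $L^m_\omega$-limit of dyadic Riemann sums and then verify the two stated bounds by a telescoping argument. Set
\[\cA^k_{s,t} := \sum_{j=0}^{2^k-1} A_{t^k_j,\, t^k_{j+1}}, \qquad t^k_j := s + j\, 2^{-k}(t-s),\]
and use the successive-refinement identity
\[\cA^{k+1}_{s,t} - \cA^k_{s,t} = -\sum_{j=0}^{2^k-1} \delta A_{t^k_j,\, u^k_j,\, t^k_{j+1}}, \qquad u^k_j := \tfrac{1}{2}(t^k_j + t^k_{j+1}),\]
to reduce everything to estimating a single layer of $\delta A$ sums at scale $2^{-k}(t-s)$.

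The central move is to split each summand into a predictable and a martingale part,
\[N_j := -\EE_{t^k_j}\,\delta A_{t^k_j, u^k_j, t^k_{j+1}}, \qquad M_j := -\bigl(\delta A_{t^k_j, u^k_j, t^k_{j+1}} - \EE_{t^k_j}\,\delta A_{t^k_j, u^k_j, t^k_{j+1}}\bigr),\]
so that $\{M_j\}_j$ is a conditional martingale difference sequence along $\{\cF_{t^k_{j+1}}\}_j$. The predictable sum is bounded directly by \eqref{eq:base_SSL_cond_2}, the triangle inequality, and superadditivity of $w_2$, yielding
\[\Bigl\|\sum_j N_j\Bigr\|_{L^n_\omega} \leq \sum_j w_2(t^k_j, t^k_{j+1})^{1+\eps_2} \leq \bigl(\max_j w_2(t^k_j, t^k_{j+1})\bigr)^{\eps_2} w_2(s,t),\]
while the martingale sum is handled through a conditional Burkholder--Davis--Gundy inequality followed by Minkowski's inequality in $L^{m/2}$ (valid since $m\ge 2$). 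Combined with the pseudo-projection bound \eqref{eq:pseudo_projection} applied with $Z = 0$, the triangle inequality on $\delta A$, and the monotonicity $\|\cdot\|_{L^m|\cF_s}\le\|\cdot\|_{L^m|\cF_u}$ for $s\le u$, this produces
\[\Bigl\|\bigl\|\sum_j M_j\bigr\|_{L^m|\cF_s}\Bigr\|_{L^n_\omega} \lesssim \Bigl(\sum_j w_1(t^k_j, t^k_{j+1})^{1+2\eps_1}\Bigr)^{1/2} \leq \bigl(\max_j w_1(t^k_j, t^k_{j+1})\bigr)^{\eps_1}\, w_1(s,t)^{1/2}.\]

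Continuity of $w_1, w_2$ drives $\max_j w_i(t^k_j, t^k_{j+1})$ to zero as $k \to \infty$, so the two layer-estimates force $\{\cA^k_{s,t}\}_k$ to be Cauchy in $L^m_\omega$. I then define $\cA_t - \cA_s := \lim_k \cA^k_{s,t}$ with $\cA_\cS = 0$, with adaptedness and $L^m_\omega$-continuity in $t$ inherited from the bounds. Estimate \eqref{eq:base:SSL_bnd_1} is obtained by applying the two layer-estimates along the telescope $\cA_{s,t} = A_{s,t} + \sum_{k\geq 0}(\cA^{k+1}_{s,t} - \cA^k_{s,t})$, with $A_{s,t}$ controlled by \eqref{eq:base_SSL_cond_1}. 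The bound \eqref{eq:base_SSL_bnd_2} is cleaner: applying $\EE_s$ annihilates every martingale increment, leaving only the $N_j$'s, so the estimate is driven purely by $w_2$. Uniqueness is standard: any other candidate $\tilde\cA$ differs from $\cA$ by a continuous adapted process satisfying \eqref{eq:SSL_convergence} with germ $A\equiv 0$, and must therefore vanish.

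The main technical hurdle I anticipate is promoting the $o(1)$ decay of $\max_j w_i(t^k_j, t^k_{j+1})$, which continuity of $w_i$ alone provides, into a genuinely summable-in-$k$ rate -- without this, the telescope does not converge quantitatively. The sewing literature (compare \cite[Lem.~2.1]{le2020stochastic} and \cite[Thm.~3.1]{le2023banach}) handles this by iteratively trading extra exponent against superadditivity, of the shape $\sum_j w_i^{1+\eps} \leq w_i(s,t)^{\eps/2} \sum_j w_i^{1+\eps/2}$, and then passing to well-chosen subpartitions to upgrade this into the quantitative rate used in \eqref{eq:base:SSL_bnd_1}--\eqref{eq:base_SSL_bnd_2}. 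A smaller but genuine subtlety is the conditional BDG in the mixed norm $\|\|\cdot\|_{L^m|\cF_s}\|_{L^n_\omega}$: one applies BDG conditionally on $\cF_s$ to produce an $\cF_s$-measurable bound, then takes $L^n_\omega$ outside, exploiting $n \ge m$ to convert the conditional sum of $L^m$-squares into a sum of $L^n_\omega$-squares via Minkowski.
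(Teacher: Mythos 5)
The paper does not prove this lemma: it is stated with a direct citation as the $\mathfrak{p}=2$ subcase of \cite[Thm.~3.1]{le2023banach}, so there is no in-paper proof to compare your attempt against. I will therefore assess your sketch on its own terms. Your overall architecture is the standard stochastic sewing pipeline and is correct in spirit: dyadic Riemann sums, the refinement identity $\cA^{k+1}_{s,t}-\cA^k_{s,t}=-\sum_j \delta A_{t^k_j,u^k_j,t^k_{j+1}}$, the split of each $\delta A$ into $N_j=-\EE_{t^k_j}\delta A$ and the compensated martingale difference $M_j$, conditional BDG plus Minkowski in $L^{m/2}$ for the $M_j$'s, superadditivity for the $N_j$'s, and the observation that $\EE_s$ kills the martingale part when proving~\eqref{eq:base_SSL_bnd_2}. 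The handling of the mixed $\|\,\|\cdot\|_{L^m|\cF_s}\,\|_{L^n_\omega}$ norm via $n\ge m$ is also the right move.

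The gap you flag is real, and your proposed fix does not close it. With general controls $w_i$, superadditivity gives $\sum_j w_i(t^k_j,t^k_{j+1})\le w_i(s,t)$ for \emph{any} partition, so your layer bound becomes $\bigl(\max_j w_i(t^k_j,t^k_{j+1})\bigr)^{\eps_i}w_i(s,t)^{\,\cdot}$; but for dyadic refinements, $\max_j w_i(t^k_j,t^k_{j+1})$ only tends to $0$ by continuity, with no rate. For instance $w(s,t)=\int_s^t\phi$ with $\phi(r)=|r-p|^{-1}|\log|r-p||^{-2}$ gives $\max_j w(t^k_j,t^k_{j+1})\sim 1/k$, so $\sum_k\max_j w_i^{\eps_i}$ diverges whenever $\eps_i\le 1$. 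The iteration you suggest, $\sum_j w_i^{1+\eps}\le w_i(s,t)^{\eps/2}\sum_j w_i^{1+\eps/2}$, merely reshuffles the excess exponent and never produces summability in $k$. The actual resolution in the sewing-with-controls literature is a \emph{non-dyadic} refinement: using superadditivity and pigeonhole, among $n$ subintervals there always exist two adjacent ones with $w(u_{j-1},u_{j+1})\le \tfrac{2}{n-1}w(s,t)$; one removes that midpoint and iterates, which produces a series of the form $\sum_{j\ge 2}(j-1)^{-(1+\eps)}<\infty$ independent of the modulus of $w$. Transplanting this into the stochastic setting requires some care (the removal scheme must be done so that the compensated increments still form a martingale difference array at each stage), which is precisely what the cited theorem in \cite{le2023banach} is for. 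Without that replacement the Cauchy property of $\{\cA^k_{s,t}\}_k$ is not established, and the main estimates do not follow.
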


Another main tool we will use throughout the paper is a more refined version of Lemma~\ref{lem:SSL}, accounting for the presence of shifts.
To this end, we will frequently use the notation for the restricted simplexes $\overline{[\cS,\cT]}^2_\leq$, $\overline{[\cS,\cT]}^3_\leq$ given by \eqref{eq:simplex_restricted_2}-\eqref{eq:simplex_restricted_3}; in order to save space, we will write $s_{-(t-s)}$ in place of $s-(t-s)$.
The next statement is taken from \cite[Lem.~2.5]{galeati2022subcritical}; we invite the reader to compare the assumptions of Lemmas~\ref{lem:SSL} and \ref{lem:SSL-shift}, none of which implies the other.

\begin{lem}\label{lem:SSL-shift}
	Let $w_1,\,w_2$ be controls, $m,\,n$ be real numbers such that $2\leq m\leq n \leq \infty$, $m<\infty$, and $0\leq \cS<\cT<\infty$. Assume that $\{A_{s,t} : (s,t)\in \overline{[\cS,\cT]}^2_{\leq}\}$ is a continuous mapping from $\overline{[\cS,\cT]}^2_{\leq}$ to $L^m_\omega\RR^\ell$ such that $A_{s,t}$ is $\cF_t$-measurable for all $(s,t) \in \overline{[\cS,\cT]}^2_{\leq}$.
	Suppose that there exist constants $\eps_1,\,\eps_2>0$ such that for all $(s,u,t)\in \overline{[\cS,\cT]}^3_{\leq}$ one has the bounds,
	\begin{align}
		\left\| \|A_{s,t}\|_{L^m|\cF_s} \right\|_{L^n_\omega} &\leq w_1(s_{-(t-s)},t)^{1/2}|t-s|^{\eps_1},\label{eq:stoch_sewing_cond_1} \\
		\|\EE_{s_{-(t-s)}} \delta A_{s,u,t}\|_{L^n_\omega}&\leq w_2(s_{-(t-s)},t)|t-s|^{\eps_2}. \label{eq:stoch_sewing_cond_2}
	\end{align}
	Then there exists a unique adapted stochastic process $\{\cA_t: t\in [\cS,\cT]\}$, continuous as a mapping from $[\cS,\cT]$ to $L^m_\omega$, such that $\cA_{\cS} =0$ and \eqref{eq:SSL_convergence} holds.
	Moreover, $\cA$ is the unique process for which there exist constants $C_1,\,C_2>0$ such that for all $(s,t)\in \overline{[\cS,\cT]}^2_{\leq}$,
	\begin{equation}\label{eq:stoch_sewing_bnd_1}
		\begin{aligned}
		\left\| \|\cA_t -\cA_s -A_{s,t} \|_{L^m|\cF_s}\right\|_{L^n_\omega} \leq& C_1 w_1(s_{-(t-s)},t)^{1/2} |t-s|^{\eps_1} \\
		&+ C_2 w_2(s_{-(t-s)},t)|t-s|^{\eps_2}, 
		\end{aligned}
	\end{equation}
	\begin{equation}\label{eq:stoch_sewing_bnd_2}
		\begin{aligned}
			\|\EE_{s_{-(t-s)}}(\cA_t-\cA_s -A_{s,t}) \|_{L^n_\omega} \leq &C_2 w_2(s_{-( t-s)},t)|t-s|^{\eps_2}. 
			\end{aligned}
	\end{equation}
	Finally, there exists a constant $C\coloneqq C(\eps_1,\eps_2,m,n,\ell)>0$ such that the bounds \eqref{eq:stoch_sewing_bnd_1} and \eqref{eq:stoch_sewing_bnd_2} hold for $C_1=C_2=C$ and such that for all $(s,t)\in [\cS,\cT]^2_{\leq}$,
	\begin{equation}\label{eq:stoch_sewing_bnd_3}
		\left\|\|\cA_t-\cA_s\|_{L^m|\cF_s} \right\|_{L^n_\omega} \leq C\left(w_1(s,t)^{1/2}|t-s|^{\eps_1} + w_2(s,t)|t-s|^{\eps_2}\right). 
	\end{equation}
\end{lem}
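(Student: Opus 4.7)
The plan is to adapt the proof of the standard stochastic sewing lemma (Lemma~\ref{lem:SSL}) to the shifted setting. Existence of $\cA$ will be obtained by showing that the dyadic Riemann-type sums
\[
\cA^k_{s,t} := \sum_{j=0}^{2^k-1} A_{s_j,\,s_{j+1}}, \qquad s_j := s + j\,2^{-k}(t-s),
\]
form a Cauchy sequence in $L^m_\omega$ as $k\to\infty$, for each $(s,t) \in \overline{[\cS,\cT]}^2_\leq$. The crucial point allowing the construction is that as soon as $k\geq 1$, every sub-pair $(s_j, s_{j+1})$ again lies in $\overline{[\cS,\cT]}^2_\leq$: indeed $s_j - (s_{j+1}-s_j) = s_{j-1} \geq s - 2^{-k}(t-s) \geq s - (t-s) \geq \cS$, so the germ $A$ can be evaluated on all pairs appearing in $\cA^k_{s,t}$, and every defect $\delta A_{s_j, m_j, s_{j+1}}$ with $m_j := (s_j+s_{j+1})/2$ falls inside the domain where assumptions \eqref{eq:stoch_sewing_cond_1}--\eqref{eq:stoch_sewing_cond_2} are effective.

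The Cauchy estimate is then obtained by writing $\cA^{k+1}_{s,t} - \cA^k_{s,t} = -\sum_j \delta A_{s_j, m_j, s_{j+1}}$ and decomposing each defect as
\[
\delta A_{s_j, m_j, s_{j+1}} = \bigl(\delta A_{s_j, m_j, s_{j+1}} - \EE_{s_{j-1}}\delta A_{s_j, m_j, s_{j+1}}\bigr) + \EE_{s_{j-1}}\delta A_{s_j, m_j, s_{j+1}} =: Y_j + Z_j.
\]
The sum $\sum_j Z_j$ is controlled in $L^n_\omega$ by the triangle inequality, assumption \eqref{eq:stoch_sewing_cond_2}, and superadditivity of $w_2$ (with a bounded overlap factor, since each interval $[s_{j-1}, s_{j+1}]$ overlaps its neighbour at most twice), yielding a bound of order $w_2(s-(t-s),t)(2^{-k}(t-s))^{\eps_2}$. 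For $\sum_j Y_j$, one bounds the $L^m_\omega$-norm via a discrete Burkholder--Davis--Gundy inequality applied to the parity-regrouped subsums $\sum_\ell Y_{2\ell+1}$ and $\sum_\ell Y_{2\ell}$; assumption \eqref{eq:stoch_sewing_cond_1} applied to the three germ evaluations comprising each defect, together with Minkowski's inequality in $L^{m/2}$ (using $m\geq 2$) and superadditivity of $w_1$, then gives a bound of order $w_1(s-(t-s),t)^{1/2}(2^{-k}(t-s))^{\eps_1}$. Both contributions decay geometrically in $k$ thanks to $\eps_1, \eps_2 > 0$, so $\{\cA^k_{s,t}\}_k$ is Cauchy; its limit defines $\cA_t - \cA_s$ on the restricted simplex, and passing to the limit in the same estimates yields \eqref{eq:stoch_sewing_bnd_1}--\eqref{eq:stoch_sewing_bnd_2}.

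The main obstacle lies precisely in the BDG step: because the centering is done at the shifted time $s_{j-1}$ rather than at $s_j$, the increments $Y_j$ do not form a martingale difference sequence for any single natural filtration. The observation that saves the argument is that $Y_{2\ell+1}$ is $\cF_{s_{2\ell+2}}$-measurable and satisfies $\EE_{s_{2\ell}} Y_{2\ell+1} = 0$, while all earlier odd-indexed terms $Y_{2\ell'+1}$ with $\ell' < \ell$ are $\cF_{s_{2\ell'+2}} \subseteq \cF_{s_{2\ell}}$-measurable; hence $\{Y_{2\ell+1}\}_\ell$ genuinely is a martingale difference sequence with respect to $\{\cF_{s_{2\ell+2}}\}_\ell$, and likewise for the even-indexed family. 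One must then verify that the factor of $2$ introduced by this parity splitting does not accumulate across successive dyadic iterations, which holds thanks to the geometric decay already noted.

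For uniqueness, any two candidate processes $\cA^1, \cA^2$ have difference $R := \cA^1 - \cA^2$ satisfying $\|\EE_{s-(t-s)}(R_t - R_s)\|_{L^n_\omega} \lesssim w_2(s-(t-s),t)|t-s|^{\eps_2}$ on $\overline{[\cS,\cT]}^2_\leq$ by subtracting \eqref{eq:stoch_sewing_bnd_2} applied to both; iterating this identity along dyadic partitions by the same telescoping argument and letting the mesh tend to zero forces $R \equiv 0$. Finally, the extension of $\cA$ to all of $[\cS,\cT]$ together with the bound \eqref{eq:stoch_sewing_bnd_3} on the full simplex follows by covering an arbitrary $(s,t) \in [\cS,\cT]^2_\leq$ with finitely many consecutive restricted pairs, applying \eqref{eq:stoch_sewing_bnd_1} on each, and recombining via the superadditivity of $w_1, w_2$ recalled in Remark~\ref{rem:properties_controls}.
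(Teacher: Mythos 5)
The paper does not prove this lemma; it is cited from \cite[Lem.~2.5]{galeati2022subcritical}, so there is no internal proof to compare against, and I evaluate your argument on its own. Your sketch correctly identifies the structural ingredients specific to the shifted setting: every dyadic sub-pair $(s_j,s_{j+1})$ of a restricted pair, and the midpoint triple $(s_j,m_j,s_{j+1})$, remain in the restricted domain with shifted left endpoint $s_{j-1}$; and, after parity-splitting, the centered defects $Y_j$ do form two genuine martingale difference sequences with respect to $\{\cF_{s_{2\ell+2}}\}_\ell$ and $\{\cF_{s_{2\ell+1}}\}_\ell$ respectively. This is the essential non-trivial observation, and with it the Cauchy estimate and \eqref{eq:stoch_sewing_bnd_1}--\eqref{eq:stoch_sewing_bnd_2} on the restricted simplex are within reach.

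Two points are glossed over, the second being a genuine gap. First, your BDG step produces $L^m_\omega$ bounds, whereas \eqref{eq:stoch_sewing_bnd_1} requires the conditional norm $\|\,\|\cdot\|_{L^m|\cF_s}\,\|_{L^n_\omega}$, which is strictly stronger when $m<n$; one must run BDG conditionally on $\cF_s$ (the parity-split families remain martingale differences under this extra conditioning) and then use the monotonicity $\|\,\|\cdot\|_{L^m|\cF_s}\,\|_{L^n_\omega}\le\|\,\|\cdot\|_{L^m|\cF_{s_{j-1}}}\,\|_{L^n_\omega}$ to connect with hypothesis \eqref{eq:stoch_sewing_cond_1}. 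Second, and more seriously, the passage to \eqref{eq:stoch_sewing_bnd_3} via ``covering with finitely many consecutive restricted pairs and superadditivity'' is not correct. No finite covering of $[s,t]$ by restricted pairs exists when $s=\cS$, and even for $s>\cS$ any covering growing out of $s$ necessarily produces pieces whose shifted left endpoints $2t_i-t_{i+1}$ drop strictly below $s$ (already $2s-t_1<s$ for the first piece), so superadditivity only yields a bound in terms of $w_1(\cS\vee(s-(t-s)),t)$, not the claimed $w_1(s,t)$. The correct device is the infinite geometric telescoping $\cA_{s,t}=\sum_{k\ge0}\cA_{b_{k+1},b_k}$ with $b_k=s+2^{-k}(t-s)$: each piece $(b_{k+1},b_k)$ has shifted left endpoint $2b_{k+1}-b_k=s$ exactly, so \eqref{eq:stoch_sewing_bnd_1} together with \eqref{eq:stoch_sewing_cond_1} bounds it by $w_1(s,b_k)^{1/2}(2^{-(k+1)}(t-s))^{\eps_1}+w_2(s,b_k)(2^{-(k+1)}(t-s))^{\eps_2}$, and \eqref{eq:stoch_sewing_bnd_3} follows from the resulting geometric series in $k$, not from superadditivity; identifying the sum with $\cA_t-\cA_s$ also uses the $L^m_\omega$-continuity of $\cA$ at $s$, which must be established.
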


\subsection{Integral estimates}\label{subsec:integral_estimates}

Throughout this section we will assume we are given a filtered probability space $(\Omega,\cF,\FF,\PP)$, on which an $\FF$-fBm $W^H$ of parameter $H\in (0,+\infty)\setminus\NN$, taking values in $\RR^\ell$ for suitable $\ell\in\NN$, is defined; let us stress that $\ell$ may differ from $d$, the dimension of the state space $\RR^d$ for the particle dynamics \eqref{eq:intro_IPS}  we are concerned with.

Throughout the paper, we will be mostly working with processes $Y=\varphi+W^H$, where $\varphi$ is an $\FF$-adapted process.
We are interested in the situation where the perturbation $\varphi$, informally speaking, oscillates at a larger time scale than $W^H$, so that we can approximate the increments of $Y$ over short time intervals by those  of $W^H$ without much cost.
There are several different ways to formulate this condition, for instance by requiring $\varphi$ to be of finite $\kappa$-variation for some $\kappa<2$. To formulate them precisely, we need to specify some parameters first.
\begin{ass}\label{ass:drift_assumption}
	We consider parameters $(\alpha,q,H)$ such that
	\begin{equation}\label{eq:drift_assumption}
		H\in (0,+\infty)\setminus\NN,\quad \alpha\in (-\infty,1),\quad  q\in (1,2],\quad \alpha>1-\frac{1}{H {q^\prime}}.
	\end{equation}
\end{ass}
Clearly, Assumption \ref{ass:drift_assumption} is related to condition \eqref{eq:intro_assumption} from the introduction (and the regularity of the forthcoming drift $b$), but at this stage $(\alpha,q)$  are simply convenient parameters for which certain estimates hold.
%
%
Having specified $H$, whenever clear from now on we will just write $W$ in place of $W^H$.

For $(\alpha,q,H)$ satisfying Assumption \ref{ass:drift_assumption}, let us define
\begin{equation}\label{eq:defn_kappa_eps}
	\kappa:= \frac{1}{(\alpha-1)H+1}, \quad \eps:= (\alpha-1)H+\frac{1}{{q^\prime}};
\end{equation}
observe that by \eqref{eq:drift_assumption}, it holds that
\begin{equation}\label{eq:kappa_eps_basic}
	\kappa\in (1,q)\subset (1,2), \quad \eps\in (0,1), \quad \eps+\frac{1}{q}=\frac{1}{\kappa}.
\end{equation}
As the next lemma shows, if $\varphi\in \dot C^{\kappa-\var}_T L^m_\omega$, we can give meaning to processes of the form $\int_0^\cdot h_r(\varphi_r+ W_r)\dd r$ even when $h$ is merely a distribution.
\begin{lem}\label{lem:estimate_integrals_v0}
	Let $(\alpha,q,H)$ satisfy Assumption \ref{ass:drift_assumption}, $\kappa,\,\eps$ be defined by \eqref{eq:defn_kappa_eps}; additionally assume that $\alpha<0$ and let $\varphi\in \dot C^{\kappa-\var}_T L^m_\omega$ for some $m\in [2,\infty)$.
	Then there exists a constant $C\coloneqq C(\ell,m,\alpha,q,H)>0$ such that, for any $(s,t)\in [0,T]^2_\leq$ and any function $h\in C^\infty_b([0,T]\times\RR^\ell;\RR)$, it holds that
	\begin{equation}\label{eq:estimate_integrals_v0}
		\bigg\| \int_0^\cdot h_r(\varphi_r+W_r) \dd r \,\bigg\|_{C^{\kappa-\var}_{[s,t]} L^m_\omega}
		\leq C \bigg( \int_s^t \| h_r\|_{\cC^{\alpha}_x}^q\, \dd r\bigg)^\frac{1}{q} |t-s|^\eps \,\Big[ |t-s|^H + \bra{\varphi}_{C^{\kappa-\var}_{[s,t]} L^m_\omega}\Big]
	\end{equation}
	By linearity and density, the map $h\mapsto \int_0^\cdot h_r(\varphi_r+W_r)\dd r$ extends uniquely to a bounded linear operator from $L^q_T \cC^\alpha_x$ to $C^{\kappa-\var}_T L^m_\omega$, for any $m\in[1,\infty)$.
\end{lem}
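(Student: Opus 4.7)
The plan is to invoke the stochastic sewing lemma (Lemma~\ref{lem:SSL}) on the germ $A_{s,t}:=\EE_s\!\int_s^t h_r(\varphi_s+W_r)\,\dd r$---which ``freezes'' the slow component $\varphi$ at time $s$---and then identify the resulting sewing $\cA$ with $Z_t:=\int_0^t h_r(\varphi_r+W_r)\,\dd r$, which is classically well defined in $L^m_\omega$ for smooth $h$. The bound~\eqref{eq:estimate_integrals_v0} will then be read off from \eqref{eq:base:SSL_bnd_1}; the final extension to $h\in L^q_T\cC^\alpha_x$ will follow from the density of $C^\infty_b$ in $\cC^\alpha_x$ and the linearity of the integral.

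To verify the two SSL hypotheses I would use local nondeterminism in the form~\eqref{eq:LND+heat}. Since $\varphi_s$ is $\cF_s$-measurable, $A_{s,t}=\int_s^t P_{(r-s)^{2H}} h_r(\varphi_s+\EE_s W_r)\,\dd r$, so the heat-kernel bound $\|P_{(r-s)^{2H}} h_r\|_{L^\infty_x}\lesssim (r-s)^{\alpha H}\|h_r\|_{\cC^\alpha_x}$ together with H\"older's inequality (using $\alpha H q'>-1$) gives the pointwise estimate
\[
|A_{s,t}|\leq C(t-s)^{\eps+H}\Bigl(\int_s^t \|h_r\|_{\cC^\alpha_x}^q\,\dd r\Bigr)^{1/q}.
\]
Since $A_{s,t}$ is $\cF_s$-measurable, this directly bounds $\|A_{s,t}\|_{L^m|\cF_s}$ and yields~\eqref{eq:base_SSL_cond_1} with $\eps_1=1/\kappa+H-1/2>0$ (using $\kappa<2$). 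For the increment, I would expand
\[
\EE_s\delta A_{s,u,t}=\int_u^t \EE_s\bigl[P_{(r-u)^{2H}} h_r(\varphi_s+\EE_u W_r)-P_{(r-u)^{2H}} h_r(\varphi_u+\EE_u W_r)\bigr]\,\dd r,
\]
apply the gradient bound $\|DP_{(r-u)^{2H}} h_r\|_{L^\infty_x}\lesssim (r-u)^{(\alpha-1)H}\|h_r\|_{\cC^\alpha_x}$ and then H\"older (this is where the critical assumption $(\alpha-1)H q'>-1$ from \eqref{eq:drift_assumption} enters) to obtain
\[
\|\EE_s\delta A_{s,u,t}\|_{L^m_\omega}\leq C(t-s)^\eps\Bigl(\int_s^t \|h_r\|_{\cC^\alpha_x}^q\,\dd r\Bigr)^{1/q}\bra{\varphi}_{C^{\kappa-\var}_{[s,t]} L^m_\omega}.
\]
Since $\eps+1/q+1/\kappa=2/\kappa>1$, this checks~\eqref{eq:base_SSL_cond_2} with $\eps_2=2/\kappa-1>0$, after assembling suitable controls $w_1$, $w_2$ from $(t-s)$, $g_1(s,t):=\int_s^t \|h_r\|_{\cC^\alpha_x}^q\dd r$ and $w_\varphi$ via Remark~\ref{rem:properties_controls}.

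To identify $\cA$ with $Z$ I would check the dyadic Riemann-sum convergence~\eqref{eq:SSL_convergence} for the candidate $Z_t-Z_s$. Writing $t^k_j:=s+j2^{-k}(t-s)$ and decomposing
\[
\sum_{j} A_{t^k_j,t^k_{j+1}}-(Z_t-Z_s)=-\sum_j (I-\EE_{t^k_j})\!\!\int_{t^k_j}^{t^k_{j+1}}\!\!\!h_r(\varphi_{t^k_j}+W_r)\,\dd r\,+\sum_j\int_{t^k_j}^{t^k_{j+1}}\!\!\![h_r(\varphi_{t^k_j}+W_r)-h_r(\varphi_r+W_r)]\,\dd r,
\]
the first sum is a discrete $\FF$-martingale whose summands are $L^\infty$-bounded by $2\|h\|_\infty(t^k_{j+1}-t^k_j)$, so Burkholder--Davis--Gundy yields $O(2^{-k/2})$ in $L^m_\omega$; the second is controlled by $\|Dh\|_\infty\sum_j(t^k_{j+1}-t^k_j)\bra{\varphi}_{C^{\kappa-\var}_{[t^k_j,t^k_{j+1}]}L^m_\omega}$, which vanishes as $k\to\infty$ by H\"older's inequality and superadditivity of $w_\varphi$ (using $\kappa<q\leq 2$). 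By uniqueness in SSL, $\cA=Z$.

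The SSL output~\eqref{eq:base:SSL_bnd_1} then gives the pointwise increment estimate, which I would upgrade to the $\kappa$-variation semi-norm by rewriting its right-hand side as $w(u,v)^{1/\kappa}$ for a control $w$ (built again via Remark~\ref{rem:properties_controls}, using that the exponents $\kappa(1/\kappa+H)=1+\kappa H$ and $\kappa\cdot(2/\kappa)=2$ are both $\geq 1$) and invoking the optimality statement of Remark~\ref{rem:link_control_variation}. Density of $C^\infty_b$ in $\cC^\alpha_x$ combined with linearity of the integral then extends the bound to $h\in L^q_T\cC^\alpha_x$, and Jensen's inequality handles $m\in[1,2)$. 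The main obstacle is the identification step: the crude estimates on $I_1,\,I_2$ above rely on $\|h\|_\infty$ and $\|Dh\|_\infty$, quantities that diverge for generic approximants in $L^q_T\cC^\alpha_x$, so the identification $\cA=Z$ must be performed only at the smooth level, with density invoked at the very end and exclusively on the final estimate \eqref{eq:estimate_integrals_v0}.
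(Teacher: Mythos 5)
Your proposal is correct and follows the same overall strategy as the paper: introduce the germ $A_{s,t}=\EE_s\int_s^t h_r(\varphi_s+W_r)\,\dd r$, verify the two SSL hypotheses of Lemma~\ref{lem:SSL} (with $n=m$) via local nondeterminism and heat kernel estimates with the identical exponent bookkeeping ($1/\kappa+H>1/2$ and $2/\kappa>1$), upgrade to the $\kappa$-variation seminorm by Remark~\ref{rem:link_control_variation}, and extend to $h\in L^q_T\cC^\alpha_x$ by density. The only place where you diverge from the paper is the identification $\cA=Z$: the paper estimates $\|\cA_{s,t}-A_{s,t}\|_{L^m_\omega}$ directly, splitting it into $\|\cA_{s,t}-\EE_s\cA_{s,t}\|$ (handled via \eqref{eq:pseudo_projection}) plus $\|\EE_s\cA_{s,t}-A_{s,t}\|$, obtaining a bound of order $|t-s|\bra{\varphi}_{C^{\kappa-\var}_{[s,t]}L^m_\omega}+|t-s|^{1+H}$ which is superadditive to a power strictly above one; you instead decompose the dyadic Riemann-sum error into a discrete martingale (treated with Burkholder--Davis--Gundy) and a ``drift'' part (treated by Lipschitz continuity of $h$ and H\"older in the partition). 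Both yield the same conclusion with comparable effort; the paper's route avoids invoking BDG and is a touch shorter, but yours is equally valid. Your closing remark---that the crude constants $\|h\|_\infty$, $\|Dh\|_\infty$ in the identification step are only used at the smooth level, with density applied solely to the final estimate---is exactly the right caution and mirrors what the paper tacitly does.
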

\begin{proof}
	We plan to apply SSL in the form of Lemma~\ref{lem:SSL} with $m=n$; 
	to this end, define the germ $A_{s,t}:= \EE_s \int_s^t h_r(\varphi_s+W_r)\dd r$.
	We first claim that $\mathcal{A}_\cdot=\int_0^\cdot h_r(\varphi_r+W_r)\dd r$ is the sewing of $A_{s,t}$. Indeed, for any $[s,t]\subset [0,T]$, it holds
	\begin{align*}
		\big\| \mathcal{A}_{s,t}& -A_{s,t} \big\|_{L^m_\omega}
		\leq \big\| \mathcal{A}_{s,t}-\EE_s\mathcal{A}_{s,t}\| + \|\EE_s\mathcal{A}_{s,t} -A_{s,t} \big\|_{L^m_\omega}\\
		& \leq 2 \Big\| \int_s^t [h_r(\varphi_r+W_r)- h_r(\varphi_s+W_s)] \dd r\Big\|_{L^m_\omega} +  \Big\| \int_s^t [h_r(\varphi_r+W_r)- h_r(\varphi_s+W_r)] \dd r\Big\|_{L^m_\omega}\\
		& \lesssim \| h\|_{L^\infty_t C^1_x} \int_s^t \left(\|\varphi_{s,r}\|_{L^m_\omega}+\|W_{s,r}\|_{L^m_\omega} \right) \dd r
		\lesssim |t-s| \bra{\varphi}_{C^{\kappa-\var}_{[s,t]} L^m_\omega} + |t-s|^{1+H};
	\end{align*}
	in the above we used several times various properties of conditional expectation, including \eqref{eq:pseudo_projection}.
	By the properties of controls, this estimate implies the validity of \eqref{eq:SSL_convergence} and thus the claim. 
	As a consequence, once we verify estimates \eqref{eq:base_SSL_cond_1}-\eqref{eq:base_SSL_cond_2} on $A_{s,t}$, the  bound \eqref{eq:base:SSL_bnd_1} for $\mathcal{A}_{s,t}$ will follow.
	
	Define the controls $w_h(s,t):=\int_s^t \| h_r\|_{\cC^\alpha_x}^q \dd r$, $w_\varphi:= \bra{\varphi}_{C^{\kappa-\var}_{[s,t]}L^m_\omega}^{\kappa}$.
	By the adaptedness of $\varphi$, the LND property of the fBm $W$ and the heat kernel estimates as combined in \eqref{eq:LND+heat}, it holds
	\begin{align*}
		|A_{s,t}|
		& = \Big|\int_s^t P_{|r-s|^{2H}} h_r(\varphi_s+\EE_s W_r)\dd r\Big|
		\leq \int_s^t \| P_{|r-s|^{2H}} h_r\|_{L^\infty_x} \dd r\\
		& \lesssim \int_s^t |r-s|^{\alpha H} \| h_r\|_{\cC^\alpha_x} \dd r
		\lesssim w_h(s,t)^{\frac{1}{q}} |t-s|^{\alpha H + \frac{1}{q'}}\\
		& =: w_1(s,t)^{1+\alpha H} = w_1(s,t)^{\frac{1}{\kappa} + H},
	\end{align*}
	where in the middle passage we applied H\"older's inequality and in the last one we used Remark~\ref{rem:properties_controls} to define the control $w_1$. As the estimate is pointwise, it yields a bound on $\| A_{s,t}\|_{L^\infty_\omega}$ (thus also on $\| A_{s,t}\|_{L^m_\omega}$).
	Similarly, for $\EE_s \delta A_{s,u,t}$ we have 
	\begin{align*}
		\| \EE_s \delta A_{s,u,t}\|_{L^m_\omega}
		& = \Big\| \EE_s \EE_u \int_u^t [h_r(\varphi_u+W_r)-h_r(\varphi_s + W_r)] \dd r \Big\|_{L^m_\omega}\\
		& \leq \Big\| \int_u^t P_{|u-r|^{2H}} h_r(\varphi_u+\EE_u W_r)-P_{|u-r|^{2H}} h_r(\varphi_s + \EE_u W_r)] \dd r \Big\|_{L^m_\omega}\\
		& \lesssim \int_u^t \| D P_{|u-r|^{2H}} h_r\|_{L^\infty_x} \| \varphi_{s,u}\|_{L^m_\omega} \dd r\\
		& \lesssim w_\varphi(s,t)^{\frac{1}{\kappa}} \int_u^t |r-u|^{(\alpha-1) H} \| h_r\|_{\cC^\alpha_x} \dd r\\
		& \lesssim w_\varphi(s,t)^{\frac{1}{\kappa}}\,w_h(s,t)^{\frac{1}{q}}\, |t-s|^{(\alpha-1) H + \frac{1}{q'}}
		=: w_2(s,t)^{\frac{2}{\kappa}};
	\end{align*}
	observe that by Assumption \ref{ass:drift_assumption}, $(\alpha-1)H+1/q'>0$, so that all the integrals appearing above are finite. 
	By \eqref{eq:kappa_eps_basic}, it holds that $1 + \alpha H>1/2$ and $2/\kappa>1$, so we can apply Lemma~\ref{lem:SSL} (with $n=m$) to deduce that
	\begin{equation}\label{eq:integr_estim_v0_eq1}\begin{split}
			\Big\| \int_s^t h_r(\varphi_r+W_r)\dd r\Big\|_{L^m_\omega}
			& \lesssim w_1(s,t)^{\frac{1}{\kappa} + H} + w_2(s,t)^{\frac{2}{\kappa}}\\
			& = w_h(s,t)^{\frac{1}{q}}\, |t-s|^{\eps} \big(|t-s|^H + w_\varphi(s,t)^{\frac{1}{\kappa}}\big)
	\end{split}\end{equation}
	for all $[s,t]\subset [0,T]$. Finally, by~\eqref{eq:integr_estim_v0_eq1} and Remark~\ref{rem:link_control_variation}, we conclude that \eqref{eq:estimate_integrals_v0} holds as well.
	
	The last statement, concerning the unique extendability of the map $h\mapsto \int_0^\cdot h_r(\varphi_r+W_r)\dd r$ to all $h\in L^q_T \cC^\alpha_x$, is a consequence of the linearity of this map, the fact that $C^\infty_b([0,T]\times \RR^d)$ is dense in $L^q_T \cC^\alpha_x$, and standard arguments.
\end{proof}

\begin{rem}\label{rem:integral_estimates_v0}
	Let $h\in L^q_T \cC^{\alpha}_x$ with $\alpha<0$ and $Z^h$ denote the process obtained by the extension procedure described in Lemma~\ref{lem:estimate_integrals_v0}, i.e. characterized by the property that
	\begin{align*}
		\lim_{k\to\infty} \Big\| Z^h_t- \int_0^t h^k_r(\varphi_r+W_r) \dd r \Big\|_{L^m_\omega} = 0
	\end{align*}
	for all smooth sequences $\{h_k\}_k$ such that $h_k\to h \in L^q_T \cC^{\alpha}_x$.
	With this limit in mind, with an abuse of notation we will directly write $Z^h_\cdot = \int_0^\cdot h_r(\varphi_r+W_r)\dd r$; we warn the reader that this writing is only formal: when $h$ is a distribution of negative regularity, $Z^h$ is no longer meaningful as a Lebesgue integral.
	A similar comment applies to the results contained in the rest of this section.
\end{rem}

Although powerful, Lemma~\ref{lem:estimate_integrals_v0} is not always enough for our purposes. As Lemma~\ref{lem:estimate_integrals} below shows, we can obtain stronger estimates on integral processes by imposing an additional assumption on the process $\varphi$, quantitatively capturing the property of being ``well-predicted given the past''.
This property interacts nicely with the LND property \eqref{eq:LND_fBm} of $W$, as it implies that $Y$ retains the same kind of unpredictability and intrinsic stochasticity at small scales.
Following \cite{gerencser2023,galeati2022subcritical}, we require the existence of a control $w$ such that
\begin{equation}\label{eq:structure_assumption_processes}
	\big\| \EE_s[|\varphi_t - \EE_s \varphi_t|] \big\|_{L^\infty_\omega} \leq w(s,t)^{\frac{1}{q}} |t-s|^{\alpha H + \frac{1}{{q^\prime}}} \quad\forall\, [s,t]\subset [0,T],
\end{equation}
where $(\alpha,q)$ are suitable parameters as above.
\begin{lem}\label{lem:estimate_integrals}
	Let $(\alpha,q,H)$ satisfy Assumption \ref{ass:drift_assumption}, $\kappa,\,\eps$ be defined by \eqref{eq:defn_kappa_eps}.
	Let $\varphi$ be a processes satisfying condition \eqref{eq:structure_assumption_processes} for some control $w$. 
	Then for any $m\in [1,\infty)$ there exists a constant $C\coloneqq C(\ell,m,\alpha,q,H)>0$ such that for any $(s,t)\in [0,T]^2_\leq$ and any smooth function $h:[0,T]\times\RR^\ell\to \RR$ it holds that
	\begin{equation}\label{eq:estimate_integrals}
		\bigg\llbracket \int_0^\cdot h_r(\varphi_r+W_r) \dd r \bigg\rrbracket_{C^{\kappa-\var}_{[s,t]} L^m_\omega}
		\leq C \bigg( \int_s^t \| h_r\|_{\cC^{\alpha-1}_x}^q\, \dd r\bigg)^\frac{1}{q} |t-s|^\eps \big[ 1 + w(s,t)^{\frac{1}{q}} |t-s|^\eps\big].
	\end{equation}
	By linearity and density, the map $h\mapsto \int_0^\cdot h_r(\varphi_r+W_r)\dd r$ extends uniquely to a bounded linear operator from $L^q_T \cC^{\alpha-1}_x$ to $C^{\kappa-\var}_T L^m_\omega$, for any $m\in[1,\infty)$.
\end{lem}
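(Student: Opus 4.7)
The plan is to apply the Stochastic Sewing Lemma with shifts (Lemma~\ref{lem:SSL-shift}) to construct and estimate the desired process. Compared to the proof of Lemma~\ref{lem:estimate_integrals_v0}, the new ingredient that buys us the extra derivative (going from $\cC^\alpha_x$ to $\cC^{\alpha-1}_x$) is the combination of shifted conditioning (evaluating at $u = s_{-(t-s)}$ in place of $s$) with the structure assumption~\eqref{eq:structure_assumption_processes}. Intuitively, conditioning at the earlier time $u$ provides heat-kernel smoothing over a window of length $|r-u|\asymp |t-s|$ for all $r\in[s,t]$, so we get uniform regularisation across the whole increment; the structure assumption then ensures that freezing $\varphi_r$ at its conditional expectation $\EE_u\varphi_r$ introduces only a controllable error.

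The natural candidate germ is
\[
	A_{s,t} \coloneqq \int_s^t P_{c_H |r-u|^{2H}} h_r\bigl(\EE_u \varphi_r + \EE_u W_r\bigr)\dd r, \qquad u = s_{-(t-s)},
\]
which by the LND property~\eqref{eq:LND+heat} coincides with $\int_s^t \EE_u h_r(\EE_u\varphi_r + W_r)\dd r$. First I would verify hypothesis~\eqref{eq:stoch_sewing_cond_1}: since $|r-u|\asymp|t-s|$ for $r\in[s,t]$, heat-kernel bounds~\eqref{eq:heat_kernel_estimates} together with H\"older's inequality give $|A_{s,t}|\lesssim |t-s|^{(\alpha-1)H + 1/{q'}} w_h(s,t)^{1/q}$ with $w_h(s,t)\coloneq\int_s^t\|h_r\|_{\cC^{\alpha-1}_x}^q\dd r$. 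Writing $w_h^{1/q} = (w_h^{2/q})^{1/2}$ and using that $2/q\geq 1$ makes $w_1\coloneq w_h^{2/q}$ a valid control, the exponent $\eps_1 = \eps = (\alpha-1)H + 1/q'>0$ gives the first SSL bound.

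The second hypothesis~\eqref{eq:stoch_sewing_cond_2} on $\EE_u \delta A$ is where the structure assumption enters. I would identify $\int_0^\cdot h_r(\varphi_r+W_r)\dd r$ with the sewing by showing that the difference
$\EE_u[I_{s,t} - A_{s,t}] = \int_s^t \EE_u\bigl[h_r(\varphi_r+W_r) - h_r(\EE_u\varphi_r+W_r)\bigr]\dd r$
is suitably small: a Taylor-type expansion combined with heat-kernel smoothing and the bound $\|\EE_u|\varphi_r-\EE_u\varphi_r|\|_{L^\infty_\omega}\leq w(u,r)^{1/q}|r-u|^{\alpha H + 1/q'}$ from~\eqref{eq:structure_assumption_processes} produces an estimate of the form $\|\EE_u\delta A_{s,v,t}\|_{L^\infty_\omega}\lesssim w_2(u,t)\,|t-s|^{\eps_2}$ for suitable $w_2$ and $\eps_2>0$ --- the additional derivative that would arise in the Taylor expansion is absorbed by the smoothing operator $P_{c_H|r-u|^{2H}}$ acting on $h_r$. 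Uniqueness from Lemma~\ref{lem:SSL-shift} then identifies the sewing with $\int_0^\cdot h_r(\varphi_r+W_r)\dd r$, and the bound~\eqref{eq:stoch_sewing_bnd_3} translates into~\eqref{eq:estimate_integrals}. Finally, standard linearity and density arguments as at the end of Lemma~\ref{lem:estimate_integrals_v0} extend the map to all $h\in L^q_T\cC^{\alpha-1}_x$.

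The main obstacle I foresee is the $\EE_u \delta A$ estimate. Unlike the analogous step in Lemma~\ref{lem:estimate_integrals_v0}, one cannot simply use independence of $W_r - \EE_u W_r$ from $\cF_u$, because $\varphi_r$ is $\cF_r$-measurable and is therefore generally correlated with the fBm increment $W_r - \EE_u W_r$ under $\PP(\cdot|\cF_u)$. The workaround is to first ``freeze'' $\varphi_r$ to its prediction $\EE_u\varphi_r$ (which is $\cF_u$-measurable, thereby restoring the independence needed for LND) and control the error $\varphi_r - \EE_u\varphi_r$ through~\eqref{eq:structure_assumption_processes}; this yields bounds where the gain of one derivative via $P_t$ matches exactly the deficit in the H\"older norm, giving the target regularity condition under Assumption~\ref{ass:drift_assumption}.
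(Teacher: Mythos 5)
Your proposal is correct and recovers, in self-contained form, the argument that the paper delegates to \cite[Lem.~3.1]{galeati2022subcritical}: the paper's own proof is essentially a citation to that lemma followed by an application of Remarks~\ref{rem:properties_controls} and~\ref{rem:link_control_variation} to pass from the $L^m_\omega$-increment bound to the $C^{\kappa-\var}_{[s,t]} L^m_\omega$-seminorm, and the cited lemma is exactly the SSL-shift argument you spell out. Your germ $A_{s,t}=\EE_{s_{-(t-s)}}\int_s^t h_r(\EE_{s_{-(t-s)}}\varphi_r+W_r)\dd r$ and the mechanism (shift the conditioning point to $u=s_{-(t-s)}$ so that $|r-u|\asymp|t-s|$ gives uniform heat-kernel smoothing, then control $\varphi_r-\EE_u\varphi_r$ through \eqref{eq:structure_assumption_processes}) coincide with what the paper does explicitly in the more involved Proposition~\ref{prop:comparison_integrals}; indeed, your Lemma is that proposition's ``one-process'' analogue. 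Two small completeness points worth making in a write-up: you should apply Lemma~\ref{lem:SSL-shift} with $n=\infty$ (condition~\eqref{eq:structure_assumption_processes} is an $L^\infty_\omega$-bound, and the resulting $\|\,\cdot\,\|_{L^m|\cF_s}$ estimate dominates $\|\,\cdot\,\|_{L^m_\omega}$ by \eqref{eq:moment_cond_moment_equiv}), then reduce $m<2$ to $m=2$ by Jensen; and to identify $\int_0^\cdot h_r(\varphi_r+W_r)\dd r$ as the sewing via the uniqueness clause of Lemma~\ref{lem:SSL-shift} you should check both \eqref{eq:stoch_sewing_bnd_1} and \eqref{eq:stoch_sewing_bnd_2} on $\cA_{s,t}-A_{s,t}$ (for smooth $h$ both are crude $\|h\|_{L^\infty_t C^1_x}$-estimates), not just the conditioned one. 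Neither affects the substance of the argument.
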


\begin{proof}
	The statement is a consequence of the results from \cite{galeati2022subcritical}; since we will shortly provide the full proof of a similar estimate in Proposition~\ref{prop:comparison_integrals} below, let us only mention some key steps here.
	Setting $Z_t=\int_0^t h_r(\varphi_r + W_r)\dd r$ and applying the second part of \cite[Lem.~3.1]{galeati2022subcritical} (where $w_{h,\alpha-1,q}(s,t):=\int_s^t \| h_r\|_{\cC^{\alpha-1}_x}^q \dd r$), we deduce that
	\begin{equation*}
		\big\| \| Z_{s,t} \|_{L^m\vert \mathcal{F}_s} \big\|_{L^\infty_\omega}
		\lesssim \bigg( \int_s^t \| h_r\|_{\cC^{\alpha-1}_x}^q\, \dd r\bigg)^\frac{1}{q} |t-s|^\eps\, \Big[ 1 + w(s,t)^{\frac{1}{q}} |t-s|^\eps\Big];
	\end{equation*}
	observing that by \eqref{eq:moment_cond_moment_equiv} one has the inequality $\| \cdot\|_{L^m_\omega} \leq \| \| \cdot\|_{L^m\vert\mathcal{F}_s} \|_{L^\infty_\omega}$, we find
	\begin{equation}\label{eq:estimate-integrals-proof}
		\| Z_{s,t} \|_{L^m_\omega}
		\lesssim \bigg( \int_s^t \| h_r\|_{\cC^{\alpha-1}_x}^q\, \dd r\bigg)^\frac{1}{q} |t-s|^\eps\, \Big[ 1 + w(s,t)^{\frac{1}{q}} |t-s|^\eps\Big].
	\end{equation}
	By Remark~\ref{rem:properties_controls} and the relation $1/q+\eps=1/\kappa$, the right hand side of \eqref{eq:estimate-integrals-proof} is of the form $\tilde{w}^{1/\kappa}$, for a suitable control $\tilde{w}$; claim \eqref{eq:estimate_integrals} then follows from Remark~\ref{rem:link_control_variation}.
\end{proof}
\begin{rem}
	Given $h$, $\varphi$ as in Lemma~\ref{lem:estimate_integrals}, let $Z^h=\int_0^\cdot h_r(\varphi_r+W_r)\dd r$ (with the convention from Remark~\ref{rem:integral_estimates_v0}).	
	Since Lemma~\ref{lem:estimate_integrals} applies for any $m\in [1,\infty)$, by Lemma~\ref{lem:kolmogorov_control} we deduce that $Z^h\in L^m_\omega C^{\tilde\kappa-\var}_T$ for any $\tilde\kappa>\kappa$; similarly, by Proposition~\ref{prop:kolmogorov_sobolev}, $Z^h\in L^m_\omega W^{\beta,q}_T$ for any $\beta<1/\kappa$.
\end{rem}
\begin{rem}\label{rem:estimate_integrals_mixed}
	It's easy to see that the techniques from Lemmas~\ref{lem:estimate_integrals_v0} and~\ref{lem:estimate_integrals} can be combined as follows: Let $(\alpha,q,H)$ satisfy Assumption \ref{ass:drift_assumption} with $\alpha<0$, $\kappa,\,\eps$ defined by \eqref{eq:defn_kappa_eps}, $\varphi$ satisfying condition \eqref{eq:structure_assumption_processes} for some control $w$ and $\psi\in \dot C^{\kappa-\var}_T L^m_\omega$ for some $m\in [2,\infty)$. Then for any $(s,t)\in [0,T]^2_\leq$ and any function $h\in C^\infty_b([0,T]\times\RR^\ell;\RR)$, it holds that
	\begin{align*}
	\bigg\| \int_0^\cdot h_r(\varphi_r+\psi_r+W_r) \dd r \,\bigg\|_{C^{\kappa-\var}_{[s,t]} L^m_\omega}
	\lesssim \bigg( & \int_s^t \| h_r\|_{\cC^{\alpha}_x}^q\, \dd r\bigg)^\frac{1}{q} |t-s|^\eps\times\\
		& \times \Big[1 + w(s,t)^{\frac{1}{q}} |t-s|^\eps + |t-s|^H + \bra{\psi}_{C^{\kappa-\var}_{[s,t]} L^m_\omega}\Big];
	\end{align*}
	as before, by standard arguments the bound extends to all $h\in L^q_T \cC^\alpha_x$.
\end{rem}
Having defined the map $\varphi\mapsto \int_0^\cdot h_r(\varphi_r+W_r) \dd r$ in the previous lemmas, we can now establish its local Lipschitz continuity in suitable norms.
%

%
%

\begin{prop}\label{prop:comparison_integrals}
	Let $(\alpha,q,H)$ satisfy Assumption \ref{ass:drift_assumption}, $\kappa,\,\eps$ be defined by \eqref{eq:defn_kappa_eps},  $m\in [2,\infty)$ and $h:[0,T]\times\RR^\ell\to \RR$ be a smooth function; define the control
	\begin{align*}
		w_h(s,t):=\int_s^t \| h_r\|^q_{\cC^{\alpha}_x} \dd r.
	\end{align*}
	Then, given a process $\varphi^1$ satisfying condition \eqref{eq:structure_assumption_processes} for some control $w$, and another process $\varphi^2$ such that $\varphi^1-\varphi^2\in C^{\kappa-\var}_T L^m_\omega$, there exists a constant $C\coloneqq C(l,m,n,\alpha,q,H)>0$ such that, for any $(s,t)\in [0,T]^2_\leq$
	\begin{equation}\label{eq:comparison_integrals}\begin{split}
			\bigg\llbracket \int_0^\cdot [h_r(\varphi^1_r+W_r) & - h_r(\varphi^2_r + W_r)] \dd r \bigg\rrbracket_{C^{\kappa-\var}_{[s,t]} L^m_\omega}\\
			& \leq C w_h(s,t)^{\frac{1}{q}} |t-s|^{\eps} \| \varphi^1-\varphi^2\|_{C^{\kappa-\var}_{[s,t]} L^m_\omega} \big(1 + w(s,t)^{\frac{1}{q}} |t-s|^{\eps} \big).
	\end{split}\end{equation}
	By linearity and density, the inequality \eqref{eq:comparison_integrals} extends to any $h\in L^q_T \cC^\alpha_x$.
\end{prop}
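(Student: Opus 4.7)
The plan is to apply the stochastic sewing lemma (Lemma~\ref{lem:SSL}) to the germ
\[
A_{s,t} := \EE_s \int_s^t [h_r(\varphi^1_s + W_r) - h_r(\varphi^2_s + W_r)]\,\dd r,
\]
after checking---exactly as in the proof of Lemma~\ref{lem:estimate_integrals_v0}---that the candidate sewing is $\cA_t := \int_0^t [h_r(\varphi^1_r + W_r) - h_r(\varphi^2_r + W_r)]\,\dd r$. Writing $g := \varphi^1 - \varphi^2$ and $\psi^\lambda := (1-\lambda)\varphi^2 + \lambda\varphi^1$, the fundamental theorem of calculus combined with the LND identity \eqref{eq:LND+heat} gives
\[
A_{s,t} = \int_s^t \int_0^1 P_{|r-s|^{2H}} Dh_r(\psi^\lambda_s + \EE_s W_r)\cdot g_s \,\dd\lambda\,\dd r.
\]
Using $\|P_{|r-s|^{2H}} Dh_r\|_{L^\infty_x} \lesssim |r-s|^{(\alpha-1)H} \|h_r\|_{\cC^\alpha_x}$ and H\"older's inequality in time, I obtain $\|A_{s,t}\|_{L^m_\omega} \lesssim \|g_s\|_{L^m_\omega}\, w_h(s,t)^{1/q} |t-s|^\eps$ with $w_h(s,t) := \int_s^t \|h_r\|_{\cC^\alpha_x}^q\,\dd r$. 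Since $\kappa<2$ implies $1/q + \eps = 1/\kappa > 1/2$, this matches the form $w_1(s,t)^{1/2+\eps_1}$ required by Lemma~\ref{lem:SSL}.

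The delicate step is controlling $\EE_s \delta A_{s,u,t}$. A second application of the fundamental theorem of calculus together with the tower property and LND yields
\[
\EE_s \delta A_{s,u,t} = \int_0^1\!\int_u^t \EE_s\bigl[g_s P_{|r-s|^{2H}} Dh_r(\psi^\lambda_s + \EE_s W_r) - g_u P_{|r-u|^{2H}} Dh_r(\psi^\lambda_u + \EE_u W_r)\bigr]\,\dd r\,\dd\lambda.
\]
I plan to split the bracket into two contributions. The first extracts the time-increment $g_s - g_u = -g_{s,u}$ and is bounded by $\|g_{s,u}\|_{L^m_\omega}\cdot w_h(s,t)^{1/q}|t-s|^\eps \leq w_g(s,t)^{1/\kappa} w_h(s,t)^{1/q}|t-s|^\eps$, where $w_g(s,t) := \bra{g}_{C^{\kappa-\var}_{[s,t]}L^m_\omega}^\kappa$, using only the $L^\infty$ bound on $P_{|r-u|^{2H}} Dh_r$. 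The second contribution measures the slow variation of $\psi^\lambda$ between $s$ and $u$; it is handled by inserting $\EE_s\varphi^1_u$ inside $h_r$ and exploiting the prediction condition \eqref{eq:structure_assumption_processes} to replace $\varphi^1_u - \EE_s\varphi^1_u$ by the sharp conditional bound $w(s,u)^{1/q}|u-s|^{\eps+H}$ (with $\varphi^2 = \varphi^1 - g$, the $g$-part being separately controlled by its $\kappa$-variation). Crucially, both contributions feature only the first-order derivative $Dh_r$, hence the \emph{integrable} singularity $|r-u|^{(\alpha-1)H}$ at $r=u$. Combined, they deliver
\[
\|\EE_s \delta A_{s,u,t}\|_{L^m_\omega} \lesssim \|g\|_{C^{\kappa-\var}_{[s,t]}L^m_\omega}\, w_h(s,t)^{1/q}|t-s|^\eps \bigl[w_g(s,t)^{1/\kappa} + w(s,t)^{1/q}|t-s|^{\eps+H}\bigr],
\]
which by Remark~\ref{rem:properties_controls} fits the form $w_2(s,t)^{1+\eps_2}$ with $\eps_2 = 2/\kappa - 1 > 0$.

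Feeding both bounds into Lemma~\ref{lem:SSL}, absorbing $|t-s|^H \leq T^H$ and bounding $w_g(s,t)^{1/\kappa} \leq \|g\|_{C^{\kappa-\var}_{[s,t]}L^m_\omega}$, I arrive at $\|\cA_t - \cA_s\|_{L^m_\omega} \lesssim \|g\|_{C^{\kappa-\var}_{[s,t]}L^m_\omega}\, w_h(s,t)^{1/q}|t-s|^\eps (1 + w(s,t)^{1/q}|t-s|^\eps)$. The right-hand side is of the form $W(s,t)^{1/\kappa}$ for an auxiliary control $W$, so Remark~\ref{rem:link_control_variation} upgrades this pointwise-in-$(s,t)$ increment estimate to the $C^{\kappa-\var}$ seminorm bound \eqref{eq:comparison_integrals}. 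When both $\varphi^i$ satisfy \eqref{eq:structure_assumption_processes}, the right-hand side depends on $h$ only through $\|h\|_{L^q_T\cC^\alpha_x}$, so the density of $C^\infty_b([0,T]\times\RR^\ell)$ in $L^q_T\cC^\alpha_x$ and linearity of the map $h\mapsto \cA$ extend the inequality to arbitrary $h\in L^q_T\cC^\alpha_x$. The main obstacle is realizing the splitting of $\EE_s\delta A_{s,u,t}$ without producing $D^2 h_r$ terms: a direct double Taylor expansion generates the singularity $|r-u|^{(\alpha-2)H}$ which fails to be time-integrable under our subcritical assumption $\alpha > 1 - 1/(Hq')$; only the insertion of $\EE_s\varphi^1_u$ together with the precise use of \eqref{eq:structure_assumption_processes} circumvents this threshold.
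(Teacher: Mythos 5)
The proposal and the paper share the overall SSL strategy, but they choose different germs and different versions of the sewing lemma, and this is where the proposal runs into trouble. You pick the unshifted Lemma~\ref{lem:SSL} with
\[
A_{s,t} := \EE_s \int_s^t [h_r(\varphi^1_s + W_r) - h_r(\varphi^2_s + W_r)]\,\dd r,
\]
whereas the paper uses the \emph{shifted} Lemma~\ref{lem:SSL-shift} with
\[
A_{s,t} := \EE_{s_{-(t-s)}}\int_s^t\bigl[h_r(\EE_{s_{-(t-s)}}\varphi^1_r + W_r) - h_r(\EE_{s_{-(t-s)}}\varphi^2_r + W_r)\bigr]\,\dd r,
\]
pushing the conditioning point back to $s-(t-s)$. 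This is not a cosmetic difference; it is the mechanism that resolves the very obstacle you name at the end.

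The gap is in your claim that \emph{both} contributions to $\EE_s\delta A_{s,u,t}$ involve only $Dh_r$. The first (the $g_{s,u}$ piece) does. The second does not: to produce the factor $w(s,u)^{1/q}|u-s|^{\eps+H}$ from the prediction condition~\eqref{eq:structure_assumption_processes}, you must compare $P_{|r-u|^{2H}}Dh_r$ at nearby arguments (say $\psi^\lambda_s$ vs $\psi^\lambda_u$, or $\psi^\lambda_u$ vs $\EE_s\psi^\lambda_u$); by the mean value theorem that comparison is controlled by $\|D^2P_{|r-u|^{2H}}h_r\|_{L^\infty_x}\lesssim|r-u|^{(\alpha-2)H}\|h_r\|_{\cC^\alpha_x}$, not $Dh_r$. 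Inserting $\EE_s\varphi^1_u$ relocates where the difference sits but does not eliminate the derivative. With the unshifted germ, $r$ ranges over $[u,t]$ and $|r-u|^{(\alpha-2)H}$ has a non-integrable singularity at $r=u$ whenever $\alpha<2-\tfrac1{Hq'}$, a regime permitted by Assumption~\ref{ass:drift_assumption}. The paper faces the same $D^2h$ term (it appears as $I_2$ in the published proof, via Lemma~\ref{lem:taylor}), but because the germ is conditioned at $s_2=s-(u-s)$ while $r\in[s,u]$, the distance $|r-s_2|\geq u-s\gtrsim t-s$ stays bounded away from zero on the restricted simplex $\overline{[\cS,\cT]}^3_\leq$, so the factor $|r-s_2|^{(\alpha-2)H}$ is merely $\sim(t-s)^{(\alpha-2)H}$ rather than an endpoint singularity. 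Your plan would need to either switch to Lemma~\ref{lem:SSL-shift} with the shifted germ, or supply an argument genuinely avoiding second derivatives, which as written you have not done.
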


\begin{rem}\label{rem:asymmetric}
	Estimate \eqref{eq:comparison_integrals} is asymmetric, in the fact that $\varphi^1$ is required to satisfy \eqref{eq:structure_assumption_processes}, and only the control $w$ associated to $\varphi^1$ appears, while less information is needed on $\varphi^2$.
	This will be crucial to achieve stability estimates in the style of Proposition~\ref{prop:generic_pairwise_stabillity}, as well as the uniqueness statements given in Section~\ref{sec:main_proofs}.
\end{rem}

\begin{proof}[Proof of Proposition~\ref{prop:comparison_integrals}]
	Throughout the proof we set $\psi\coloneq \varphi^2-\varphi^1$.
	We only need to consider smooth $h$, as the second part of the statement follows by passing to the limit and making use of either Lemma~\ref{lem:estimate_integrals} or Remark~\ref{rem:estimate_integrals_mixed} (with $\varphi=\varphi^1$, $\psi$ as above).
	As usual, we make use of stochastic sewing techniques, by introducing a well chosen germ $A_{s,t}$. For $i=1,2$, set
	\begin{align*}
		A^i_{s,t} &:= \EE_{s_{-(t-s)}} \int_s^t h_r(\EE_{s_{-(t-s)}} \varphi^i_r + W_r) \dd r,
		\quad A_{s,t}:=A^1_{s,t}-A^2_{s,t},\\
		\cA^i_{s,t} & := \int_s^t h_r(\varphi^i_r + W_r) \dd r, \qquad \qquad \qquad \quad \ \ \cA_{s,t}:=\cA^1_{s,t}-\cA^2_{s,t}.
	\end{align*}
	We claim that $\cA^i$ (resp. $\cA$) is the stochastic sewing of $A^i$ (resp. $A$); we check it for the slightly more challenging case $i=2$, the other one being similar.
	Let us fix $(s,t)\in [0,T]_\leq^2$ and set $s_-=s-(t-s)$. We have the trivial estimate
	\begin{equation}\label{eq:proof-stability-trivial}
		\| A^2_{s,t}\|_{L^\infty_\omega} \leq \| h\|_{L^\infty_t C^0_x} |t-s|,\quad
		| \cA^2_{s,t}\|_{L^\infty_\omega} \leq \| h\|_{L^\infty_t C^0_x} |t-s|.
	\end{equation}		
	Moreover, by addition and subtraction, the triangle inequality and recalling that $\psi\coloneq \varphi^2-\varphi^1$
	\begin{align*}
		|\EE_{s_-}\cA_{s,t}^2-\EE_{s_-} A^2_{s,t}|
		& \leq \EE_{s_-} \int_s^t |h_r(\varphi^2_r+W_r)-h_r(\EE_{s_-} \varphi^2_r+W_r)| \dd r\\
		& \leq \| h\|_{L^\infty_t C^1_b} \int_s^t \big[\EE_{s_-} |\varphi^1_r-\EE_{s_-} \varphi^1_r| + \EE_{s_-} |\psi_r-\EE_{s_-} \psi_r|\big] \dd r\\
		& \lesssim |t-s|^{1+\alpha H + \frac{1}{q'}} w(s_-,t)^{\frac{1}{q}} + \int_s^t \EE_{s_-} |\psi_r-\EE_{s_-} \psi_r| \dd r
	\end{align*}
	where we used \eqref{eq:structure_assumption_processes}.
	Taking the $L^m_\omega$-norm on both sides, using Minkowski's inequality and the fact that $\| \EE_{s_-}|\psi_r-\EE_{s_-} \psi_r|\|_{L^m_\omega}\leq \| \psi_r-\EE_{s_-} \psi_r\|_{L^m_\omega} \lesssim \| \psi_r- \psi_{s_-}\|_{L^m_\omega}$ thanks to \eqref{eq:pseudo_projection}, we arrive at
	\begin{equation}\label{eq:proof-stability-trivial2}
		\|\EE_{s_-}\cA_{s,t}^2-\EE_{s_-} A^2_{s,t}\|_{L^m_\omega} \lesssim |t-s|^{1+\alpha H + \frac{1}{q'}}\, w(s_-,t)^{\frac{1}{q}} + |t-s|\, w_{\psi}(s_-,t)^{\frac{1}{\kappa}}
	\end{equation}
	where we set $w_{\psi}(s',t')=\bra{\psi}_{C^{\kappa-\var}_{[s',t']} L^m_\omega}^\kappa$.
	Estimates \eqref{eq:proof-stability-trivial}-\eqref{eq:proof-stability-trivial2} imply the validity of conditions \eqref{eq:stoch_sewing_bnd_1}-\eqref{eq:stoch_sewing_bnd_2} from Lemma~\ref{lem:SSL-shift}, for $m=n$, and so conclude the identification of $\mathcal{A}_{s,t}^2$ as the stochastic sewing of $A_{s,t}^2$ (similarly for $\mathcal{A}_{s,t}$ and $A_{s,t}$).
	
	Our aim is now to obtain suitable estimates of the form \eqref{eq:stoch_sewing_cond_1}-\eqref{eq:stoch_sewing_bnd_2} on $A_{s,t}$, in order to invoke again Lemma~\ref{lem:SSL-shift} for $m=n$ and in particular deduce \eqref{eq:stoch_sewing_bnd_3}.
	
	First of all, it holds
	\begin{equation}\label{eq:proof-stability-0}\begin{split}
			\| A_{s,t}\|_{L^m_\omega}
			& = \bigg\|  \int_s^t [ P_{|r-s_-|^{2H}} h_r(\EE_{s_-} \varphi^1_r + W_r) - P_{|r-s_-|^{2H}} h_r(\EE_{s_-}\varphi^2_r + W_r)] \dd r \bigg\|_{L^m_\omega}\\
			& \leq \int_s^t \| P_{|r-s_-|^{2H}} h_r\|_{C^1_x}\, \| \EE_{s_-}\varphi^1_r - \EE_{s_-} \varphi^2_r\|_{L^m_\omega} \dd r\\
			& \lesssim \int_s^t |r-s_- |^{(\alpha-1) H}  \| h_r\|_{\cC^\alpha_x} \| \psi_r\|_{L^m_\omega} \dd r\\
			& \lesssim |t-s|^{(\alpha-1)H+\frac{1}{q'}} \Big(\int_s^t \| h_r\|_{\cC^\alpha_x}^q \dd r\Big)^{\frac{1}{q'}} \sup_{r\in [s,t]} \| \psi_r\|_{L^m_\omega}\\
			& = |t-s|^\eps\, w_h(s,t)^{\frac{1}{q}} \sup_{r\in [s,t]} \| \psi\|_{L^m_\omega};
	\end{split}\end{equation}
	above we consecutively applied the LND property of the fBm $W$ and heat kernel estimates as in \eqref{eq:LND+heat}, the fact that $|r-s_-|=|r-s+(t-s)|\geq |t-s|$, H\"older's inequality and the definition of $\eps$ from \eqref{eq:defn_kappa_eps}.
	Next, we need to estimate $\EE_{s-(t-s)} \delta A_{s,u,t}$ for fixed $(s,u,t)\in \overline{[S,T]}^3_\leq $.
	Let us set $s_1=s-(t-s)$, $s_2=s-(u-s)$, $s_3=u-(t-u)$, $s_4=s$, $s_5=u$, $s_6=t$; these points are ordered according to their indices, except for $s_3$ and $s_4$, for which $s_4\leq s_3$ might happen, but this plays no role whatsoever.
	With this definition $\EE_{s-(t-s)} \delta A_{s,u,t}=\EE_{s_1} \delta A_{s_4,s_5,s_6}$; after an elementary rearrangement, one finds $\EE_{s_1} \delta A_{s_4,s_5,s_6} = I + J$
	for
	\begin{align*}
		& I:= \EE_{s_1} \EE_{s_2} \int_{s_4}^{s_5} \big[ h_r(\EE_{s_1} \varphi^1_r + W_r) - h_r(\EE_{s_1} \varphi^2_r + W_r) - h_r(\EE_{s_2} \varphi^1_r + W_r) + h_r(\EE_{s_2} \varphi^2_r + W_r) \big] \dd r,\\
		& J:= \EE_{s_1} \EE_{s_3} \int_{s_5}^{s_6} \big[ h_r(\EE_{s_1} \varphi^1_r + W_r) - h_r(\EE_{s_1} \varphi^2_r + W_r) - h_r(\EE_{s_3} \varphi^1_r + W_r) + h_r(\EE_{s_3} \varphi^2_r + W_r) \big] \dd r.
	\end{align*}
	We only show how to estimate $I$, since the procedure for $J$ is similar. By bringing the conditional expectation $\EE_{s_2}$ inside, using the LND property of $W$ as in \eqref{eq:LND+heat} along with Lemma~\ref{lem:taylor} from Appendix \ref{app:useful}, we obtain
	\begin{align*}
		|I| & \leq \EE_{s_1} \int_{s_4}^{s_5} \| D_x P_{|r-s_2|^{2H}} h_r\|_{L^\infty_x}\, |\EE_{s_2} (\varphi^1_r-\varphi^2_r) - \EE_{s_1}(\varphi^1_r-\varphi^2_r)| \dd r\\
		& \quad + \EE_{s_1} \int_{s_4}^{s_5} \| D^2_x P_{|r-s_2|^{2H}} h_r\|_{L^\infty_x}\, |\EE_{s_2} \varphi^1_r-\EE_{s_1} \varphi^1_r|\, |\EE_{s_1} (\varphi^1_r-\varphi^2_r)| \dd r.
	\end{align*}
	Recall the notations for $\psi$ and $w_\psi$; taking the $L^m_\omega$ norm on both sides and applying heat kernel estimates, we find
	\begin{equation*}\begin{split}
			\|I\|_{L^m_\omega}
			& \leq \int_{s_4}^{s_5} |r-s_2|^{(\alpha-1)H} \| h_r\|_{\cC^\alpha_x} \|\EE_{s_2} \psi_r - \EE_{s_1}\psi_r\|_{L^m_\omega} \dd r\\
			& \quad + \int_{s_4}^{s_5} |r-s_2|^{(\alpha-2)H} \| h_r\|_{\cC^\alpha_x} \| \EE_{s_1}|\EE_{s_2} \varphi^1_r-\EE_{s_1} \varphi^1_r| \|_{L^\infty_\omega} \|\psi_r\|_{L^m_\omega} \dd r\\
			& =: I_1 + I_2.
	\end{split}\end{equation*}
	Next, using various basic properties of conditional expectations, observe that one has
	\begin{equation}\begin{split}\label{eq:proof-stability-2}
			\|\EE_{s_2} \psi_r - \EE_{s_1}\psi_r\|_{L^m_\omega}
			\leq \| \psi_r - \EE_{s_1}\psi_r\|_{L^m_\omega}
			\lesssim \| \psi_r-\psi_{s_1}\|_{L^m_\omega}
			\leq w_\psi(s_1,r)^{\frac{1}{\kappa}}.
	\end{split}\end{equation}
	By \eqref{eq:proof-stability-2} and H\"older's inequality, we can estimate $I_1$ by
	\begin{align*}
		I_1 \lesssim \bigg( \int_{s_4}^{s_5} |r-s_2|^{(\alpha-1)H {q^\prime}} \dd r\bigg)^{\frac{1}{{q^\prime}}}\, w_h (s_4,s_5)^{\frac{1}{q}}\, w_\psi(s_2,t)^{\frac{1}{\kappa}}
		\lesssim |t-s|^\eps w_h(s_1,t)^{\frac{1}{q}} w_\psi(s_1,t)^{\frac{1}{\kappa}}.
	\end{align*}
	Again by basic properties of conditional expectation, it holds
	\begin{equation}\label{eq:proof-stability-3}
		\EE_{s_1}|\EE_{s_2} \varphi^1_r-\EE_{s_1} \varphi^1_r|
		= \EE_{s_1}|\EE_{s_2} ( \varphi^1_r- \EE_{s_1}\varphi^1_r)|
		\leq \EE_{s_1} |\varphi^1_r- \EE_{s_1}\varphi^1_r|
		\leq w(s_1,r)^{\frac{1}{q}} |r-s_1|^{\alpha H + \frac{1}{{q^\prime}}},
	\end{equation}
	where in the last passage we used the assumption \eqref{eq:structure_assumption_processes}.
	By \eqref{eq:proof-stability-3} and H\"older's inequality, we can estimate $I_2$ by 
	\begin{align*}
		I_2 & \lesssim \bigg( \int_{s_4}^{s_5} |r-s_2|^{(\alpha-2)H {q^\prime}} \dd r\bigg)^{\frac{1}{{q^\prime}}}\, w_h(s_4,s_5)^{\frac{1}{q}}\, w(s_1,s_5)^{\frac{1}{q}} |s_5-s_1|^{\alpha H + \frac{1}{{q^\prime}}} \sup_{r\in [s_4,s_5]} \| \psi_r\|_{L^m_\omega}\\
		& \lesssim |s_4-s_2|^{(\alpha-2)H+\frac{1}{{q^\prime}}}\, |s_5-s_4|\, |s_5-s_1|^{\alpha H + 1/{q^\prime}}\, w_h(s_4,s_5)^{\frac{1}{q}}\, w(s_1,s_5)^{\frac{1}{q}} \sup_{r\in [s_4,s_5]} \| \psi_r\|_{L^m_\omega}\\
		& \lesssim |t-s|^{2\eps} w_h(s_{1},t)^{\frac{1}{q}}\, w(s_{1},t)^{\frac{1}{q}} \sup_{r\in [s,t]} \| \psi_r\|_{L^m_\omega}.
	\end{align*}
	In the above passages, we exploited crucially the fact that $s_4$ is sufficiently distant from $s_2$, so that no infinite integrals appear, as well as the fact that
	\begin{align*}
		|s_5-s_4|\sim |s_4-s_2|\sim |s_5-s_1|\sim |t-s|.
	\end{align*}
	Bringing together the above estimates, and performing similar ones for $J$, overall yields
	\begin{equation}\label{eq:proof-stability-4}\begin{split}
			\| \EE_{s_1} \delta A_{s,u,t,}\|_{L^m_\omega}
			& \lesssim |t-s|^\eps w_h(s_1,t)^{\frac{1}{q}} \\
			& \quad \times \bigg( w_\psi(s_{1},t)^{\frac{1}{\kappa}} + |t-s|^{\eps}\, w(s_1,t)^{\frac{1}{q}} \sup_{r\in [s,t]} \| \psi_r\|_{L^m_\omega}\bigg)
	\end{split}\end{equation}
	where the exponents appearing on the controls in \eqref{eq:proof-stability-4} satisfy $1/q+1/\kappa> 2/q\geq 1$.
	
	Estimates \eqref{eq:proof-stability-0} and \eqref{eq:proof-stability-4} therefore verify that $A$ satisfies the assumptions of Lemma~\ref{lem:SSL-shift} for $m=n$, which together with the knowledge of the associated stochastic sewing $\mathcal{A}_{\,\cdot\,} = \int_0^\cdot [h_r(\varphi^1_r+W_r)-h_r(\varphi^2_r+W_r)] \dd r$ finally yields (cf. eq.~\eqref{eq:stoch_sewing_bnd_3})
	\begin{equation}\label{eq:proof-stability-5}\begin{split}
			\bigg\| \int_s^t&  [h_r(\varphi^1_r+W_r) -h_r(\varphi^2_r+W_r)] \dd r \bigg\|_{L^m_\omega}\\
			& \lesssim |t-s|^\eps\, w_h(s,t)^{\frac{1}{q}} \Big( \sup_{r\in [s,t]} \| \psi_r \|_{L^m_\omega} +  \llbracket \psi\rrbracket_{C^{\kappa-\var}_{[s,t]} L^m_\omega} + |t-s|^{\eps}\, w(s,t)^{\frac{1}{q}} \sup_{r\in [s,t]} \| \psi_r \|_{L^m_\omega} \Big)
	\end{split}\end{equation}
	where the estimate holds uniformly over $[s,t]\subset [0,T]$.
	Finally, the upgrade from \eqref{eq:proof-stability-5} to~\eqref{eq:comparison_integrals} follows from the definition of $\| \psi\|_{C^{\kappa-\var}_{[s,t]} L^m_\omega}$ and repeated use of Remarks~\ref{rem:properties_controls} and~\ref{rem:link_control_variation}.
\end{proof}

The next result, in the spirit of Lemma~\ref{lem:estimate_integrals} and Proposition~\ref{prop:comparison_integrals}, gives similar statements in the case of measure dependent maps; we recall the notation $b(x,\mu)$ from \eqref{eq:measure_functional}.
Here, the statement is for $x\in\RR^d$, the state space of the particle dynamics \eqref{eq:intro_IPS}.
%

%
%
\begin{lem}\label{lem:estimate_measure_flow_integrals}
	Let $(\alpha,q,H)$ satisfy Assumption \eqref{ass:drift_assumption}, $(\kappa,\eps)$ defined by \eqref{eq:defn_kappa_eps}, and $b:[0,T]\times\RR^{2d}\to\RR$ be a smooth function.
	Further, assume we are given $\varphi$, $\tilde\varphi$ satisfying \eqref{eq:structure_assumption_processes} for the same control $w$ and define a flow of measures $t\mapsto \mu_t \in \cP(\RR^d)$ by $\mu_t :=\cL(\tilde{\varphi}_t +W_t)$.
	Then, for any $m\in [1,\infty)$ there exists a constant $C\coloneqq C(m,d,\alpha,q,H)>0$ such that, for any $(s,t)\in [0,T]^2_{\leq}$,
	\begin{equation}\label{eq:estimate_measure_integrals}
		\bigg\llbracket \int_0^\cdot b_r(\varphi_r+W_r,
		\mu_r) \dd r \bigg\rrbracket_{C^{\kappa-\var}_{[s,t]} L^m_\omega}
		\leq C \bigg( \int_s^t \| b_r\|_{\cC^{\alpha-1}_x}^q\, \dd r\bigg)^\frac{1}{q} |t-s|^\eps \big[ 1 + w(s,t)^{\frac{1}{q}} |t-s|^\eps\big].
	\end{equation}
	In addition, given $\varphi^1$, $\tilde{\varphi}^1$ both satisfying \eqref{eq:structure_assumption_processes} and $\varphi^2$, $\tilde{\varphi}^2$ such that $\varphi^i-\tilde\varphi^i \in C^{\kappa-\var}_t L^m_\omega$, let us define for $i=1,2$
	\begin{align*}
		\mu^i:= \cL(\tilde{\varphi}^i_t+W_t), \quad
		Z^i_\cdot:= \int_0^\cdot b_r(\varphi^i_r+W_r, \mu^i_r)\dd r,\quad
		w_{b,\alpha}=\int_s^t \| b_r\|_{\cC^\alpha_x}^q \dd r.
	\end{align*}
	Then for any $m\in [2,\infty)$, there exists a constant $C\coloneqq C(m,d,\alpha,q,H)>0$ such that for any $(s,t)\in [0,T]^2_\leq$ it holds
	\begin{equation}\label{eq:comparison_measure_integrals}\begin{split}
		\llbracket Z^1-Z^2 \rrbracket_{C^{\kappa-\var}_{[s,t]} L^m_\omega}
			& \leq C \left( \| \varphi^1-\varphi^2\|_{C^{\kappa-\var}_{[s,t]} L^m_\omega}+ \| \tilde{\varphi}^1-\tilde{\varphi}^2\|_{C^{\kappa-\var}_{[s,t]} L^m_\omega}\right)
			\\
			&\qquad\quad  \times w_{b,\alpha}(s,t)^{\frac{1}{q}} |t-s|^{\eps}\big(1 + w(s,t)^{\frac{1}{q}} |t-s|^{\eps} \big).
	\end{split}\end{equation}
	As before, by linearity and density, the inequality \eqref{eq:comparison_measure_integrals} extends to any $b\in L^q_T \cC^\alpha_x$.
\end{lem}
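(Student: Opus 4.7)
The plan is to reduce both estimates to those already established in Lemma~\ref{lem:estimate_integrals} and Proposition~\ref{prop:comparison_integrals}, by first ``integrating out'' the measure argument and then, when comparing two measure flows, using a doubled probability space to re-express the measure-difference as a difference of processes on $\RR^{2d}$. For \eqref{eq:estimate_measure_integrals}, I would set $\tilde b_r(x):=b_r(x,\mu_r)=\int b_r(x,y)\dd\mu_r(y)$, so that $\int_0^\cdot b_r(\varphi_r+W_r,\mu_r)\dd r=\int_0^\cdot \tilde b_r(\varphi_r+W_r)\dd r$. Since $b$ is smooth and $\mu_r$ is a probability measure, $\tilde b_r$ is smooth in $x$, and by Lemma~\ref{lem:besov_dimension_reduction} (or equivalent duality against a Schwartz test function combined with the Littlewood--Paley characterization of Besov norms) one has $\|\tilde b_r\|_{\cC^{\alpha-1}_x(\RR^d)}\lesssim \|b_r\|_{\cC^{\alpha-1}_x(\RR^{2d})}$ uniformly in $\mu_r$. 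Then \eqref{eq:estimate_measure_integrals} follows directly from Lemma~\ref{lem:estimate_integrals} applied to $h=\tilde b$ and the process $\varphi$, which satisfies \eqref{eq:structure_assumption_processes}.

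For the comparison estimate \eqref{eq:comparison_measure_integrals}, I would first perform the telescoping decomposition
\begin{equation*}
b_r(\varphi^1_r+W_r,\mu^1_r)-b_r(\varphi^2_r+W_r,\mu^2_r)
= \underbrace{[b_r(\varphi^1_r+W_r,\mu^2_r)-b_r(\varphi^2_r+W_r,\mu^2_r)]}_{=:A_r}
+ \underbrace{[b_r(\varphi^1_r+W_r,\mu^1_r)-b_r(\varphi^1_r+W_r,\mu^2_r)]}_{=:B_r}.
\end{equation*}
For the $x$-difference $A_r$, I would apply Proposition~\ref{prop:comparison_integrals} directly with drift $\tilde b^2_r(x):=b_r(x,\mu^2_r)$; since $\varphi^1$ satisfies \eqref{eq:structure_assumption_processes} and $\|\tilde b^2_r\|_{\cC^\alpha_x(\RR^d)}\lesssim \|b_r\|_{\cC^\alpha_x(\RR^{2d})}$, this yields the desired bound controlled by $\|\varphi^1-\varphi^2\|_{C^{\kappa-\var}_{[s,t]}L^m_\omega}$.

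The real content is in $B_r$, where I would enlarge the probability space as follows. Take an auxiliary filtered probability space $(\Omega',\cF',\FF',\PP')$ carrying an $\FF'$-fBm $W'$ of parameter $H$ and a coupling $(\tilde\varphi^{1,\prime},\tilde\varphi^{2,\prime})$ with the same joint law as $(\tilde\varphi^1,\tilde\varphi^2)$; on the product space $(\Omega\times\Omega',\cF\otimes\cF',\PP\otimes\PP')$ with the product filtration, the concatenated noise $\mathbf{W}_r:=(W_r,W'_r)$ is an $\RR^{2d}$-valued fBm (and inherits the LND property \eqref{eq:LND_fBm} with the same constant $c_H$), while the $\RR^{2d}$-valued process $\Psi^i_r:=(\varphi^1_r,\tilde\varphi^{i,\prime}_r)$ satisfies \eqref{eq:structure_assumption_processes} for $i=1$ with a control of the form $2w$ (by independence the off-diagonal conditional expectations factor). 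Since $\mu^i_r=\cL(\tilde\varphi^{i,\prime}_r+W'_r)$, Fubini gives $B_r=\EE'[b_r(\Psi^1_r+\mathbf{W}_r)-b_r(\Psi^2_r+\mathbf{W}_r)]$. Taking $L^m_\omega$ norms and applying Jensen's inequality in $\omega'$ (which commutes with the supremum defining $\bra{\cdot}_{C^{\kappa-\var} L^m}$ via its partition characterization), I reduce to estimating $\int_s^t [b_r(\Psi^1_r+\mathbf{W}_r)-b_r(\Psi^2_r+\mathbf{W}_r)]\dd r$ in $L^m(\Omega\times\Omega')$, to which Proposition~\ref{prop:comparison_integrals} (on $\RR^{2d}$, for the noise $\mathbf{W}$ and drift $b$) applies with $\Psi^1-\Psi^2=(0,\tilde\varphi^{1,\prime}-\tilde\varphi^{2,\prime})$, producing the factor $\|\tilde\varphi^1-\tilde\varphi^2\|_{C^{\kappa-\var}_{[s,t]}L^m_\omega}$. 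Combining with the bound for $A_r$ gives \eqref{eq:comparison_measure_integrals}; the extension to $b\in L^q_T \cC^\alpha_x$ follows by density, as before.

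The main obstacle I anticipate is the clean verification that the enlarged setup inherits all structural assumptions: that $\mathbf{W}$ is a bona fide $\FF\otimes\FF'$-fBm with the correct LND constant, and that $\Psi^1$ satisfies \eqref{eq:structure_assumption_processes} on the doubled filtration (so that Proposition~\ref{prop:comparison_integrals} applies verbatim). These are standard consequences of independence, but care is needed with conditional expectations across the product filtration and with the choice of control. A secondary technical point is the norm inequality $\|\tilde b_r\|_{\cC^{\beta}_x(\RR^d)}\lesssim \|b_r\|_{\cC^{\beta}_x(\RR^{2d})}$ for $\beta<0$, which one handles by duality: for a Schwartz $\phi$, one writes $\langle\tilde b_r,\phi\rangle=\langle b_r,\phi\otimes \mu_r\rangle$ and uses that convolution/tensoring with a probability measure does not enlarge the relevant Besov seminorms.
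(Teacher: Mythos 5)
Your reduction of $\int_0^\cdot b_r(\varphi_r+W_r,\mu_r)\dd r$ to a one–dimensional integral $\int_0^\cdot \tilde b_r(\varphi_r+W_r)\dd r$ with $\tilde b_r(x):=b_r(x,\mu_r)$ hinges on the inequality $\|\tilde b_r\|_{\cC^{\alpha-1}_x(\RR^d)}\lesssim \|b_r\|_{\cC^{\alpha-1}_x(\RR^{2d})}$ uniformly in the probability measure $\mu_r$, and the same inequality (at regularity $\alpha$) is needed for your $A_r$ term in the comparison estimate. This inequality is false when $\alpha-1<0$. To see it, take $b_\eps(x,y):=g_\eps(y)$ where $g_\eps(y)=\eps^{-d}g(y/\eps)$ is a standard mollifier; by Lemma~\ref{lem:besov_dimension_reduction}, $\sup_\eps\|b_\eps\|_{B^\beta_{\infty,\infty}(\RR^{2d})}<\infty$ whenever $\beta\leq -d$. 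If $\mu$ concentrates near the origin, then $\tilde b_\eps(x)=\int g_\eps(y)\,\mu(\dd y)\to g_\eps(0)=\eps^{-d}g(0)$, a constant whose $\cC^\beta(\RR^d)$-norm is comparable to its absolute value and hence blows up as $\eps\to0$. The duality heuristic you propose fails for the same reason: $\phi\otimes\mu$ does not have a $B^{-\beta}_{1,1}(\RR^{2d})$-norm controlled by $\|\phi\|_{B^{-\beta}_{1,1}(\RR^d)}$ when $\mu$ can be arbitrary (e.g.\ a Dirac mass), because the $2d$ Littlewood--Paley block at frequency $2^j$ picks up low $x$-frequency combined with high $y$-frequency contributions. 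In the lemma at hand the constant cannot depend on $\mu_r$, and in any case $\mu_0=\cL(\tilde\varphi_0)$ may be singular, so this step genuinely fails.

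This is not a cosmetic gap: the object $x\mapsto b_r(x,\mu_r)$ is precisely what Remark~\ref{rem:intro_MKV} warns cannot be made sense of a priori for distributional $b$, and the whole point of the lemma is to avoid ever forming it. The correct route — which your treatment of $B_r$ already gestures towards, and which is what the paper does — is to apply the doubling trick to the \emph{entire} integral, without first telescoping off the $x$-difference. Concretely, enlarge the probability space with an independent copy $(\varphi',W')$ of $(\tilde\varphi,W)$, so $\mu_r=\cL(\varphi'_r+W'_r)$, and write
\begin{equation*}
  \int_s^t b_r(\varphi_r+W_r,\mu_r)\dd r
  = \EE\Big[ \int_s^t b_r(\varphi_r+W_r,\varphi'_r+W'_r)\dd r \,\Big|\,\cG_t \Big],
\end{equation*}
where $\cG_t$ is generated by $(\varphi,W)$; then conditional Jensen reduces the $L^m_\omega$ estimate to that of a genuine $\RR^{2d}$-valued integral, to which Lemma~\ref{lem:estimate_integrals} (resp.\ Proposition~\ref{prop:comparison_integrals}) applies verbatim with the concatenated noise $(W,W')$ and the concatenated process $(\varphi,\varphi')$. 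For the comparison estimate one takes $\Psi^i=(\varphi^i,\varphi'^{\,i})$ and controls $\Psi^1-\Psi^2$ all at once — there is no need to split into an $A_r$ and a $B_r$ contribution, and doing so forces the false embedding. Your verification that the product space inherits the fBm and LND structure, and that conditional expectation contracts the $C^{\kappa-\var}L^m$ seminorm, is correct; it is the reduction to a frozen one–dimensional drift that must be abandoned.
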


\begin{proof}
	The proofs of \eqref{eq:estimate_measure_integrals} and \eqref{eq:comparison_measure_integrals} follow respectively from Lemma~\ref{lem:estimate_integrals} and Proposition~\ref{prop:comparison_integrals}, up to applying them in doubled dimension and employing a conditional expectation trick. For this reason, we will only present the proof of \eqref{eq:estimate_measure_integrals}, \eqref{eq:comparison_measure_integrals} being similar.
	
	Up to enlarging the probability space, we can construct a pair $(\varphi',W')$, distributed as $(\tilde\varphi, W)$ and independent from $(\varphi,W)$, so that $\mu_t=\cL(\varphi'_t+W'_t)$; hence, letting $\mbG \coloneq  \{\cG_t\}_{t\in [0,T]}$ denote the natural filtration generated by $(\varphi,W)$, appealing to independence it holds that
	\begin{equation*}
		\int_s^{t} b_r (\varphi_r + W_r ,\mu_r) \dd r = \EE\left[ \int_s^t b_r(\varphi_r +W_r,\varphi'_r + W'_r)  \dd r \,\bigg|\,\cG_t \right].
	\end{equation*}
	For all $m\in [1,\infty)$, conditional Jensen's inequality yields
	\begin{equation}\label{eq:conditional_bound}\begin{split}
			\left\| \int_s^t b_r(\varphi_r+W_r,\mu_r) \dd r \,\right\|_{L^m_\omega}
			& = \left\| \EE\left[\int_s^t b_r(\varphi_r +W_r,\varphi'_r + W'_r) \dd r \,\bigg|\,\cG_t\right]\right\|_{L^m_\omega}\\
			& \leq \left\| \int_s^t b_r(\varphi_r+W_r,\varphi'_r+W'_r) \dd r\,\right\|_{L^m_\omega}.
	\end{split}\end{equation}
	Define the $\RR^{2d}$-valued processes $\tilde\varphi=(\varphi,\varphi')$, $\tilde W=(W,W')$; it's easy to check that by construction  $\tilde{W}$ is an $\RR^{2d}$-valued fBm and $\tilde{\varphi}$ satisfies \eqref{eq:structure_assumption_processes}. Therefore we can apply Lemma~\ref{lem:estimate_integrals} (in dimension $\ell=2d$, for $h=b$) to deduce that
	\begin{equation}\label{eq:conditional_bound_2}
		\left\| \int_s^t b_r(\varphi_r+W_r,\varphi'_r+W'_r) \dd r\,\right\|_{L^m_\omega}
		\lesssim \bigg( \int_s^t \| b_r\|_{\cC^{\alpha-1}_x}^q\, \dd r\bigg)^\frac{1}{q} |t-s|^\eps \big[ 1 + w(s,t)^{\frac{1}{q}} |t-s|^\eps\big].
	\end{equation}
	Claim~\eqref{eq:estimate_measure_integrals} follows by combining~\eqref{eq:conditional_bound} and~\eqref{eq:conditional_bound_2} and applying Remark~\ref{rem:link_control_variation} as usual.	
\end{proof}
\begin{rem}\label{rem:estimate_integrals_v0_mkv}
	Going through the same argument, one similarly obtains an analogue of Lemma~\ref{lem:estimate_integrals_v0}.
	Namely, when $\alpha<0$, let $\varphi,\,\tilde\varphi\in \dot C^{\kappa-\var}_T L^m_\omega$, $\mu_t=\cL(\tilde{\varphi}_t +W_t)$ and, for $b\in C^\infty_b$, define $Z^b := \int_0^\cdot b_r(\varphi_r+W_r,\mu_r)$. Then
	\begin{equation}\label{eq:estimate_measure_integrals_v0}
		\big\llbracket Z^b  \big\rrbracket_{C^{\kappa-\var}_{[s,t]} L^m_\omega}
		\lesssim \bigg( \int_s^t \| b_r\|_{\cC^{\alpha}_x}^q\, \dd r\bigg)^\frac{1}{q} |t-s|^\eps \,\Big[ |t-s|^H + \bra{\varphi}_{C^{\kappa-\var}_{[s,t]} L^m_\omega} + \bra{\tilde\varphi}_{C^{\kappa-\var}_{[s,t]} L^m_\omega}\Big];
	\end{equation}
	combined with Proposition~\ref{prop:kolmogorov_sobolev}, this implies that $Z^b\in L^m_\omega W^{\beta,q}_T$ for any $\beta<1/\kappa=(\alpha-1)H+1$, along with the estimate
	\begin{equation}\label{eq:estimate_measure_integrals_sobolev}
		\Big\| \llbracket Z^b  \rrbracket_{W^{\beta,q}_T} \Big\|_{L^m_\omega}
		\lesssim \| b\|_{L^q_T \cC^\alpha_x} T^{\frac{1}{\kappa}-\beta} \,\big( T^H + \bra{\varphi}_{C^{\kappa-\var}_T L^m_\omega} + \bra{\tilde\varphi}_{C^{\kappa-\var}_T L^m_\omega}\big).
	\end{equation}
	By the usual density argument, the definition of $Z^b$ can then be uniquely extended to any $b\in L^q_T \cC^\alpha_x$, and estimates \eqref{eq:estimate_measure_integrals_v0}-\eqref{eq:estimate_measure_integrals_sobolev} still hold.
\end{rem}

\section{The equations with regular drifts}\label{sec:reg.drift}

The overall aim of this paper is to prove quantitative propagation of chaos estimates for non-Lipschitz, possibly distributional interactions $b\in L^q_T \cC^\alpha_x$.
To this end, we will however start by considering smooth drifts $b$, where the result is classical and can be easily established by Sznitman's direct comparison argument, cf. \cite{sznitman1991topics} and Appendix \ref{app:lipschitz_p_system}.
The goal of this section is to study properties of solutions in the regular case, and in particular develop a priori estimates which only depend on $\| b\|_{L^q_T \cC^\alpha_x}$.
These will culminate in suitable stability estimates (cf. Proposition~\ref{prop:generic_pairwise_stabillity} and Corollary~\ref{cor:p_system_drift_stable}), which will eventually allow us to consider distributional drifts by passing to the limit along regular approximations; the latter will be performed in Section~\ref{sec:main_proofs}.

We fix some notation. For $N\geq 2$, we will consider pairwise interacting particle systems $X^{(N)}= \{X^{i;N}\}_{i=1}^N$ solving
\begin{equation}\label{eq:p_system_pairwise}
	X^{i;N}_t = X_0^i + \frac{1}{N-1}\sum_{\substack{j=1\\ j\neq i}}^N\int_0^t b_s(X^{i;N}_s,X^{j;N}_s) \dd s + W^i_t,\qquad \text{for all } i=1,\ldots,N,
\end{equation}
where $\{W^i\}_{i=1}^\infty$ is a family of i.i.d fBms with $H\in (0,\infty)\setminus \NN$ and $\{X^i_0\}_{i=1}^\infty$ is another family of $\mbR^d$ random variables independent of $\{W^i\}_{i=1}^\infty$.
We will denote by $\FF$ (the standard augmentation of) the $\sigma$-algebra $\cF_t=\sigma(X^i_0,\, W^i_r: r\in [0,t],\, i\in\NN)$. 
In this section, we will always consider $b\in C^\infty_b(\RR_+\times\RR^{2d};\RR^d)$, which we will abbreviate as $b\in C^\infty_b$.
For notational simplicity, whenever the value $N$ is clear, we will write just $\sum_{j\neq i}$ in place of $\sum_{j=1,j\neq i}^N$ in expressions like \eqref{eq:p_system_pairwise}.

\subsection{A Priori and Stability Estimates for Interacting Particle Systems}\label{subsec:pairwise_a_priori}
We plan to establish a priori bounds which are uniform in $N$ and only depend on $\| b\|_{L^q_T \cC^\alpha_x}$, for $(\alpha,q,H)$ satisfying  \ref{ass:drift_assumption}.
Without further specification, we define the control
\begin{equation}\label{def.wbCa}
	w_b(s,t) := \int_s^t \|b_r\|^q_{\cC^\alpha_x}\dd r = \|b\|^q_{L^q_{[s,t]}\cC^\alpha_x}.
\end{equation}
Given a solution $X^{(N)}$ to \eqref{eq:p_system_pairwise}, we define the remainder process $\theta^{(N)}:= \{\theta^{i;N}\}_{i=1}^N$ by
\begin{equation}\label{eq:gen_remainder_def}
	\theta^{i;N}_t := X^{i;N}_t-W^i_t = X^i_0 + \frac{1}{N-1}\sum_{j\neq i}\int_0^t b_s(\theta^{i;N}_s+W^i_s,\theta^{j;N}_s+W^j_s) \dd s.
\end{equation}
The following two lemmas demonstrate a priori bounds on solutions to \eqref{eq:p_system_pairwise}, uniformly in system size $N$.
Lemma~\ref{lem:reg_apri} treats the regular case $\alpha>0$, while Lemma~\ref{lem:remainder_singular_apriori} treats the singular case $\alpha<0$ (and the borderline value $\alpha=0$).
Their proofs closely follow those of \cite[Lem.~2.1-2.4]{galeati2022subcritical} respectively.
\begin{lem}\label{lem:reg_apri}
	Let $N\geq 2$, $b\in C^\infty_b$ and $(\alpha,q,H)$ satisfy Assumption \ref{ass:drift_assumption}; additionally assume that $\alpha>0$.
	Let $X^{(N)}$ be a solution to \eqref{eq:p_system_pairwise}, $\theta^{(N)}$ be the associated remainder defined by \eqref{eq:gen_remainder_def}. For given $m\in [1,\infty)$, define $\llbracket \theta^{(N)}\rrbracket_m$ as the smallest deterministic constant such that
	\begin{equation}\label{eq:remainder_reg_apriori}
		\sup_{i=1,\ldots,N} \left\|\|\theta^{i;N}_t-\EE_s\theta^{i;N}_t\|_{L^m|\cF_s}\right\|_{L^\infty_\omega} \leq \llbracket \theta^{(N)}\rrbracket_m |t-s|^{\alpha H + \frac{1}{{q^\prime}}} w_b(s,t)^{\frac{1}{q}} \quad \forall\, (s,t)\in [0,T]^2_{\leq}.
	\end{equation}
	Then there exists a constant $C\coloneqq C(\alpha,q,H,d,T)>0$ such that
	\begin{equation}\label{eq:priori_estim_particle}
		\sup_{N\geq 2}\, \llbracket \theta^{(N)} \rrbracket_m \leq C\Big( 1+ \| b\|_{L^q_T \cC^\alpha_x}^{\frac{\alpha}{1-\alpha}} \Big). 
	\end{equation}
\end{lem}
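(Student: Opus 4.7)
The plan is to run a bootstrap argument on $\llbracket \theta^{(N)}\rrbracket_m$. The a priori finiteness of $\llbracket \theta^{(N)}\rrbracket_m$ for a smooth bounded drift $b$ follows from the trivial pointwise bound $|\theta^{i;N}_t - \theta^{i;N}_s|\leq \|b\|_{L^\infty_{t,x}}(t-s)$, combined with a comparison against the prescribed shape $|t-s|^{\alpha H + 1/q'} w_b(s,t)^{1/q}$ (which is non-degenerate once $b\not\equiv 0$). The core of the argument is to establish a self-improving estimate of the form
\begin{equation*}
\big\| \|\theta^{i;N}_t - \EE_s \theta^{i;N}_t\|_{L^m|\cF_s}\big\|_{L^\infty_\omega}
\leq C\,|t-s|^{\alpha H + \tfrac{1}{{q^\prime}}} w_b(s,t)^{\tfrac{1}{q}}\big(1 + \|b\|_{L^q_T \cC^\alpha_x}^{\alpha}\llbracket\theta^{(N)}\rrbracket_m^{\alpha}\big),
\end{equation*}
valid uniformly in $i$, $N$ and $(s,t)\in [0,T]^2_\leq$. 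Since the self-referential term appears at the power $\alpha<1$, Young's inequality with conjugate exponents $1/\alpha, 1/(1-\alpha)$ absorbs half of the right-hand side into the left, leaving behind $\|b\|_{L^q_T \cC^\alpha_x}^{\alpha/(1-\alpha)}$ and producing~\eqref{eq:priori_estim_particle} with an $N$-independent constant.

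The self-improving estimate will be proven by applying \cref{lem:SSL-shift} (with $m=n$) to the shifted germ
\begin{equation*}
A^i_{s,t} \coloneqq \frac{1}{N-1}\sum_{j\neq i} \EE_{s_-}\!\int_s^t b_r(X^{i;N}_r,X^{j;N}_r)\,\dd r, \qquad s_-\coloneqq s-(t-s),
\end{equation*}
whose stochastic sewing is precisely the increment $\theta^{i;N}_t - \theta^{i;N}_s$. For the first SSL condition~\eqref{eq:stoch_sewing_cond_1}, the LND identity~\eqref{eq:LND+heat} rewrites $\EE_{s_-}b_r(X^{i;N}_r,X^{j;N}_r)$ as a heat semigroup $P_{|r-s_-|^{2H}}b_r$ evaluated at $(\EE_{s_-}X^{i;N}_r,\EE_{s_-}X^{j;N}_r)$ (up to the error incurred in replacing the non-$\cF_{s_-}$-measurable remainders $\theta^{i;N}_r,\theta^{j;N}_r$ by their $\cF_{s_-}$-conditional expectations); combining $\|P_t b_r\|_{L^\infty_x}\lesssim \|b_r\|_{\cC^\alpha_x}$ (available for $\alpha>0$) with H\"older's inequality in time yields the required control. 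For the three-point condition~\eqref{eq:stoch_sewing_cond_2}, the difference $\delta A^i_{s,u,t}$ produces an extra gradient on $b_r$, and the $\alpha$-H\"older regularity allows the same replacement at a cost proportional to $\|b_r\|_{\cC^\alpha_x}\,\EE_{s_-}|\theta^{i;N}_r-\EE_{s_-}\theta^{i;N}_r|^\alpha$. By conditional Jensen and the very definition of $\llbracket\theta^{(N)}\rrbracket_m$, this error is bounded in $L^\infty_\omega$ by $\|b_r\|_{\cC^\alpha_x}\llbracket\theta^{(N)}\rrbracket_m^\alpha (r-s_-)^{\alpha(\alpha H + 1/q')} w_b(s_-,r)^{\alpha/q}$, i.e.\ exactly the desired self-referential factor at the power $\alpha$. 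Invoking bound~\eqref{eq:stoch_sewing_bnd_3} of \cref{lem:SSL-shift} and then converting the conditional fluctuation bound on $\theta^{i;N}_t - \theta^{i;N}_s$ into one on $\theta^{i;N}_t - \EE_s\theta^{i;N}_t$ via the pseudo-projection inequality~\eqref{eq:pseudo_projection} with $Z=\theta^{i;N}_s$ concludes the derivation.

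The main technical difficulty lies in the bookkeeping of the three-point estimate~\eqref{eq:stoch_sewing_cond_2}: every replacement of a remainder by its conditional expectation inside $b$ must contribute the H\"older exponent $\alpha$ (rather than $1$) on $\llbracket\theta^{(N)}\rrbracket_m$, while the time exponent must remain strictly above the SSL threshold in order for $\eps_2>0$. This is where the subcriticality condition $\alpha>1-1/(Hq')$ from \cref{ass:drift_assumption} intervenes: it guarantees that the exponents $\eps_1,\eps_2$ produced by the germ and three-point bounds are strictly positive, so that \cref{lem:SSL-shift} can be applied and the above absorption argument goes through independently of $N$.
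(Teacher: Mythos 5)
There is a genuine gap at the very first step of your argument: the \emph{a priori} finiteness of $\llbracket\theta^{(N)}\rrbracket_m$ does not follow from the trivial pointwise bound. For smooth bounded $b$, the crude bound gives $|\theta^{i;N}_{s,t}|\leq \|b\|_{L^\infty_{t,x}}|t-s|$, while the prescribed shape is $|t-s|^{\alpha H + 1/q'}w_b(s,t)^{1/q}$, which for smooth $b$ behaves like $|t-s|^{\alpha H + 1/q' + 1/q} = |t-s|^{1+\alpha H}$ as $t\to s$. Thus the ratio you are taking a supremum over behaves like $|t-s|^{-\alpha H}\to\infty$ as $t\to s$, and the supremum is in fact infinite. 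Consequently, the Young's inequality absorption step — which implicitly cancels $\frac12\llbracket\theta^{(N)}\rrbracket_m$ from both sides and therefore requires $\llbracket\theta^{(N)}\rrbracket_m<\infty$ beforehand — is not justified. The paper remedies this precisely through a \emph{bootstrap on exponents}: it first shows $\llbracket\theta^{(N)}\rrbracket_{m;1/q'}$ (with exponent $1/q'$, matching the crude bound) is finite, and then iterates $\beta_n = \alpha(\beta_{n-1}\wedge H)+1/q'$ until $\beta_n$ hits $\alpha H + 1/q'$; only after the exponent has been upgraded is the Young absorption applied. Your proposal is missing this iteration entirely.

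A secondary issue is the stochastic sewing machinery itself. The germ you write, $A^i_{s,t}\coloneq\frac{1}{N-1}\sum_{j\neq i}\EE_{s-(t-s)}\int_s^t b_r(X^{i;N}_r,X^{j;N}_r)\,\dd r$, is $\cF_{s-(t-s)}$-measurable, and because $s-(t-s)$ lies to the left of both $s-(u-s)$ and $u-(t-u)$, the tower property gives $\EE_{s-(t-s)}\delta A^i_{s,u,t}=0$ identically. Condition~\eqref{eq:stoch_sewing_cond_2} is therefore vacuously satisfied, and the self-referential factor $\|b_r\|_{\cC^\alpha_x}\llbracket\theta^{(N)}\rrbracket_m^\alpha$ you attribute to the $\delta A$ bound is absent — your description of the three-point estimate is inconsistent with the germ you chose. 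For this to produce a nonzero $\delta A$ one would have to \emph{freeze} the remainders (e.g.\ at $\theta^{i;N}_{s-(t-s)}$) rather than condition the whole integrand. More fundamentally, for $\alpha>0$ the heat semigroup does not gain any negative power of time ($\|P_t b_r\|_{L^\infty_x}\lesssim\|b_r\|_{\cC^\alpha_x}$, but there is no smoothing improvement), so SSL buys nothing that the direct H\"older estimate of the paper does not already provide. The paper's proof of this lemma is deliberately elementary: no sewing, only the $\alpha$-H\"older regularity of $b$, conditional Jensen, and the LND bound $\|W^H_r-\EE_sW^H_r\|_{L^m|\cF_s}\sim|r-s|^H$, combined with the exponent bootstrap described above.
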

\begin{proof}	
	First define, for any $\beta>0$, the quantity $\llbracket \theta^{(N)} \rrbracket_{m;\beta}$ by replacing $|t-s|^{\alpha H + 1/{q^\prime}}$ in \eqref{eq:remainder_reg_apriori} with $|t-s|^\beta$. Using the assumption that $\alpha\ge0$, \eqref{eq:pseudo_projection}, \eqref{eq:gen_remainder_def} and H\"older inequality, one sees that
	\begin{align*}
		\left\|\|\theta^{i;N}_t-\EE_s\theta^{i;N}_t\|_{L^m|\cF_s}\right\|_{L^\infty_\omega}\lesssim \left\|\|\theta^{i;N}_t-\theta^{i;N}_s\|_{L^m|\cF_s}\right\|_{L^\infty_\omega}\lesssim (t-s)^{\frac1{q'}} w_b(s,t)^{\frac1q},
	\end{align*}
	which shows that  $\sup_{N\ge2}\llbracket \theta^{(N)} \rrbracket_{m,1/q'}$ is finite. 
	
	We now follow a bootstrapping argument from \cite{galeati2022subcritical} which will show that $\llbracket \theta^{(N)} \rrbracket_{m}$ is finite.
	Using \eqref{eq:pseudo_projection}, the definition of $\theta^{i;N}$ and the assumed H\"older regularity of $b$, for any $i=1,\ldots, N$, one has 
	\begin{align*}
		\| \theta^{i;N}_t &- \EE_s \theta^{i;N}_t\|_{L^m|\cF_s}
		\lesssim \bigg\| \theta^{i;N}_t - \theta^{i;N}_s - \frac{1}{N-1}\sum_{j\neq i} \int_s^t b_r(\EE_s[\theta^{i;N}_r+W^{i}_r,\theta^{j;N}_r+W^{j}_r]) \dd r\, \bigg\|_{L^m|\cF_s}\\
		& \lesssim \frac{1}{N-1}\sum_{j\neq i}  \int_s^t \|b_r(\theta^{i;N}_r+ W^{i}_r,\theta^{j;N}_r +W^{j}_r)-b_r(\EE_s[\theta^{i;N}_r+W^{i}_r,\theta^{j;N}_r+W^{j}_r])\|_{L^m|\cF_s} \dd r\\
		& \lesssim \frac{1}{N-1}\sum_{j\neq i} \int_s^t \| b_r\|_{\cC^\alpha_x} \left( \| \theta^{i;N}_r-\EE_s\theta^{i;N}_r\|^\alpha_{L^m|\cF_s} + \| \theta^{j;N}_r-\EE_s\theta^{j;N}_r\|^\alpha_{L^m|\cF_s}\right) \dd r\\
		& \quad +  \frac{1}{N-1}\sum_{j\neq i} \int_s^t \| b_r\|_{\cC^\alpha_x} \left( \| W^i_r-\EE_sW^i_r\|^\alpha_{L^m|\cF_s} + \| W^j_r-\EE_sW^j_r\|^\alpha_{L^m|\cF_s}\right) \dd r,
	\end{align*}
	where we applied both conditional Minkowski's and conditional Jensen's inequalities, using that $\alpha\in (0,1)$.
	By  Gaussianity and the LND property \eqref{eq:LND_fBm}, we see that
	\begin{align*}
		\| W^H_r-\EE_s W^H_r\|_{L^m|\cF_s} = \| W^H_r - \EE_s W^H_r\|_{L^m_\omega} \sim |r-s|^H.
	\end{align*}
	Thus, under the assumption that $\llbracket \theta^{(N)} \rrbracket_{m;\beta}<\infty$ for some $\beta>0$, we arrive at the almost sure bound
	%
	\begin{align*}
		\| \theta^{i;N}_t - \EE_s \theta^{i;N}_t \|_{L^m\vert \cF_s}
		& \lesssim \left(\llbracket \theta^{(N)}\rrbracket_{m;\beta}^\alpha |t-s|^{\alpha \beta}w_b(s,t)^{\alpha/q} + |t-s|^{\alpha H}\right) \int_s^t \| b_r\|_{\cC^\alpha_x}  \dd r\\
		&\lesssim  \left(\llbracket \theta^{(N)}\rrbracket_{m;\beta}^\alpha |t-s|^{\alpha \beta}w_b(s,t)^{\alpha/q} + |t-s|^{\alpha H}\right) |t-s|^{1/{q^\prime}} w_b(s,t)^{1/q} \\
		&  \lesssim |t-s|^{\alpha (\beta\wedge H) + 1/{q^\prime}} w_b(s,t)^{1/q} (1+ \llbracket \theta^{(N)}\rrbracket_{m;\beta}^\alpha\, w_b(s,t)^{\alpha/q}),
	\end{align*}
	where we applied H\"older's inequality in the second line. Observing that the previous estimates are uniform over $i=1,\ldots,N$, we have shown that if $\llbracket \theta^{(N)}\rrbracket_{m;\beta}$ is finite, then so is $\llbracket \theta^{(N)}\rrbracket_{m,\beta'}$ for $\beta':= \alpha (\beta\wedge H)+1/{q^\prime}$.
	
	If $1/q'\ge H$, then this already implies that $\bra{\theta^{(N)}}_m$ is finite. When $1/q'< H$, we can iterate this argument, starting from $\beta_0=1/q'$. After $n$ iterations, we see that $\bra{\theta^{(N)}}_{m;\beta_n}$ is finite, for $\beta_n$ defined recursively by the relation
	\begin{align*}
		\beta_n=\alpha(\beta_{n-1}\wedge H)+\frac1{q'}, \quad \beta_0=\frac1{q'}.
	\end{align*}
	Using condition \eqref{eq:drift_assumption}, it is elementary to see that for any $n$ sufficiently large, $\beta_{n-1}>H$ and hence $\beta_n=\alpha H+1/q'$. Thus, we have shown that $\bra{\theta^{(N)}}_m$ is finite.
	
	We now refine the previous argument in order to obtain the quantitative estimate \eqref{eq:priori_estim_particle}. Going through the same computations as above,  putting $\beta:=\alpha H+1/{q^\prime}$ (so that $\llbracket \theta^{(N)}\rrbracket_{m;\beta}= \llbracket \theta^{(N)}\rrbracket_{m}$), one finds that
	\begin{align*}
		\left\|\| \theta^{i;N}_t - \EE_s \theta^{i;N}_t \|_{L^m|\cF_s}\right\|_{L^\infty_\omega} \lesssim |t-s|^{\alpha \beta +1/{q^\prime}} w_b(s,t)^{1/q+\alpha/q} \llbracket \theta^{(N)} \rrbracket_{m}^\alpha + |t-s|^{\alpha H + 1/{q^\prime}} w_b(s,t)^{1/q}.
	\end{align*}
	By assumption \eqref{eq:drift_assumption}, we have that $\beta>H$. Dividing both sides by $|t-s|^{\alpha H + 1/{q^\prime}}w_b(s,t)^{1/q}$ and taking supremum over $i,s,t$, one can find a constant $C\coloneqq C(\alpha,q,d)>0$ such that
	\begin{align*}
		\llbracket \theta^{(N)} \rrbracket_{m}
		& \leq C T^{\alpha (\beta-H)} \| b\|_{L^q_T \cC^\alpha_x}^\alpha \llbracket \theta^{(N)} \rrbracket_{m}^\alpha + C\\
		& \leq  \tilde C T^{\frac{\alpha}{1-\alpha} (\beta-H)+1/{q^\prime}} \| b\|_{L^q_T \cC^\alpha_x}^\frac{\alpha}{1-\alpha} + \frac{1}{2} \llbracket \theta^{(N)} \rrbracket_{m} + C,
	\end{align*}
	where in the second passage we used Young's inequality and $\tilde{C}$ is another constant with the same dependencies as $C$. Since $\bra{\theta^{(N)}}_m$ is finite, this readily implies the bound \eqref{eq:priori_estim_particle}.
\end{proof}
We have a similar in spirit, but technically different, a priori bound in the case $\alpha\leq 0$, where we restrict to $H<1/2$.
\begin{lem}\label{lem:remainder_singular_apriori}
	Let $N\geq 2$, $b\in C^\infty_b$  and $(\alpha, q,H)$ satisfy Assumption \ref{ass:drift_assumption}; additionally assume that $\alpha \leq 0$ (so that necessarily $H<1/2$).
	Let $X^{(N)}$ be a solution to \eqref{eq:p_system_pairwise}, $\theta^{(N)}$ be the associated remainder defined by \eqref{eq:gen_remainder_def}.
	Then for any $m\in [1,\infty)$, there exists a constant $C\coloneqq C(\alpha,q,H,d,T,m)>0$ such that 
	\begin{equation}\label{eq:singular_rem_apriori}
		\sup_{N\geq 2}\max_{i=1,\ldots,N}\left\|\|\theta^{i;N}_{s,t}\|_{L^m|\cF_s}\right\|_{L^\infty_\omega} 
		\leq C\Big(1+ \|b\|^{\frac{-\alpha H}{(\alpha-1)H+1}}_{L^q_T\cC^\alpha_x}\Big)w_b(s,t)^{\frac{1}{q}}|t-s|^{\alpha H+\frac{1}{q^\prime}},
	\end{equation}
	for all $(s,t)\in [0,T]^2_{\leq}$.
\end{lem}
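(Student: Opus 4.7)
The plan is to run an SSL-based bootstrap, as in \cite[Lem.~2.4]{galeati2022subcritical} for the single SDE case, now in the particle setting. Since $\{W^i\}_i$ are independent, each pair $(W^i,W^j)$ is a $2d$-dimensional fBm of parameter $H$ and satisfies \eqref{eq:LND+heat} in $\RR^{2d}$; this is the only probabilistic input needed, so the single-SDE analysis lifts directly to pairwise interactions. Define
\[
M := M_N := \max_{1\le i\le N}\,\sup_{(s,t)\in[0,T]^2_\le}\,\frac{\bigl\|\,\|\theta^{i;N}_{s,t}\|_{L^m|\cF_s}\bigr\|_{L^\infty_\omega}}{w_b(s,t)^{1/q}|t-s|^{\alpha H+1/q'}}.
\]
Since $b\in C^\infty_b$, the crude bound $\|\theta^{i;N}_{s,t}\|_{L^\infty_\omega}\le\|b\|_{L^\infty}(t-s)$ ensures $M<\infty$; the objective is to upgrade this qualitative finiteness to $M\lesssim 1+\|b\|_{L^q_T\cC^\alpha_x}^{-\alpha H\kappa}$ with $\kappa=1/((\alpha-1)H+1)$, uniformly in $N\ge 2$. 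Throughout, set $\varepsilon:=(\alpha-1)H+1/q'>0$, so that $\varepsilon=1/\kappa-1/q$ and $\alpha H+1/q'=\varepsilon+H$.

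For each $i$, apply Lemma~\ref{lem:SSL-shift} to the germ
\[
A^i_{s,t}:=\frac{1}{N-1}\sum_{j\neq i}\EE_{s-(t-s)}\int_s^t b_r\bigl(\EE_{s-(t-s)}\theta^{i;N}_r+W^i_r,\;\EE_{s-(t-s)}\theta^{j;N}_r+W^j_r\bigr)\,dr,
\]
identifying $\theta^{i;N}_t-\theta^{i;N}_s$ as its stochastic sewing through the convergence \eqref{eq:SSL_convergence}, by a routine check exactly as in Lemma~\ref{lem:estimate_integrals_v0}. Local nondeterminism of $(W^i,W^j)$ in $\RR^{2d}$ rewrites the inner conditional expectation as a heat-semigroup action on $b_r(\cdot,\cdot)$ at time $|r-(s-(t-s))|^{2H}$; together with heat-kernel bounds and H\"older's inequality, this yields $|A^i_{s,t}|\lesssim w_b(s-(t-s),t)^{1/q}|t-s|^{\alpha H+1/q'}$, verifying \eqref{eq:stoch_sewing_cond_1} with $w_1=w_b^{2/q}$ and $\varepsilon_1=\alpha H+1/q'$. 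For \eqref{eq:stoch_sewing_cond_2}, a first-order Taylor expansion (cf.\ Lemma~\ref{lem:taylor}) absorbs one derivative onto the semigroup at the price of a factor $|r-\cdot|^{(\alpha-1)H}$, while the remaining difference $\EE_{u-(t-u)}\theta^{\bullet;N}_r-\EE_{s-(t-s)}\theta^{\bullet;N}_r$ is controlled by $M$ itself via the very definition of the latter; this produces $\|\EE_{s-(t-s)}\delta A^i_{s,u,t}\|_{L^m_\omega}\lesssim M\,w_b(s-(t-s),t)^{2/q}|t-s|^{2\varepsilon+H}$, compatible with \eqref{eq:stoch_sewing_cond_2}.

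The conclusion \eqref{eq:stoch_sewing_bnd_3} then produces, uniformly in $i$, $N$ and $(s,t)\in[0,T]^2_\le$, the bootstrap
\[
\bigl\|\,\|\theta^{i;N}_{s,t}\|_{L^m|\cF_s}\bigr\|_{L^\infty_\omega}\le C\,w_b(s,t)^{1/q}|t-s|^{\alpha H+1/q'}\bigl(1+M\,w_b(s,t)^{1/q}|t-s|^\varepsilon\bigr).
\]
Being only linear in $M$, this cannot be closed by direct Young absorption; instead, one localizes to a partition $0=T_0<T_1<\cdots<T_K=T$ with $w_b(T_k,T_{k+1})^{1/q}(T_{k+1}-T_k)^\varepsilon\le(2C)^{-1}$ on each piece. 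Since $(s,t)\mapsto w_b(s,t)(t-s)^{q\varepsilon}$ is a control (product of the control $w_b$ with the increasing function $(t-s)^{q\varepsilon}$), its superadditivity bounds $K\lesssim 1+\|b\|_{L^q_T\cC^\alpha_x}^q\,T^{q\varepsilon}$. On each piece the localized version of $M$ is absorbed into an $O(1)$ constant, and a H\"older-type resummation across the partition, carried out precisely as in the single-SDE proof \cite[Lem.~2.4]{galeati2022subcritical}, delivers the global bound $M\lesssim 1+\|b\|_{L^q_T\cC^\alpha_x}^{-\alpha H\kappa}$ and hence \eqref{eq:singular_rem_apriori}.

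The principal technical obstacle is precisely this final resummation: linearity of the bootstrap in $M$ rules out a direct Young absorption, and the exponent $-\alpha H\kappa=-\alpha H/((\alpha-1)H+1)$ only emerges after carefully balancing the superadditivity of $w_b$ against the concavity of $t\mapsto t^{q'\alpha H+1}$ (whose exponent lies in $(0,1)$ under $\alpha<0$ and \eqref{eq:drift_assumption}), using both $\varepsilon=1/\kappa-1/q$ and $\kappa<q$. The remaining steps---the sewing identification and the two germ bounds---are direct adaptations of the single-SDE analysis, enabled by the independence of the noises $\{W^i\}$ which makes each pair $(W^i,W^j)$ a genuine $\RR^{2d}$-valued fBm.
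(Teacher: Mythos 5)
Your overall strategy (stochastic sewing bootstrap, then partition-and-resum) is in the right family, but there are two places where the argument as written does not go through.

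\textbf{The a priori finiteness of $M$ is not established.}
You define
\[
M=\max_i\sup_{(s,t)}\frac{\|\,\|\theta^{i;N}_{s,t}\|_{L^m|\cF_s}\|_{L^\infty_\omega}}{w_b(s,t)^{1/q}|t-s|^{\alpha H+1/q'}}
\]
and assert that the crude bound $\|\theta^{i;N}_{s,t}\|_{L^\infty_\omega}\le\|b\|_{L^\infty}(t-s)$ implies $M<\infty$. This does not follow: the denominator involves $w_b(s,t)^{1/q}=(\int_s^t\|b_r\|_{\cC^\alpha_x}^q\,dr)^{1/q}$, and for $\alpha<0$ the norm $\|b_r\|_{\cC^\alpha_x}$ has no lower bound in terms of $\|b_r\|_{C^0_x}$ (e.g.\ $b_r(x)=\sin(nx)$ has $\|b_r\|_{C^0_x}=1$ but $\|b_r\|_{\cC^\alpha_x}\sim n^{\alpha}\to 0$). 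In particular, if $b_{r_0}\equiv 0$ at some time while $\|b\|_{L^\infty}>0$, then $w_b(s,t)$ can be made arbitrarily small near $r_0$ while your numerator stays pinned to $\|b\|_{L^\infty}(t-s)$, so the supremum can blow up. The bootstrap inequality you obtain is useless until $M<\infty$ is known, so the argument is circular as stated. The paper avoids this by introducing the auxiliary control $w_{b,0}$ built from the $C^0_x$-norm, for which the H\"older estimate $|\theta_{s,t}|\le|t-s|^{1/q'}w_{b,0}(s,t)^{1/q}\lesssim|t-s|^\beta w_{b,0}(s,t)^{1/q}$ holds for free, giving finiteness of the $w_{b,0}$-based quantity; only then does the sewing step (germ bound with $w_b$, $\delta A$ bound with $w_{b,0}$) transfer finiteness to the $w_b$-based one, after which the $\delta A$ bound is upgraded to $w_b$ and the quantitative estimate is extracted.

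\textbf{Your germ and $\delta A$ estimate are more fragile than necessary.}
The paper uses the base SSL (Lemma~\ref{lem:SSL}) with the much simpler germ $A^{i;N}_{s',t'}=\frac{1}{N-1}\sum_{j\neq i}\EE_{s'}\int_{s'}^{t'}b_r(\theta^{i;N}_{s'}+W^i_r,\theta^{j;N}_{s'}+W^j_r)\,dr$, freezing $\theta$ exactly at $s'$; the $\delta A$ computation then produces the genuine increment $\theta^{\bullet;N}_{s',u'}$, which is directly controlled by the same bracket quantity. Your germ instead uses $\EE_{s-(t-s)}\theta^{\bullet;N}_r$ inside the argument of $b$ and the shifted SSL (Lemma~\ref{lem:SSL-shift}); in the $\delta A$ step this gives you differences like $\EE_{u-(t-u)}\theta^{\bullet;N}_r-\EE_{s-(t-s)}\theta^{\bullet;N}_r$, which are \emph{not} increments and are not directly controlled by the definition of $M$. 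To bound them by $M$ you need the extra step $\EE_{s_2}\theta_r-\EE_{s_1}\theta_r=\EE_{s_2}(\theta_r-\EE_{s_1}\theta_r)$ together with the contraction property of conditional expectation and the relation \eqref{eq:pseudo_projection}; this is not difficult but it is not ``via the very definition'' as you claim, and you should spell it out. In this proof there is no need for the shifted germ at all --- that machinery is introduced in the paper precisely for the stability estimate in Proposition~\ref{prop:comparison_integrals}, where one must compare two processes, not for the a priori bound on a single one.

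The localization and resummation at the end is described correctly in outline and matches the paper's final step, including the appeal to superadditivity of the control $w_b(s,t)(t-s)^{q\varepsilon}$ to bound the number of pieces and the concavity argument (exponent $1+\alpha H\in(0,1)$) to reassemble.
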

\begin{rem}\label{rem:a_priori_norm_relations}
	Observe that, by properties of conditional norms, one has the control
	\begin{equation}\label{eq:a_priori_norm_relations}
		\|\theta^{i;N}_{s,t}\|_{L^m_\omega}
		+ \big\|\|\theta^{i;N}_t - \EE_s\theta^{i;N}_t\|_{L^m|\cF_s}\big\|_{L^\infty_\omega}
		\lesssim \big\|\|\theta^{i;N}_{s,t}\|_{L^m|\cF_s}\big\|_{L^\infty_\omega} 
	\end{equation}
	uniformly over $i,\,N$. In particular, estimate \eqref{eq:singular_rem_apriori} is stronger than the analogue of \eqref{eq:remainder_reg_apriori} for $\alpha\leq 0$.
\end{rem}

\begin{proof}
	In view of Jensen's inequality, we can assume without loss of generality that $m\in [2,\infty)$.
	
	Consider first $\alpha=0$, where by definition $\cC^0_x=BUC_x$ endowed with $C^0_x$-norm; by H\"older's inequality, for any $i$, $N$ and any $[0,T]^2_\leq$, we have the pointwise estimate
	\begin{equation}\label{eq:pointwise_holder}
		|\theta^{i;N}_{s,t}|
		\leq \frac{1}{N-1}\sum_{j\neq i} \int_s^t |b_r(X^{i;N}_r,X^{j;N}_r)| \dd r
		\leq \int_{s}^{t} \|b_r\|_{C^0_x}\dd r 
		\leq |t-s|^{1/{q^\prime}} w_{b}(s,t)^{1/q};
	\end{equation}
	taking the $\| \| \cdot\|_{L^m|\cF_s} \|_{L^\infty_\omega}$-norm on both sides, we obtain \eqref{eq:a_priori_norm_relations} for $\alpha=0$.
	
	Assume now $\alpha<0$ and let us define $w_{b,0}(s,t):=\int_s^t\|b_r\|_{C^0_x}\dd r$; since the exponent $\beta:= \alpha H+1/{q^\prime}$ is strictly less than $1/{q^\prime}$, arguing as in \eqref{eq:pointwise_holder} we have 
	\begin{equation*}
		|\theta^{i;N}_{s',t'}|  \leq |t'-s'|^{1/{q^\prime}} w_{b,0}(s',t')^{1/q} \lesssim |t'-s'|^{\beta}w_{b,0}(s',t')^{1/q}.
	\end{equation*}
	Taking the $\| \| \cdot\|_{L^m|\cF_s} \|_{L^\infty_\omega}$-norm on both sides, it follows that, for any $(s,t) \in [0,T]^2_{\leq}$, we have
	\begin{equation*}
		\llbracket \theta^{i;N} \rrbracket_{\beta,w_{b,0},[s,t]} := \sup_{(s',t') \in I^2_{\leq}} \frac{\left\|\|\theta^{i;N}_{s',t'}\|_{L^m|\cF_s}\right\|_{L^\infty_\omega} }{|t'-s'|^\beta w_{b,0}^{1/q}(s',t')} <\infty
	\end{equation*}
	uniformly in $i,N$.
	We plan to use this initial bound and Lemma~\ref{lem:SSL} to control $\llbracket \theta^{i;N} \rrbracket_{\beta,w_b,[s,t]}$, which will conclude the proof.
	We fix $(s,t) \in [0,T]^{2}_{\leq}$ and $(s',u',t')\in [s,t]^3_{\leq}$. We define the germ
	\begin{equation*}
		\begin{aligned}
			A^{i;N}_{s',t'} &= \frac{1}{N-1}\sum_{j\neq i} \EE_{s'}\left[ \int_{s'}^{t'} b_r(\theta^{i;N}_{s'}+ W^i_r , \theta^{j;N}_{s'} +W^j_r)\dd r\right]
			\\
			&= \frac{1}{N-1}\sum_{j\neq i} \left[ \int_{s'}^{t'} P_{|r-s'|^{2H}} b_r(\theta^{i;N}_{s'}+ \EE_{s'}W^i_r , \theta^{j;N}_{s'} +\EE_{s'}W^j_r)\dd r\right],
		\end{aligned}
	\end{equation*}
	where the second identity is due to the independence between $W^{i;H}$, $W^{j;H}$ as well as conditional Fubini theorem. 
	%
	Set
	\begin{align*}
		\cA^{i;N}_{s',t'} := \frac{1}{N-1}\sum_{j\neq i} \int_{s'}^{t'} b_r(\theta^{i;N}_r+ W^i_r , \theta^{j;N}_{r} +W^j_r)\dd r;
	\end{align*}
	arguing as in the proof of Lemma~\ref{lem:estimate_integrals_v0}, it's easy to verify that $\cA^{i;N}$ is the stochastic sewing of $A^{i;N}$. Our task is reduced to establishing suitable bounds on $A^{i;N}$, $\delta A^{i;N}$.
	
	Using \eqref{eq:drift_assumption}, we have the almost sure bound
	\begin{equation}\label{eq:sing_apriori_bnd_1}
		|A^{i;N}_{s',t'}| \leq  \frac{N-1}{N} \int_{s'}^{t'} \|P_{|r-s'|^{2H}} b_r\|_{L^\infty_x}\dd r \lesssim \int_{s'}^{t'} |r-s'|^{\alpha H} \|b_r\|_{\cC^\alpha_x}\dd r 
		\lesssim |t'-s'|^{\beta} w_b(s',t')^{1/q}.
	\end{equation}
	To estimate $\E_{s'}\delta A_{s',u',t'}$, we note that
	\begin{align*}
		\left|\EE_{s'} [\delta A^{i;N}_{s',u',t'}] \right|
		&=  \frac{1}{N-1}\sum_{j\neq i} \left| \EE_{s'}\EE_{u'}\int_{u'}^{t'} b_r(\theta^{i;N}_{s'}+ W^i_{r},\theta^{j;N}_{s'}+W^j_r) - b_r(\theta^{i;N}_{u'}+ W^i_r,\theta^{j;N}_{u'}+W^j_r)\dd r \right|.
	\end{align*}
	For each $i,j$, applying \eqref{eq:LND+heat}, H\"older inequality and condition \eqref{eq:drift_assumption}, we have 
	\begin{align*}
		\Big|\EE_{u'}\int_{u'}^{t'} b_r(\theta^{i;N}_{s'}&+ W^i_{r},\theta^{j;N}_{s'}+W^j_r) - b_r(\theta^{i;N}_{u'}+ W^i_r,\theta^{j;N}_{u'}+W^j_r)\dd r\Big|
		\\&\le \int_{u'}^{t'} |r-u'|^{H(\alpha-1)} \|b_r\|_{\cC^\alpha_x}\dd r \left(\EE_s |\theta^{i;N}_{s',u'}| + \EE_s|\theta^{j;N}_{s',u'}|\right)
		\\&\lesssim |t'-u'|^{2\beta-H}w_b(s',t')^{1/q} w_{b,0}(s',t')^{1/q} \sup_{k=1,\ldots,N} \llbracket \theta^{k;N}\rrbracket_{\beta,w_{b,0},[s,t]}.
	\end{align*}
	From here, we obtain the almost sure bound
	\begin{align}
		\left|\EE_{s'} [\delta A^{i;N}_{s',u',t'}] \right|
		\lesssim |t'-s'|^{2\beta -H}w_b(s',t')^{1/q}w_{b,0}(s',t')^{1/q} \sup_{i=1,\ldots,N} \llbracket \theta^{i;N}\rrbracket_{\beta,w_{b,0},[s,t]}. \label{tmp.esda}
	\end{align}
	Since we further have $2 \beta-H>1$, for any $m\in [2,\infty)$, we can apply \cref{lem:SSL} with $n=\infty$;
	since estimates \eqref{eq:sing_apriori_bnd_1}-\eqref{tmp.esda} are uniform in $i,\,N$, it follows from \eqref{eq:base:SSL_bnd_1} that there exists a constant $C_0\coloneqq C_0(\alpha,q,H,d,m)>0$ such that
	\begin{equation*}
		\begin{aligned}
			\sup_{i=1,\ldots,N}\Big\| \big\| \theta^{i;N}_{s',t'} \big\|_{L^m|\cF_s}\Big\|_{L^\infty_\omega} 
			&\leq C_0|t'-s'|^{\beta} w_b(s',t')^{1/q}
			\\
			&\quad \times \bigg(1+|t'-s'|^{\beta-H}w_{b,0}(s',t')^{1/q}\sup_{N\geq 2} 	\llbracket\theta^{(N)}\rrbracket_{\beta,w_{b,0},[s',t']} \bigg).
		\end{aligned}
	\end{equation*}
	Dividing both sides by $|t'-s'|^{\beta} w_b(s',t')^{1/q}$, taking suprema over $(s',t')\in [s,t]^2_{\leq }$ we obtain that
	\begin{equation*}
		\begin{aligned}
			\sup_{N\geq 2} 	\llbracket\theta^{(N)}\rrbracket_{\beta,w_b,[s,t]}  \leq &C_0 \bigg(1+|t-s|^{\beta-H}w_{b,0}(s,t)^{1/q}\sup_{N\geq 2} 	\llbracket\theta^{(N)}\rrbracket_{\beta,w_{b,0},[s,t]} \bigg).
		\end{aligned}
	\end{equation*}
	This shows that $	\sup_{N\geq 2} 	\llbracket\theta^{(N)}\rrbracket_{\beta,w_b,[s,t]}$ is finite. Hence, we may upgrade \eqref{tmp.esda} to the following almost sure estimate
	\begin{align*}
		\left|\EE_{s'} [\delta A^{i;N}_{s',u',t'}] \right|
		& \lesssim \int_{u'}^{t'} |r-u|^{H(\alpha-1)} \|b_r\|_{\cC^\alpha_x}\dd r \bigg(\EE_s |\theta^{i;N}_{s',u'}| + \frac{1}{N-1}\sum_{j\neq i}\EE_s|\theta^{j;N}_{s',u'}|\bigg) \notag \\
		&\leq |t'-s'|^{2\beta -H}w_b(s',t')^{2/q} \llbracket \theta^{(N)}\rrbracket_{\beta,w_b,[s,t]}. 
	\end{align*}
	Replacing \eqref{tmp.esda} by the above inequality in the previous argument yields that
	\begin{equation}\label{eq:sing_apriori_bnd_3}
		\begin{aligned}
			\llbracket\theta^{(N)}\rrbracket_{\beta,w_b,[s,t]}   &\leq C_1 \left(1+|t-s|^{\beta-H}w_b(s,t)^{1/q} 	\llbracket\theta^{(N)}\rrbracket_{\beta,w_b,[s,t]} \right),
		\end{aligned}
	\end{equation}
	for a finite constant $C_1\coloneqq C_1(\alpha,q,H,d,m)>0$ which is again independent from $N$.
	
	We now define a new control $w_\ast$ by the identity, $w_\ast (s,t)^{1/q+\beta-H}:= w_b(s,t)^{1/q} |t-s|^{\beta-H}$ and an increasing sequence of times $(t_k)_{k=0}^n$ by requiring that $t_0=0$, $t_n=T$ and
	\begin{align*}
		\quad w_\ast(t_k,t_{k+1})^{1/q+\beta-H}=(2C_1)^{-1} \quad \text{for } k=0,\ldots,n-2, \quad w_\ast(t_{n-1},t_n)^{1/q+\beta-H}\leq(2C_1)^{-1}.
	\end{align*}
	From \eqref{eq:sing_apriori_bnd_3}, we get that $\llbracket\theta^{(N)}\rrbracket_{\beta,w_b,[t_k,t_{k+1}]}   \leq 2C_1$ for every $k=0,\ldots,n-1$. 
	For each $(s,t)\in [0,T]^2_\leq$, there exist $j,\,l \in \NN$ such that $t_{j-1}<s\leq t_j \leq \cdots \leq t_l \leq t <t_{l+1}$. Set $\tau_{j-1} =s$, $\tau_i = t_i$ for $i=j,\ldots,l$ and $\tau_{l+1} = t$. Then it holds that
	\begin{align*}
		\Big\| \big\| \theta^{i;N}_{s,t} \big\|_{L^m|\cF_s}\Big\|_{L^\infty_\omega} 
		&\leq   \sum_{i=j-1}^l	\Big\| \big\| \theta^{i;N}_{\tau_i,\tau_{i+1}} \big\|_{L^m|\cF_s}\Big\|_{L^\infty_\omega} \\
		&\leq  \sum_{i=j-1}^l 	\Big\| \big\| \theta^{i;N}_{\tau_i,\tau_{i+1}} \big\|_{L^m|\cF_{t_i}}\Big\|_{L^\infty_\omega}\\
		&\le 2C_1  \sum_{i=j-1}^l w_b(\tau_i,\tau_{i+1})^{1/q}|\tau_i-\tau_{i+1}|^{\beta}\\
		&\leq (l+1-j)^{-\alpha H} \bigg(\sum_{i=j-1}^l \left(w_b(\tau_i,\tau_{i+1})^{1/q}|\tau_i-\tau_{i+1}|^{\beta}\right)^{\frac{1}{1+\alpha H}} \bigg)^{1+\alpha H}\\
		&\leq (l+1-j)^{-\alpha H} w_b(s,t)^{1/q}|t-s|^{\beta}.
	\end{align*}
	In the last two passages we used that $1+\alpha H = \beta+1/q  \in (0,1)$, Jensen's inequality and the super-additivity of the control $(s,t)\mapsto (w_b(s,t)^{1/q}|t-s|^{\beta})^{\frac{1}{1+\alpha H}}$. The length $l+1-m$ of the above sum is controlled by the number of intervals $[t_k,t_{k+1}]$ required to cover $[0,T]$; however, this in turn is controlled by $w_\ast(0,T) = \|b\|^{\frac{1}{(\alpha-1)H+1}}_{L^q_T\cC^\alpha_x} T^{\beta-H}$. 
	It follows that 
	\begin{align*}
		\Big\| \big\| \theta^{i;N}_{s,t} \big\|_{L^m|\cF_s}\Big\|_{L^\infty_\omega} \le C \|b\|^{\frac{-\alpha H}{(\alpha-1)H+1}}_{L^q_T\cC^\alpha_x} w_b(s,t)^{1/q}|t-s|^{\beta}
	\end{align*}
	for some finite constant $C$ which depends only on $\alpha,q,H,d,T,m$. This concludes the proof.	
\end{proof}

With these preparations in hand, we can establish an abstract stability result, comparing the evolution of $N$ interacting particles $X^{(N)}$ to a nearby perturbed system $Y^{(N)}$.
Proposition~\ref{prop:generic_pairwise_stabillity} may be regarded as one of the fundamental achievements of this paper and the cornerstone of our main results. 
Recall that, for parameters $(H,\alpha,q)$ satisfying Assumption~\ref{ass:drift_assumption}, we define $\kappa,\,\eps$ by \eqref{eq:defn_kappa_eps}, which satisfy \eqref{eq:kappa_eps_basic}.
\begin{prop}\label{prop:generic_pairwise_stabillity}
	Let $N\geq 2$, $m\in [2,\infty)$, $b\in C^\infty_b$, $(\alpha, q,H)$ satisfy Assumption \ref{ass:drift_assumption} and $\kappa,\,\eps$ be defined by \eqref{eq:defn_kappa_eps}.
	Also, let $\cZ^{(N)}=\{\cZ^{i}\}_{i=1}^N$ be a family of processes in $C^{\kappa-\var}_{T}L^m_\omega$, $X^{(N)}$ be the solution to \eqref{eq:p_system_pairwise} and $Y^{(N)}=\{Y^{i;N}\}_{i=1}^N$ be the solution to the perturbed system
	\begin{equation}\label{eq:pairwise_stab_remainder}
		\begin{aligned}
			%
			Y^{i;N}_t &= X_0^i + \frac{1}{N-1}\sum_{j\neq i} \int_0^t b_s(Y^{i;N}_s,Y^{j;N}_s) \dd s + W^i_t + \cZ^i_t,
		\end{aligned}
		\quad\quad  \text{for all } i=1,\ldots,N.
	\end{equation}
	Then there exists a constant $C\coloneqq C(\alpha,q,H,d,T,m,\|b\|_{L^q_T\cC^\alpha_x})>0$ such that
	\begin{align}\label{est.sup.stab}
		\sup_{i=1,\ldots,N} \sup_{t\in[0,T]} \| X^{i;N}_t-Y^{i;N}_t\|_{L^m_\omega} \le C \sup_{i=1,\ldots,N} \| \cZ^i\|_{C^{\kappa-\var}_T L^m_\omega}.
	\end{align}
	Assume additionally that there is a control $w_\czz$ such that
	\begin{align}\label{con.zunif}
		\sup_{i=1,\ldots,N}\bra{\czz^i}_{C^{\kappa-\var}_{[s,t]}L^m_\omega}\le w_\czz(s,t)^{\frac1 \kappa} \quad \forall(s,t)\in[0,T]^2_\le.
	\end{align}
	Then for every $\tilde \kappa\in[1,\infty)$ satisfying
	\begin{align*}
		\frac1{\tilde \kappa}<\frac 1\kappa-\frac1m,
	\end{align*}
	there exists a constant $\tilde C$, depending on $\tilde \kappa$ and the aforementioned parameters,
	such that
	\begin{equation}\label{est.aspbpvar}
		\sup_{i=1,\ldots,N} \left\| \|X^{i;N}-Y^{i;N}\|_{C^{\tilde\kappa-\var}_{T}}\right\|_{L^m_\omega}  \le \tilde C \left(\sup_{i=1,\ldots,N}\|\czz^{i;N}_0\|_{L^m_\omega}+w_\czz(0,T)^{\frac 1 \kappa}\right).
	\end{equation}
	The constants  $C,\,\tilde C$ in \eqref{est.sup.stab}-\eqref{est.aspbpvar} are monotone in $\|b\|_{L^q_T\cC^\alpha_x}, T$, and independent from $N$.
\end{prop}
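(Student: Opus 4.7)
The plan is to adapt Sznitman's direct coupling argument to the irregular setting, with the comparison estimate of Proposition~\ref{prop:comparison_integrals} replacing the naive Lipschitz bound and the rough Gr\"onwall Lemma~\ref{lem:rough-gronwall} absorbing the resulting error terms. First, I would decompose $X^{i;N}=W^i+\theta^{i;N}$ and $Y^{i;N}=W^i+\tilde\theta^{i;N}+\czz^i$, where $\theta^{i;N}$ and $\tilde\theta^{i;N}$ denote the drift remainders of \eqref{eq:p_system_pairwise} and \eqref{eq:pairwise_stab_remainder} respectively; setting $\Delta^i:=X^{i;N}-Y^{i;N}$, one has $\Delta^i_0=-\czz^i_0$ and
\begin{equation*}
(\Delta^i+\czz^i)_t = \theta^{i;N}_t-\tilde\theta^{i;N}_t = \frac{1}{N-1}\sum_{j\neq i}\int_0^t \bigl[b_s(X^{i;N}_s,X^{j;N}_s)-b_s(Y^{i;N}_s,Y^{j;N}_s)\bigr]\dd s.
\end{equation*}

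Next, I would invoke Lemmas~\ref{lem:reg_apri}--\ref{lem:remainder_singular_apriori} (together with Remark~\ref{rem:a_priori_norm_relations} in the case $\alpha\leq 0$) to verify that $\theta^{i;N}$ satisfies the structural condition \eqref{eq:structure_assumption_processes} uniformly in $i$ and $N$, for a control $w$ depending only on $\|b\|_{L^q_T\cC^\alpha_x}$ and $T$; consequently, the joint $\RR^{2d}$-valued process $(\theta^{i;N},\theta^{j;N})$ driven by the $2d$-dimensional fBm $(W^i,W^j)$ inherits the same property. I would then apply Proposition~\ref{prop:comparison_integrals} in doubled dimension with $\varphi^1=(\theta^{i;N},\theta^{j;N})$ and $\varphi^2=(\tilde\theta^{i;N}+\czz^i,\tilde\theta^{j;N}+\czz^j)$, so that $\varphi^1-\varphi^2=(\Delta^i,\Delta^j)$; averaging over $j\neq i$ and bounding $\|(\Delta^i,\Delta^j)\|_{C^{\kappa-\var}_{[s,t]}L^m_\omega}\leq 2\max_k\|\Delta^k\|_{C^{\kappa-\var}_{[s,t]}L^m_\omega}$ yields
\begin{equation*}
\bra{\Delta^i+\czz^i}_{C^{\kappa-\var}_{[s,t]}L^m_\omega}\lesssim w_b(s,t)^{1/q}|t-s|^{\eps}\bigl(1+w(s,t)^{1/q}|t-s|^\eps\bigr)\max_k\|\Delta^k\|_{C^{\kappa-\var}_{[s,t]}L^m_\omega}.
\end{equation*}

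Using Remark~\ref{rem:properties_controls}, the prefactor can be rewritten as $W_\ast(s,t)^{1/\kappa}+W_\ast(s,t)^{2/\kappa}$ for a control $W_\ast$ depending only on $\|b\|_{L^q_T\cC^\alpha_x}$ and $T$. Combining this with $\bra{\Delta^i}_{C^{\kappa-\var}_{[s,t]}L^m_\omega}\leq \bra{\Delta^i+\czz^i}_{C^{\kappa-\var}_{[s,t]}L^m_\omega}+\bra{\czz^i}_{C^{\kappa-\var}_{[s,t]}L^m_\omega}$, the superadditivity of $\kappa$-variation seminorms and taking the maximum over $i$, one obtains, on any sub-interval where $W_\ast(s,t)\leq 1$,
\begin{equation*}
G_t-G_s \leq C\, W_\ast(s,t)^{1/\kappa}\, G_t + \gamma(s,t),\qquad G_t := \max_i \|\Delta^i\|_{C^{\kappa-\var}_{[0,t]}L^m_\omega},\ \ \gamma(s,t) := \max_i \bra{\czz^i}_{C^{\kappa-\var}_{[s,t]}L^m_\omega}.
\end{equation*}
Since $G$ is non-decreasing and $\gamma$ is increasing in the sense of Remark~\ref{rem:properties_controls}, Lemma~\ref{lem:rough-gronwall} yields $\sup_{t\in[0,T]} G_t\lesssim G_0+\gamma(0,T)\lesssim \max_i\|\czz^i\|_{C^{\kappa-\var}_T L^m_\omega}$, whence \eqref{est.sup.stab}. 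Estimate \eqref{est.aspbpvar} would then follow from the Kolmogorov-type Lemma~\ref{lem:kolmogorov_control}, applicable since $m\geq 2>\kappa$, upgrading the $C^{\kappa-\var}_T L^m_\omega$-bound to an $L^m_\omega C^{\tilde\kappa-\var}_T$-bound, and invoking assumption \eqref{con.zunif} to estimate $\gamma(0,T)\leq w_\czz(0,T)^{1/\kappa}$.

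The main obstacle I expect is ensuring that the Gr\"onwall-type closure is \emph{uniform in $N$}. This will be possible precisely because the a priori bounds from Lemmas~\ref{lem:reg_apri}--\ref{lem:remainder_singular_apriori} are $N$-independent and the comparison estimate is applied \emph{pairwise} in $\RR^{2d}$, so that no combinatorial factor in $N$ arises; the same structural feature is what yields constants depending only on $\|b\|_{L^q_T\cC^\alpha_x}$, $T$ and the analytic parameters, as claimed.
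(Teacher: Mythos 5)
Your proposal is substantively correct and follows the same route as the paper's proof: decompose into fBm plus remainder, verify the structural condition \eqref{eq:structure_assumption_processes} for the unperturbed remainders via Lemmas~\ref{lem:reg_apri}--\ref{lem:remainder_singular_apriori}, apply Proposition~\ref{prop:comparison_integrals} asymmetrically in doubled dimension, close with the rough Gr\"onwall Lemma~\ref{lem:rough-gronwall}, and upgrade via Lemma~\ref{lem:kolmogorov_control}. Keeping $\czz^i$ separate from $\tilde\theta^{i;N}$, rather than absorbing it as the paper does via $\tilde\theta = Y - W$, is immaterial since $\varphi^2 + W = Y$ either way.

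The one place you should be more careful is the choice of Gr\"onwall quantity. You take $G_t := \max_i \|\Delta^i\|_{C^{\kappa-\var}_{[0,t]}L^m_\omega}$ and assert $G_t - G_s \leq \max_i\bra{\Delta^i}_{C^{\kappa-\var}_{[s,t]}L^m_\omega}$ ``by superadditivity.'' What you actually need here is the \emph{sub}additivity $\bra{Y}_{C^{\kappa-\var}_{[0,t]}} \leq \bra{Y}_{C^{\kappa-\var}_{[0,s]}} + \bra{Y}_{C^{\kappa-\var}_{[s,t]}}$ (true, via a Minkowski argument on the refined partition, but not the superadditivity of the induced control, which goes the wrong way). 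The paper sidesteps this by choosing the cruder scalar $G_t = \sup_j\sup_{r\in[0,t]}\|X^j_r - Y^j_r\|_{L^m_\omega}$, for which $G_t - G_s \leq \sup_j\bra{X^j - Y^j}_{C^{\kappa-\var}_{[s,t]}L^m_\omega}$ is an immediate triangle inequality, and then tracks the variation seminorm on small intervals separately before invoking Kolmogorov. Either choice works, but your version deserves a one-line justification of the subadditivity it uses. You should also note that in your bound of $\max_k\|\Delta^k\|_{C^{\kappa-\var}_{[s,t]}L^m_\omega}$ by $G_t$, the term $\|\Delta^k_s\|_{L^m_\omega}$ is controlled by $\|\Delta^k\|_{C^{\kappa-\var}_{[0,s]}L^m_\omega} \leq G_t$, which is where the monotonicity of $G$ enters so that $\sup_{r\leq t}G_r = G_t$ in Lemma~\ref{lem:rough-gronwall}.
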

\begin{proof}
	Recall $\theta^{(N)}$ as defined in \eqref{eq:gen_remainder_def}; similarly, we set $\tilde\theta^{(N)}=\{\tilde \theta^{i;N}\}_{i=1}^N$ by
	\begin{equation}\label{eq:pairwise_remainders}
		\tilde\theta^{(N)} = Y^{(N)} -W^{(N)}.
	\end{equation}
	Let $\bar w $ be the control defined by $\bar w(s,t)^{\frac{1}{\kappa}} \coloneq w_b(s,t)^{\frac{1}{q}} |t-s|^{\eps}$.
	For clarity, we omit $N$ in the notation $\eta^{i;N}$ for $\eta\in\{X,Y,Z,\theta,\tilde \theta\}$ in the remainder of the proof. We emphasize that all constants below are independent from $N,i,j$. For distinct $i,j\in\{1,\ldots,N\}$, define the process
	\begin{align*}
		\caa^{i,j}_t=\int_0^t \left[b_r(\theta^i_r +W^i_r,\theta^j_r+W^j_r) - b_r(\tilde{\theta}^i_r+W^i_r,\tilde{\theta}^j_r+W^j_r)\right]\dd r.
	\end{align*}
	An application of Proposition~\ref{prop:comparison_integrals} with $\varphi^1=(\theta^i,\theta^j)$, $\varphi^2=(\tilde\theta^i, \tilde\theta^j)$ and $W=(W^i,W^j)$ gives
	\begin{align*}
		\bra{\cA^{i,j}}_{C^{\kappa-\var}_{[s,t]}L^m_\omega} &\le C_1\bar w(s,t)^{\frac{1}{\kappa}} \|\varphi^1- \varphi^2\|_{C^{\kappa-\var}_{[s,t]}L^m_\omega} \quad \forall(s,t)\in[0,T]^2_\le,
	\end{align*}
	for some finite constant $C_1$ depending on $w_b(0,T),T,m,\alpha,q,d,H$. 
	Here we used that due to Lemmas~\ref{lem:reg_apri}-\ref{lem:remainder_singular_apriori} (in particular making use of \eqref{eq:a_priori_norm_relations} in the case $\alpha<0$) it holds that  $(\theta^i,\theta^j)$ satisfies \eqref{eq:structure_assumption_processes} with $w =w_b$ and that it is unnecessary for $(\tilde \theta^i,\tilde \theta^j)$ to satisfy~\eqref{eq:structure_assumption_processes} (see Remark~\ref{rem:asymmetric}). The fact that $\czz$ belongs to $C^{\kappa-\var}_TL^m_\omega$ is used to verify that $\varphi^1- \varphi^2$ belongs to $C^{\kappa-\var}_TL^m_\omega$.
	Note that by triangle inequality,
	\begin{align*}
		\|\varphi^1- \varphi^2\|_{C^{\kappa-\var}_{[s,t]}L^m_\omega}\le\sum_{k\in\{i,j\}} \left(\|\theta^k_s-\tilde \theta^k_s\|_{L^m_\omega}+\bra{\theta^k-\tilde \theta^k}_{C^{\kappa-\var}_{[s,t]}L^m_\omega}\right).
	\end{align*}
	Hence, we have
	\begin{align*}
		\bra{\cA^{i,j}}_{C^{\kappa-\var}_{[s,t]}L^m_\omega}
		&\le C \bar w(s,t)^{\frac{1}{\kappa}}\sum_{k\in\{i,j\}}\left(\| \theta^k_s-\tilde{\theta}^k_s\|_{L^m_\omega}+\bra{\theta^k-\tilde{\theta}^k}_{C^{\kappa-\var}_{[s,t]} L^m_\omega} \right).
	\end{align*}

	Fix $i\in\{1,\ldots,N\}$.	For any $(s,t)\in [S,T]^{2}_{\leq}$, we obtain from the equations of $X$ and $Y$ that
	\begin{align*}
		\theta^i_{s,t}- \tilde{\theta}^i_{s,t} =X^i_{s,t} -Y^i_{s,t}  
		= \frac{1}{N-1}\sum_{j\neq i} \caa^{i,j}_{s,t}+ \cZ^{i}_{s,t}.
	\end{align*}
	It follows that
	\begin{align*}
		\bra{\theta^i -\tilde{\theta}^i}_{C^{\kappa-\var}_{[s,t]}L^m_\omega} 
		&\le  \sup_{j \neq i} \bra{\cA^{i,j}} _{C^{\kappa-\var}_{[s,t]}L^m_\omega}+ \bra{\cZ^{i}} _{C^{\kappa-\var}_{[s,t]}L^m_\omega}
		\\&\le C_1   \bar w(s,t)^{\frac{1}{\kappa}}\sup_{j}\left(\| \theta^j_s-\tilde{\theta}^j_s\|_{L^m_\omega}+\bra{\theta^j-\tilde{\theta}^j}_{C^{\kappa-\var}_{[s,t]} L^m_\omega} \right)
		+ \bra{\czz^i}_{C^{\kappa-\var}_{[s,t]}L^m_\omega}.
	\end{align*}
	Now choosing $(s,t)\in [0,T]^{2}_{\leq}$ such that 
	\begin{align}\label{tmp.stsmall}
		\bar w(s,t)^{\frac{1}{\kappa}} \leq \frac{1}{2C_1},
	\end{align}
	we find,
	\begin{align}\label{eq:pairwise_stab_increment_1}
		\sup_{j}\bra{\theta^j -\tilde{\theta}^j}_{C^{\kappa-\var}_{[s,t]}L^m_\omega}  
		\le 2C_1 \bar w(s,t)^{\frac{1}{\kappa}} \sup_{j}\| \theta^j_s-\tilde{\theta}^j_s\|_{L^m_\omega} + 2\sup_{j}\bra{\czz^j}_{C^{\kappa-\var}_{[s,t]}L^m_\omega}.
	\end{align}
	Set $G_t := \sup_{j}\sup_{r\in [0,t]}\| \theta^j_{r}-\tilde{\theta}^j_{r}\|_{L^m_\omega}$. By triangle inequality, we see that 
	\begin{align*}
		G_t-G_s
		&\leq \sup_{j}\sup_{r\in [s,t]}\| \theta^j_{s,r}-\tilde{\theta}^j_{s,r}\|_{L^m_\omega} \leq \sup_{j} \bra{\theta^j -\tilde{\theta}^j}_{C^{\kappa-\var}_{[s,t]}L^m_\omega}.
	\end{align*}
	In view of \eqref{eq:pairwise_stab_increment_1}, $G$ satisfies
	\begin{align*}
		G_t-G_s\le 2 C_1 G_s \bar w(s,t)^{\frac1 \kappa}+\gamma(s,t)
	\end{align*}
	for every $(s,t)$ satisfying \eqref{tmp.stsmall},
	where $\gamma(s,t) :=2\sup_{j}\bra{\czz^j}_{C^{\kappa-\var}_{[s,t]}L^m_\omega}$.
	Applying Lemma~\ref{lem:rough-gronwall},  there exists a constant $C_2$ depending on $C_1, w_b(0,T),T, \varepsilon, \kappa$  such that
	\begin{equation}\label{tmp.supjt}
		\sup_{j}\sup_{t\in [0,T]} \| \theta^j_{t}-\tilde{\theta}^j_{t}\|_{L^m_\omega}=G_T \leq C_2\Big(G_0+\sup_{j}\bra{\czz^j}_{C^{\kappa-\var}_{[0,T]}L^m_\omega} \Big).
	\end{equation}
	Noticing that $G_0=\sup_{j}\|\czz^{j;N}_0\|_{L^m_\omega}$, we obtain \eqref{est.sup.stab} from the above estimate.
	Under the additional condition \eqref{con.zunif}, we substitute \eqref{tmp.supjt} into \eqref{eq:pairwise_stab_increment_1} to get
	\begin{align*}
		\sup_{j} \bra{\theta^j -\tilde{\theta}^j}_{C^{\kappa-\var}_{[s,t]}L^m_\omega} 
		&\lesssim\left(G_0+w_\czz(0,T)^{\frac 1 \kappa}\right)\bar  w(s,t)^{\frac{1}\kappa}   +w_\czz(s,t)^{\frac 1 \kappa},
	\end{align*}
	for any $(s,t)\in [0,T]^{2}_{\leq}$ satisfying \eqref{tmp.stsmall}. 
	Define the control $\bar w_\czz=w_\czz/w_\czz(0,T)$ so that
	\begin{align*}
		\left(G_0+w_\czz(0,T)^{\frac 1 \kappa}\right)\bar  w^{\frac{1}\kappa}   +w_\czz^{\frac 1 \kappa}\le \left(G_0+w_\czz(0,T)^{\frac 1 \kappa}\right)\left(\bar  w+\bar w_\czz\right)^{\frac{1}\kappa}.
	\end{align*}
	It follows from the previous two estimates that
	\begin{align*}
		\sup_{j} \bra{\theta^j -\tilde{\theta}^j}_{C^{\kappa-\var}_{[s,t]}L^m_\omega} 
		&\lesssim \left(G_0+w_\czz(0,T)^{\frac 1 \kappa}\right)\left(\bar  w(s,t)+\bar w_\czz(s,t)\right)^{\frac{1}\kappa}
	\end{align*}
	for any $(s,t)\in [0,T]^{2}_{\leq}$ satisfying \eqref{tmp.stsmall}. 
	To remove the restriction on $(s,t)$, we apply \cite[Prop.~5.10]{friz2010multidimensional} to obtain,
	\begin{align}\label{tmp.b4Kol}
		\sup_{j} \bra{\theta^j -\tilde{\theta}^j}_{C^{\kappa-\var}_{[s,t]}L^m_\omega}  
		\lesssim\left(G_0+w_\czz(0,T)^{\frac 1 \kappa}\right) w(s,t)^{\frac1 \kappa} \quad \forall (s,t)\in[0,T]^2_\le,
	\end{align}
	where $w$ is the control defined by
	\begin{align*}
		w= \bar w+\bar w_\czz + (\bar w+\bar w_\czz)^\kappa
	\end{align*}
	cf. Remark~\ref{rem:properties_controls}. We finally apply \cref{lem:kolmogorov_control} to obtain \eqref{est.aspbpvar}.
\end{proof}
\begin{rem}
	The above proof actually yields a stronger almost sure estimate under condition \eqref{con.zunif}. Indeed, from \eqref{tmp.b4Kol}, applying \cref{lem:kolmogorov_control}, we can find random variables $\{K^{i;N}\}_i$ such that $\PP$-a.s. for every $i,N$, every $(s,t)\in [0,T]^2_\le$,
	\begin{align*}
		|X^{i;N}_{s,t}- Y^{i;N} _{s,t}|\le K^{i;N}\left(\sup_{j=1,\ldots,N}\|\czz_0^j\|_{L^m_\omega} +w_\czz(0,T)^{\frac 1 \kappa}\right)\big(\bar w(s,t)+\bar w_\czz(s,t)\big)^{\frac1{\tilde \kappa}}
	\end{align*}
	and
	\begin{align*}
		\sup_{i=1,\ldots,N}\|K^{i;N}\|_{L^m_\omega}\le C,
	\end{align*}
	where $C$ is the same constant in \eqref{est.aspbpvar}.	
\end{rem}
We can now apply the general stability result of Proposition~\ref{prop:generic_pairwise_stabillity} to compare particle systems driven by different drifts.
\begin{cor}\label{cor:p_system_drift_stable}
	Let $b^1$, $b^2\in C^\infty_b$ and $(\alpha, q,H)$ satisfy Assumption \ref{ass:drift_assumption}. For each integer $N\geq 2$, let $X^{(N)}$, $Y^{(N)}$ denote respectively the solutions to
	\begin{equation}\label{eq:pairwise_b1_b2}
		\begin{aligned}
			X^{i;N}_t &= X_0^i + \frac{1}{N-1}\sum_{j\neq i}\int_0^t b^1_s(X^{i;N}_s,X^{j;N}_s) \dd s + W^i_t,\\
			Y^{i;N}_t &= Y_0^i + \frac{1}{N-1}\sum_{j\neq i}\int_0^t b^2_s(Y^{i;N}_s,Y^{j;N}_s) \dd s + W^i_t,
		\end{aligned}
		\quad\quad  \text{for all } i=1,\ldots,N.
	\end{equation}
	Then, for any $m\in [1,\infty)$ and $\tilde\kappa>\kappa$, there exists a constant 
	$C>0$ (depending on $\alpha$, $H$, $q$, $d$, $T$, $m$, $\|b^1\|_{L^q_T\cC^\alpha_x}$, $\|b^2\|_{L^q_T\cC^\alpha_x}$, $\tilde\kappa$) such that
	\begin{equation}\label{eq:stability-particle-by-particle}
		\sup_{j=1,\ldots,N}
		\Big\|  \| X^{j;N}-Y^{j;N}\|_{C^{\tilde \kappa-\var}_{T}} \Big\|_{L^m_\omega}
		\le C \left(\sup_{j=1,\ldots,N}\|X^{j;N}_0-Y^{j;N}_0\|_{L^m_\omega}+\| b^1-b^2\|_{L^q_T \cC_x^{\alpha-1}}\right).
	\end{equation}
\end{cor}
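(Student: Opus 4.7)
The plan is to reduce the comparison of $X^{(N)}$ and $Y^{(N)}$ to an application of Proposition~\ref{prop:generic_pairwise_stabillity}. To this end I would rewrite the second system in \eqref{eq:pairwise_b1_b2} as a perturbation of a system driven by $b^1$ with the \emph{same} initial data $X_0^i$. Concretely, set
\begin{equation*}
\cZ^{i;N}_t := (Y_0^i - X_0^i) + \frac{1}{N-1}\sum_{j\neq i}\int_0^t (b^2_s-b^1_s)(Y^{i;N}_s,Y^{j;N}_s)\dd s,
\end{equation*}
so that $Y^{i;N}_t = X_0^i + (N-1)^{-1}\sum_{j\neq i}\int_0^t b^1_s(Y^{i;N}_s,Y^{j;N}_s)\dd s + W^i_t + \cZ^{i;N}_t$, matching \eqref{eq:pairwise_stab_remainder} with perturbation $\cZ^{i;N}$.

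The main analytic work is then to bound $\cZ^{i;N}$ in $C^{\kappa-\var}_T L^m_\omega$ uniformly in $N$ and $i$. Writing $\tilde{\theta}^{i;N} := Y^{i;N}-W^i$, the a priori estimates of Lemma~\ref{lem:reg_apri} (if $\alpha>0$) or Lemma~\ref{lem:remainder_singular_apriori} (if $\alpha\leq 0$), applied to $Y^{(N)}$ (solving the system with drift $b^2$), yield a control $w = C\, w_{b^2}$ such that \eqref{eq:structure_assumption_processes} holds uniformly over $i,N$, with constant depending only on $\|b^2\|_{L^q_T \cC^\alpha_x}$. Viewing $(\tilde{\theta}^{i;N},\tilde{\theta}^{j;N})$ as an $\RR^{2d}$-valued process and observing that tensorized components inherit \eqref{eq:structure_assumption_processes}, I would apply Lemma~\ref{lem:estimate_integrals} in dimension $\ell=2d$ to $h_s = b^2_s - b^1_s \in L^q_T \cC^{\alpha-1}_x$ to obtain
\begin{equation*}
\bigg\llbracket \int_0^\cdot (b^2-b^1)_s(Y^{i;N}_s,Y^{j;N}_s)\dd s\bigg\rrbracket_{C^{\kappa-\var}_{[s,t]}L^m_\omega} \lesssim \|b^1-b^2\|_{L^q_{[s,t]}\cC^{\alpha-1}_x}\,|t-s|^\eps\bigl[1+w_{b^2}(s,t)^{1/q}|t-s|^\eps\bigr]
\end{equation*}
for $m\in[2,\infty)$, with constant depending on $\|b^2\|_{L^q_T\cC^\alpha_x}$ but independent of $i,j,N$. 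Averaging over $j\neq i$ by Minkowski preserves the bound, so combining with $\|\cZ^{i;N}_0\|_{L^m_\omega}=\|Y^i_0-X^i_0\|_{L^m_\omega}$, I obtain a control $w_\cZ$ as in \eqref{con.zunif} satisfying
\begin{equation*}
w_\cZ(0,T)^{1/\kappa} \lesssim \|b^1-b^2\|_{L^q_T \cC^{\alpha-1}_x},
\end{equation*}
uniformly in $N$, with constant depending on the listed parameters and $\|b^2\|_{L^q_T\cC^\alpha_x}$.

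With both ingredients in hand, I would invoke the second part of Proposition~\ref{prop:generic_pairwise_stabillity} (comparing $X^{(N)}$ driven by $b^1$ with $Y^{(N)}$ as the perturbed system) to conclude
\begin{equation*}
\sup_{i=1,\ldots,N}\Big\|\|X^{i;N}-Y^{i;N}\|_{C^{\tilde\kappa-\var}_T}\Big\|_{L^m_\omega} \lesssim \sup_{i}\|\cZ^{i;N}_0\|_{L^m_\omega} + w_\cZ(0,T)^{1/\kappa},
\end{equation*}
which gives \eqref{eq:stability-particle-by-particle} for any $m\in[2,\infty)$ and any $\tilde\kappa$ with $1/\tilde\kappa<1/\kappa - 1/m$; since $\kappa<2$ is fixed by $(\alpha,q,H)$ whereas $m$ is arbitrary in $[2,\infty)$, this covers every $\tilde\kappa>\kappa$ by taking $m$ sufficiently large. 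Finally, the case $m\in[1,2)$ follows from Jensen's inequality. The main obstacle is the uniformity in $N$: it hinges on the fact that the a priori bounds of Lemmas~\ref{lem:reg_apri}--\ref{lem:remainder_singular_apriori} are $N$-free and that Minkowski's inequality, when applied to the arithmetic mean over $j\neq i$, does not worsen the estimate obtained for each summand.
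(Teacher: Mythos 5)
Your proposal is correct and follows essentially the same route as the paper: rewrite the $b^2$-system as a perturbation of a $b^1$-system with the same initial data, bound the perturbation $\cZ^{i;N}$ in $C^{\kappa-\var}_T L^m_\omega$ via the a priori bounds of Lemmas~\ref{lem:reg_apri}--\ref{lem:remainder_singular_apriori} and the integral estimate of Lemma~\ref{lem:estimate_integrals} applied in dimension $2d$, and then invoke Proposition~\ref{prop:generic_pairwise_stabillity}. (As a minor note, you have the sign $(b^2-b^1)$ right; the paper's proof writes $(b^1-b^2)$, a harmless typo; and your explicit handling of the $m\in[1,2)$ case and the choice of $m$ to reach any $\tilde\kappa>\kappa$ just spells out what the paper leaves implicit.)
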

\begin{proof}
	Define  $\tilde{\theta}^{i;N}=Y^{i;N}-W^i$. Applying Lemmas~\ref{lem:reg_apri} and \ref{lem:remainder_singular_apriori}, we have
	\begin{align*}
		\sup_{N\ge2}\sup_{i=1,\ldots,N}\|\E_s|\tilde\theta^{i;N}_t-\E_s\tilde \theta^{i;N}_t|\|_{L^\infty_\omega}\lesssim w_{b^2}(s,t)^{\frac1q}|t-s|^{\alpha H+\frac1{q'}} \quad \forall(s,t)\in [0,T]^2_\le,
	\end{align*}
	where $w_{b^2}$ is defined according to \eqref{def.wbCa}.
	We define the process
	\begin{equation*}
		\cZ^{i;N}_{t} =Y^{i;N}_0-X^{i;N}_0+ \frac{1}{N-1}\sum_{j\neq i} \int_0^t (b^1-b^2)_r(\tilde{\theta}^{i;N}_r+W^i_r,\tilde{\theta}^{j;N}_r+W^j_r)\dd r
	\end{equation*}
	so that the system for $Y^{(N)}$ may be rewritten as
	\begin{align*}
		Y^{i;N}_t &= X_0^i + \frac{1}{N-1}\sum_{j\neq i}\int_0^t b^1_s(Y^{i;N}_s,Y^{j;N}_s) \dd s + W^i_t+\czz^{i;N}_t.
	\end{align*}
	Taking into account the above estimate for $\tilde \theta$, we can apply \cref{lem:estimate_integrals} to get that
	\begin{align*}
		\sup_{N\ge2}\sup_{i=1,\ldots,N}\bra{\czz^{i;N}}_{C^{\kappa-\var}_{[s,t]}L^m_\omega}\le \tilde C \left(\int_s^t\|b^1_r-b^2_r\|_{\C^{\alpha-1}_x}^q\dd r\right)^{\frac1q}|t-s|^\varepsilon,
	\end{align*}
	for some constant $\tilde C\coloneqq \tilde C(\alpha,q,H,d,T,m,\|b^2\|_{L^q_T\C^\alpha_x})$. 
	This shows that $\czz$ verifies condition \eqref{con.zunif} with the control $w_\czz$ defined by
	\begin{align*}
		w_\czz(s,t)^{\frac 1 \kappa}=\tilde C\left(\int_s^t\|b^1_r-b^2_r\|_{\C^{\alpha-1}_x}^q\dd r\right)^{\frac1q}|t-s|^\varepsilon
	\end{align*}
	and so that we are in the setting of Proposition~\ref{prop:generic_pairwise_stabillity}, with $b=b^1$.
	Hence, we obtain \eqref{eq:stability-particle-by-particle} from \eqref{est.aspbpvar}, noting that $w_\czz(0,T)^{\frac 1 \kappa}\lesssim \|b^1-b^2\|_{L^q_T\C^{\alpha-1}_x}$.
\end{proof}
\subsection{A Priori and Stability Estimates for McKean--Vlasov Equations} 
\label{sub:mkv}
Having established a priori estimates and stability estimates for system \eqref{eq:p_system_pairwise}, we can now also infer similar bounds for solutions to the McKean--Vlasov equation
\begin{equation}\label{eq:regular_mkv}
	\bar X_t = X_0 + \int_0^t b_s(\bar X_s,\bar{\mu}_s)\dd s + W_t,\qquad \bar{\mu}_t = \cL(\bar X_t);
\end{equation}
as before, $X_0$ is an $\cF_0$-measurable random variable and $W$ an fBm of parameter $H$.

Next, we create a coupling between \eqref{eq:regular_mkv} and the interacting particle systems as follows.
Let $\{X_0^i\}_{i=1}^\infty$ and  $\{W^i\}_{i=1}^\infty$ be i.i.d. copies of $X_0$ and $W$ respectively.
This gives corresponding i.i.d. copies $\{\bar{X}^{i}\}_{i=1}$ of $\bar{X}$, so that each $\bar X^i$ is the solution to
\begin{align}\label{eq.mkvi}
	\bar{X}^{i} = X^i_0 +  \int_0^t b_s (\bar{X}^i_s,\bar{\mu}_s)\dd s + W^i_t.
\end{align}
Together with the particle system \eqref{eq:p_system_pairwise}, \eqref{eq.mkvi} forms Sznitman's classical direct coupling \cite{sznitman1991topics}. 
As usual, we denote 
\begin{align*}
	\bar\theta=\bar X-W, \quad \theta^{i;N}=X^{i;N}-W^i, \quad \bar \theta^i=\bar X^i-W^i.
\end{align*}
\begin{lem}\label{lem:pairwise_mkv_apriori}
	Let 
	$b\in C^\infty_b$ and $(\alpha, q,H)$ satisfy Assumption \ref{ass:drift_assumption}; let $X_0$ be $\cF_0$-measurable, $\bar X$ be the associated solution to \eqref{eq:regular_mkv}.
	If $\alpha>0$, then there exists a constant $C\coloneqq C(\alpha,q,H,d,T)>0$ such that
	\begin{equation}\label{eq:eq:mkv_apriori_alpha>0}
		\left\| \|\bar \theta_t - \EE_s\bar \theta_t\|_{L^m|\cF_s} \right\|_{L^\infty_\omega} 
		\leq C \Big( 1+ \| b\|_{L^q_T \cC^\alpha_x}^{\frac{\alpha}{1-\alpha}} \Big) w_b(s,t)^{\frac{1}{q}} |t-s|^{\alpha H+\frac{1}{q^\prime}}
		\quad \forall\, (s,t)\in [S,T]^2_\leq.
	\end{equation}
	If $\alpha\leq 0$, then there exists a constant $\tilde C\coloneqq \tilde C(\alpha,q,H,d,T,m)>0$ such that
	\begin{equation}\label{eq:mkv_apriori_alpha<0}
		\left\| \|\bar\theta_{s,t}\|_{L^m|\cF_s} \right\|_{L^\infty_\omega} 
		\leq \tilde C \Big(1+ \|b\|^{\frac{-\alpha H}{(\alpha+1)H-1}}_{L^q_T\cC^\alpha_x}\Big) w_b(s,t)^{\frac{1}{q}} |t-s|^{\alpha H+\frac{1}{q^\prime}}
		\quad\forall\, (s,t)\in [S,T]^2_\leq.
	\end{equation}
\end{lem}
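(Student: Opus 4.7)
The strategy is to reduce the McKean--Vlasov equation to a standard SDE and then invoke the one-particle analogues of Lemmas~\ref{lem:reg_apri}--\ref{lem:remainder_singular_apriori}. Concretely, since $b \in C^\infty_b$ and $\bar X$ solves \eqref{eq:regular_mkv}, the effective drift $\tilde b_s(x) := b_s(x, \bar\mu_s) = \int_{\RR^d} b_s(x,y)\,\dd\bar\mu_s(y)$ is well-defined, smooth and bounded, and \eqref{eq:regular_mkv} rewrites as the standard SDE
$$\bar X_t = X_0 + \int_0^t \tilde b_s(\bar X_s)\,\dd s + W_t.$$
Because $\bar\mu_s$ is a probability measure, a standard Besov-space computation (in the spirit of Lemma~\ref{lem:besov_dimension_reduction}) yields $\|\tilde b_s\|_{\cC^\alpha_x(\RR^d)} \leq \|b_s\|_{\cC^\alpha_x(\RR^{2d})}$, so $\|\tilde b\|_{L^q_T \cC^\alpha_x} \leq \|b\|_{L^q_T \cC^\alpha_x}$ uniformly in the underlying law.

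For $\alpha>0$, I would repeat verbatim the bootstrap from the proof of Lemma~\ref{lem:reg_apri}, replacing the collection $\{\theta^{i;N}\}_{i=1}^N$ by the single process $\bar\theta$ and the particle average $\frac{1}{N-1}\sum_{j\neq i} b_r(\cdot,X^{j;N}_r)$ by $\tilde b_r(\cdot)$. The iteration exploits only the H\"older continuity of $b$ in its first argument, the LND property of fBm, and H\"older's inequality in time, none of which interacts with the particle sum; the same recursion $\beta_n=\alpha(\beta_{n-1}\wedge H)+1/q'$ starting from $\beta_0=1/q'$ reaches $\beta_\infty=\alpha H+1/q'$ and gives \eqref{eq:eq:mkv_apriori_alpha>0}. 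For $\alpha\leq 0$, I would run the stochastic sewing argument from the proof of Lemma~\ref{lem:remainder_singular_apriori} with germ $A_{s,t}:=\EE_s \int_s^t \tilde b_r(\bar\theta_s+W_r)\,\dd r$; the LND bound \eqref{eq:LND+heat} gives the initial estimate $|A_{s,t}|\lesssim |t-s|^{\alpha H+1/q'} w_b(s,t)^{1/q}$, and the two-scale control on $\EE_s\delta A_{s,u,t}$ combined with the partition-of-$[0,T]$ argument produces the factor $\|b\|_{L^q_T\cC^\alpha_x}^{-\alpha H/((\alpha-1)H+1)}$, yielding \eqref{eq:mkv_apriori_alpha<0}.

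I expect no genuine obstacle: once a solution $\bar X$ is fixed, the nonlinear dependence on $\bar\mu$ decouples through $\tilde b$, and the particle structure played no essential role in the earlier proofs beyond providing a bounded sum amenable to H\"older's inequality. As an alternative route, one could instead pass to the limit $N\to\infty$ directly in the uniform-in-$N$ estimates of Lemmas~\ref{lem:reg_apri}--\ref{lem:remainder_singular_apriori}, using Sznitman's classical propagation of chaos for smooth drifts (cf. Appendix~\ref{app:lipschitz_p_system}) together with the lower semicontinuity of the deterministic $L^\infty_\omega$ bound under $L^m_\omega$-convergence of $\theta^{i;N}$ to $\bar\theta^i$ on the common filtered probability space.
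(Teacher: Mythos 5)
Your alternative route at the end — passing to the limit $N\to\infty$ in the uniform-in-$N$ estimates of Lemmas~\ref{lem:reg_apri}--\ref{lem:remainder_singular_apriori}, invoking Lemma~\ref{lem:app_lip_MKV_propagation} for the $L^m_\omega$-convergence $\theta^{1;N}\to\bar\theta^1$ on a common probability space, and using lower semicontinuity (conditional Fatou) of the $L^\infty_\omega$-norm — is precisely the paper's argument, and it works. If you had led with it, the proposal would be complete.

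Your main route, on the other hand, has a genuine gap when $\alpha\leq 0$. For $\alpha>0$, freezing $\bar\mu$ and rewriting the McKean--Vlasov equation as a standard SDE with effective drift $\tilde b_s(x)=b_s(x,\bar\mu_s)$ is fine: $\cC^\alpha_x$ is then the classical H\"older space and the bound $\|\tilde b_s\|_{\cC^\alpha_x(\RR^d)}\leq\|b_s\|_{\cC^\alpha_x(\RR^{2d})}$ follows pointwise from the triangle inequality. For $\alpha<0$, however, $\cC^\alpha_x$ carries a Besov norm and the estimate you invoke is false uniformly over probability measures. Take $d=1$, fix a smooth compactly supported $\psi:\RR^2\to\RR$ with $\psi(0,0)=1$, and set $b^n(x,y)=\sum_{j=1}^{n}2^{-j\alpha}\cos(2^jy)\,\psi(x,y)$. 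Each summand has Fourier support essentially on an annulus of radius $\sim 2^j$ in $\RR^2$, so $\|b^n\|_{B^\alpha_{\infty,\infty}(\RR^2)}\sim 1$ uniformly in $n$; yet taking $\mu=\delta_0$ gives $\tilde b^n(x)=\big(\sum_{j=1}^{n}2^{-j\alpha}\big)\psi(x,0)$, whose $B^\alpha_{\infty,\infty}(\RR)$-norm diverges as $n\to\infty$ because $-\alpha>0$. Averaging a negative-regularity function against a generic probability measure behaves like a trace, and traces are not bounded on $B^\alpha_{\infty,\infty}$ for $\alpha<0$. This is not a side issue — it is exactly why the paper never works with $\tilde b$ for singular drifts and instead handles the measure dependence by the doubling-dimension conditional-expectation argument of Lemma~\ref{lem:estimate_measure_flow_integrals} (see also the related caveat in Remark~\ref{rem:intro_MKV}). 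One also cannot rescue the claimed estimate by appealing to Gaussian smoothing of $\bar\mu_s$, since $\bar\mu_0=\cL(X_0)$ may be Dirac. So the $\alpha\leq 0$ branch of your main route does not get off the ground, and the lemma must be proved by the limiting argument you relegated to a footnote.
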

\begin{proof}
	We first note that in either case, $\alpha>0$ or $\alpha\leq 0$, it follows from the results of Lemma~\ref{lem:reg_apri} or Lemma~\ref{lem:remainder_singular_apriori} that for all $(s,t)\in [S,T]^2_{\leq}$,
	\begin{equation*}
		\sup_{N\geq 2}\left\| \|\theta^{1;N}_t-\E_s \theta^{1;N}_t\|_{L^m|\cff_s} \right\|_{L^\infty_\omega} \leq C w_b(s,t)^{\frac{1}{q}} |t-s|^{\alpha H + \frac{1}{{q^\prime}}}.
	\end{equation*}
	Furthermore,  it follows from Lemma~\ref{lem:app_lip_MKV_propagation} that, for any $m\in [1,\infty)$
	\begin{align*}
		\lim_{N\to\infty}\Big(\theta^{1;N}_t-\E_s \theta^{1;N}_t\Big)=\bar\theta^{1}_t-\E_s \bar\theta^{1}_t \quad\text{in}\quad L^m_\omega.
	\end{align*}
	Using lower semi-continuity of the $L^\infty_\omega$ norm and the conditional version of Fatou's lemma, for all $[s,t]\in [S,T]^{2}_{\leq}$ it holds that
	\begin{equation*}
		\left\| \|\bar \theta^{1}_t-\E_s \bar \theta^{1}_t\|_{L^m|\cff_s} \right\|_{L^\infty_\omega} \leq C w_b(s,t)^{\frac{1}{q}} |t-s|^{\alpha H + \frac{1}{{q^\prime}}}.
	\end{equation*}
	The claim then follows from the fact that $\bar{\theta}^{1}$ has the same law as $\bar\theta$.\end{proof}
\begin{prop}\label{prop:pairwise_mkv_stability}
	Let $N\geq 2$, $b^1$, $b^2\in C^\infty_b$ and $(\alpha, q,H)$ satisfy Assumption \ref{ass:drift_assumption} and $\kappa,\,\eps$ be defined by \eqref{eq:kappa_eps_basic}; let $\bar X,\,\bar Y$ be the solutions to \eqref{eq:regular_mkv} with drifts $b^1,\,b^2$ and initial data $X_0,Y_0$ respectively. Then for any $m \in [1,\infty)$ and $\tilde\kappa>\kappa$ there exists $C\coloneqq C(\alpha,q,H,d,T,m,\|b^1\|_{L^q_T\C^\alpha_x},\|b^2\|_{L^q_T\C^\alpha_x},\tilde\kappa)>0$ such that
	\begin{equation}\label{eq:pairwise_mkv_stabillity}
		\Big\| \|\bar X-\bar Y\|_{C^{\tilde \kappa-\var}_{T}} \Big\|_{L^m_\omega} \leq C\left(\|X_0-Y_0\|_{L^m_\omega}+ \| b^1-b^2\|_{L^q_T \cC^{\alpha-1}_x}\right).
	\end{equation}
\end{prop}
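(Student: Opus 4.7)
The plan is to lift the particle-level stability estimate of Corollary~\ref{cor:p_system_drift_stable} to the McKean--Vlasov level via Sznitman's direct coupling and a limiting argument, mirroring the approach used in the proof of Lemma~\ref{lem:pairwise_mkv_apriori}. Concretely, I would construct i.i.d. copies $\{(X_0^i, Y_0^i)\}_{i=1}^\infty$ of the coupled pair $(X_0, Y_0)$, together with i.i.d. fBms $\{W^i\}_{i=1}^\infty$, independent of the initial data; for each $N \ge 2$, let $X^{(N)}$ solve \eqref{eq:pairwise_b1_b2} with drift $b^1$ and initial data $\{X_0^i\}$, and $Y^{(N)}$ solve \eqref{eq:pairwise_b1_b2} with drift $b^2$ and initial data $\{Y_0^i\}$; let $\bar X^i$, $\bar Y^i$ be the associated solutions to \eqref{eq.mkvi} driven by $W^i$. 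By construction $(\bar X^1, \bar Y^1)$ has the same joint law as $(\bar X, \bar Y)$, so it suffices to prove \eqref{eq:pairwise_mkv_stabillity} for the former pair.

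Corollary~\ref{cor:p_system_drift_stable} immediately yields, with a constant $C$ having the claimed dependencies and independent of $N$,
\begin{equation*}
	\big\| \|X^{1;N} - Y^{1;N}\|_{C^{\tilde\kappa-\var}_T} \big\|_{L^m_\omega}
	\le C\big(\|X_0 - Y_0\|_{L^m_\omega} + \|b^1 - b^2\|_{L^q_T \cC^{\alpha-1}_x}\big).
\end{equation*}
Since $b^1, b^2$ are smooth, the classical propagation of chaos result in Appendix~\ref{app:lipschitz_p_system} (Lemma~\ref{lem:app_lip_MKV_propagation}) guarantees that $X^{1;N}_t \to \bar X^1_t$ and $Y^{1;N}_t \to \bar Y^1_t$ in $L^m_\omega$ for every $t \in [0,T]$, and after extracting a subsequence the convergences hold almost surely on a countable dense set of times.

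To conclude, I would fix a finite partition $\pi = \{0 = t_0 < \cdots < t_k = T\}$ with nodes in that dense set and set $V_\pi(Z) := (\sum_i |Z_{t_{i+1}} - Z_{t_i}|^{\tilde\kappa})^{1/\tilde\kappa}$. The particle-level estimate bounds $\|V_\pi(X^{1;N} - Y^{1;N})\|_{L^m_\omega}$ uniformly in $N$ by the same constant $C$; the almost-sure convergence along the subsequence together with continuity of $V_\pi$ and Fatou's lemma give
\begin{equation*}
	\|V_\pi(\bar X^1 - \bar Y^1)\|_{L^m_\omega} \le \liminf_{N\to\infty} \|V_\pi(X^{1;N} - Y^{1;N})\|_{L^m_\omega} \le C\big(\|X_0 - Y_0\|_{L^m_\omega} + \|b^1 - b^2\|_{L^q_T \cC^{\alpha-1}_x}\big).
\end{equation*}
Taking an increasing sequence of such partitions whose mesh tends to zero, the maps $V_{\pi_k}(\bar X^1 - \bar Y^1)$ are monotone nondecreasing and converge almost surely to $\|\bar X^1 - \bar Y^1\|_{C^{\tilde\kappa-\var}_T}$ by continuity of the sample paths. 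Monotone convergence then promotes the partition-wise bound to the full variation norm, yielding \eqref{eq:pairwise_mkv_stabillity}.

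The main delicate point is this last interchange between the supremum over partitions and the $L^m_\omega$-expectation; the monotonicity of the variation under refinement and path continuity, together with the uniform bound of Corollary~\ref{cor:p_system_drift_stable}, make this step straightforward. A minor subsidiary issue is coordinating the subsequence extractions for the dense set of times so that a single subsequence works for all relevant partition nodes, which is handled by a standard diagonal argument.
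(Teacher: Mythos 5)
Your proposal is correct and follows essentially the same route as the paper's own proof: couple via i.i.d. copies and the particle systems, apply Corollary~\ref{cor:p_system_drift_stable} uniformly in $N$, use Lemma~\ref{lem:app_lip_MKV_propagation} to pass to the McKean--Vlasov limit, and conclude by lower semicontinuity of the $C^{\tilde\kappa\text{-}\mathrm{var}}_T$ and $L^m_\omega$ norms. The paper simply cites this lower semicontinuity as standard (pointing to \cite[Lem.~5.12]{friz2010multidimensional}), whereas you unpack it via the partition functionals $V_\pi$; note that since Lemma~\ref{lem:app_lip_MKV_propagation} already gives convergence of $\sup_{t\in[0,T]}|\eta^{1;N}_t-\bar\eta^1_t|$ in $L^m_\omega$, each $V_\pi$ (being Lipschitz in the $C_T$-norm) converges in $L^m_\omega$ directly, so the subsequence extraction and Fatou step you invoke are not actually needed --- the fixed-partition bound passes to the limit outright, and monotone convergence over refinements then finishes exactly as you say.
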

\begin{proof}
	We define $Y^{i;N},\bar Y^{i}$ in a similar way as $X^{i;N},\bar X^{i}$. From \cref{cor:p_system_drift_stable}, we have
	\begin{align*}
		\Big\| \| X^{1;N}-Y^{1;N}\|_{C^{\tilde \kappa-\var}_T} \Big\|_{L^m_\omega} \leq C\left(\|X_0-Y_0\|_{L^m_\omega}+ \| b^1-b^2\|_{L^q_T \cC^{\alpha-1}_x}\right).
	\end{align*}
	From \cref{lem:app_lip_MKV_propagation}, for $\eta\in\{X,Y\}$ we have that
	\begin{align*}
		\lim_{N\to\infty}\bigg\|\sup_{t\in[0,T]}\big|\eta^{1;N}_t-\bar \eta^1_t\big|\bigg\|_{L^m_\omega}=0.
	\end{align*}
	Using the lower semi-continuity of $L^m_\omega$-norms and $C^{\kappa-\var}_T$-norms (see \cite[Lem.~5.13]{friz2010multidimensional} for the second one), we obtain \eqref{eq:pairwise_mkv_stabillity} (similarly to the argument used in proving \cref{lem:pairwise_mkv_apriori}).	
\end{proof}
\subsection{Propagation of chaos estimates}
Another application of Proposition~\ref{prop:generic_pairwise_stabillity} gives a rate of convergence between the interacting particle system and the $N$ independent copies of the McKean--Vlasov equation. 
Let us stress that this is the first time in the paper we truly exploit the fact that the $\{X^i_0\}_i$ are i.i.d.
\begin{cor}\label{cor:estim_particle_to_mkv}
	Let $N\geq 2$, $b\in C^\infty_b$, $(\alpha, q,H)$ satisfy Assumption \ref{ass:drift_assumption} and $\kappa,\,\eps$ be defined by \eqref{eq:kappa_eps_basic}. Let $\{X^i_0\}_i$ be i.i.d. $\cF_0$-measurable random variables, $X^{(N)}$ be the associated particle system satisfying \eqref{eq:p_system_pairwise} and $\{\bar{X}^{i}\}_{i\ge1}$ be i.i.d. processes satisfying \eqref{eq.mkvi}.
	Then, for any $m\in [1,\infty)$ and any $\tilde\kappa>\kappa$, there exists a constant $C\coloneqq C(\alpha,q,H,d,T,m,\|b\|_{L^q_T\C^\alpha_x},\tilde\kappa)>0$, independent of $N$, such that
	\begin{equation}\label{eq:estim-particle-to-mkv}
		\sup_{i=1,\ldots,N} \Big\| \|X^{i;N}-\bar{X}^{i}\|_{C^{\tilde\kappa-\var}_T} \Big\|_{L^m_\omega} \leq C N^{-1/2}.
	\end{equation}
\end{cor}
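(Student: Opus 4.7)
We realize $\{\bar X^i\}_{i=1}^N$ as a perturbation of the particle system \eqref{eq:p_system_pairwise} and apply Proposition~\ref{prop:generic_pairwise_stabillity}. Subtracting \eqref{eq.mkvi} from \eqref{eq:pairwise_stab_remainder} suggests the natural choice
\begin{equation*}
\cZ^{i;N}_t := \frac{1}{N-1}\sum_{j\neq i} U^{i,j}_t, \qquad U^{i,j}_t := \int_0^t \bigl[b_s(\bar X^i_s,\bar\mu_s) - b_s(\bar X^i_s,\bar X^j_s)\bigr]\,\dd s,
\end{equation*}
so that $\bar X^i$ satisfies \eqref{eq:pairwise_stab_remainder} with this $\cZ^{i;N}$ and with $\cZ^{i;N}_0 = 0$. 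The task thus reduces to verifying \eqref{con.zunif} uniformly in $i$ with a control $w_\cZ$ such that $w_\cZ(0,T)^{1/\kappa} \lesssim N^{-1/2}$: estimate \eqref{est.aspbpvar} will then yield \eqref{eq:estim-particle-to-mkv}.

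The core observation is a conditional independence structure. Set $\cG^i := \sigma(X_0^i, W^i)$; then $\bar X^i$ is $\cG^i$-measurable, while $\{\bar X^j\}_{j\neq i}$ remain independent of $\cG^i$ and i.i.d.\ with marginal law $\bar\mu_s$ at each time $s$. Consequently $\EE[b_s(\bar X^i_s,\bar X^j_s)\mid \cG^i] = b_s(\bar X^i_s, \bar\mu_s)$, so each $U^{i,j}$ is $\cG^i$-conditionally mean-zero, and the family $\{U^{i,j}\}_{j\neq i}$ is $\cG^i$-conditionally i.i.d. A scalar Rosenthal-type bound for conditionally i.i.d.\ mean-zero sums (applied $(s,t)$-pointwise and then integrating out the conditioning) yields, for every $m \geq 2$ and every $(s,t) \in [0,T]^2_\leq$,
\begin{equation*}
\|\cZ^{i;N}_{s,t}\|_{L^m_\omega} \leq C_m (N-1)^{-1/2}\, \|U^{i,1}_{s,t}\|_{L^m_\omega}.
\end{equation*}

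To bound the right-hand side we apply the integral estimates of Section~\ref{subsec:integral_estimates} to the two summands of $U^{i,1}$ separately. By Lemma~\ref{lem:pairwise_mkv_apriori}, both $\bar\theta^i$ and the $\RR^{2d}$-valued pair $(\bar\theta^i, \bar\theta^1)$ satisfy \eqref{eq:structure_assumption_processes} with control $w_b$, the latter being driven by the $\RR^{2d}$-valued fBm $(W^i, W^1)$. Lemma~\ref{lem:estimate_measure_flow_integrals} applied to $\int_0^\cdot b_r(\bar X^i, \bar\mu_r)\dd r$, together with Lemma~\ref{lem:estimate_integrals} applied in doubled spatial dimension to $\int_0^\cdot b_r(\bar X^i,\bar X^1) \dd r$, and the embedding $\cC^\alpha_x \hookrightarrow \cC^{\alpha-1}_x$ from \eqref{eq:holder_complete_embed} applied to the smooth drift $b$, together produce a constant $C$ with the stated parameter dependence such that
\begin{equation*}
\|U^{i,1}_{s,t}\|_{L^m_\omega} \leq \bra{U^{i,1}}_{C^{\kappa-\var}_{[s,t]} L^m_\omega} \leq C\, w_b(s,t)^{1/q}\, |t-s|^{\eps}.
\end{equation*}
Since $1/q + \eps = 1/\kappa$, Remark~\ref{rem:properties_controls} writes the right-hand side as $\tilde w(s,t)^{1/\kappa}$ for a suitable control $\tilde w$. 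Combining the two displays gives $\|\cZ^{i;N}_{s,t}\|_{L^m_\omega} \leq C' N^{-1/2}\, \tilde w(s,t)^{1/\kappa}$ uniformly in $i$, and Remark~\ref{rem:link_control_variation} upgrades this pointwise estimate to a genuine $C^{\kappa-\var}_{[s,t]}L^m_\omega$-seminorm bound, verifying \eqref{con.zunif} with $w_\cZ := (C')^{\kappa} N^{-\kappa/2}\, \tilde w$, so $w_\cZ(0,T)^{1/\kappa} \lesssim N^{-1/2}$. The case $m \in [1,2)$ then follows from the $m=2$ case by Jensen's inequality.

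The principal technical point is that Proposition~\ref{prop:generic_pairwise_stabillity} requires control in the Banach-space-valued seminorm $\bra{\cdot}_{C^{\kappa-\var}_T L^m_\omega}$, whereas a scalar Rosenthal estimate only directly produces pointwise $L^m_\omega$-bounds on increments. We sidestep the need for a genuine martingale-type-$2$ norm on $C^{\kappa-\var}_T$ by designing the pointwise bound to take exactly the control-compatible form $\tilde w(s,t)^{1/\kappa}$, so that Remark~\ref{rem:link_control_variation} automatically delivers the required $\kappa$-variation seminorm bound.
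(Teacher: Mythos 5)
Your proof is correct and follows essentially the same route as the paper: same decomposition of $\bar X^i$ as a perturbed particle system with remainder $\cZ^{i;N}_t=\frac{1}{N-1}\sum_{j\neq i}U^{i,j}_t$, same appeal to Proposition~\ref{prop:generic_pairwise_stabillity} after verifying \eqref{con.zunif}, same exploitation of the $\cG^i$-conditional i.i.d.\ mean-zero structure, and same use of Lemmas~\ref{lem:estimate_integrals}--\ref{lem:estimate_measure_flow_integrals} for the increment bound on $U^{i,j}$. Your ``scalar Rosenthal-type bound applied pointwise in $(s,t)$'' is exactly what the paper's Lemma~\ref{lem:moments_martingale_type2} provides when specialized to $E=\RR^d$, and your closing remark about upgrading pointwise $L^m_\omega$-bounds to the $C^{\kappa-\var}_TL^m_\omega$ seminorm via Remark~\ref{rem:link_control_variation} matches the paper's reasoning.
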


\begin{proof}
	For each $i$, we write
	\begin{align*}
		\bar X^i_t=X^i_0+\frac1{N-1}\sum_{j\neq i}\int_0^t b_r(\bar X^i_r,\bar X^j_r)\dd r+W^i_t+\czz^i_t,
	\end{align*}
	where 
	\begin{equation*}
		\czz^i_t=\frac1{N-1}\sum_{j\neq i} Z^{i,j}_t 
		\quad\text{and}\quad
		Z^{i,j}_{t} :=\int_0^t\left[b_r(\bar X^i_r,\bar{\mu}_r)-b_r(\bar X^i_r,\bar X^j_r)\right]\dd r.
	\end{equation*}
	We are going to apply Proposition~\ref{prop:generic_pairwise_stabillity}.
	Since $b$ is bounded, it is obvious that each $\czz^i$ belongs to ${C^{\kappa-\var}_{[0,T]}L^m_\omega}$.
	By Lemma~\ref{lem:pairwise_mkv_apriori}, the processes $\bar{\theta}^i=X^i-W^i$ satisfy condition \eqref{eq:structure_assumption_processes}, therefore we can invoke Lemmas~\ref{lem:estimate_integrals}-\ref{lem:estimate_measure_flow_integrals} to find that, for every $(s,t)\in[0,T]^2_\le$, we have
	\begin{align}\label{tmp.estZij}
		\sup_{i,j}\|Z^{i,j}_{s,t}\|_{L^m_\omega}\lesssim\|b\|_{L^q_{[s,t]}\cC^{\alpha-1}_x}|t-s|^\eps.
	\end{align}	
	In addition, conditional on $\bar{X}^{i}$, $\{Z^{i,j}\}_{j\neq i}$
	is a family of i.i.d random variables. 
	Since $\law(\bar{X}^j_r)= \mu_r$, we have 
	\begin{align*}
		\EE\left[\EE\left[ b_r(\bar{X}^{i;N}_r,\bar{X}^{j;N}_r) - b_r(\bar{X}^{i;N}_r,\mu_r)|\bar{X}^{i;N}_r\right]\right]
		= b_r(\bar{X}^{i;N}_r, \mu_r ) - b_r(\bar{X}^{i;N}_r,\mu_r)
		=0.
	\end{align*}
	For $m\geq 2$, we can therefore apply Lemma~\ref{lem:moments_martingale_type2} and estimate \eqref{tmp.estZij} to obtain
	\begin{align*}
		\| \cZ^i_{s,t}\|_{L^m_\omega}
		\lesssim \frac{1}{N-1} \bigg( \sum_{j\neq i} \| Z^{i,j}_{s,t}\|_{L^m_\omega}^2\bigg)^{1/2}
		\lesssim N^{-1/2} \| b\|_{L^q_{[s,t]}\cC^{\alpha-1}_x} |t-s|^\eps
	\end{align*}
	where the estimate is uniform in $i$.
	By Remark~\ref{rem:link_control_variation}, it follows that 
	\begin{equation*}
		\sup_{i=1,\ldots,N} \bra{\czz^i}_{C^{\kappa-\var}_{[s,t]}L^m_\omega}\lesssim N^{-1/2} w_\czz(s,t)^{\frac 1 \kappa} \quad \forall(s,t)\in[0,T]^2_\le
	\end{equation*}
	where $ w_\czz$ is the control defined by $ w_\czz(s,t)^{\frac1 \kappa}=\| b\|_{L^q_{[s,t]}\cC^{\alpha-1}_x} |t-s|^\eps$.
	Applying \cref{prop:generic_pairwise_stabillity}, we obtain \eqref{eq:estim-particle-to-mkv}.
\end{proof}

\section{Well-Posedness and Propagation of Chaos}\label{sec:main_proofs}
With the results of Section~\ref{sec:reg.drift} in hand, we are ready to prove our main results, by passing to the limit with respect to regular approximations of the singular drift.
We will first show in Sections~\ref{sec:sing_drifts_sols}-\ref{sec:reg_drifts_sols} that all equations of interest are well-posed in this limit, up to introducing suitable solution concepts.
We then prove quantitative mean field convergence and propagation of chaos in Theorem~\ref{th:pairwise_mean_field}, which as a consequence yields Theorem~\ref{thm:main}, as shown in Section~\ref{sec:poc_proofs}.

We maintain most of the notations and conventions from the previous sections.
In particular, whenever clear, we write $W^i$ in place of $W^{i,H}$ and when given a collection of $N$ random variables $\{Y^i\}_{i=1}^N$, for concision we write it as $Y^{(N)}$; this applies to objects like $X^{(N)}_0$, $X^{(N)}$, $\bar X^{(N)}$, $W^{(N)}$ and so on. Given a particle system $X^{(N)}$, we write $\theta^{(N)}=X^{(N)}-W^{(N)}$ for its remainder process, which is defined $i$-wise by $\theta^{i;N}_t=X^{i;N}_t-W^i_t$, for $i=1,\ldots, N$; similarly for $\bar\theta^{(N)}$.
Whenever $N$ is fixed and clear, especially in the proofs, we will just write $X^i$ (resp. $\theta^i$) instead of $X^{i;N}$ (resp. $\theta^{i;N}$), so as to lighten notation.

Throughout the section, unless specified otherwise, $\{(X^i_0,W^i)\}_{i=1}^\infty$ are always taken i.i.d., with $X^i_0$ independent of $W^i$.
Let us also stress that we continue working with the spaces $L^q_T \cC^\alpha_x$, which have the advantage of being well-approximated by elements of $C^\infty_b$; we will only show at the very end, in the proof of Theorem~\ref{thm:main}, how the case of drifts $b\in L^q_T B^\alpha_{\infty,\infty}$ can be reduced to this one.

\subsection{Solution Concept and Well-Posedness for Distributional Drifts}\label{sec:sing_drifts_sols}

We begin by defining a suitable notion of solutions to the interacting particle system  in the case $\alpha<0$, so that the interaction $b$ is a genuine distribution and cannot be naively evaluated pointwise.
The following definition is rather standard in this context, after the work \cite{BasChe2003}.

\begin{defn}\label{def:pairwise_singular_sol}
	Let $\alpha<0$, $b\in L^q_T\cC^\alpha_x$. We say that a tuple $(\Omega,\FF,\PP;X^{(N)}_0, W^{(N)}, X^{(N)})$ is a weak solution on $[0,T]$ to the particle system
	\begin{equation}\label{eq:sing_pairwise_system}
		X^{i;N}_t = X_0^i + \frac{1}{N-1}\sum_{\substack{j=1\\ j\neq i}}^N\int_0^t b_s(X^{i;N}_s,X^{j;N}_s) \dd s + W^i_t,\qquad \text{for all } i=1,\ldots,N,
	\end{equation} 
	if the following hold:
	\begin{enumerate}[label=\roman*)]
		\item $(\Omega,\FF,\PP)$ is a filtered probability space, $W^{(N)}$ is an $\RR^{Nd}$-valued $\FF$-fBm and $X^{(N)}_0$ is $\cF_0$-adapted (in particular, this implies that $W^i$ and $W^j$ are independent for $i\neq j$, and that $W^{(N)}$ and $X^{(N)}_0$ are independent);
		\item $X^{(N)}$ is $\FF$-adapted;
		\item there exists a sequence $\{b^n\}_{n\geq 1}\subset C^\infty_b$ such that $b^n\to b$ in $L^q_T \cC^{\alpha}_x$, and for all $i=1,\ldots, N$ it holds that
		\begin{equation}\label{eq:defn_pairwise_singular_sol}
			X^{i;N}_{\,\cdot\,} -X_0^{i;N} - W^{i}_{\,\cdot\,}  = \lim_{n\to \infty}\frac{1}{N-1}\sum_{j\neq i} \int_{0}^{_{\,\cdot\,} } b^n_s(X^{i;N}_s,X^{j;N}_s)\dd s,
		\end{equation}
		where the limit is taken in probability with respect to $\PP$, uniformly on $[0,T]$.
	\end{enumerate}
\end{defn}

In the setting of Definition \ref{def:pairwise_singular_sol}, for simplicity we will often refer to $X^{(N)}$ as a weak solution, without making the tuple $(\Omega,\FF,\PP;X^{(N)}_0, W^{(N)}, X^{(N)})$ explicit. We will say that $X^{(N)}$ is a strong solution if it is adapted to the filtration generated by $(X^{(N)}_0, W^{(N)})$.

We will consider a more regular class of solutions.

\begin{defn}\label{def:kappa_m_sol}
	In the setting of Definition~\ref{def:pairwise_singular_sol}, given a pair $\kappa \in [1,\infty)$, $m\in [1,\infty]$, a weak solution $X^{(N)}$
	to \eqref{eq:sing_pairwise_system} 
	is said to be $(\kappa,m)$-regular if $X^{(N)}-X^{(N)}_0-W^{(N)} \in C^{\kappa-\var}_{T}~L^m_\omega$.
\end{defn}
\begin{rem}\label{rem:kappa_m_sol}
	By Jensen's inequality, if $X^{(N)}$ 
	is $(\kappa,m)$-regular, then necessarily it is also $(\kappa,\tilde m)$-regular for any $\tilde m\in [1,m]$.
	Similarly, by virtue of \eqref{eq:inequality_variations}, if $X^{(N)}$ 
	is $(\kappa,m)$-regular, then it is also $(\tilde\kappa,m)$-regular for any $\tilde\kappa>\kappa$.
\end{rem}

The next statement provides already a partial justification for the use of $(\kappa,m)$-regular solutions; it shows shows that, under our regularity assumptions on $b$, for $(\kappa,2)$-regular solutions,
the limit in \eqref{eq:defn_pairwise_singular_sol} 
holds in a stronger sense.
\begin{lem}\label{lem:k_m_convergence}
	Let $N\geq 2$, $(\alpha, q,H)$ satisfy Assumption \ref{ass:drift_assumption} with $\alpha <0$, $\kappa$, $\eps$  defined by \eqref{eq:defn_kappa_eps} and $b\in L^q_T \mcC^{\alpha}_x$.
	Let $m\in [2,\infty)$ and let $X^{(N)}$ be $(\kappa,m)$-regular weak solution to \eqref{eq:sing_pairwise_system}.
	Then, for any sequence ${\tilde b^n}_n\subset C^\infty_b$ such that $\tilde b^n \to b \in L^q_T \cC^\alpha_x$, it holds that 
	\begin{equation}\label{eq:k_m_p_system_limit}
		\lim_{n\to \infty} \max_{i=1,\ldots N} \bigg\| \theta^{i;N}_{\,\cdot\,} - X^i_0 - \frac{1}{N-1}\sum_{j\neq i} \int_0^{\,\cdot\,} \tilde b^n_s(X^{i;N}_s,X^{j;N}_s) \dd s\, \bigg\|_{C^{\kappa-\var}_T L^m_\omega} = 0.
	\end{equation}
\end{lem}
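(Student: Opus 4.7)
The plan is to establish \eqref{eq:k_m_p_system_limit} by combining the $(\kappa,m)$-regularity of $X^{(N)}$ with the continuity estimate furnished by the pairwise version of Lemma~\ref{lem:estimate_integrals_v0}, and then identifying the $C^{\kappa-\var}_T L^m_\omega$-limit with $\theta^{i;N}$ via uniqueness of probability limits. The key observation is that for distinct $i,j\in\{1,\ldots,N\}$ the $\mbR^{2d}$-valued process $(\theta^{i;N},\theta^{j;N})$ belongs to $\dot C^{\kappa-\var}_T L^m_\omega$ (by $(\kappa,m)$-regularity), while $(W^i,W^j)$ is an $\mbR^{2d}$-valued fBm of parameter $H$ thanks to independence of the noises.

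First, I would apply Lemma~\ref{lem:estimate_integrals_v0} in doubled dimension $\ell=2d$, componentwise on $\mbR^d$-valued drifts, to obtain
\begin{equation*}
\bigg\| \int_0^\cdot [h_r-g_r](X^{i;N}_r,X^{j;N}_r)\dd r \bigg\|_{C^{\kappa-\var}_T L^m_\omega} \leq C \, \| h-g\|_{L^q_T\cC^\alpha_x} \quad \forall\, h,g\in C^\infty_b,
\end{equation*}
where $C=C(\alpha,q,H,d,m,T)\,\big(T^H+\max_{k}\llbracket \theta^{k;N}\rrbracket_{C^{\kappa-\var}_T L^m_\omega}\big)$ is independent of $i,j$ and of the approximating functions. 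Averaging over $j\neq i$ preserves this estimate, yielding a Lipschitz bound on $h\mapsto \frac{1}{N-1}\sum_{j\neq i}\int_0^\cdot h_s(X^{i;N}_s,X^{j;N}_s)\dd s$ with constant independent of $i$ and of $h$.

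Next, let $\{b^n\}_n$ be the specific sequence furnished by Definition~\ref{def:pairwise_singular_sol}, and set $\cI^{i,n}_\cdot := \frac{1}{N-1}\sum_{j\neq i}\int_0^\cdot b^n_s(X^{i;N}_s,X^{j;N}_s)\dd s$. Since $b^n\to b$ in $L^q_T\cC^\alpha_x$, the above estimate implies that $\{\cI^{i,n}\}_n$ is Cauchy in $C^{\kappa-\var}_T L^m_\omega$; denote its limit by $\hat\theta^{i;N}$. On the other hand, by Definition~\ref{def:pairwise_singular_sol}, $\cI^{i,n}\to \theta^{i;N}$ in probability uniformly on $[0,T]$. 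Since convergence in $L^m_\omega$ implies convergence in probability and probability limits are $\PP$-a.s.\ unique, $\hat\theta^{i;N}$ and $\theta^{i;N}$ are indistinguishable. Hence $\cI^{i,n}\to \theta^{i;N}$ in $C^{\kappa-\var}_T L^m_\omega$, and the convergence is uniform in $i$ by the uniformity of the constant $C$.

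Finally, given any other sequence $\{\tilde b^n\}_n\subset C^\infty_b$ with $\tilde b^n\to b$ in $L^q_T\cC^\alpha_x$, set $\tilde\cI^{i,n}_\cdot:= \frac{1}{N-1}\sum_{j\neq i}\int_0^\cdot \tilde b^n_s(X^{i;N}_s,X^{j;N}_s)\dd s$. Applying the Lipschitz bound with $h=\tilde b^n$, $g=b^n$ yields
\begin{equation*}
\max_{i=1,\ldots,N} \|\tilde\cI^{i,n}-\cI^{i,n}\|_{C^{\kappa-\var}_T L^m_\omega} \leq C\,\|\tilde b^n-b^n\|_{L^q_T\cC^\alpha_x} \longrightarrow 0,
\end{equation*}
and the triangle inequality combined with the previous step gives \eqref{eq:k_m_p_system_limit}. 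The only delicate point is ensuring that the constant $C$ in the continuity estimate is simultaneously independent of $i$, of $n$, and of the approximating sequence; this follows because Lemma~\ref{lem:estimate_integrals_v0} depends on the driven process only through $\llbracket\varphi\rrbracket_{C^{\kappa-\var}_T L^m_\omega}$ and on the drift only through its $L^q_T\cC^\alpha_x$ norm.
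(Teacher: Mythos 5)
Your proof is correct and follows essentially the same argument as the paper: both proofs apply Lemma~\ref{lem:estimate_integrals_v0} in doubled dimension $\ell=2d$ with $\varphi=(X^i_0+\theta^{i;N},X^j_0+\theta^{j;N})$ and $W=(W^i,W^j)$ to obtain a Lipschitz continuity estimate, use the defining sequence $\{b^n\}_n$ from Definition~\ref{def:pairwise_singular_sol} together with uniqueness of probability limits to identify the $C^{\kappa-\var}_T L^m_\omega$-limit with $\theta^{i;N}$, and then handle a general sequence $\{\tilde b^n\}_n$ via the Lipschitz estimate. The only cosmetic difference is that the paper introduces the extension $Z^{i,j;N}$ explicitly and bounds $\| Z^{i,j;N}-\int_0^{\cdot}\tilde b^n\|$ directly, whereas you go through the defining sequence and a triangle inequality; these are equivalent routes.
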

\begin{proof}
	Let $i\in \{1,\ldots,N\}$ be fixed. For any $j\neq i$, by assumption $(\theta^{i;N},\theta^{j;N})\in \dot C^{\kappa-\var}_T L^m_\omega$; 
	thus we can apply Lemma~\ref{lem:estimate_integrals_v0} (for $\ell=2d$, $\varphi=(\theta^{i;N},\theta^{j;N})$, $W=(W^i,W^j)$) to deduce that the process formally given by 
	\begin{align*}
		Z^{i,j;N} = \int_0^\cdot b_s(X^{i;N}_s,X^{j;N}_s) \dd s
	\end{align*}
	is a well-defined element of $C^{\kappa-\var}_T L^m_\omega$, and the limit of the corresponding processes associated to any sequence $\{\tilde b^n\}_n\subset C^\infty_b$ such that $\tilde b^n\to b$ in $L^q_T \cC^\alpha_x$.
	This applies in particular to the sequence $\{b^n\}_n$ associated to $X^{(N)}$ being a weak solution, namely the one for which \eqref{eq:defn_pairwise_singular_sol} holds; we deduce that
	\begin{align*}
		\theta^{i;N} = X^i_0 + \frac{1}{N-1} \sum_{j\neq i} Z^{i,j;N} \quad \forall\, i=1,\ldots, N.
	\end{align*}
	Finally, by applying Lemma~\ref{lem:estimate_integrals_v0} with $h = b-b^n$ to each $Z^{i,j;N}$, we deduce that
	\begin{align*}
		\bigg\| \theta^{i;N}_{\,\cdot\,} &- X^i_0 - \frac{1}{N-1}\sum_{j\neq i} \int_0^{\,\cdot\,} \tilde b^n_s(X^{i;N}_s,X^{j;N}_s) \dd s\, \bigg\|_{C^{\kappa-\var}_T L^m_\omega}\\
		& \leq \frac{1}{N-1}\sum_{j\neq i} \bigg\| Z^{i,j;N} - \int_0^{\,\cdot\,} \tilde b^n_s(X^{i;N}_s,X^{j;N}_s) \dd s\, \bigg\|_{C^{\kappa-\var}_T L^m_\omega}
		\lesssim \| b-\tilde b^n\|_{L^q_T \cC^\alpha_x}
	\end{align*}
	which yields the desired limit \eqref{eq:k_m_p_system_limit}.
\end{proof}
We are now in a position to state a well-posedness result for the particle system \eqref{eq:sing_pairwise_system}, 
under our standing assumptions in the case of distributional drift.
Pathwise uniqueness is shown in the restricted class of $(\kappa,2)$-regular solutions; this is in a similar vein to previous results on singular SDEs driven by fBm, cf. \cite[Thm.~2.10]{AnRiTa2023} or \cite[Thm.~2.14]{butkovsky2023stochastic}.

\begin{thm}\label{thm:unique_pwise_sing_system}
	Let $N\geq 2$, $(\alpha, q,H)$ satisfy Assumption \ref{ass:drift_assumption} with $\alpha <0$, $b \in L^q_T \cC^\alpha_x$ and $\kappa$, $\eps$ be defined by \eqref{eq:defn_kappa_eps}.
	Then the following hold:
	\begin{enumerate}[label=\roman*)]
		\item\label{it:unique_pwise_sing_it1} There exists a strong solution $X^{(N)}$ to \eqref{eq:sing_pairwise_system}, which is $(\kappa,m)$-regular for any $m\in [2,\infty)$.
		Furthermore, $X^{(N)}$ is the unique limit, in the $L^m_\omega C_T$-topology, of the solutions $X^{(N);n}$ to \eqref{eq:p_system_pairwise} associated to the same data $X^{(N)}_0$ and smooth interaction $b^n$, for any sequence $\{b^n\}_n\subset C^\infty_b$ such that $b^n\to b$ in $L^q_T \cC^\alpha_x$.
		%
		\item\label{it:unique_pwise_sing_it2} Pathwise uniqueness holds in the class of $(\kappa,2)$-solutions. Namely, given two weak solutions $X^{(N)}$ and $\tilde X^{(N)}$, defined on the same filtered space $(\Omega,\FF,\PP)$, driven by the same $\FF$-fBm $W^{(N)}$ and with same initial data $X^{(N)}_0$, such that $X^{(N)}$ and $\tilde X^{(N)}$ are $(\kappa,2)$-regular, then necessarily $X^{(N)}$ and $\tilde X^{(N)}$ are indistinguishable.
	\end{enumerate}
	Further, let $X^{(N)}$, $\tilde X^{(N)}$ be strong solutions to \eqref{eq:sing_pairwise_system} as constructed in Point~\ref{it:unique_pwise_sing_it1}, associated to distinct data $(X^{(N)}_0,b)$ and $(\tilde X^{(N)}_0,\tilde b)$, but driven by the same $W^{(N)}$; then 	for any $m\in [1,\infty)$ and any $\tilde \kappa>\kappa$, there exists $C\coloneqq C(\alpha,H,q,d,T,m,\|b\|_{L^q_T\C^\alpha_x},\|\tilde b\|_{L^q_T\C^\alpha_x},\tilde\kappa)>0$ such that
	\begin{equation}\label{eq:stability_sing_pairwise}
		\sup_{j=1,\ldots,N}
		\Big\|  \| X^{j;N}-\tilde X^{j;N}\|_{C^{\tilde\kappa}_T} \Big\|_{L^m_\omega}
		\le C \bigg(\sup_{j=1,\ldots,N}\|X^{j;N}_0-\tilde X^{j;N}_0\|_{L^m_\omega}+\| b-\tilde{b}\|_{L^q_T \cC_x^{\alpha-1}}\bigg).
	\end{equation}
\end{thm}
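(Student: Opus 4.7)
The plan is to obtain all three claims by approximation with smooth drifts, piggy-backing on the uniform-in-$N$ a priori estimate (Lemma~\ref{lem:remainder_singular_apriori}) and stability estimates (Corollary~\ref{cor:p_system_drift_stable}, Proposition~\ref{prop:generic_pairwise_stabillity}) of Section~\ref{sec:reg.drift}. Fix any $\{b^n\}_n\subset C^\infty_b$ with $b^n\to b$ in $L^q_T\mcC^\alpha_x$, and let $X^{(N);n}$ denote the classical strong solution to \eqref{eq:p_system_pairwise} with smooth drift $b^n$ and the common initial data $X^{(N)}_0$.

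\emph{Existence.} For every $m\in[1,\infty)$ and $\tilde\kappa>\kappa$, Corollary~\ref{cor:p_system_drift_stable} gives
\begin{equation*}
	\sup_{i}\bigl\|\|X^{i;N,n}-X^{i;N,k}\|_{C^{\tilde\kappa-\var}_T}\bigr\|_{L^m_\omega}\le C\,\|b^n-b^k\|_{L^q_T\mcC^{\alpha-1}_x},
\end{equation*}
with $C$ monotonically controlled by $\sup_n\|b^n\|_{L^q_T\mcC^\alpha_x}<\infty$. Hence $\{X^{(N);n}\}_n$ is Cauchy in $L^m_\omega C_T$ and converges to an $\FF$-adapted limit $X^{(N)}$; the same bound shows the limit is independent of the approximating sequence, yielding the last claim of \ref{it:unique_pwise_sing_it1}. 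Applying Lemma~\ref{lem:remainder_singular_apriori} to each smooth remainder $\theta^{(N);n}$ yields \eqref{eq:singular_rem_apriori} uniformly in $n$, and lower semi-continuity of the conditional $L^m_\omega$-norm transfers this bound to $\theta^{(N)}$; in particular $\theta^{(N)}\in C^{\kappa-\var}_T L^m_\omega$, so $X^{(N)}$ is $(\kappa,m)$-regular. Then Lemma~\ref{lem:k_m_convergence}, applied along the sequence $\{b^n\}_n$ itself, produces the convergence in \eqref{eq:defn_pairwise_singular_sol} in $C^{\kappa-\var}_T L^m_\omega$ (hence in probability uniformly in $t$), so $X^{(N)}$ is indeed a strong solution in the sense of Definition~\ref{def:pairwise_singular_sol}.

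\emph{Uniqueness and stability.} Let $X^{(N)},\tilde X^{(N)}$ be two $(\kappa,2)$-regular weak solutions with identical data on the same space. By Lemma~\ref{lem:k_m_convergence} applied to each, for any fixed $n$ they both rewrite as perturbations of the smooth system driven by $b^n$, namely
\begin{align*}
	X^{i;N}_t&=X_0^i+\frac{1}{N-1}\sum_{j\neq i}\int_0^t b^n_s(X^{i;N}_s,X^{j;N}_s)\dd s+W^i_t+\mcZ^{i;N,n}_t,\\
	\tilde X^{i;N}_t&=X_0^i+\frac{1}{N-1}\sum_{j\neq i}\int_0^t b^n_s(\tilde X^{i;N}_s,\tilde X^{j;N}_s)\dd s+W^i_t+\widetilde{\mcZ}^{i;N,n}_t,
\end{align*}
with $\sup_i(\|\mcZ^{i;N,n}\|_{C^{\kappa-\var}_T L^2_\omega}+\|\widetilde{\mcZ}^{i;N,n}\|_{C^{\kappa-\var}_T L^2_\omega})\to 0$ as $n\to\infty$. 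Applying Proposition~\ref{prop:generic_pairwise_stabillity} twice, with $X^{(N);n}$ as the smooth reference and $X^{(N)}$, respectively $\tilde X^{(N)}$, as the perturbed system, yields
\begin{equation*}
	\sup_i\sup_{t\in[0,T]}\|X^{i;N}_t-\tilde X^{i;N}_t\|_{L^2_\omega}\xrightarrow[n\to\infty]{}0,
\end{equation*}
whence $X^{(N)}$ and $\tilde X^{(N)}$ are indistinguishable by continuity. The stability bound \eqref{eq:stability_sing_pairwise} follows analogously: approximating $b$ and $\tilde b$ by smooth sequences, Corollary~\ref{cor:p_system_drift_stable} gives \eqref{eq:stability-particle-by-particle} for the corresponding smooth solutions with constants uniform in the approximation; passing to the limit via lower semi-continuity of the $C^{\tilde\kappa-\var}_T$-seminorm under uniform convergence (cf.~\cite[Lem.~5.12]{friz2010multidimensional}) produces \eqref{eq:stability_sing_pairwise}.

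\emph{Main obstacle.} The most delicate point lies in the uniqueness step: the error processes $\mcZ^{i;N,n},\widetilde{\mcZ}^{i;N,n}$ must decay in the strong topology $C^{\kappa-\var}_T L^2_\omega$ demanded by Proposition~\ref{prop:generic_pairwise_stabillity}, which is strictly stronger than the mere convergence in probability supplied by Definition~\ref{def:pairwise_singular_sol}; this upgrade is precisely what Lemma~\ref{lem:k_m_convergence} is designed to provide. The applicability of Proposition~\ref{prop:generic_pairwise_stabillity} in turn hinges on the asymmetry in Proposition~\ref{prop:comparison_integrals} (Remark~\ref{rem:asymmetric}): only the \emph{reference} smooth process $X^{(N);n}$ must satisfy the predictability bound \eqref{eq:structure_assumption_processes}, furnished by Lemma~\ref{lem:remainder_singular_apriori}, while on $X^{(N)}$ and $\tilde X^{(N)}$ the bare $(\kappa,2)$-regularity suffices---this is why it is enough to impose the relatively weak class condition in the formulation of the uniqueness statement.
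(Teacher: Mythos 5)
Your existence step and the stability estimate~\eqref{eq:stability_sing_pairwise} follow the paper's route essentially verbatim (Lemma~\ref{lem:unique_pwise_sing_system_A} and the final passage to the limit from Corollary~\ref{cor:p_system_drift_stable}). Your uniqueness step, however, is a genuinely different and in some ways slicker argument: instead of constructing the singular-drift strong solution $F(X^{(N)}_0,W^{(N)})$ on the weak solution's probability space and comparing with the given weak solution directly via Proposition~\ref{prop:comparison_integrals} plus a hand-rolled iteration over small intervals (the paper's Lemma~\ref{lem:unique_pwise_sing_system_C}), you use the smooth solution $X^{(N);n}$ as a bridge and apply Proposition~\ref{prop:generic_pairwise_stabillity} twice, absorbing the Gr\"onwall iteration into that already-proved stability result. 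This buys a shorter argument at the cost of one extra layer of approximation.

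There is, however, a genuine gap that you gloss over. You invoke Lemma~\ref{lem:remainder_singular_apriori} to furnish the predictability bound~\eqref{eq:structure_assumption_processes} for the smooth remainders $\theta^{(N);n}$, and then apply Proposition~\ref{prop:generic_pairwise_stabillity} with the given weak solutions $X^{(N)},\tilde X^{(N)}$ as the perturbed systems. But those weak solutions are, by Definition~\ref{def:pairwise_singular_sol}, only required to be adapted to a filtration $\FF$ which may be strictly larger than the natural filtration $\GG=\sigma(X^{(N)}_0,W^{(N)}_r)$. The stochastic sewing machinery inside Proposition~\ref{prop:generic_pairwise_stabillity} (via Proposition~\ref{prop:comparison_integrals}) must then be run with respect to $\FF$, so you need~\eqref{eq:structure_assumption_processes} for $\theta^{(N);n}$ \emph{with conditional expectations taken with respect to $\FF$}, not $\GG$. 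Lemma~\ref{lem:remainder_singular_apriori} as stated gives the bound only for $\GG$. The paper devotes Lemma~\ref{lem:unique_pwise_sing_system_B} precisely to this filtration upgrade, observing that the proof of the a priori bound uses only that $W^{(N)}$ is an $\FF$-fBm and $X^{(N);n}$ is $\FF$-adapted, not the exact form of $\FF$; alternatively one may argue directly that $\EE[\,\cdot\,|\cF_s]=\EE[\,\cdot\,|\cG_s]$ on $\sigma(X^{(N)}_0,W^{(N)})$-measurable variables because the fBm innovations $W^{(N)}_t-\EE^\GG_s W^{(N)}_t$ remain independent of the larger $\cF_s$. Either way, this step must be made explicit: as written, your application of Proposition~\ref{prop:generic_pairwise_stabillity} with an arbitrary admissible $\FF$ is not justified.
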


We split the proof of Theorem~\ref{thm:unique_pwise_sing_system} in several steps, which are accomplished in Lemmas~\ref{lem:unique_pwise_sing_system_A}, \ref{lem:unique_pwise_sing_system_B} and \ref{lem:unique_pwise_sing_system_C} below.

\begin{lem}\label{lem:unique_pwise_sing_system_A}
	Let $N\geq 2$, $(\alpha, q,H)$ satisfy Assumption \ref{ass:drift_assumption} with $\alpha <0$, $b \in L^q_T \cC^\alpha_x$ and $\kappa$, $\eps$ be defined by \eqref{eq:defn_kappa_eps}.
	Then there exists a strong solution $X^{(N)}$ to \eqref{eq:sing_pairwise_system}, which is $(\kappa,m)$-regular for any $m\in [2,\infty)$. Moreover $X^{(N)}$ is the limit of any sequence $X^{(N);n}$ of solutions associated to regular interactions $b^n$, in the sense described in Point~\ref{it:unique_pwise_sing_it1} of Theorem~\ref{thm:unique_pwise_sing_system}.
\end{lem}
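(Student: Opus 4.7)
The plan is to construct the solution by a compactness/limiting procedure along a smooth approximation of $b$, leveraging the uniform-in-$n$ stability and a priori estimates established in Section~\ref{sec:reg.drift}. Fix any sequence $\{b^n\}_{n\geq 1}\subset C^\infty_b$ with $b^n\to b$ in $L^q_T\cC^\alpha_x$; such a sequence exists by construction of $\cC^\alpha_x$. For each $n$, classical theory (Appendix~\ref{app:lipschitz_p_system}) provides a unique strong solution $X^{(N);n}$ to \eqref{eq:p_system_pairwise} with drift $b^n$ and initial data $X^{(N)}_0$, driven by $W^{(N)}$.

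The first step is to show that $\{X^{(N);n}\}_n$ is Cauchy in $L^m_\omega C^{\tilde\kappa-\var}_T$ for every $m\in[1,\infty)$ and every $\tilde\kappa>\kappa$. This is a direct application of Corollary~\ref{cor:p_system_drift_stable}: applied to the pair $(b^n,b^{n'})$ with identical initial data, it yields
\begin{equation*}
\sup_{j=1,\ldots,N}\Big\|\|X^{j;N;n}-X^{j;N;n'}\|_{C^{\tilde\kappa-\var}_T}\Big\|_{L^m_\omega}\leq C\,\|b^n-b^{n'}\|_{L^q_T\cC^{\alpha-1}_x},
\end{equation*}
with constant $C$ monotone in $\|b^n\|_{L^q_T\cC^\alpha_x}$ and $\|b^{n'}\|_{L^q_T\cC^\alpha_x}$, hence uniformly bounded along the sequence. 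The embedding $\cC^\alpha_x\hookrightarrow\cC^{\alpha-1}_x$ from \eqref{eq:holder_complete_embed} ensures the right-hand side vanishes as $n,n'\to\infty$, so $X^{(N);n}\to X^{(N)}$ in $L^m_\omega C^{\tilde\kappa-\var}_T$ for some limit $X^{(N)}$, which is automatically $\FF^{X_0,W}$-adapted (hence a strong solution candidate).

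Next I verify $(\kappa,m)$-regularity and that the integral identity \eqref{eq:defn_pairwise_singular_sol} holds in the limit. Setting $\theta^{(N);n}=X^{(N);n}-X^{(N)}_0-W^{(N)}$, Lemma~\ref{lem:remainder_singular_apriori} provides a uniform-in-$n$ bound
\begin{equation*}
\sup_{n}\sup_{i=1,\ldots,N}\bigl\|\|\theta^{i;N;n}_{s,t}\|_{L^m|\cF_s}\bigr\|_{L^\infty_\omega}\leq C\,w_b(s,t)^{1/q}|t-s|^{\alpha H+1/q'}\qquad\forall\,(s,t)\in[0,T]^2_\leq,
\end{equation*}
(with $w_b$ as in~\eqref{def.wbCa}), uniformity following because $\|b^n\|_{L^q_T\cC^\alpha_x}$ converges. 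Passing to the $L^m_\omega$-limit and using Fatou on conditional norms (as in the proof of Lemma~\ref{lem:pairwise_mkv_apriori}), the limit remainder $\theta^{(N)}$ satisfies the same pointwise bound, which by Remark~\ref{rem:link_control_variation} yields $\theta^{(N)}\in C^{\kappa-\var}_T L^m_\omega$ for every $m\in[2,\infty)$; i.e. $X^{(N)}$ is $(\kappa,m)$-regular for all such $m$. Finally, the identity \eqref{eq:defn_pairwise_singular_sol} with the approximating sequence $\{b^n\}$ follows by combining the strong convergence $X^{(N);n}\to X^{(N)}$ with Lemma~\ref{lem:k_m_convergence} applied to the now-established $(\kappa,m)$-regular limit: indeed, for each fixed $n_0$,
\begin{equation*}
\Big\|\theta^{i;N}-\tfrac{1}{N-1}\sum_{j\neq i}\int_0^\cdot b^{n_0}_s(X^{i;N}_s,X^{j;N}_s)\,\dd s\Big\|_{C^{\kappa-\var}_T L^m_\omega}\lesssim \|b-b^{n_0}\|_{L^q_T\cC^\alpha_x}\xrightarrow[n_0\to\infty]{}0,
\end{equation*}
which is stronger than the required convergence in probability uniform in time.

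The main technical point to watch is not an obstacle per se but the bookkeeping: one must ensure the choice of approximation used to \emph{define} a weak solution in Definition~\ref{def:pairwise_singular_sol} can be taken arbitrary, which is precisely the content of Lemma~\ref{lem:k_m_convergence} and justifies why the strong limit $X^{(N)}$ qualifies as a weak solution in the sense of Definition~\ref{def:pairwise_singular_sol}. No new estimate is needed here; everything has been set up in Section~\ref{sec:reg.drift} and Section~\ref{subsec:integral_estimates}, and the existence statement reduces to carefully composing these results.
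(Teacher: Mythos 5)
Your proof follows the paper's construction up to establishing that the Cauchy limit $X^{(N)}$ is adapted and $(\kappa,m)$-regular, and those steps are correct. The gap comes at the very end, where you invoke Lemma~\ref{lem:k_m_convergence} to conclude that $X^{(N)}$ satisfies \eqref{eq:defn_pairwise_singular_sol}. This is circular: the hypothesis of Lemma~\ref{lem:k_m_convergence} is that $X^{(N)}$ is a $(\kappa,m)$-regular \emph{weak solution}, and in its proof the weak-solution property (i.e.\ the validity of \eqref{eq:defn_pairwise_singular_sol} for some approximating sequence) is used precisely to identify $\theta^{i;N}=\tfrac{1}{N-1}\sum_{j\neq i}Z^{i,j;N}$. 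You cannot invoke the lemma to derive the very identity it presupposes. Being $(\kappa,m)$-regular alone does not make $X^{(N)}$ a weak solution.

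The substantive issue you have not resolved is the following: you know $\theta^{i;N;n}=\tfrac{1}{N-1}\sum_{j\neq i}\int_0^\cdot b^n_s(X^{i;n}_s,X^{j;n}_s)\dd s$ and $\theta^{i;N;n}\to\theta^{i;N}$, but the required identity involves $\int_0^\cdot b^n_s(X^{i;N}_s,X^{j;N}_s)\dd s$, where the \emph{limit} process appears in the integrand. Passing from the integral along $X^{(N);n}$ to the integral along $X^{(N)}$ is not automatic: comparing two such singular integrals requires either Proposition~\ref{prop:comparison_integrals} (which would demand convergence of $\theta^{(N);n}\to\theta^{(N)}$ in $C^{\kappa-\var}_TL^m_\omega$, a stronger statement than the $L^m_\omega C^{\tilde\kappa-\var}_T$ convergence furnished by Corollary~\ref{cor:p_system_drift_stable} for $\tilde\kappa>\kappa$), or an intermediate smooth reference drift $\bar b\in C^\infty_b$. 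The paper's proof takes the second route: after defining $Z^{i,j}$ via Lemma~\ref{lem:estimate_integrals_v0} (whose hypotheses need only $\dot C^{\kappa-\var}_T L^m_\omega$, not the weak solution property), it splits $Z^{i,j}-\int_0^\cdot b^n_s(X^{i;n}_s,X^{j;n}_s)\dd s$ into pieces that separately compare drifts (via Lemma~\ref{lem:estimate_integrals_v0}, applied to $b^n-\bar b$ along both the limit and approximating processes) and compare processes for a fixed smooth $\bar b$ (via uniform convergence $X^{(N);n}\to X^{(N)}$), and finally lets $\bar b\to b$. To repair your argument you need to insert exactly this intermediate step; the rest of your outline (Cauchy estimate from Corollary~\ref{cor:p_system_drift_stable}, $(\kappa,m)$-regularity by conditional Fatou plus Lemma~\ref{lem:remainder_singular_apriori}, independence of the approximating sequence by a second application of Corollary~\ref{cor:p_system_drift_stable}) is sound.
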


\begin{proof}
	As usual, we drop the superscript $N$ whenever clear throughout the proof.
	
	Given $b\in L^q_T \mcC^{\alpha}_x$, let $\{b^n\}_{n\geq 1}\subset C^{\infty}_b$ be a fixed sequence such that $b^n\to b$ in $L^q_T \mcC^{\alpha}_x$ (we can always find at least one);
	for each $n$, denote by $X^{(N);n}$ the strong solution to \eqref{eq:p_system_pairwise} with $b$ replaced by $b^n$ (whose existence is classical, cf. also Lemma~\ref{lem:app_lip_MKV_propagation}).
	For each $n\geq 1$ and $i=1,\ldots,N$, we define $\theta^{(N);n}= X^{(N);n}-W^{(N)}$.
	%
	%
	%
	%
	%
	%
	
	By Lemma~\ref{lem:remainder_singular_apriori} and Remark~\ref{rem:a_priori_norm_relations}, for any $m\in [1,\infty)$, it holds that
	\begin{equation}\label{eq:sing_theta_a_priori}
		\max_{i=1,\ldots,N}\|\theta^{i;n}_{s,t}\|_{L^m_\omega}
		\lesssim  \bigg( \int_s^t \|b^n_r\|^q_{\mcC^{\alpha}_x}\dd r \bigg)^{\frac{1}{q}} |t-s|^{\alpha H+1/q'} =: w_{b^n}(s,t)^{\frac{1}{q}} |t-s|^{\alpha H+1/q'}
	\end{equation}
	and moreover $\theta^{(N);n}$ satisfies \eqref{eq:structure_assumption_processes}, with control $w_{b^n}$.
	By Corollary~\ref{cor:p_system_drift_stable}, for any $n'$, $n$, we have that
	\begin{equation}\label{eq:sing_p_system_cauchy}
		\max_{i=1,\ldots,N}
		\Big\| \sup_{t\in [0,T]} \left|X^{i;n'}_t-X^{i;n}_t\right|\Big\|_{L^m_\omega}
		\lesssim \| b^{n'}-b^{n}\|_{L^q_T \cC_x^{\alpha-1}},
	\end{equation}
	where we used the fact that $\sup_n \| b^n\|_{L^q_T \cC^\alpha_x}$ is finite since $b^n\to b$ in $L^q_T \cC^\alpha_x$. 
	
	By construction, the sequence $\{b^n\}_{n\geq 1}$  is Cauchy in $L^q_T \mcC^{\alpha}_x \hookrightarrow L^q_T \cC^{\alpha-1}_x$,
	therefore by \eqref{eq:sing_p_system_cauchy} the sequence $\{X^{(N);n}\}_{n\geq 1}$ is Cauchy in $L^m_\omega C_T$; 
	it must have a unique limit in $L^m_\omega C_T$, let us call it $X^{(N)}$.
	We claim that, for any $m\in [1,\infty)$, $X^{(N)}$ is a $(\kappa,m)$-regular strong solution to \eqref{eq:sing_pairwise_system}, in the sense of Definitions~\ref{def:pairwise_singular_sol}-\ref{def:kappa_m_sol}.
	
	Firstly, since $X^{(N);n}$ is adapted to the filtration generated by $X^{(N)}_0,W^{(N)}$ for each $n$, the same must hold for the limit $X^{(N)}$.
	Next, since $X^{(N);n}\to X^{(N)}$ in $L^m_\omega C_T$, $\theta^{(N);n}\to \theta^{(N)}$ in $L^m_\omega C_T$ as well, for $\theta^{(N)}:= X^{(N)}-W^{(N)}$; passing to the limit as $n\to\infty$ in \eqref{eq:sing_theta_a_priori}, we deduce that
	\begin{equation}\label{eq:sing_p_system_remainder}
		\max_{i=1,\ldots,N}\|\theta^{i}_{s,t}\|_{L^m_\omega}
		\lesssim  \bigg( \int_s^t \|b_r\|^q_{\mcC^{\alpha}_x}\dd r \bigg)^{\frac{1}{q}} |t-s|^{\alpha H+1/q'} =: w_b(s,t)^{\frac{1}{q}} |t-s|^{\alpha H+1/q'};
	\end{equation}
	it follows that $\theta^{(N)}\in \dot C^{\kappa-\var}_T L^m_\omega$ and it satisfies \eqref{eq:structure_assumption_processes} with $w=w_b$.
	It remains to check that $X^{(N)}$ is a solution, in the sense that \eqref{eq:defn_pairwise_singular_sol} holds; we show this exactly for the sequence $\{b^n\}_n$ used to construct $X^{(N)}$.
	
	Similarly to the proof of Lemma~\ref{lem:k_m_convergence}, it is convenient to introduce the processes
	\begin{align*}
		Z^{i,j} \coloneq \int_0^\cdot b_s(X^i_s,X^j_s)\dd s
		= \lim_{n\to\infty} \int_0^\cdot b^n_s(X^i_s,X^j_s)\dd s;
	\end{align*}
	the first identity is formal, but the second is rigorous, with limit holding in the $C^{\kappa-\var}_T L^m_\omega$-topology by an application of Lemma~\ref{lem:estimate_integrals_v0}.
	Verifying \eqref{eq:defn_pairwise_singular_sol} amounts to showing that $\theta^i = X^i_0 + \frac{1}{N-1}\sum_{j\neq i} Z^{i,j}$; namely, to complete the proof, it suffices to show that
	\begin{equation}\label{eq:sing_p_system_goal}
		\lim_{n\to\infty} \Big\| Z^{i,j} - \int_0^\cdot b^n_s(X^{i;n}_s,X^{j;n}_s) \dd s \Big\|_{C^{\kappa-\var}_T L^m_\omega} = 0\quad \forall\, i\neq j.
	\end{equation}
In order to check \eqref{eq:sing_p_system_goal}, we fix $i\neq j$ and take any $\bar{b}\in C^\infty_b$; it holds
	\begin{align*}
		Z^{i,j} - & \int_0^\cdot b^n_s(X^{i;n}_s,X^{j;n}_s) \dd s\\
		& = \Big( Z^{i,j} - \int_0^\cdot b^n_s(X^{i}_s,X^{j}_s) \dd s\Big) + \int_0^\cdot [\bar b_s- b^n_s](X^{i}_s,X^{j}_s) \dd s\\
		& \quad + \int_0^\cdot [\bar b_s(X^{i}_s,X^{j}_s) - \bar b_s(X^{i;n}_s,X^{j;n}_s)] \dd s
		+ \int_0^\cdot [\bar b_s- b^n_s](X^{i;n}_s,X^{j;n}_s) \dd s\\
		& \eqqcolon\, I^{1;n} + I^{2;n} + I^{3;n} + I^{4;n}.
	\end{align*}
	$I^{1;n}\to 0$ in $C^{\kappa-\var}_T L^m_\omega$ by Lemma~\ref{lem:estimate_integrals_v0}, while $I^{3;n}\to 0$ in the same topology because $\bar{b}$ is smooth and $X^{(N);n}\to X^{(N)}$ in $L^m_\omega C_T$.
	Since $\theta^{(N);n}$ and $\theta^{(N)}$ satisfy the bounds \eqref{eq:sing_theta_a_priori} and \eqref{eq:sing_p_system_remainder} respectively, we can apply Lemma~\ref{lem:estimate_integrals_v0} (with $h=b^n-\bar{b}$) to deduce that
	\begin{align*}
		\| I^{2;n} \|_{C^{\kappa-\var}_T L^m_\omega} + \| I^{4;n} \|_{C^{\kappa-\var}_T L^m_\omega} \lesssim \| b^n-\bar{b}\|_{L^q_T \cC^\alpha_x}
	\end{align*}		
	with implicit constant independent of $n$; combining all these facts and passing to the limit as $n\to\infty$ (using that $b^n\to b$ in $L^q_T \cC^\alpha_x$), we arrive at
	\begin{align*}
		\lim_{n\to\infty} \bigg\| \Big\| Z^{i,j} - \int_0^\cdot b^n_s(X^{i;n}_s,X^{j;n}_s) \dd s \Big\|_{C^{\kappa-\var}_T} \bigg\|_{L^m_\omega} \lesssim \| b-\bar{b}\|_{L^q_T \cC^\alpha_x}.
	\end{align*}
	Since $\bar{b}\in C^\infty_b$ was arbitrary and $C^\infty_b$ is dense in $L^q_T \cC^\alpha_x$, we conclude that \eqref{eq:sing_p_system_goal} holds, showing that $X^{(N)}$ is a weak solution.
	Combined with the aforementioned facts, we conclude that $X^{(N)}$ is actually a $(\kappa,m)$-regular, strong solution, for any $m\in [1,\infty)$.
	%
	
	Finally, let $\{\tilde b^n\}_n\subset C^\infty_b$ be any other sequence such that $\tilde b^n\to b$ in $L^q_T \cC^\alpha_x$, and let $\tilde X^{(N);n}$ be the strong solution to \eqref{eq:p_system_pairwise} with $b$ replaced by $\tilde b^n$.
	Then $\tilde \theta^{(N)}$ will still satisfy uniform bounds of the form \eqref{eq:sing_theta_a_priori}, which in turn allows to apply Corollary~\ref{cor:p_system_drift_stable} to find that
	\begin{equation*}
		\max_{i=1,\ldots,N}
		\Big\| \sup_{t\in [0,T]} \left|X^{i;n}_t-\tilde X^{i;n}_t\right|\Big\|_{L^m_\omega}
		\lesssim \| b^{n}-\tilde b^{n}\|_{L^q_T \cC_x^{\alpha-1}},
	\end{equation*}
	with implicit constant independent of $n$.
	Since $b^n,\tilde b^n$ both converge to $b$ in $L^q_T \cC^\alpha_x$, $b^n-\tilde b^n\to 0$ in $L^q_T \cC_x^{\alpha-1}$, this implies that $X^{(N);n}$ and $\tilde X^{(N);n}$ converge to the same limit. In particular, $\tilde X^{(N);n}\to X^{(N)}$ in $L^m_\omega C_T$.
\end{proof}
Having constructed a strong solution, in order to prove pathwise uniqueness, we require the following intermediate lemma, strengthening the available bounds on $X^{(N)}$, in particular with respect to the reference filtration.
\begin{lem}\label{lem:unique_pwise_sing_system_B}
	Let $X^{(N)}$ be the strong solution from Lemma~\ref{lem:unique_pwise_sing_system_A}, and let $\FF$ be any filtration on $(\Omega,\FF,\PP)$ such that $W^{(N)}$ is an $\FF$-fBm. Then $\theta^{(N)}$ satisfies condition \eqref{eq:structure_assumption_processes} with respect to $\FF$.
\end{lem}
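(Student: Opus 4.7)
The plan is to obtain \eqref{eq:structure_assumption_processes} for $\theta^{(N)}$ with respect to $\FF$ by a limiting procedure, starting from the a priori estimate of \cref{lem:remainder_singular_apriori} applied to the smooth-drift approximations $X^{(N);n}$ constructed in \cref{lem:unique_pwise_sing_system_A}. I would fix the approximating sequence $\{b^n\}_n\subset C^\infty_b$ with $b^n\to b$ in $L^q_T\cC^\alpha_x$ and the associated strong solutions $X^{(N);n}$, which converge to $X^{(N)}$ in $L^m_\omega C_T$ for every $m\in[1,\infty)$, so in particular $\theta^{i;n}_{s,t}\to\theta^{i;N}_{s,t}$ in $L^m_\omega$.

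The first (and main) step is to observe that the proof of \cref{lem:remainder_singular_apriori} only uses two ingredients: the adaptedness of $X^{(N);n}$ to the conditioning filtration, and the LND property \eqref{eq:LND+heat} of $W^{(N)}$ with respect to that filtration. Both continue to hold for the filtration $\FF$ in the statement: the latter by assumption, and the former because $X^{(N);n}$ is a measurable functional of $(X^{(N)}_0,W^{(N)})$, both of which are $\FF$-adapted. Hence the same proof, applied verbatim with $\FF$ in place of the natural filtration and with $b$ replaced by $b^n$, yields, for every $m\geq 2$ and every $(s,t)\in[0,T]^2_\leq$,
\begin{equation}\label{eq:Bproof_1}
	\max_{i=1,\ldots,N}\left\|\|\theta^{i;n}_{s,t}\|_{L^m|\cF_s}\right\|_{L^\infty_\omega}\leq C\,w_{b^n}(s,t)^{1/q}|t-s|^{\alpha H+1/q^\prime},
\end{equation}
with $C$ independent of $n$, using that $\sup_n\|b^n\|_{L^q_T\cC^\alpha_x}<\infty$.

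The second step is to pass to the limit in \eqref{eq:Bproof_1}. Along a suitable diagonal subsequence, $\theta^{i;n_k}_{s,t}\to\theta^{i;N}_{s,t}$ almost surely; by conditional Fatou together with continuity of $x\mapsto x^{1/m}$ and the elementary inequality $\|\liminf_k Y_k\|_{L^\infty_\omega}\leq\liminf_k\|Y_k\|_{L^\infty_\omega}$ for nonnegative $Y_k$, and noting that $w_{b^n}(s,t)\to w_b(s,t)$ by continuity of the $L^q$-norm, I obtain the same bound for the limit:
\begin{equation}\label{eq:Bproof_2}
	\max_{i=1,\ldots,N}\left\|\|\theta^{i;N}_{s,t}\|_{L^m|\cF_s}\right\|_{L^\infty_\omega}\leq C\,w_b(s,t)^{1/q}|t-s|^{\alpha H+1/q^\prime}.
\end{equation}
Since $X^{(N)}$ is $\FF$-adapted (as an $L^m_\omega C_T$-limit of $\FF$-adapted processes), $\theta^{i;N}_s$ is $\cF_s$-measurable, and \eqref{eq:pseudo_projection} with $Z=\theta^{i;N}_s$ gives $\|\theta^{i;N}_t-\EE_s\theta^{i;N}_t\|_{L^m|\cF_s}\leq 2\|\theta^{i;N}_{s,t}\|_{L^m|\cF_s}$; combined with conditional Jensen's inequality $\EE_s|\theta^{i;N}_t-\EE_s\theta^{i;N}_t|\leq\|\theta^{i;N}_t-\EE_s\theta^{i;N}_t\|_{L^m|\cF_s}$ and \eqref{eq:Bproof_2} (say with $m=2$), taking $\|\cdot\|_{L^\infty_\omega}$ yields \eqref{eq:structure_assumption_processes} with control $w=(2C)^q w_b$.

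The main obstacle is the first step, namely the justification that \cref{lem:remainder_singular_apriori}, originally deduced with respect to the natural filtration generated by $(X^{(N)}_0,W^{(N)})$, remains valid for an arbitrary larger filtration $\FF$. This is not entirely cosmetic, because enlarging a filtration in general destroys independence structures and martingale properties of the driving noise. What saves the argument is that LND is preserved whenever $W^{(N)}$ remains an $\FF$-fBm, and this is exactly the hypothesis of the lemma; every appearance of $W^{(N)}$ in the proof of \cref{lem:remainder_singular_apriori} goes through \eqref{eq:LND+heat}, so the bound survives the enlargement.
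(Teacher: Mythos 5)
Your proposal is correct and follows essentially the same strategy as the paper's own proof: observe that the a priori estimate of Lemma~\ref{lem:remainder_singular_apriori} uses only adaptedness and the LND property relative to the conditioning filtration, both of which remain valid for $\FF$, then apply it to the smooth approximations $X^{(N);n}$ uniformly in $n$ and pass to the limit via (conditional) Fatou. The only cosmetic difference is that you spell out the limiting argument and the final reduction via \eqref{eq:pseudo_projection} and Jensen, whereas the paper compresses these into a reference to \eqref{eq:a_priori_norm_relations} and the lower semicontinuity argument of Lemma~\ref{lem:pairwise_mkv_apriori}.
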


\begin{proof}
	Since by construction $X^{(N)}$ is the limit of solutions $X^{(N);n}$ associated to regular interactions $b^n$, passing to the limit as $n\to\infty$ in \eqref{eq:sing_apriori_bnd_1}, combined with \eqref{eq:a_priori_norm_relations}, we find that
	\begin{align*}
		\sup_{i=1,\ldots,N} \Big\| |\EE\big[\big| \theta^{i;N}_t - \EE[\theta^{i;N}_t | \cG_s]|\, \big|\cG_s\big]\Big\|_{L^\infty_\omega}
		\lesssim |t-s|^{\alpha H + \frac{1}{q'}} \Big( \int_s^t \| b_r\|_{\cC^\alpha_x}^q \dd r\Big)^{\frac{1}{q}},
	\end{align*}
	where we used the lower semicontinuity of the $L^\infty_\omega$-norm as in Lemma~\ref{lem:pairwise_mkv_apriori}.
	Here $\GG=\{\cG_t\}_{t\geq 0}$ denotes the natural filtration generated by $(X^{(N)}_0,W^{(N)})$, to which all the solutions  $X^{(N);n}$ are adapted.
	
	This estimate does not immediately yield the conclusion, since $\FF$ may be larger than $\GG$.
	Observe, however, that when proving the a priori bounds on $X^{(N);n}$ in particular within the proof of Lemma~\ref{lem:remainder_singular_apriori}, we never used the exact definition of the underlying filtration $\GG$.
	Instead, the proof only required that: i) $X^{(N)}$ and $W^{(N)}$ are adapted to the filtration; ii) $W^{(N)}$ is an $\GG$-fBm. This allows us to apply the LND property in conjunction with heat kernel estimates (cf. \eqref{eq:LND+heat}) to perform stochastic sewing arguments.
	Since by assumption both properties hold for $\FF$ as well, one can repeat the same argument to obtain estimates of the form \eqref{eq:structure_assumption_processes} for $\theta^{(N);n}$ and $\FF$; then we can pass to the limit $n\to\infty$ as above to conclude.
\end{proof}

\begin{lem}\label{lem:unique_pwise_sing_system_C}
	Under the assumptions of Theorem~\ref{thm:unique_pwise_sing_system}, pathwise uniqueness holds in the class of $(\kappa,2)$-regular solutions.
\end{lem}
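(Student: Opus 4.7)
The plan is to use the classical smooth interactions as a mediator and apply Proposition~\ref{prop:generic_pairwise_stabillity} twice, once with each of the two $(\kappa,2)$-regular solutions playing the role of the perturbed system. I would fix any sequence $\{b^n\}_{n\geq 1} \subset C^\infty_b$ with $b^n \to b$ in $L^q_T \cC^\alpha_x$, and for each $n$ let $X^{(N);n}$ denote the unique classical strong solution to \eqref{eq:p_system_pairwise} with drift $b^n$ and data $(X^{(N)}_0, W^{(N)})$. This object, though auxiliary, will allow us to compare both of our singular candidate solutions to a common regularised benchmark.

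For each $(\kappa,2)$-regular solution $Y^{(N)} \in \{X^{(N)}, \tilde X^{(N)}\}$, I would define the perturbation
\begin{equation*}
\cZ^{i;n,Y}_t := (Y^{i;N}_t - X^i_0 - W^i_t) - \frac{1}{N-1}\sum_{j\neq i}\int_0^t b^n_s(Y^{i;N}_s, Y^{j;N}_s)\dd s,
\end{equation*}
so that $Y^{(N)}$ satisfies the perturbed smooth system \eqref{eq:pairwise_stab_remainder} (with $b = b^n$ and forcing $\cZ^{i;n,Y}$). Since $Y^{(N)}$ is $(\kappa,2)$-regular and $b^n$ is smooth, the integral term lies in $C^{\kappa-\var}_T L^2_\omega$ by Lemma~\ref{lem:estimate_integrals_v0}, so $\cZ^{i;n,Y}\in C^{\kappa-\var}_T L^2_\omega$; more importantly, Lemma~\ref{lem:k_m_convergence} applied to $Y^{(N)}$ with the approximating sequence $\{b^n\}$ yields
\begin{equation*}
\lim_{n\to\infty}\sup_{i=1,\ldots,N} \|\cZ^{i;n,Y}\|_{C^{\kappa-\var}_T L^2_\omega} = 0.
\end{equation*}

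I would then invoke Proposition~\ref{prop:generic_pairwise_stabillity} with $m=2$, drift $b^n$, and perturbed system $Y^{(N)}$; since $\sup_n \|b^n\|_{L^q_T \cC^\alpha_x}<\infty$, the constant in \eqref{est.sup.stab} may be taken uniform in $n$, giving
\begin{equation*}
\sup_{i=1,\ldots,N} \sup_{t \in [0,T]} \|X^{i;N;n}_t - Y^{i;N}_t\|_{L^2_\omega} \leq C \sup_{i=1,\ldots,N} \|\cZ^{i;n,Y}\|_{C^{\kappa-\var}_T L^2_\omega}.
\end{equation*}
Specialising to $Y\in\{X,\tilde X\}$, sending $n\to\infty$ and applying the triangle inequality yields $\sup_{i,t}\|X^{i;N}_t - \tilde X^{i;N}_t\|_{L^2_\omega} = 0$; by path continuity, $X^{(N)}$ and $\tilde X^{(N)}$ are indistinguishable.

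The main obstacle I anticipate is verifying the hypotheses of Proposition~\ref{prop:generic_pairwise_stabillity} when the ``perturbed'' side is a generic $(\kappa,2)$-regular weak solution rather than a classical solution of an SDE with forcing. This is precisely where the $(\kappa,2)$-regularity is used nontrivially: it upgrades the bare convergence in probability of Definition~\ref{def:pairwise_singular_sol} to the $C^{\kappa-\var}_T L^2_\omega$-convergence provided by Lemma~\ref{lem:k_m_convergence}, which is needed both to make sense of $\cZ^{i;n,Y}$ in the right function space and to quantify its size as $n\to\infty$. Note that the asymmetry built into Proposition~\ref{prop:comparison_integrals} (cf.\ Remark~\ref{rem:asymmetric})---wherein only the first process must satisfy \eqref{eq:structure_assumption_processes}---is crucial, since it is $X^{(N);n}$ (coming with the a priori bounds of Lemma~\ref{lem:remainder_singular_apriori}) that plays the role of $\varphi^1$, while no such structural information on $Y^{(N)}$ is required.
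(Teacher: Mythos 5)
Your proposal takes a genuinely different route from the paper. The paper constructs the strong solution $X^{(N)} = F(X^{(N)}_0, W^{(N)})$ on the probability space of the given weak solution $Y^{(N)}$ and compares $\theta^{(N)}$ and $\tilde\theta^{(N)}$ directly via Proposition~\ref{prop:comparison_integrals} and a decomposition with smooth approximants $b^n$, closing with a small-interval iteration. You instead triangulate: cast each of the two weak solutions as a perturbed smooth system driven by $b^n$ with forcing $\cZ^{(N);n,Y}$, invoke Proposition~\ref{prop:generic_pairwise_stabillity} to estimate $\sup_{i,t}\|X^{i;N;n}_t - Y^{i;N}_t\|_{L^2_\omega}$ by $\sup_i\|\cZ^{i;n,Y}\|_{C^{\kappa-\var}_T L^2_\omega}$, then let $n\to\infty$ via Lemma~\ref{lem:k_m_convergence}. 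This is clean and avoids the final iteration, since that contraction is already absorbed into Proposition~\ref{prop:generic_pairwise_stabillity}. It also gives for free that any $(\kappa,2)$-regular weak solution coincides with the strong solution of Lemma~\ref{lem:unique_pwise_sing_system_A}, which is actually the intermediate statement the paper proves.

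There is one gap you do not address: the filtration. The weak solutions $X^{(N)}, \tilde X^{(N)}$ are adapted to some filtration $\FF$ which a priori may be strictly larger than the natural filtration $\GG = \sigma(X^{(N)}_0, W^{(N)}_{\,\cdot\wedge t})$. Proposition~\ref{prop:generic_pairwise_stabillity} (and, underneath it, Proposition~\ref{prop:comparison_integrals} via stochastic sewing with conditional expectations $\EE_s$) is proved in Section~\ref{sec:reg.drift} working with $\GG$. To run your argument, the perturbed process $\tilde\theta^{(N)}$ must be adapted to the sewing filtration, forcing you to use $\FF$; in turn, the condition \eqref{eq:structure_assumption_processes} on $\theta^{(N);n}$ must then be verified with respect to $\FF$, not just $\GG$. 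You cite Lemma~\ref{lem:remainder_singular_apriori} for this structural bound, but that lemma is proved for $\GG$. This is precisely the issue the paper addresses with Lemma~\ref{lem:unique_pwise_sing_system_B} (whose proof explicitly notes that the a priori bounds on $\theta^{(N);n}$ extend to any filtration $\FF$ for which $W^{(N)}$ is an $\FF$-fBm, since the proof of Lemma~\ref{lem:remainder_singular_apriori} only uses the LND property via~\eqref{eq:LND+heat}). If you add this observation before invoking Proposition~\ref{prop:generic_pairwise_stabillity}, your argument is complete.
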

\begin{proof}
	For simplicity, we drop the index $N$ whenever clear.
	
	In order to show pathwise uniqueness, we will actually prove the following: given any $(\kappa,2)$-regular weak solution $(\Omega,\FF,\PP;X^{(N)}_0,W^{(N)},Y^{(N)})$, on the same probability space there exists a strong solution $X^{(N)}$, such that $Y^{(N)}$ and $X^{(N)}$ are indistinguishable from $X^{(N)}$.
	
	In fact, in Lemma~\ref{lem:unique_pwise_sing_system_A} we constructed a strong solution (defined e.g. on the canonical space), which is a function of the initial data and driving noises (denote this function by $F$); we can therefore construct a copy of this strong solution on $(\Omega,\FF,\PP;X^{(N)}_0,W^{(N)},Y^{(N)})$, by taking $X^{(N)}\coloneq F(X^{(N)}_0,W^{(N)})$.
	By Lemma~\ref{lem:unique_pwise_sing_system_A}, $X^{(N)}$ is a $(\kappa,m)$-regular solution, for any $m\in [1,\infty)$, adapted to the natural filtration $\GG$ generated by $(X^{(N)}_0,W^{(N)})$; by Definition \ref{def:pairwise_singular_sol}, $\GG$ is a subfiltration of $\FF$ and $W^{(N)}$ is an $\FF$-fBm.
	We can therefore apply Lemma~\ref{lem:unique_pwise_sing_system_B} to deduce that $\theta^{(N)}\coloneq X^{(N)}-W^{(N)}$ satisfies \eqref{eq:structure_assumption_processes} with respect to $\FF$. 
	
	Since $Y^{(N)}$ is $(\kappa,2)$-regular, $\tilde{\theta}^{(N)}= Y^{(N)}-W^{(N)}\in \dot C^{\kappa-\var}_T L^2_m$.
	Comparing $\theta^{(N)}$ and $\tilde\theta^{(N)}$ on any interval $[s,t]\subset [0,T]$ we see that, for any $n\geq 1$ and $i=1,\ldots,N$,
	\begin{align*}
		\llbracket & \theta^{i} -  \tilde{\theta}^{i} \rrbracket_{C^{\kappa-\var}_{[s,t]}L^2_\omega}\\
		\leq &\,\bigg\llbracket\,\theta^{i}_{\variable} -X^i_0 -\frac{1}{N-1}\sum_{j\neq i}^N\int_{0}^{\,\cdot\,}  b^n_r (\theta^{i}_r+W^{i}_r,\theta^{j}_r+W^{i}_r) \dd r\,\bigg\rrbracket_{C^{\kappa-\var}_{[s,t]}L^2_\omega}\\
		&+\sup_{i\neq j \in \{1,\ldots,N\}}\left\llbracket\int_{0}^{\,\cdot\,} \left[ b^n_r \left(\theta^{i}_r+W^{i}_r,\theta^{j}_r+W^{i}_r\right)  -  b^n_r\left(\tilde{\theta}^{i}_r+W^{i}_r,\tilde{\theta}^{j}_r + W^{j}_r\right)\right]\dd r \right\rrbracket_{C^{\kappa-\var}_{[s,t]}L^2_\omega}\\
		&+ \bigg\llbracket\,\tilde{\theta}^{i}_{\variable}-X^i_0-\frac{1}{N-1}\sum_{j\neq i}^N\int_0^{\,\cdot\,} b^n_r(\tilde{\theta}^{i}_r+W^{i}_r,\tilde{\theta}^{j}_r + W^{j}_r)\dd r  \,\bigg\rrbracket_{C^{\kappa-\var}_{[s,t]}L^2_\omega}\\
		\eqqcolon&\,  \RN{1}_{s,t}^{i;n} + \RN{2}_{s,t}^{i;n} + \RN{3}_{s,t}^{i;n}.
	\end{align*}
	Since $X^{(N)}$ and $Y^{(N)}$ are both $(\kappa,2)$-regular, by Lemma~\ref{lem:k_m_convergence} we know that $\RN{1}^{i;n}_{s,t} + \RN{3}^{i;n}_{s,t} \to 0$ as $n\to\infty$, for all $i=1,\ldots,N$. 
	Concerning $\RN{2}^{i;n}$, we can go through similar arguments as in the proof of Proposition~\ref{prop:generic_pairwise_stabillity}.
	Indeed, since $\theta^{(N)}$ satisfies \eqref{eq:structure_assumption_processes} with $w=w_b$ and with respect to the filtration $\FF$ under which $\tilde \theta^{(N)}$ is adapted, we may apply Proposition~\ref{prop:comparison_integrals}, with $m=2$, $\varphi^1=(\theta^{i},\theta^{j})$ and $\varphi^2 = (\tilde{\theta}^{i},\tilde{\theta}^{j})$ to obtain that
	\begin{align*}
		\max_{i=1,\ldots,N}\RN{2}^{i;n}_{s,t} \lesssim w_b(s,t)^{\frac{1}{q}}|t-s|^\eps\max_{i=1,\ldots,N} \|\theta^{i}-\tilde{\theta}^{i}\|_{C^{\kappa-\var}_{[s,t]}L^2_\omega}\left(1+w_{b}(s,t)^{\frac{1}{q}}|t-s|^\eps\right),
	\end{align*}
	where the implicit constant is independent of $n$. Hence, taking the limit $n \to \infty$ on both sides, we find
	\begin{equation*}
		\max_{i =1,\ldots,N} \left\llbracket\theta^{i} - \tilde{\theta}^{i} \right \rrbracket_{C^{\kappa-\var}_{[s,t]}L^2_\omega}
		\lesssim w_b(s,t)^{\frac{1}{q}}|t-s|^\eps\max_{i=1,\ldots,N} \|\theta^{i}-\tilde{\theta}^{i}\|_{C^{\kappa-\var}_{[s,t]}L^2_\omega}\left(1+w_b(s,t)^{\frac{1}{q}}|t-s|^\eps\right).
	\end{equation*}
	A standard iteration argument on sufficiently small intervals $[s,t] \subset [0,T]$ then shows that 
	\begin{equation*}
		\max_{i =1,\ldots,N} \llbracket X^{i} - Y^{i} \rrbracket_{C^{\kappa-\var}_T L^2_\omega} 
		= \max_{i =1,\ldots,N} \llbracket\theta^{i} - \tilde{\theta}^{i} \rrbracket_{C^{\kappa-\var}_T L^2_\omega} =0
	\end{equation*}
	which yields the conclusion.
\end{proof}

\begin{proof}[Proof of Theorem~\ref{thm:unique_pwise_sing_system}]
	Point~\ref{it:unique_pwise_sing_it1} comes from Lemma~\ref{lem:unique_pwise_sing_system_A}, while Point~\ref{it:unique_pwise_sing_it2} from Lemma~\ref{lem:unique_pwise_sing_system_C}; thus it only remains to show the stability estimate~\eqref{eq:stability_sing_pairwise}.
	Given $b$, $\tilde b\in L^q_T \cC^\alpha_x$, we can find sequences $\{b^n\}_n$, $\{\tilde b^n\}_n$ such that $b^n\to b$ in $L^q_T \cC^\alpha_x$, similarly for $\tilde b$; by Point~\ref{it:unique_pwise_sing_it1}, the associated solutions $X^{(N);n}$ (resp. $\tilde X^{(N);n}$) converge to $X^{(N)}$ (resp. $\tilde X^{(N)}$).
	Estimate~\eqref{eq:stability_sing_pairwise} then follows by passing to the limit as $n\to\infty$ in the estimate~\eqref{eq:stability-particle-by-particle}, applied for $b^1=b^n$ and $b^2=\tilde b^n$.
\end{proof}

Having established a well-posedness result for the particle system \eqref{eq:sing_pairwise_system}, we can carry out a similar analysis for the associated singular McKean--Vlasov equation. We begin by defining a similar notion of solution for the McKean--Vlasov equation.

\begin{defn}\label{def:mkv_singular_sol}
	Let $\alpha<0$, $b\in L^q_T\cC^\alpha_x$. We say that a tuple $(\Omega,\FF,\PP;X_0, W, \bar X)$ is a weak solution on $[0,T]$ to the McKean--Vlasov equation
	\begin{equation}\label{eq:singular_mkv}
		\bar X_t = X_0 + \int_0^t b_s(\bar X_s,\bar{\mu}_s)\dd s + W_t,\qquad \mu_t = \cL(\bar X_t);
	\end{equation} 
	if the following hold:
	\begin{enumerate}[label=\roman*)]
		\item $(\Omega,\FF,\PP)$ is a filtered probability space, $W$ is an $\RR^d$-valued $\FF$-fBm and $X_0$ is $\cF_0$-adapted (in particular, this implies that $W$ and $X_0$ are independent);
		\item $\bar X$ is $\FF$-adapted;
		\item there exists a sequence $\{b^n\}_{n\geq 1}\subset C^\infty_b$ such that $b^n\to b$ in $L^q_T \cC^{\alpha}_x$ and
		\begin{equation}\label{eq:defn_mkv_sol}
			\bar{X}_{\,\cdot\,}-X_0-W_{\,\cdot\,}
			= \lim_{n\to \infty} \int_0^{\,\cdot\,} b^n_s(\bar{X}_s,\bar{\mu}_s)\dd s,\quad
			\bar{\mu}_t =\mcL(\bar{X}_t),
		\end{equation}
		where the limit is taken in probability with respect to $\PP$, uniformly on $[0,T]$.
	\end{enumerate}
\end{defn}
Clearly, we can readapt the concept of $(\kappa,m)$-regular solutions from Definition~\ref{def:kappa_m_sol} to the setting of equation~\eqref{eq:singular_mkv}; natural analogues of Remark~\ref{rem:kappa_m_sol} and Lemma~\ref{lem:k_m_convergence} then follow with identical proofs. The next statement is a direct parallel of Theorem~\ref{thm:unique_pwise_sing_system}.
\begin{lem}\label{lem:unique_pairwise_mkv}
	Let $N\geq 2$, $(\alpha, q,H)$ satisfy Assumption \ref{ass:drift_assumption} with $\alpha <0$, $b \in L^q_T \cC^\alpha_x$ and $\kappa$, $\eps$ be defined by \eqref{eq:defn_kappa_eps}.
	Then for any $X_0$ there exists a strong solution $\bar{X}$ to the McKean--Vlasov equation \eqref{eq:singular_mkv}, which is a $(\kappa,m)$-solution for any $m\in [2,\infty)$, and can be recovered as the unique limit of solutions $\bar{X}^n$ associated to regular approximations $\{b^n\}_n\subset C^\infty_b$ such that $b^n\to b$ in $L^q_T \cC^\alpha_x$.
	Furthermore, pathwise uniqueness holds in the class of $(\kappa,2)$-solutions.
	
	Finally, given two solutions $\bar X^1$, $\bar X^2$, associated to distinct data $(X^i_0,b^i)$ but driven by the same $W$, for any $m \in [1,\infty)$ and $\tilde \kappa>\kappa$ there exists a constant $C\coloneqq C(\alpha,q,H,d,T,m,\|b^1\|_{L^q_T\C^\alpha_x},\|b^2\|_{L^q_T\C^\alpha_x},\tilde\kappa)>0$ such that
	\begin{equation}\label{eq:singular_mkv_stability}
		\Big\| \|\bar X-\bar Y\|_{C^{\tilde \kappa-\var}_{T}} \Big\|_{L^m_\omega} \leq C\left(\|X_0-Y_0\|_{L^m_\omega}+ \| b^1-b^2\|_{L^q_T \cC^{\alpha-1}_x}\right).
	\end{equation}
\end{lem}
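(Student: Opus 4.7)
The strategy mirrors the proof of Theorem~\ref{thm:unique_pwise_sing_system}, but now we exploit the stability results already established for the regular McKean--Vlasov equation in Section~\ref{sub:mkv}, together with the integral estimates for measure-dependent maps from Lemma~\ref{lem:estimate_measure_flow_integrals} and Remark~\ref{rem:estimate_integrals_v0_mkv}. First, fix a sequence $\{b^n\}_n \subset C^\infty_b$ with $b^n \to b$ in $L^q_T \cC^\alpha_x$, and let $\bar X^n$ denote the strong solution to the regular McKean--Vlasov equation~\eqref{eq:regular_mkv} associated with $(X_0, b^n)$. By Proposition~\ref{prop:pairwise_mkv_stability}, for any $m \in [1,\infty)$,
\begin{equation*}
	\big\| \|\bar X^n - \bar X^{n'}\|_{C^{\tilde\kappa-\var}_T}\big\|_{L^m_\omega} \lesssim \|b^n - b^{n'}\|_{L^q_T \cC^{\alpha-1}_x},
\end{equation*}
with constant uniform in $n, n'$ since $\sup_n \|b^n\|_{L^q_T \cC^\alpha_x} < \infty$. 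As $L^q_T \cC^\alpha_x \hookrightarrow L^q_T \cC^{\alpha-1}_x$, the sequence is Cauchy in $L^m_\omega C_T$ and we denote its limit by $\bar X$. Strong adaptedness of $\bar X$ is inherited in the limit.

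Next, I would show that $\bar X$ is a $(\kappa,m)$-regular weak solution for every $m \in [2,\infty)$. Setting $\bar\theta = \bar X - X_0 - W$, Lemma~\ref{lem:pairwise_mkv_apriori} applied to each $\bar X^n$ yields bounds on $\|\bar\theta^n_t - \EE_s \bar\theta^n_t\|_{L^m|\cF_s}$ of the form $w_{b^n}(s,t)^{1/q} |t-s|^{\alpha H + 1/q'}$; passing to the limit (using $L^m_\omega$-convergence and lower semicontinuity of conditional norms as in the proof of Lemma~\ref{lem:pairwise_mkv_apriori}) gives the analogous estimate for $\bar\theta$. In particular $\bar\theta \in C^{\kappa-\var}_T L^m_\omega$ and $\bar\theta$ satisfies \eqref{eq:structure_assumption_processes} with $w = w_b$. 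To verify~\eqref{eq:defn_mkv_sol}, I introduce the measure-dependent integral
\begin{equation*}
	Z^b_\cdot := \int_0^\cdot b_s(\bar X_s, \bar\mu_s)\, \mathrm{d}s, \qquad \bar\mu_s = \cL(\bar X_s),
\end{equation*}
which is well-defined as the limit of $\int_0^\cdot \bar b_s(\bar X_s, \bar\mu_s)\, \mathrm{d}s$ for $\bar b \in C^\infty_b$ approximating $b$, by Remark~\ref{rem:estimate_integrals_v0_mkv}. I then decompose, for any auxiliary $\bar b \in C^\infty_b$,
\begin{align*}
	\bar\theta - \int_0^\cdot b^n_s(\bar X_s, \bar\mu_s)\, \mathrm{d}s
	&= \Big(\bar\theta - Z^b\Big) + \int_0^\cdot [\bar b_s - b^n_s](\bar X_s, \bar\mu_s)\, \mathrm{d}s \\
	&\quad + \int_0^\cdot [\bar b_s(\bar X_s, \bar\mu_s) - \bar b_s(\bar X^n_s, \bar\mu^n_s)]\, \mathrm{d}s + \int_0^\cdot [b^n_s - \bar b_s](\bar X^n_s, \bar\mu^n_s)\, \mathrm{d}s,
\end{align*}
and control the four terms as in Lemma~\ref{lem:unique_pwise_sing_system_A}: the first vanishes by definition of $Z^b$, the second and fourth are $O(\|\bar b - b^n\|_{L^q_T \cC^\alpha_x})$ in $C^{\kappa-\var}_T L^m_\omega$ by Remark~\ref{rem:estimate_integrals_v0_mkv}, and the third vanishes as $n \to \infty$ since $\bar b$ is smooth and $\bar X^n \to \bar X$ in $L^m_\omega C_T$ (which implies $\cW_m(\bar\mu^n_s, \bar\mu_s) \to 0$).

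For pathwise uniqueness in the $(\kappa,2)$-regular class, I would proceed exactly as in Lemma~\ref{lem:unique_pwise_sing_system_C}. Given two such solutions $\bar X$, $\bar Y$ on the same filtered space with the same initial datum, the strong solution constructed above is adapted to the subfiltration $\GG$ generated by $(X_0, W)$; the analogue of Lemma~\ref{lem:unique_pwise_sing_system_B} upgrades \eqref{eq:structure_assumption_processes} to hold with respect to the ambient filtration $\FF$ (the proof only used the LND property of $W^{(N)}$ under $\FF$, which is preserved here). One then compares $\bar\theta$ and $\tilde\theta$ over a small interval $[s,t]$ via the triangle inequality involving $b^n(\bar X, \bar\mu)$ and $b^n(\bar Y, \bar\nu)$, invoking the convergence~\eqref{eq:k_m_p_system_limit}-type statement for both, and applying the key comparison estimate~\eqref{eq:comparison_measure_integrals} of Lemma~\ref{lem:estimate_measure_flow_integrals} with $\varphi^1 = \bar\theta$, $\varphi^2 = \tilde\theta$. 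Crucially this requires $\bar\theta$ (not both) to satisfy~\eqref{eq:structure_assumption_processes}, which we have already ensured. The asymmetric stability estimate then gives
\begin{equation*}
	\llbracket \bar\theta - \tilde\theta \rrbracket_{C^{\kappa-\var}_{[s,t]} L^2_\omega} \lesssim w_b(s,t)^{1/q} |t-s|^\eps \,\|\bar\theta - \tilde\theta\|_{C^{\kappa-\var}_{[s,t]} L^2_\omega} (1 + w_b(s,t)^{1/q}|t-s|^\eps),
\end{equation*}
and a standard iteration on short intervals closes the argument. Finally, the stability estimate~\eqref{eq:singular_mkv_stability} follows by applying Proposition~\ref{prop:pairwise_mkv_stability} to the regular approximations $\bar X^n$, $\bar Y^n$ and passing to the limit $n \to \infty$, using lower semicontinuity of $L^m_\omega C^{\tilde\kappa-\var}_T$-norms.

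The main technical obstacle I expect is the verification that $\bar X$ is a solution in the sense of~\eqref{eq:defn_mkv_sol}, specifically controlling the ``measure mismatch'' term $\int_0^\cdot [\bar b_s(\bar X_s, \bar\mu_s) - \bar b_s(\bar X^n_s, \bar\mu^n_s)]\, \mathrm{d}s$ in the $C^{\kappa-\var}_T L^m_\omega$-topology rather than merely uniformly in time. However, since the auxiliary drift $\bar b$ is smooth and bounded, a direct Lipschitz argument on the probabilistic representation $b(x,\mu) = \EE[b(x, Y')]$ for $Y' \sim \mu$ suffices, so this is not a deep difficulty.
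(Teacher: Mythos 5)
Your proposal is correct and follows essentially the same route as the paper: construct $\bar X$ as the limit of regular approximations via Proposition~\ref{prop:pairwise_mkv_stability}, inherit the a priori bound on $\bar\theta$ and the $(\kappa,m)$-regularity by passing to the limit in Lemma~\ref{lem:pairwise_mkv_apriori}, verify~\eqref{eq:defn_mkv_sol} by the four-term decomposition as in Lemma~\ref{lem:unique_pwise_sing_system_A}, obtain pathwise uniqueness by upgrading the filtration as in Lemma~\ref{lem:unique_pwise_sing_system_B} and then invoking the asymmetric comparison estimate~\eqref{eq:comparison_measure_integrals} of Lemma~\ref{lem:estimate_measure_flow_integrals} in place of Proposition~\ref{prop:comparison_integrals}, and deduce the stability estimate by limiting from the regular case. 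This is exactly the argument the paper gives (which it itself only sketches by cross-reference to Theorem~\ref{thm:unique_pwise_sing_system}).
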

\begin{proof}
	The proof is almost identical to that of Theorem~\ref{thm:unique_pwise_sing_system}, so let us only shortly describe the main differences.
	
	To establish strong existence of a solution, which corresponds to the unique limit of any regular approximations, one goes through the same argument as in Lemma~\ref{lem:unique_pwise_sing_system_A}, up to replacing the stability estimates from Corollary~\ref{cor:p_system_drift_stable} with those of Proposition~\ref{prop:pairwise_mkv_stability}.
	
	As a byproduct of the construction by an approximation procedure, the strong solution $\bar{X}$ satisfies an estimate of the form \eqref{eq:mkv_apriori_alpha<0}, with respect to the filtration $\GG$ generated by $(X^{(N)}_0,W^{(N)})$.
	
	Arguing as in Lemma~\ref{lem:unique_pwise_sing_system_B}, this property can be upgraded to any other filtration $\FF$ such that $W$ is an $\FF$-fBm.
	Pathwise uniqueness then follows by comparing any other weak solution $\bar Y$ to $\bar X$; the argument is the same as in Lemma~\ref{lem:unique_pwise_sing_system_C}, up to applying the estimates on integrals coming from Lemma~\ref{lem:estimate_measure_flow_integrals} instead of Proposition~\ref{prop:comparison_integrals}.
	Finally, the stability estimate \eqref{eq:singular_mkv_stability} comes from the one for regular drifts of Proposition~\ref{prop:pairwise_mkv_stability} by passing to the limit with respect to regular approximations.
\end{proof}
\subsection{Solutions Concept and Well-Posedness for Functional Drifts}\label{sec:reg_drifts_sols}
In the case of functional drifts, i.e. $b\in L^1_T \mcC^\alpha_x$ with $\alpha \geq 0$, the notion of solution for both the particle system \eqref{eq:sing_pairwise_system} and the McKean--Vlasov equation \eqref{eq:singular_mkv} are unambiguous.
Indeed, integrals of the form
\begin{align*}
	\int_0^\cdot b_s(X^{i;N}_s,X^{j;N}_s) \dd s, \quad \int_0^\cdot b_s(\bar X_s,\bar\mu_s) \dd s,
\end{align*}
are always well-defined in the Lebesgue sense, and give rise to absolutely continuous paths.
We therefore refrain from giving complete definitions of weak solutions $(\Omega,\FF,\PP; X^{(N)}_0,W^{(N)},X^{N})$  (resp. $(\Omega,\FF,\PP; X_0,W,\bar X)$) in this context, since they coincide with the usual ones.
The same goes in this setting for the notions of strong solutions and pathwise uniqueness.

With these considerations in mind, we can give results analogous to (in fact, partially stronger than) Theorem~\ref{thm:unique_pwise_sing_system} and Lemma~\ref{lem:unique_pairwise_mkv}, guaranteeing well-posedness of the interacting particle system and McKean--Vlasov equations in the case of functional drifts.

\begin{prop}\label{prop:unique_cts}
	Let $N\geq 2$, $(\alpha, q,H)$ satisfy Assumption \ref{ass:drift_assumption} with $\alpha \geq 0$, $b \in L^q_T \cC^\alpha_x$. Then, for any $X^{(N)}_0$ (resp. $X_0$),
	strong existence and pathwise uniqueness hold for solutions $X^{(N)}$ (resp. $\bar{X}$) to the particle system \eqref{eq:sing_pairwise_system} (resp. the McKean--Vlasov equation \eqref{eq:singular_mkv}).
	Furthermore, such solutions are the unique limit in $L^m_\omega C_T$ of those associated to regular approximations $\{b^n\}_n\subset C^\infty_b$ such that $b^n\to b$ in $L^q_T \cC^\alpha_x$.
	Finally, stability estimates of the form \eqref{eq:stability_sing_pairwise} (resp. \eqref{eq:singular_mkv_stability}) are available also in this case.
\end{prop}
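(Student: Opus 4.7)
The plan is to mirror the strategy of Theorem~\ref{thm:unique_pwise_sing_system} and Lemma~\ref{lem:unique_pairwise_mkv}, with considerable simplifications: since $\alpha\geq 0$ entails $\cC^\alpha_x\hookrightarrow L^\infty_x$, the drift $b$ is bounded and every integral in \eqref{eq:sing_pairwise_system}--\eqref{eq:singular_mkv} has its usual Lebesgue meaning, so the notions of weak/strong solution coincide with the classical ones and the pre-approximation step present in Definitions~\ref{def:pairwise_singular_sol} and \ref{def:mkv_singular_sol} is no longer required.

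For \emph{strong existence and convergence of regularisations}, fix $\{b^n\}_n\subset C^\infty_b$ with $b^n\to b$ in $L^q_T\cC^\alpha_x$ and let $X^{(N);n}$ be the classical strong solution of \eqref{eq:p_system_pairwise} associated to $b^n$ with initial data $X^{(N)}_0$ (see Appendix~\ref{app:lipschitz_p_system}). Corollary~\ref{cor:p_system_drift_stable} yields
\[
 \sup_{j=1,\ldots,N} \big\| \|X^{j;N;n}-X^{j;N;m}\|_{C^{\tilde\kappa-\var}_T}\big\|_{L^m_\omega}\lesssim \|b^n-b^m\|_{L^q_T\cC^{\alpha-1}_x},
\]
with implicit constant uniform in $n,m$, since $\sup_n\|b^n\|_{L^q_T\cC^\alpha_x}<\infty$; via the embedding $\cC^\alpha_x\hookrightarrow\cC^{\alpha-1}_x$, the sequence $\{X^{(N);n}\}_n$ is Cauchy in $L^m_\omega C_T$, and its limit $X^{(N)}$ is adapted to $(X^{(N)}_0, W^{(N)})$. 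Passing to the limit in the integral equation is routine, by inserting a smooth intermediary $\bar b$ and using the embedding $\cC^\alpha_x\hookrightarrow L^\infty_x$ to control the drift differences. Independence of the limit from the choice of approximating sequence comes from the same stability estimate. The McKean--Vlasov case is identical, with Proposition~\ref{prop:pairwise_mkv_stability} in place of Corollary~\ref{cor:p_system_drift_stable}.

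For \emph{pathwise uniqueness}, the a priori estimates of Lemmas~\ref{lem:reg_apri}--\ref{lem:remainder_singular_apriori} are stated for $C^\infty_b$ drifts, but their proofs only invoke the H\"older seminorm of $b$ pointwise in space and therefore extend verbatim to $b\in L^q_T\cC^\alpha_x$; combined with the filtration-transfer argument from Lemma~\ref{lem:unique_pwise_sing_system_B}, this shows that any solution's remainder $\theta^{(N)}=X^{(N)}-X^{(N)}_0-W^{(N)}$ belongs to $C^{\kappa-\var}_T L^\infty_\omega$ and satisfies the conditional regularity bound~\eqref{eq:structure_assumption_processes} with control $w_b$. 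Given two solutions $X^{(N)},\tilde X^{(N)}$ sharing the driving data, we insert a smooth approximation $b^n\to b$ in $L^q_T\cC^\alpha_x$ and decompose
\[
\theta^{i;N}-\tilde\theta^{i;N}=\frac{1}{N-1}\sum_{j\ne i}\Big(\int_0^\cdot[b-b^n]_s(X^i,X^j)\,\dd s + \int_0^\cdot[b^n_s(X^i,X^j)-b^n_s(\tilde X^i,\tilde X^j)]\,\dd s + \int_0^\cdot[b^n-b]_s(\tilde X^i,\tilde X^j)\,\dd s\Big);
\]
the two outer terms vanish in $L^m_\omega C^{\kappa-\var}_T$ as $n\to\infty$ by Lemma~\ref{lem:estimate_integrals_v0}, and the middle term is controlled by Proposition~\ref{prop:comparison_integrals}. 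Letting $n\to\infty$ and applying the rough Gr\"onwall Lemma~\ref{lem:rough-gronwall}, as in the proof of Proposition~\ref{prop:generic_pairwise_stabillity} with $\cZ^{(N)}\equiv 0$, forces $\theta^{(N)}\equiv\tilde\theta^{(N)}$. The McKean--Vlasov uniqueness follows analogously, substituting Lemma~\ref{lem:estimate_measure_flow_integrals} for Proposition~\ref{prop:comparison_integrals}.

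The stability estimates analogous to \eqref{eq:stability_sing_pairwise} and \eqref{eq:singular_mkv_stability} follow by passing to the limit in the smooth-drift bounds of Corollary~\ref{cor:p_system_drift_stable} and Proposition~\ref{prop:pairwise_mkv_stability}, using that their constants depend monotonically on $\|b\|_{L^q_T\cC^\alpha_x}$ and invoking the lower semicontinuity of the $C^{\tilde\kappa-\var}_T$-seminorms under $L^m_\omega C_T$-convergence (as in the proof of Proposition~\ref{prop:pairwise_mkv_stability}). The main technical point I anticipate is the extension of Lemmas~\ref{lem:reg_apri}--\ref{lem:remainder_singular_apriori} beyond the class $C^\infty_b$, without which the uniqueness comparison cannot be applied to an abstract pair of solutions; this is however a direct inspection of the existing arguments.
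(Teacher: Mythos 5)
Your proposal is correct, but it takes a genuinely different route from the paper in the pathwise uniqueness step, and the paper's route is worth noting because it is substantially simpler. You attack uniqueness by arguing that the conditional a priori bounds of Lemmas~\ref{lem:reg_apri}--\ref{lem:remainder_singular_apriori} extend verbatim from $b\in C^\infty_b$ to $b\in L^q_T\cC^\alpha_x$, so that \emph{any} weak solution's remainder satisfies the structural condition~\eqref{eq:structure_assumption_processes}, which lets you compare two arbitrary solutions directly via Proposition~\ref{prop:comparison_integrals} and rough Gr\"onwall. This works (the bootstrap in Lemma~\ref{lem:reg_apri} and the SSL germ estimates in Lemma~\ref{lem:remainder_singular_apriori} only use pointwise evaluations of $b_r$ and heat-kernel bounds, both of which are available for $b_r\in\cC^\alpha_x$, $\alpha\geq 0$), but it requires re-inspecting and re-deriving those lemmas. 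The paper avoids this entirely: since $\cC^\alpha_x\hookrightarrow C^0_x$ for $\alpha\geq 0$, the trivial Lebesgue bound $|\theta^{i;N}_{s,t}|\leq\int_s^t\|b_r\|_{C^0_x}\dd r$ shows that \emph{any} weak solution is automatically $(1,\infty)$-regular, hence $(\kappa,2)$-regular; pathwise uniqueness in this class is then exactly Lemma~\ref{lem:unique_pwise_sing_system_C}, which compares an arbitrary weak solution to the specific strong solution built by approximation (whose structural bound passes to the limit via Lemma~\ref{lem:unique_pwise_sing_system_B}). So the paper reuses the asymmetry of Proposition~\ref{prop:comparison_integrals} and already-proved lemmas rather than re-proving the a priori estimates; your route buys a more symmetric, self-contained argument at the cost of the extra ``extends verbatim'' verification you yourself flagged. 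One small inaccuracy: you cite Lemma~\ref{lem:estimate_integrals_v0} to kill the outer terms in your uniqueness decomposition, but that lemma is stated for $\alpha<0$; here the much simpler bound $\big|\int_0^t[b-b^n]_s(X,Y)\dd s\big|\leq T^{1/q'}\|b-b^n\|_{L^q_T C^0_x}$ suffices. The existence and stability parts of your proposal match the paper's argument.
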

\begin{proof}
	The proof is almost identical to those of Theorem~\ref{thm:unique_pwise_sing_system} and Lemma~\ref{lem:unique_pairwise_mkv}, so let us only describe the key differences.
	Contrary to those statements, here we state uniqueness in the larger class of all weak solutions, without any need to restrict to $(\kappa,2)$-regular solutions; this is because, as we will show, any solution is automatically $(1,\infty)$-regular.
	
	We prove this claim for $X^{(N)}$, the argument for $\bar X$ being similar.
	Since $b\in L^q_T \cC^\alpha_x$, it also belongs to $L^1_T C^0_x$. Recall that $\theta^{(N)}=X^{(N)}-W^{(N)}$; arguing as in \eqref{eq:pointwise_holder} we have the $\PP$-a.s. bound
	\begin{align*}
		\sup_{i=1,\ldots, N} |\theta^{i;N}_{s,t}| \leq \int_s^t \|b_r\|_{C^0_x} \dd r \quad \forall\, (s,t)\in [0,T]^2_\leq.
	\end{align*}
	Observing that the right hand side defines a deterministic control, taking the $L^\infty_\omega$-norm on both sides and in view of Remark~\ref{rem:link_control_variation}, we deduce that $\theta^i \in  \dot C^{1-\var}_{[0,T]} L^\infty_\omega$; by Remark~\ref{rem:kappa_m_sol}, it follows in particular that $\theta^{(N)}\in \dot C^{\kappa-\var}_{[0,T]} L^m_\omega$ with $\kappa$ as in \eqref{eq:defn_kappa_eps} (see~\eqref{eq:inequality_variations}) and any $m\in [1,\infty]$.
	
	The proof of strong existence and characterization as the unique limit with respect to regular approximations proceeds as in Lemma~\ref{lem:unique_pwise_sing_system_A}; similarly, Lemma~\ref{lem:unique_pwise_sing_system_A} also holds in this context.
	Having established that any weak solution is $(1,\infty)$-regular, the proof of pathwise uniqueness proceeds identically as in Lemma~\ref{lem:unique_pwise_sing_system_C}, but without the need to assume $(\kappa,2)$-regularity.
	Stability estimates of the form \eqref{eq:stability_sing_pairwise}-\eqref{eq:singular_mkv_stability} are again a byproduct of those respectively from Corollary~\ref{cor:p_system_drift_stable} and Proposition~\ref{prop:pairwise_mkv_stability}, and a limiting procedure $b^n\to b$.
\end{proof}

We conclude this section with a small regularity result for solutions to \eqref{eq:singular_mkv}, valid for all $\alpha\in (-\infty,1)$.

\begin{lem}\label{lem:k_m_sol_time_reg}
	Let $(\alpha, q,H)$ satisfy Assumption \ref{ass:drift_assumption} with $\alpha \in \mbR$, $b \in L^q_T \cC^\alpha_x$; given $(X_0,W)$, let $\bar{X}$ denote the unique associated solution to the McKean-Vlasov equation \eqref{eq:singular_mkv}, coming from either Lemma~\ref{lem:unique_pairwise_mkv} (for $\alpha< 0$) or Proposition~\ref{prop:unique_cts} (for $\alpha\geq 0$).
	Then $\bar{X}=\bar{\theta}+W$, where:
	\begin{itemize}
		\item for $\alpha\geq 0$, one has $\bar{\theta}\in W^{1,q}_T$ with the pathwise bound
		\begin{align*}
			\llbracket \bar{\theta}\rrbracket_{W^{1,q}_T} \lesssim \| b\|_{L^q_T \cC^0_x}\quad \PP\text{-a.s.};
		\end{align*}
		\item for $\alpha<0$, for any $\iota>0$ it holds that $\theta\in W^{1+\alpha H-\iota,q}_T$, with moment bound
		\begin{align*}
			\Big\|\,\llbracket \bar{\theta}\rrbracket^m_{W^{1+\alpha H - \iota,q}_T}\Big\|_{L^m_\omega} \lesssim_{m,T,\iota} \| b\|_{L^q_T \cC^\alpha_x}\quad \text{for all }\,\, m \in [1,\infty).
		\end{align*}
	\end{itemize}	 
\end{lem}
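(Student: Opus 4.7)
The proof splits naturally into the two regimes $\alpha \ge 0$ and $\alpha < 0$; in both cases one works first at the level of smooth approximations $\{b^n\}\subset C^\infty_b$ with $b^n \to b$ in $L^q_T\cC^\alpha_x$, and then passes to the limit along the approximation-based construction of $\bar X$ coming from Lemma~\ref{lem:unique_pairwise_mkv} or Proposition~\ref{prop:unique_cts}.

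For $\alpha \ge 0$ the plan is immediate: the drift $b$ is a genuine bounded function, so $|b_s(\bar X_s,\bar\mu_s)| \le \|b_s\|_{\cC^0_x}$ pointwise in $s$ and $\PP$-a.s. Consequently $\bar\theta_t = \int_0^t b_s(\bar X_s,\bar\mu_s)\dd s$ is absolutely continuous with weak derivative bounded pathwise by $\|b_t\|_{\cC^0_x}$. Taking the $q$-th power and integrating in time gives $\|\dot{\bar\theta}\|_{L^q_T} \le \|b\|_{L^q_T\cC^0_x}$; combined with $\bar\theta_0 = 0$ and a Poincar\'e-type control, this yields the claimed pathwise $W^{1,q}_T$-bound.

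For $\alpha < 0$ the plan is to first establish the time-pointwise estimate
\[
\|\bar\theta_{s,t}\|_{L^m_\omega} \le C\, w_b(s,t)^{1/q}\, |t-s|^{\alpha H + 1/q'}, \qquad (s,t) \in [0,T]^2_\le,
\]
with $w_b(s,t):=\int_s^t\|b_r\|^q_{\cC^\alpha_x}\dd r$ and $C$ depending on $m, T, \|b\|_{L^q_T\cC^\alpha_x}$, and then to invoke Proposition~\ref{prop:kolmogorov_sobolev} with $\beta = 1 + \alpha H$. Since $\beta - 1/q = \alpha H + 1/q'$ is precisely the H\"older exponent in the above display, the proposition will deliver $\bar\theta \in W^{\tilde\beta,q}_T$ for every $\tilde\beta < 1 + \alpha H$, with the $L^m_\omega$-moment of the seminorm bounded by $w_b(0,T)^{1/q} = \|b\|_{L^q_T\cC^\alpha_x}$; one should also verify via Assumption~\ref{ass:drift_assumption} that $1 + \alpha H \in (0,1)$ so that the Kolmogorov--Sobolev criterion applies, which is straightforward ($1 + \alpha H < 1$ from $\alpha < 0$, and $1 + \alpha H > 0$ follows from $\alpha > 1 - 1/(Hq')$ combined with $q \le 2$).

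The only real work therefore lies in securing the time-pointwise estimate for the true remainder $\bar\theta$. For each smooth $b^n$, Lemma~\ref{lem:pairwise_mkv_apriori} yields exactly such a bound for $\bar\theta^n$, with an explicit polynomial dependence of the constant on $\|b^n\|_{L^q_T\cC^\alpha_x}$ and hence uniform in $n$. Since $\bar X^n \to \bar X$ in $L^m_\omega C_T$ by Lemma~\ref{lem:unique_pairwise_mkv}, one may pass to the limit $n \to \infty$ in this uniform bound via Fatou's lemma (or lower semi-continuity of conditional norms as in the proof of Lemma~\ref{lem:pairwise_mkv_apriori} itself) to obtain the required estimate on $\bar\theta$. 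This step, which is the main technical checkpoint, crucially leverages that \eqref{eq:mkv_apriori_alpha<0} depends on $b$ only through $\|b\|_{L^q_T\cC^\alpha_x}$; once secured, the desired $W^{1+\alpha H - \iota, q}_T$-regularity is a direct consequence of Proposition~\ref{prop:kolmogorov_sobolev}.
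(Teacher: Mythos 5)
Your proposal is correct and follows essentially the same route as the paper's own proof: the pathwise $W^{1,q}_T$-estimate from $|b_s(\bar X_s,\bar\mu_s)| \le \|b_s\|_{\cC^0_x}$ when $\alpha\ge 0$, and for $\alpha<0$ the passage to the limit in the a priori bound of Lemma~\ref{lem:pairwise_mkv_apriori} (uniform in $n$ since $\sup_n\|b^n\|_{L^q_T\cC^\alpha_x}<\infty$), followed by Proposition~\ref{prop:kolmogorov_sobolev} with $\beta=1+\alpha H$. The only cosmetic difference is that the paper retains the stronger conditional-norm form $\|\|\bar\theta_{s,t}\|_{L^m|\cF_s}\|_{L^\infty_\omega}$ before feeding it into the Kolmogorov--Sobolev lemma, whereas you correctly observe that the weaker $\|\bar\theta_{s,t}\|_{L^m_\omega}$-bound, which it implies, is already sufficient for that application.
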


\begin{proof}
	For $\alpha\geq 0$, since $\cC^\alpha_x\hookrightarrow C^0_x$, the pathwise bound follows directly from the estimate
	\begin{equation*}
		\llbracket \bar\theta\rrbracket_{W^{1,q}_T}^q
		= \int_0^T |\dot{\bar \theta}_t|^q \dd t
		= \int_0^T |b_t (X_t, \mcL(\bar{X}_t))|^q \dd t
		\leq \int_0^T \|b_t\|_{C^0_x}^q \dd t.
	\end{equation*}

	For $\alpha<0$, since by construction $\bar{X}$ is the unique limit in $C^{\kappa-\var}_T L^m_\omega$ of a sequence $\{\bar X^n\}_n$ of solutions associated to regular drifts $b^n\to b$ in  $L^q_T \cC^\alpha_x$, we can pass to the limit as $n\to\infty$ in the bound \eqref{eq:mkv_apriori_alpha<0} (using lower-semicontinuity of the $L^\infty_\omega$-norm as in Lemma~\ref{lem:pairwise_mkv_apriori});
	it follows that
	\begin{equation*}
		\left\| \|\bar\theta_{s,t}\|_{L^m|\cF_s} \right\|_{L^\infty_\omega} 
		\lesssim w_b(s,t)^{\frac{1}{q}} |t-s|^{\alpha H+\frac{1}{q^\prime}}
		\quad\forall\, (s,t)\in [S,T]^2_\leq
	\end{equation*}
	for some hidden constant depending on $\| b\|_{L^q_T \cC^\alpha_x}$ and the usual parameters, where $w_b(s,t) = \int_s^t \|b_s\|_{\mcC^{\alpha}_x}\dd s$. We can therefore apply Proposition~\ref{prop:kolmogorov_sobolev} with $w=w_b$ and $\beta=1+\alpha H$ to conclude that for any $\iota>0$,
	\begin{align*}
		\Big\|\,\|\llbracket \bar{\theta}\rrbracket^m_{W^{1+\alpha H - \iota,q}_T}\Big\|_{L^m_\omega}
		\lesssim_{m,T,\iota} w_b(0,T)^{\frac{1}{q}}
		=\| b\|_{L^q_T \cC^\alpha_x}\quad \text{for all }\,\, m \in [1,\infty). \qquad \qquad \qedhere
	\end{align*}
\end{proof}
\subsection{Propagation of Chaos for Distributional and Functional Drifts}\label{sec:poc_proofs}
Having established existence and uniqueness of solutions to particle systems and McKean--Vlasov equations, both in the case of distributional and continuous drifts, we are in a position to present (a slightly more refined version of) our main result.
\begin{thm}\label{th:pairwise_mean_field}
	Let $(\alpha, q,H)$ satisfy Assumption \ref{ass:drift_assumption} with $\alpha \in \mbR$, $b \in L^q_T \cC^\alpha_x$.
	Let $\{X^i_0\}_{i=1}^\infty$ be i.i.d. random variables and let $\{W^i\}_{i=1}^\infty$ be a collection of i.i.d. fBms of parameter $H$, independent from $\{X^i_0\}_{i=1}^\infty$.
	Given $N\geq 2$, denote by $X^{(N)}=\{X^{i;N}\}_{i=1}^N$ the unique strong solution to the interacting particle system \eqref{eq:sing_pairwise_system} and set $\mu^N_t := \frac{1}{N}\sum_{i=1}^N\delta_{X^{i;N}_t}$; define a collection of i.i.d. random variables $\{\bar X^i\}_{i=1}^\infty$ as the unique strong solutions to the McKean--Vlasov equations
	\begin{equation}\label{eq:iid_copies_mkv}
		\bar{X}^i_t =  X_0^i + \int_0^t b_s (\bar{X}^i_s,\bar{\mu}_s) \dd s + \dd W^i_t,
		\quad \bar{\mu}_t =\cL(\bar{X}^i_t),
		\quad \forall\, i\in \NN
	\end{equation}
	(observe that $\bar\mu_t$ does not depend on $i$).
	Then, for all $m\in [1,\infty)$, there exists a constant $C\coloneqq C(\alpha,q,H,d,T,m,\|b\|_{L^q_T\C^\alpha_x})>0$ independent of $N$ such that
	\begin{equation}\label{eq:estim_particle_to_mkv_rough}
		\sup_{i=1,\ldots, N} \EE\left[\sup_{t\in [0,T]}\left| X^{i;N}_t - \bar{X}^i_t\right|^m\right]^{\frac{1}{m}} \leq C N^{-1/2}
	\end{equation}
	and
	\begin{equation}\label{eq:empirical_to_mean_field_rough}
		\EE\left[\sup_{t\in [0,T]}\left| \langle \varphi, \mu_t^N - \bar{\mu}_t\rangle\right|^m\right]^{\frac{1}{m}} \leq C \|\varphi\|_{W^{1,\infty}_x} N^{-1/2} \quad \forall\, \varphi\in W^{1,\infty}_x
	\end{equation}
\end{thm}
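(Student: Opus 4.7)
The overall strategy is to leverage the regular case already handled in Corollary~\ref{cor:estim_particle_to_mkv}, and to pass to the limit along a regularization $\{b^n\}_n \subset C^\infty_b$ with $b^n \to b$ in $L^q_T \cC^\alpha_x$. Let $X^{(N);n}$ and $\bar{X}^{i;n}$ denote the solutions associated to $b^n$ with the same initial data and driving noises. By Corollary~\ref{cor:estim_particle_to_mkv}, one has $\sup_i \|X^{i;N;n} - \bar{X}^{i;n}\|_{L^m_\omega C_T} \le C_n N^{-1/2}$ with $C_n$ monotone in $\|b^n\|_{L^q_T \cC^\alpha_x}$, hence uniformly bounded in $n$. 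The stability bounds~\eqref{eq:stability_sing_pairwise} (from Theorem~\ref{thm:unique_pwise_sing_system} or Proposition~\ref{prop:unique_cts}) and~\eqref{eq:singular_mkv_stability} (from Lemma~\ref{lem:unique_pairwise_mkv} or Proposition~\ref{prop:unique_cts}) imply that $X^{i;N;n} \to X^{i;N}$ and $\bar{X}^{i;n} \to \bar{X}^i$ in $L^m_\omega C_T$, uniformly in $i$ and $N$, with rates controlled by $\|b - b^n\|_{L^q_T \cC^{\alpha-1}_x}$; the triangle inequality then yields~\eqref{eq:estim_particle_to_mkv_rough}.

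For~\eqref{eq:empirical_to_mean_field_rough}, I would exploit the i.i.d.\ structure of $\{\bar{X}^i\}_i$ through the decomposition
\begin{equation*}
\langle \varphi, \mu^N_t - \bar\mu_t\rangle = \frac{1}{N}\sum_{i=1}^N [\varphi(X^{i;N}_t) - \varphi(\bar{X}^i_t)] + \frac{1}{N}\sum_{i=1}^N [\varphi(\bar{X}^i_t) - \langle \varphi, \bar\mu_t\rangle] \eqcolon \mathrm{I}^N_t + \mathrm{II}^N_t,
\end{equation*}
using $\EE[\varphi(\bar{X}^i_t)] = \langle \varphi, \bar\mu_t\rangle$. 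The term $\mathrm{I}^N$ is controlled by the Lipschitz estimate $|\mathrm{I}^N_t| \le \|\varphi\|_{\dot W^{1,\infty}_x} \max_i |X^{i;N}_t - \bar{X}^i_t|$ combined with~\eqref{eq:estim_particle_to_mkv_rough}. For $\mathrm{II}^N$, setting $Z^i_t \coloneqq \varphi(\bar{X}^i_t) - \langle \varphi, \bar\mu_t\rangle$, the $\{Z^i\}_i$ form an i.i.d.\ centered family of scalar processes satisfying $|Z^i_t| \le 2\|\varphi\|_{L^\infty_x}$ and $|Z^i_{s,t}| \le \|\varphi\|_{\dot W^{1,\infty}_x} |\bar{X}^i_{s,t}|$ almost surely. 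A scalarwise application of Lemma~\ref{lem:moments_martingale_type2} then yields, for all $(s,t) \in [0,T]^2_\le$,
\begin{equation*}
\|\mathrm{II}^N_t\|_{L^m_\omega} \lesssim \|\varphi\|_{L^\infty_x} N^{-1/2}, \quad \|\mathrm{II}^N_t - \mathrm{II}^N_s\|_{L^m_\omega} \lesssim \|\varphi\|_{\dot W^{1,\infty}_x} N^{-1/2} \|\bar{X}^1_{s,t}\|_{L^m_\omega}.
\end{equation*}
Since Lemma~\ref{lem:pairwise_mkv_apriori} (extended to the singular setting via the same approximation argument used to define $\bar{X}$) combined with the regularity of $W^H$ provides $\|\bar{X}^1_{s,t}\|_{L^m_\omega} \lesssim w(s,t)^{1/\kappa}$ for a suitable control $w$, applying Lemma~\ref{lem:kolmogorov_control} converts these pointwise bounds into the desired $\sup_t$ estimate of order $\|\varphi\|_{W^{1,\infty}_x} N^{-1/2}$; combining with the bound on $\mathrm{I}^N$ proves~\eqref{eq:empirical_to_mean_field_rough}.

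The main technical obstacle I expect lies in extracting $\sup_t$ from inside the expectation in $\mathrm{II}^N$: since $C_T$ is not a Banach space of martingale type $2$, a direct Banach-valued concentration estimate is unavailable, and one must instead combine pointwise (in $t$) concentration through Lemma~\ref{lem:moments_martingale_type2} with the uniform time regularity of the $Z^i$ inherited from the a priori control on $\bar\theta^i$. A second, more routine subtlety is ensuring that all constants depend monotonically on $\|b\|_{L^q_T \cC^\alpha_x}$ so as to remain uniformly bounded along the regularization $b^n \to b$; this is built into all the well-posedness and stability results invoked above.
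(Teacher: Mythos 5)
Your proof of \eqref{eq:estim_particle_to_mkv_rough} and your decomposition of $\langle\varphi,\mu^N_t-\bar\mu_t\rangle$ into $\mathrm{I}^N + \mathrm{II}^N$ match the paper's argument exactly (the paper names them $I^{1;N}$ and $I^{2;N}$), as does your treatment of $\mathrm{I}^N$.

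For $\mathrm{II}^N$ you take a genuinely different route. The paper regards $\bar Z^i := \varphi(\bar X^i) - \EE\varphi(\bar X^i)$ as an i.i.d.\ sequence of \emph{Banach-valued} random variables in $E = W^{\beta,p}_T$ with $\beta p > 1$, applies Lemma~\ref{lem:moments_martingale_type2} once in that space (it is of martingale type $2$, being a closed subspace of an $L^p$ space), and then uses the Sobolev embedding $W^{\beta,p}_T \hookrightarrow C_T$ to get the $\sup_t$ inside the expectation; this requires the time-regularity results of Lemma~\ref{lem:k_m_sol_time_reg}. You instead apply the scalar concentration inequality pointwise at each fixed $t$ and at each pair $(s,t)$, producing a $C^{\kappa_0-\var}_T L^m_\omega$ bound on $\mathrm{II}^N$ of order $N^{-1/2}$, and then invoke Lemma~\ref{lem:kolmogorov_control} to obtain the pathwise $\sup_t$ estimate. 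Both routes hinge on the same underlying fact --- the time regularity of $\bar X^1 = X^1_0 + \bar\theta^1 + W^1$ --- and both force a constraint of the form ``moment exponent $>1/H$,'' which is dealt with via Jensen; the paper's version is arguably cleaner as a one-shot application of the Banach-valued inequality, while yours bypasses the fractional Sobolev machinery of Lemma~\ref{lem:k_m_sol_time_reg} in favour of the more elementary Kolmogorov-type Lemma~\ref{lem:kolmogorov_control}, which has already been set up.

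Two small corrections. First, your remark that ``a direct Banach-valued concentration estimate is unavailable'' is off the mark: while $C_T$ is indeed not of martingale type $2$, the paper finds a martingale type $2$ space $W^{\beta,p}_T$ that embeds continuously into $C_T$, which is precisely the direct Banach-valued estimate you say cannot be found. Second, the bound $\|\bar X^1_{s,t}\|_{L^m_\omega}\lesssim w(s,t)^{1/\kappa}$ should not use $\kappa$ from \eqref{eq:defn_kappa_eps}: the fBm part contributes $|t-s|^H$, and since $\kappa<2$ one has $1/\kappa > H$ when $H<1/2$, so $|t-s|^{\kappa H}$ fails superadditivity. You must use some $\kappa_0 \geq \max(\kappa,1/H)$ so that $w(s,t) = \bigl(w_b(s,t)^{1/q}|t-s|^{\eps} + |t-s|^H\bigr)^{\kappa_0}$ is a genuine control (using Remark~\ref{rem:properties_controls}); correspondingly $m>\kappa_0$ is needed in Lemma~\ref{lem:kolmogorov_control}. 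This is merely a bookkeeping point, but without it the Kolmogorov step would fail.
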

\begin{proof}
	Given $b\in L^q_T \cC^\alpha_x$, considering any sequence $\{b^n\}_n\subset C^\infty_b$ such that $b^n\to b$ in $L^q_T \cC^\alpha_x$, we know that the associated solutions $X^{(N);n}$ to \eqref{eq:sing_pairwise_system} converge to $X^{(N)}$ in $L^m_T C_T$.
	We can therefore pass to the limit as $n\to\infty$ in estimate \eqref{eq:estim-particle-to-mkv} from Corollary~\ref{cor:estim_particle_to_mkv}, using the fact that $\sup_n \| b^n\|_{L^q_T \cC^\alpha_x}<\infty$ and the constant therein is non-decreasing in $\|\cdot \|_{L^q_T \mcC^{\alpha}_x}$, to deduce that \eqref{eq:estim_particle_to_mkv_rough} holds.
	
	To prove \eqref{eq:empirical_to_mean_field_rough}, recall that $\bar{X}^{(N)}\coloneqq \{\bar{X}^{i;N}\}_{i=1}^N$ are i.i.d. random variables and set
	\begin{equation*}
		\bar{\mu}^N_t \coloneqq \frac{1}{N}\sum_{i=1}^N \delta_{\bar{X}^i_t};
	\end{equation*}
	we have the decomposition,
	\begin{equation*}
		\langle \varphi, \mu^N_t - \mu^X_t\rangle
		= \langle \varphi,\mu^N_t - \bar{\mu}^N_t\rangle + \langle \varphi,\bar{\mu}^N_t -\mu^X_t\rangle
		\eqqcolon I^{1;N}_t + I^{2;N}_t.
	\end{equation*}
	Since $\varphi$ is Lipschitz, we may apply \eqref{eq:estim_particle_to_mkv_rough} to see that
	\begin{align*}
		\Big\| \sup_{t\in [0,T]} |I^{1;N}_t| \Big\|_{L^m_\omega}
		\leq \frac{1}{N} \sum_{i=1}^N \Big\|  \sup_{t\in [0,T]} |\varphi(X^{i;N}_t) -\varphi(\bar X^i_t)| \Big\|_{L^m_\omega}
		\lesssim \| \varphi\|_{\dot W^{1,\infty}_x} N^{-1/2}.
	\end{align*}
	To estimate $I^{2;N}$, noting that $\cL(\bar{X}^i_t)= \bar{\mu}_t$ for all $i$, we can write
	\begin{align*}
		I^{2;N}_t = \frac{1}{N} \sum_{i=1}^N \Big(\varphi(\bar X^i_t) - \EE[\varphi(\bar{X}^i_t)]\Big).
	\end{align*}
	To conclude the proof, we will actually provide a stronger estimate on $I^{2;N}$; we claim that there exist $\beta\in (0,1)$, $p\in [2,\infty)$ with $\beta p>1$ such that
	\begin{equation}\label{eq:main_thm_proof_claim}
		\Big\| \| I^{2;N} \|_{W^{\beta,p}_T} \Big\|_{L^m_\omega} \lesssim N^{-1/2}
	\end{equation}
	where the hidden constant depends only on the usual parameters. The conclusion will then follow from the fractional Sobolev embedding $W^{\beta,p}_T \hookrightarrow C_T$, which implies a similar bound in $L^m_\omega C_T$.
	
	Set $\bar Z^i:=\varphi(\bar X^i_t) - \EE[\varphi(\bar{X}^i_t)]$, which is a collection of i.i.d., mean-zero random variables. In order to prove \eqref{eq:main_thm_proof_claim}, it suffices to show that, for $(\beta,p)$ as above, it holds that $\bar Z^1\in L^m_\omega W^{\beta,p}_T$.
	Indeed in that case, we can use the fact that for $p\in [2,\infty)$, $W^{\beta,p}_T$ is a martingale type $2$ space (see Appendix \ref{app:useful} for more details), and that $\bar Z^i$ are mean-zero and i.i.d., to conclude that $Y_k:=\frac{1}{N}\sum_{i=1}^k \bar{Z}^i$ is a martingale sequence in $W^{\beta,p}_T$; applying Lemma~\ref{lem:moments_martingale_type2} (with $E=W^{\beta,p}_T$), we see that
	\begin{align*}
		\Big\| \| I^{2;N} \|_{W^{\beta,p}_T} \Big\|_{L^m_\omega}
		\lesssim \frac{1}{N} 	\bigg( \sum_{i=1}^N \| \bar Z^i \|_{L^m_\omega W^{\beta,p}_T}^2 \bigg)^{1/2}
		\lesssim \|\bar Z^1\|_{L^m_\omega W^{\beta,p}_T} N^{-1/2} 
	\end{align*}
	yielding the desired \eqref{eq:main_thm_proof_claim}. So it only remains to show that $Z^1\in L^m_\omega W^{\beta,p}_T$ and estimate its norm. We perform the estimates for $\varphi(\bar X^1_t)$, the ones for its expectation being similar.
	Firstly, since $\varphi\in W^{1,\infty}_x$, using the definition of $W^{\beta,p}_T$, we have the pathwise bounds
	\begin{align*}
		\| \varphi(\bar X^1)\|_{L^p_T}
		\leq T^{1/p} \| \varphi(\bar X^1)\|_{L^\infty_T}
		\leq T^{1/p} \| \varphi\|_{L^\infty_x},\quad
		\bra{\varphi(\bar X^1)}_{W^{\beta,p}_T}
		\leq \| \varphi\|_{\dot W^{1,\infty}_x} \bra{\bar X^1}_{W^{\beta,p}_T}.
	\end{align*}
	Furthermore, since $\bar X^1= \bar\theta^1 + W^1$, $\bra{\bar X^1}_{W^{\beta,p}_T} \leq \bra{\bar \theta^1}_{W^{\beta,p}_T} + \bra{W^1}_{W^{\beta,p}_T}$; since $W$ is an fBm of parameter $H$ and $T$ is finite, $\bra{W^1}_{W^{\beta,p}_T}\in L^m_\omega$ for all $p\in [2,\infty)$ and all $\beta<H$. To estimate $\bra{\bar \theta^1}_{W^{\beta,p}_T}$, we rely on Lemma~\ref{lem:k_m_sol_time_reg}, distinguishing two cases:
	\begin{itemize}
		\item[i)] For $\alpha\geq 0$, we know that $\bar\theta^1-X_0\in L^\infty_\omega W^{1,q}_T$; since $q\in (1,2]$, by fractional Sobolev embeddings $W^{1,q}_T\hookrightarrow W^{\frac{1}{q'}+\frac{1}{p},p}$. The conclusion then follows by taking $p\in [2,\infty)$ large enough and $\beta<\min\{H, 1/q'+1/p\}$ such that $\beta p>1$; observe that this is always possible since $q>1$.
		\item[ii)] For $\alpha<0$, we know that $\bar\theta^1-X_0\in L^m_\omega W^{1+\alpha H-\iota,q}_T$ for any $\iota>0$; by Assumption \ref{ass:drift_assumption}, we can take $\iota>0$ small enough so that $1+\alpha H-\iota>H+1/q$. By fractional Sobolev embeddings it then holds $\bar\theta^1-X_0\in L^m_\omega W^{H+1/p,p}_T$, for all $p\in [2,\infty)$; we can then choose $p$ large enough and $\beta<\min\{H, H+1/p\}=H$ such that $\beta p>1$ to reach the conclusion. \qedhere
	\end{itemize}
\end{proof}
\begin{proof}[Proof of Theorem~\ref{thm:main}]
	Let $b\in L^q_T B^\alpha_{\infty,\infty}$, with $(\alpha,q,H)$ satisfying \eqref{eq:intro_assumption}; since the inequality is strict, by virtue of \eqref{eq:holder_complete_embed}, we can find $\tilde \alpha<\alpha$ such that $b\in L^q_T \cC^{\tilde \alpha}_x$ and $\tilde\alpha>1-1/(Hq')$ still holds. We are therefore in the framework of the results from this section.
	
	Points~\ref{it:intro_thm_i}-\ref{it:intro_thm_ii} are a consequence of the results gathered in Sections~\ref{sec:sing_drifts_sols}-\ref{sec:reg_drifts_sols}, most notably Theorem~\ref{thm:unique_pwise_sing_system}, Lemma~\ref{lem:unique_pairwise_mkv} and Proposition~\ref{prop:unique_cts}.
	
	In particular, for $\tilde\alpha<0$, it is now clear that we are adopting Definitions~\ref{def:pairwise_singular_sol}-\ref{def:mkv_singular_sol} as solutions concepts; uniqueness here may be either interpreted as pathwise uniqueness in the class of $(\kappa,2)$-regular solutions (cf. Lemma~\ref{lem:unique_pwise_sing_system_C}) or as the fact that $X^{(N)}$ (resp. $\bar X$) is the unique limit of solutions associated to regular approximations $b^n\to b$ (cf. Lemma~\ref{lem:unique_pwise_sing_system_A}).
	Instead for $\tilde\alpha\geq 0$, solutions are defined classically and classical pathwise uniqueness holds, by virtue of Proposition~\ref{prop:unique_cts}.
	
	Points~\ref{it:intro_thm_iii}-\ref{it:intro_thm_iv} instead come from Theorem~\ref{th:pairwise_mean_field}.
\end{proof}

Estimate \eqref{eq:empirical_to_mean_field_rough} provides a mean field limit result in which we first fix a reference observable $\varphi$, and then derive estimates in expectation.
Using similar arguments however, one can provide more intrinsic estimates, e.g. by means of negative Sobolev norms.

In the next statement, we use the inhomogeneous Sobolev spaces $H^{-\lambda}_x=B^{-\lambda}_{2,2}$; they can be equivalently characterized by the norm $\| f\|_{H^{-\lambda}_x} = \| (1+|\xi|)^{-\lambda} \hat f\|_{L^2}$, where $\hat f$ denotes the Fourier transform of $f$.

\begin{cor}\label{cor:mean_field_negative_sobolev}
	Let $(\alpha,q,H)$, $b$, $\{X^i_0\}_i$, $\{W^i\}_i$, $\{\bar{X}^i\}$, $\bar\mu$ be as in Theorem~\ref{th:pairwise_mean_field}.
	For given $N\geq 2$, define $X^{(N)}$ and $\mu^N$ as therein.
	Then, for any $m\in [1,\infty)$ and $\delta>0$, there exists a constant $C$, depending on $\delta$ and the usual parameters, such that
	\begin{align*}
		\Big\| \sup_{t\in [0,T]} \|\mu^N_t - \bar \mu_t \|_{H^{-d/2-1-\delta}_x} \Big\|_{L^m_\omega} \leq C N^{-1/2}.
	\end{align*}
\end{cor}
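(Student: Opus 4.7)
The strategy is to mimic the proof of estimate~\eqref{eq:empirical_to_mean_field_rough} in Theorem~\ref{th:pairwise_mean_field}, but perform the analysis intrinsically in the Hilbert space $E := H^{-d/2-1-\delta}_x$. Setting $\bar\mu^N_t := \frac{1}{N}\sum_{i=1}^N\delta_{\bar X^i_t}$, we split
$$\mu^N_t - \bar\mu_t \,=\, \underbrace{(\mu^N_t - \bar\mu^N_t)}_{I^{1,N}_t} \,+\, \underbrace{(\bar\mu^N_t - \bar\mu_t)}_{I^{2,N}_t}.$$

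A direct Fourier computation, using $\widehat{\delta_x}(\xi) = e^{-ix\cdot\xi}$ together with $|e^{-ix\cdot\xi}-e^{-iy\cdot\xi}| \leq |\xi|\,|x-y|$, yields the uniform bound $\|\delta_x\|_E \lesssim 1$ (since $2(d/2+1+\delta)>d$) and, more importantly, the Lipschitz property $\|\delta_x - \delta_y\|_E \lesssim |x-y|$ (since $2(d/2+1+\delta)>d+2$, making the corresponding Fourier integral finite). Combining the latter with \eqref{eq:estim_particle_to_mkv_rough} immediately gives
$$\Big\|\sup_{t\in[0,T]} \|I^{1,N}_t\|_E\Big\|_{L^m_\omega} \leq \frac{1}{N}\sum_{i=1}^N \Big\|\sup_{t\in[0,T]} |X^{i;N}_t - \bar X^i_t|\Big\|_{L^m_\omega} \lesssim N^{-1/2}.$$

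For $I^{2,N}$, the processes $Z^i_t := \delta_{\bar X^i_t} - \bar\mu_t$ are i.i.d., centred and $E$-valued; by the same Lipschitz estimate combined with Jensen's inequality in the Hilbert space $E$, their time increments satisfy
$$\|Z^i_t - Z^i_s\|_E \lesssim |\bar X^i_t - \bar X^i_s| + \EE|\bar X^i_t - \bar X^i_s|.$$
Consequently $Z^i$ inherits the fractional Sobolev regularity established for $\bar X^i$ in the proof of Theorem~\ref{th:pairwise_mean_field}: there exist $\beta \in (0,1)$ and $p \in [2,\infty)$ with $\beta p > 1$ such that $Z^1 \in L^m_\omega W^{\beta,p}([0,T]; E)$, with a norm bounded in terms of the usual parameters. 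By H\"older's inequality we may moreover assume $m \geq p$.

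The conclusion follows by exploiting the martingale type $2$ structure pointwise in $(s,t)$. Applying Minkowski's inequality (using $m \geq p$) to interchange $L^m_\omega$ with the Gagliardo integral in time, then Lemma~\ref{lem:moments_martingale_type2} to the $E$-valued, centred, i.i.d. sum $\frac{1}{N}\sum_i Z^i_{s,t}$ for each fixed $(s,t)$, and finally Minkowski in the reverse direction, we obtain
$$\Big\|I^{2,N}\Big\|_{L^m_\omega W^{\beta,p}([0,T]; E)} \,\lesssim\, N^{-1/2}\, \|Z^1\|_{L^m_\omega W^{\beta,p}([0,T]; E)} \,\lesssim\, N^{-1/2}.$$
The fractional Sobolev embedding $W^{\beta,p}([0,T]; E) \hookrightarrow C([0,T]; E)$, valid since $\beta p > 1$, then upgrades this into the required supremum-in-time bound on $I^{2,N}$, which combined with the bound on $I^{1,N}$ concludes the proof. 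The main technical nicety is the double use of Minkowski's inequality, which sidesteps verifying the martingale type $2$ property of the Bochner--Sobolev space $W^{\beta,p}([0,T];E)$ directly, allowing us to work pointwise in $(s,t)$ with the Hilbert space $E$ alone.
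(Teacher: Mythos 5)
Your argument takes a genuinely different and considerably heavier route than the paper. The paper's proof is a two-line corollary of the mean-field bound \eqref{eq:empirical_to_mean_field_rough}: apply it with the (bounded, Lipschitz) test functions $\varphi(x)=e^{i\xi\cdot x}$ to obtain $\big\| \sup_{t} |\widehat{(\mu^N_t-\bar\mu_t)}(\xi)| \big\|_{L^m_\omega}\lesssim (1+|\xi|)N^{-1/2}$, then integrate this pointwise-in-$\xi$ bound against the weight $(1+|\xi|)^{-d-2-2\delta}$, using a single application of Minkowski (for $m\ge 2$) to move the $L^m_\omega$-norm inside the $\xi$-integral; the factor $\delta>0$ is precisely what makes the resulting integral $\int (1+|\xi|)^{-d-2\delta}\,d\xi$ converge. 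By contrast, you re-run the whole proof of Theorem~\ref{th:pairwise_mean_field} intrinsically in the Hilbert space $E=H^{-d/2-1-\delta}_x$. This is a legitimate point of view and most of your argument is sound — the Lipschitz estimate $\|\delta_x-\delta_y\|_E\lesssim|x-y|$ is correct (it is exactly the Fourier integral that the paper's proof computes), the treatment of $I^{1,N}$ via \eqref{eq:estim_particle_to_mkv_rough} is correct, and the transfer of $W^{\beta,p}$-time-regularity from $\bar X^1$ to $Z^1_t=\delta_{\bar X^1_t}-\bar\mu_t$ via the Lipschitz estimate and Jensen is correct.

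However, the final step has a genuine flaw. For $m\ge p$, Minkowski's integral inequality gives $\|\cdot\|_{L^m_\omega(L^p_{(s,t)})}\le\|\cdot\|_{L^p_{(s,t)}(L^m_\omega)}$ and \emph{only} in this direction; there is no "reverse Minkowski" allowing you to come back from the mixed norm $\big(\iint \|Z^1_{s,t}\|_{L^m_\omega E}^p/|t-s|^{1+\beta p}\big)^{1/p}$ to $\|Z^1\|_{L^m_\omega W^{\beta,p}_T E}$. The two Minkowski steps you invoke require opposite inequalities between $m$ and $p$, which is impossible unless $m=p$. The argument can nonetheless be repaired: after the pointwise application of Lemma~\ref{lem:moments_martingale_type2} the quantity $\|Z^1\|_{L^p_{(s,t)}(L^m_\omega E)}$ can be bounded \emph{directly}, since $\|Z^1_{s,t}\|_{L^m_\omega E}\lesssim \|\bar X^1_{s,t}\|_{L^m_\omega}$ and the latter satisfies the explicit a priori estimate $\|\bar X^1_{s,t}\|_{L^m_\omega}\lesssim w_b(s,t)^{1/q}|t-s|^{\alpha H+1/q'}+|t-s|^H$ coming from Lemmas~\ref{lem:pairwise_mkv_apriori} and~\ref{lem:k_m_sol_time_reg}; plugging this into the Gagliardo integral and arguing as in the proof of Proposition~\ref{prop:kolmogorov_sobolev} gives the finiteness you need. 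Alternatively, and more cleanly, you can avoid the Minkowski gymnastics entirely: the discussion in Appendix~\ref{app:useful} shows $W^{\beta,p}([0,T];\RR^\ell)$ is M-type~$2$ by identifying it with a closed subspace of $L^p([0,T]^3;\RR^\ell)$, and exactly the same identification works with $\RR^\ell$ replaced by any M-type~$2$ space $E$ (citing \cite[Prop.~7.1.4]{Hytonen2016}); hence $W^{\beta,p}([0,T];E)$ is itself M-type~$2$ for Hilbert $E$ and $p\ge 2$, and Lemma~\ref{lem:moments_martingale_type2} applies to $I^{2,N}$ directly, just as in the paper's proof of \eqref{eq:main_thm_proof_claim}.
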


\begin{proof}
	As usual, we may assume $m\geq 2$.
	Let us set $\nu^N_t=\mu^N_t-\bar{\mu}_t$.
	By applying estimate \eqref{eq:empirical_to_mean_field_rough} for the choice $\varphi=e^{i\xi\cdot x}$, we find
	\begin{equation}\label{eq:mean_field_negative_sobolev_basic}
		\Big\| \sup_{t\in [0,T]} |\hat{\nu}^N_t(\xi)| \Big\|_{L^m_\omega} \lesssim (1+|\xi|) N^{-1/2} \quad \forall\, \xi\in\RR^d
	\end{equation}
	where $\hat{\nu}^N_t$ denotes the Fourier transforms of $\nu^N$. Therefore
	\begin{align*}
		\Big\| \sup_{t\in [0,T]} \| \nu^N_t \|_{H^{-d/2-1-\delta}_x} \Big\|_{L^m_\omega}
		& \leq \bigg\| \bigg( \int_{\RR^d} (1+|\xi|)^{-2-d-2\delta} \sup_{t\in [0,T]} |\nu^N_t(\xi)|^2 \dd \xi \bigg)^{1/2} \bigg\|_{L^m_\omega}\\
		& \leq \bigg( \int_{\RR^d} (1+|\xi|)^{-2-d-2\delta}\, \Big\| \sup_{t\in [0,T]} |\nu^N_t(\xi)|\, \Big\|_{L^m_\omega}^2 \dd \xi \bigg)^{1/2}\\
		& \lesssim N^{-1/2} \bigg( \int_{\RR^d} (1+|\xi|)^{-d-2\delta} \dd \xi \bigg)^{1/2}
		\lesssim N^{-{1/2}}.
	\end{align*}
	In the intermediate passage above, we applied Minkowski's inequality to bring the norm $\| \cdot\|_{L^m_\omega}$ inside, which is allowed since $m\geq 2$ and $\ell(\dd \xi)\coloneq (1+|\xi|)^{-2-d-2\delta} \dd \xi$ is a measure; then we employed estimate \eqref{eq:mean_field_negative_sobolev_basic}, and finally we used $\delta>0$ to guarantee convergence of the integral.
\end{proof}

Let us finally point out that we can combine Theorem~\ref{th:pairwise_mean_field} with our stability estimates, to derive quantitative rates for \emph{moderate interactions}; for simplicity, we focus solely on variants of \eqref{eq:estim_particle_to_mkv_rough}, although similar considerations apply also to \eqref{eq:empirical_to_mean_field_rough}.

\begin{cor}\label{cor:moderate_interactions}
	Let $(\alpha,q,H)$, $b$, $\{X^i_0\}_i$ and $\{W^i\}_i$ be as in Theorem~\ref{th:pairwise_mean_field}.
	Consider a family $\{b^\delta\}_{\delta >0}\subset L^q_T \cC^\alpha_x$ such that $b^\delta \to b$ in $L^q_T \cC^\alpha_x$ as $\delta\to 0$. 
	For any $N\geq 2$, denote by $X^{(N);\delta}$ the solution to \eqref{eq:sing_pairwise_system} associated to $b^\delta$ and by $\bar X^{(N)}$ the i.i.d. copies of the McKean--Vlasov equation \eqref{eq:iid_copies_mkv} associated to $b$.
	Then it holds that
	\begin{equation}\label{eq:moderate_interactions}
		\sup_{i=1,\ldots,N} \Big\| \sup_{t\in [0,T]}\left| X^{i;N,\delta}_t - \bar{X}^i_t\right| \Big\|_{L^m_\omega} \lesssim N^{-1/2} + \| b^\delta-b\|_{L^q_T \cC^{\alpha-1}_x}.
	\end{equation}
\end{cor}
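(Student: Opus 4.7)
The plan is a straightforward triangle inequality, splitting the error through the unperturbed particle system. Let $X^{(N)}$ denote the solution to \eqref{eq:sing_pairwise_system} associated to the \emph{limit} drift $b$ and the same data $(X^{(N)}_0, W^{(N)})$ used to construct $X^{(N);\delta}$; its existence and uniqueness are guaranteed by Theorem~\ref{thm:unique_pwise_sing_system} (for $\alpha<0$) or Proposition~\ref{prop:unique_cts} (for $\alpha\geq 0$). Then for each $i$, write
\begin{equation*}
    X^{i;N,\delta}_t - \bar X^i_t = \big(X^{i;N,\delta}_t - X^{i;N}_t\big) + \big(X^{i;N}_t - \bar X^i_t\big).
\end{equation*}

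First, I would apply the drift-stability estimate \eqref{eq:stability_sing_pairwise} to the pair $(X^{(N);\delta}, X^{(N)})$, which share the same initial conditions and the same driving fBms $W^{(N)}$ but are driven by the distinct drifts $b^\delta$ and $b$. Choosing any $\tilde\kappa>\kappa$, this yields
\begin{equation*}
    \sup_{i=1,\ldots,N}\Big\| \| X^{i;N,\delta}-X^{i;N}\|_{C^{\tilde\kappa-\var}_T}\Big\|_{L^m_\omega} \leq C\, \|b^\delta-b\|_{L^q_T \cC^{\alpha-1}_x},
\end{equation*}
with $C$ depending on $\|b\|_{L^q_T\cC^\alpha_x}$ and on $\sup_{\delta}\|b^\delta\|_{L^q_T\cC^\alpha_x}$, which is finite thanks to the convergence $b^\delta\to b$. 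Since $\sup_{t\in[0,T]}|\cdot|\leq \|\cdot\|_{C^{\tilde\kappa-\var}_T}$, this controls the first summand by the right-hand quantity $\|b^\delta-b\|_{L^q_T \cC^{\alpha-1}_x}$ in \eqref{eq:moderate_interactions}.

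Second, I would invoke Theorem~\ref{th:pairwise_mean_field}, more precisely the quantitative propagation of chaos bound \eqref{eq:estim_particle_to_mkv_rough}, applied to the pair $(X^{(N)}, \bar X^{(N)})$, both built from the limit drift $b$ and the same i.i.d. data $(X^i_0, W^i)$. This gives
\begin{equation*}
    \sup_{i=1,\ldots,N}\Big\|\sup_{t\in[0,T]}|X^{i;N}_t-\bar X^i_t|\Big\|_{L^m_\omega} \leq C\, N^{-1/2},
\end{equation*}
with $C$ depending only on $\|b\|_{L^q_T\cC^\alpha_x}$ and the usual parameters.

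Combining the two bounds via the triangle inequality yields \eqref{eq:moderate_interactions}. There is no genuine obstacle here: the entire content has already been deposited in Theorem~\ref{thm:unique_pwise_sing_system}, Proposition~\ref{prop:unique_cts} and Theorem~\ref{th:pairwise_mean_field}, and the only mild point to track is that the implicit constant remains uniform in $\delta$, which is immediate from the monotonicity of the constants in $\|b^\delta\|_{L^q_T\cC^\alpha_x}$ and the uniform boundedness $\sup_\delta\|b^\delta\|_{L^q_T\cC^\alpha_x}<\infty$ provided by the convergence hypothesis.
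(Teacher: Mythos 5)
Your proposal is correct and follows essentially the same route as the paper: decompose through the unperturbed particle system $X^{(N)}$ driven by $b$, then control the two pieces by drift-stability and propagation of chaos respectively. The one small difference is in the citation for the first piece: the paper points to Corollary~\ref{cor:p_system_drift_stable}, which is stated only for $b^1,b^2\in C^\infty_b$, whereas you correctly invoke the stability estimate \eqref{eq:stability_sing_pairwise} from Theorem~\ref{thm:unique_pwise_sing_system} (or its counterpart in Proposition~\ref{prop:unique_cts} when $\alpha\ge0$), which is the version that applies directly to drifts in $L^q_T\cC^\alpha_x$. Your reference is the more precise one; the paper presumably means the same thing implicitly, since \eqref{eq:stability_sing_pairwise} is obtained from Corollary~\ref{cor:p_system_drift_stable} by the limiting procedure.
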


\begin{proof}
	Estimate \eqref{eq:moderate_interactions} follows from triangular inequality, upon relying on the stability estimates from Corollary~\ref{cor:p_system_drift_stable} in order to compare $X^{(N);\delta}$ to $X^{(N)}$, and Theorem~\ref{th:pairwise_mean_field} to compare $X^{(N)}$ to $\bar X^{(N)}$.
\end{proof}

\begin{rem}
	If $b^\delta=\rho^\delta\ast b$ for some standard mollifier $\{\rho^\delta\}_{\delta>0}$, one has
	\begin{align*}
		\| b^\delta-b\|_{L^q_T \cC^{\alpha-1}_x} \lesssim \delta \| b\|_{L^q_T \cC^{\alpha}_x};
	\end{align*}
	the proof is standard and can be established e.g. by going through the same argument as in \cite[Lem.~A.7]{GaHaMa2023}. Combined with \eqref{eq:moderate_interactions}, in this case we find
	\begin{equation*}
		\Big\| \sup_{t\in [0,T]}\left| X^{1;N,\delta}_t - \bar{X}^1_t\right| \Big\|_{L^m_\omega} \lesssim N^{-1/2} + \delta.
	\end{equation*}
	One can e.g. recover the rate $N^{-1/2}$ by taking $\delta=\delta(N)\sim N^{-1/2}$.
\end{rem} 

\begin{appendix}
\section{Interacting Particle Systems with Lipschitz Interactions}\label{app:lipschitz_p_system}
We present here a classical propagation of chaos result for regular vector fields $b$.

Let $\{X_0^i\}_{i=1}^\infty$ be i.i.d. $\RR^d$-valued random variables and $\{W^i\}_{i=1}^\infty$ be i.i.d. $\RR^d$-valued fBm with Hurst parameter $H\in (0,+\infty)\setminus \NN$. For any $N\geq 2$, consider the particle system $X^{(N)}=\{X^{i;N}\}_{i=1}^N$ given by
\begin{equation}\label{eq:app_IPS}
	X^{i;N}= X^i_0 + \frac{1}{N-1}\sum_{\substack{j=1\\ j\neq i}}^N \int_0^t b_s(X^{i;N}_s,X^{j;N}_s) \dd s + W^i_t \quad \forall\, i=1,\ldots,N,
\end{equation}
the nonlinear process $X$ solving the McKean--Vlasov SDE
\begin{equation}\label{eq:app_MKV}
	\bar{X}_t= X_0 + \int_0^t b_s(\bar{X}_s,\bar{\mu}_s) \dd s + W_t, \quad \bar{\mu}_t = \cL(X_t),
\end{equation}
as well as the i.i.d. copies $\{\bar{X}^i\}_{i=1}^\infty$ of $\bar{X}$, coupled to $X^{(N)}$ by solving
\begin{align}\label{eq:app_pairwise_MKV}
	\bar{X}^i = X^i_0 +  \int_0^t b_s (\bar{X}^i_s,\bar{\mu}_s)\dd s + W^i_t, \quad \forall\, i=1,\ldots,N.
\end{align}
The next propagation of chaos statement is classical. 
We include a proof to stress that the absence of self-interaction $i=j$ in \eqref{eq:app_IPS}, and $W^i$ not being sampled as Brownian motion, do not play any relevant role; moreover, there is no need to impose any finite moment assumption on $X^1_0$.
\begin{lem}\label{lem:app_lip_MKV_propagation}
	Let $b\in L^1_T C^1_b$ and $\{X_0^i\}_i$, $\{W^i\}_i$ as above.
	Then, for each $N\geq 2$, there exist unique strong solutions $X^{(N)}$, $\bar{X}$ and $\bar{X}^i$ to \eqref{eq:app_IPS}, \eqref{eq:app_MKV} and \eqref{eq:app_pairwise_MKV}.
	Moreover, for any $m\in [1,\infty)$, there exists a constant $C\coloneqq C(m,T,\| b\|_{L^2_T C^1_b})>0$ such that
	\begin{equation}\label{eq:app_lip_MKV_propagation}
		\bigg\| \sup_{t\in [0,T]} | X^{1;N}_t-\bar{X}^1_t |\, \bigg\|_{L^m_\omega} \leq C N^{-1/2}.
	\end{equation}
	for all $N\geq 2$.
\end{lem}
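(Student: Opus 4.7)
The plan is to follow Sznitman's classical direct coupling argument \cite[Thm.~1.4]{sznitman1991topics} essentially verbatim; the fact that $\{W^i\}_i$ are fBm rather than Brownian motions and that the $i=j$ term is absent from \eqref{eq:app_IPS} plays no role, since the driving noises are additive and therefore cancel pathwise when taking differences $X^{i;N}-\bar X^i$. By Jensen's inequality, it suffices to prove \eqref{eq:app_lip_MKV_propagation} for $m\geq 2$.

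First, strong existence and pathwise uniqueness for \eqref{eq:app_IPS}, \eqref{eq:app_MKV}, \eqref{eq:app_pairwise_MKV} are standard: conditional on a realisation of the driving noise, each equation is an ODE with Lipschitz coefficients (measurable in time), globally well-posed by a Picard iteration whose convergence uses only $\| b\|_{L^1_T C^1_b}<\infty$; the nonlinear fixed point for \eqref{eq:app_MKV} is treated by the usual contraction on $\cL^2$-Wasserstein space over $C_T$, noting that bounded initial moments of $X_0$ are not needed because only \emph{increments} $\bar X_t-X_0$ need to be controlled and these are bounded in $L^m_\omega$ thanks to the boundedness of $b$.

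Next, fix $i\in\{1,\dots,N\}$ and write, using additivity of the noise,
\begin{equation*}
X^{i;N}_t-\bar X^i_t = R^{1;i,N}_t + R^{2;i,N}_t,
\end{equation*}
where
\begin{align*}
R^{1;i,N}_t & := \frac{1}{N-1}\sum_{j\neq i}\int_0^t\bigl[b_s(X^{i;N}_s,X^{j;N}_s)-b_s(\bar X^i_s,\bar X^j_s)\bigr]\dd s,\\
R^{2;i,N}_t & := \frac{1}{N-1}\sum_{j\neq i}\int_0^t\bigl[b_s(\bar X^i_s,\bar X^j_s)-b_s(\bar X^i_s,\bar\mu_s)\bigr]\dd s.
\end{align*}
Because $b\in L^1_T C^1_b$, the first term admits the pointwise bound
\begin{equation*}
|R^{1;i,N}_t|\leq \int_0^t \|b_s\|_{C^1_b}\Bigl(|X^{i;N}_s-\bar X^i_s|+\frac{1}{N-1}\sum_{j\neq i}|X^{j;N}_s-\bar X^j_s|\Bigr)\dd s.
\end{equation*}
Taking the supremum over $t\in[0,T]$, applying the triangle inequality in $L^m_\omega$ and using that by exchangeability $\|\sup_{s\leq t}|X^{j;N}_s-\bar X^j_s|\|_{L^m_\omega}$ does not depend on $j$, this yields a bound of the form $\int_0^t\|b_s\|_{C^1_b}\Phi(s)\,\dd s$ with $\Phi(t):=\|\sup_{s\leq t}|X^{1;N}_s-\bar X^1_s|\|_{L^m_\omega}$.

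For the second term, the main observation is that for each $i$, conditional on $\bar X^i$ the random variables $Z^{i,j}_t:=\int_0^t[b_s(\bar X^i_s,\bar X^j_s)-b_s(\bar X^i_s,\bar\mu_s)]\dd s$ for $j\neq i$ are i.i.d. and centred, since $\mathcal{L}(\bar X^j_s)=\bar\mu_s$ and $\bar X^j$ is independent of $\bar X^i$. For $m\geq 2$ we can therefore apply the martingale-type-$2$ bound of Lemma~\ref{lem:moments_martingale_type2} conditionally on $\bar X^i$ (taking $E=C_T$), together with the uniform pathwise bound $\sup_{t\in[0,T]}|Z^{i,j}_t|\leq 2T\|b\|_{L^\infty_{T,x}}\lesssim\|b\|_{L^1_T C^1_b}$, to obtain
\begin{equation*}
\Bigl\|\sup_{t\in[0,T]}|R^{2;i,N}_t|\Bigr\|_{L^m_\omega}\lesssim \frac{1}{N-1}\Bigl(\sum_{j\neq i}\bigl\|\sup_{t\in [0,T]}|Z^{i,j}_t|\bigr\|_{L^m_\omega}^2\Bigr)^{1/2}\lesssim N^{-1/2}.
\end{equation*}
Combining the two estimates gives $\Phi(t)\leq C N^{-1/2}+\int_0^t\|b_s\|_{C^1_b}\Phi(s)\,\dd s$, and Gronwall's inequality yields \eqref{eq:app_lip_MKV_propagation}. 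The only mildly delicate step is the conditional application of the martingale-type-$2$ inequality to handle the sup-in-time inside the $L^m_\omega$-norm, but this is immediate once one works in the Banach space $E=C([0,T];\RR^d)$ and uses the uniform boundedness of $b$ to verify the required moment assumption on $Z^{i,j}$.
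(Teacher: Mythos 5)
Your decomposition $X^{i;N}-\bar X^i = R^{1;i,N}+R^{2;i,N}$ and the overall Sznitman/Gr\"onwall architecture agree with the paper's proof, and your treatment of $R^{1;i,N}$ is fine (the paper works with the truncated quantity $1\wedge|X^j_t-\bar X^j_t|$, but this is cosmetic here since boundedness of $b$ makes the differences pathwise bounded, so no moment assumption on $X_0$ is needed either way). The gap is in your treatment of $R^{2;i,N}$: you invoke Lemma~\ref{lem:moments_martingale_type2} with $E=C_T=C([0,T];\RR^d)$, but this space is \emph{not} of martingale type $2$. Indeed, $C([0,T])$ contains $\ell^\infty_n$ uniformly, hence has trivial Rademacher type, and martingale type $2$ implies type $2$; so the vector-valued Burkholder inequality you are quoting simply does not hold in $C_T$. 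The appendix proof of this lemma in the paper deliberately sidesteps this issue: one has $\sup_{t\le T}|R^{2;i,N}_t|\le\int_0^T R^i_s\,\dd s$ with $R^i_s:=\bigl|\tfrac1{N-1}\sum_{j\neq i}[b_s(\bar X^i_s,\bar X^j_s)-b_s(\bar X^i_s,\bar\mu_s)]\bigr|$, then Minkowski gives $\|\int_0^T R^i_s\,\dd s\|_{L^m_\omega}\le\int_0^T\|R^i_s\|_{L^m_\omega}\dd s$, and the martingale-type-$2$ inequality is applied \emph{for each fixed $s$} with $E=\RR^d$ (a Hilbert space), yielding $\|R^i_s\|_{L^m_\omega}\lesssim N^{-1/2}\|b_s\|_{C^0_b}$. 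This is both simpler and actually valid.

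You are perhaps misled by the proof of Theorem~\ref{th:pairwise_mean_field}, where a supremum-in-time bound via a Banach-space martingale inequality \emph{is} used; but there the paper carefully works in $W^{\beta,p}_T$ with $p\ge 2$, $\beta p>1$, which is of martingale type $2$, and only afterwards passes to $C_T$ by the Sobolev embedding $W^{\beta,p}_T\hookrightarrow C_T$. If you want a sup-norm martingale bound in your argument, that is the route to take; but for this lemma the pointwise-in-time application with $E=\RR^d$, combined with the absolute value inside the time integral, is the intended and more elementary fix.
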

\begin{proof}
	The existence and uniqueness statement is standard, so we only explain how to obtain \eqref{eq:app_lip_MKV_propagation}; given the similarities to \cite[Thm.~1.4]{sznitman1991topics}, we mostly sketch the argument.
	
	We may assume $m\geq 2$; we fix $N$ and for simplicity write $X^i$ in place of $X^{i;N}$ in what follows. Define the quantities
	\begin{align*}
		Q_t := \frac{1}{N}\sum_{j=1}^N 1\wedge |X^j_t-\bar{X}^j_t|, \quad
		R^i_t = \Big| \frac{1}{N-1}\sum_{j\neq i} [b(\bar{X}^i_t,\bar{X}^j_t)-b(\bar{X}^i_t,\bar{\mu}_t)] \Big|.
	\end{align*}
	By our assumption on $b$, it holds
	\begin{align*}
		|b_t(x,y)-b_t(x',y')| \lesssim \| b_t\|_{C^1_b} (1\wedge |x-x'| + 1\wedge |y-y'|)
	\end{align*}
	so that by \eqref{eq:app_IPS} and \eqref{eq:app_pairwise_MKV}, adding and subtracting $b_s(\bar{X}^i_s,\bar{X}^j_s)$, for any $\tau \in (0,T]$ we find
	\begin{align}
		\sup_{t\leq \tau}|X^i_t - \bar{X}^i_t|
		& \lesssim \| b_s\|_{C^1_b}\bigg[ \int_0^\tau (1\wedge |X^i_s-\bar{X}^i_s|) \dd s + \int_0^\tau \frac{1}{N-1}\sum_{j\neq i} (1\wedge |X^j_s-\bar{X}^j_s|) \dd s \bigg]+ \int_0^\tau R^i_s \dd s \nonumber \\
		& \lesssim \| b_s\|_{C^1_b} \bigg[ \int_0^\tau 1\wedge |X^i_s-\bar{X}^i_s| \dd s + \int_0^\tau Q_s \dd s\bigg] + \int_0^\tau R^i_s \dd s.\label{eq:app_propagation_eq1}
	\end{align}
	Since $\bar{X}^j$ and $\bar{X}^\ell$ are independent whenever $j\neq \ell$ and $\cL(\bar{X}^j_t)=\bar{\mu}_t$, for any fixed $t$ the process
	\[ Y^k_t= \sum_{\alpha=1, \alpha\neq i}^k b(\bar{X}^i_t,\bar{X}^\alpha_t)-b(\bar{X}^i_t,\mu_t)\]
	is a martingale with respect to the filtration $\cG_k=\sigma(X_t^\alpha: \alpha\in \{1,\ldots k\}\cup \{i\})$.
	By Lemma~\ref{lem:moments_martingale_type2} below, it holds
	\begin{align*}
		\| R^i_t\|_{L^m_\omega}
		= \frac{1}{N-1} \| Y^N_t\|_{L^m_\omega}
		\lesssim \frac{1}{N-1} \bigg( \sum_{j\neq i} \| b_t(\bar{X}^i_t,\bar{X}^j_t)-b_t(\bar{X}^i_t,\bar{\mu}_t)\|_{L^m_\omega}^2\bigg)^{1/2}
		\lesssim N^{-1/2} \| b_t\|_{C^0_b}.
	\end{align*}
	Taking the $L^m_\omega$-norm on both sides of \eqref{eq:app_propagation_eq1}, we then find
	\begin{equation}\label{eq:app_propagation_eq2}
		\big\| \sup_{t\leq \tau}|X^i_t - \bar{X}^i_t| \big\|_{L^m_\omega}
		\lesssim  \int_0^\tau \| b_s\|_{C^1_b}\Big[ \| 1\wedge |X^i_s-\bar{X}^i_s|\|_{L^m_\omega} +  \| Q_s\|_{L^m_\omega} + N^{-1/2}\Big]\dd s.
	\end{equation}
	Starting from \eqref{eq:app_propagation_eq2}, summing over $i$ and applying Minkowski's inequality, one finds an inequality where $\| Q_t\|_{L^m_\omega}$ appears on both sides, which by Gr\"onwall's inequality then yields
	\begin{equation}\label{eq:app_propagation_eq3}
		\sup_{t\in [0,T]} \| Q_t\|_{L^m_\omega}
		\lesssim \exp(C \| b\|_{L^1_T C^1_b})\, \| b\|_{L^1_T C^1_b}\, N^{-1/2}.
	\end{equation}
	Reinserting bound \eqref{eq:app_propagation_eq3} in \eqref{eq:app_propagation_eq2} and applying Gr\"onwall again finally gives \eqref{eq:app_lip_MKV_propagation}.
\end{proof}

\section{Some useful lemmas}\label{app:useful}

We collect here some basic facts which are often used throughout the paper.

\begin{lem}\label{lem:taylor}
	For any regular $f:\RR^d \to \RR^m$ and any $x_i\in\RR^d$, $i=1,\ldots,4$, it holds
	\begin{equation*}
		|f(x_1)-f(x_2)-f(x_3)+f(x_4)| \leq \| D_xf \|_{L^\infty_x} |x_1-x_2-x_3+x_4| + \| D^2_x f \|_{L^\infty_x} |x_1-x_2| |x_2-x_4|  
	\end{equation*}
\end{lem}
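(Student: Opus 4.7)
The plan is to reduce the claim to a purely calculus exercise by splitting the four-point difference into a first-order piece in the ``mismatched'' direction $x_1-x_2-x_3+x_4$ and a pure mixed second difference in the directions $x_1-x_2$ and $x_4-x_2$. Concretely, I would add and subtract the auxiliary point $x_1+x_4-x_2$, writing
\begin{equation*}
f(x_1)-f(x_2)-f(x_3)+f(x_4) = J_1 + J_2,
\end{equation*}
with
\begin{equation*}
J_1 := f(x_1)-f(x_2)-f(x_1+x_4-x_2)+f(x_4), \qquad J_2 := f(x_1+x_4-x_2)-f(x_3).
\end{equation*}

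The term $J_2$ is the easy piece: since $(x_1+x_4-x_2)-x_3 = x_1-x_2-x_3+x_4$, the fundamental theorem of calculus gives
\begin{equation*}
J_2 = \int_0^1 D_x f\bigl(x_3 + s(x_1+x_4-x_2-x_3)\bigr)\cdot (x_1-x_2-x_3+x_4)\,\dd s,
\end{equation*}
so $|J_2| \leq \|D_x f\|_{L^\infty_x}\,|x_1-x_2-x_3+x_4|$, producing the first term in the desired bound.

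The term $J_1$ is the structural heart of the estimate. Setting $a := x_1-x_2$ and $b := x_4-x_2$, one checks directly that
\begin{equation*}
J_1 = -\bigl[f(x_2+a+b)-f(x_2+a)-f(x_2+b)+f(x_2)\bigr],
\end{equation*}
which is the canonical mixed second difference of $f$ in the two directions $a$ and $b$. The standard double-integral representation
\begin{equation*}
f(x_2+a+b)-f(x_2+a)-f(x_2+b)+f(x_2)=\int_0^1\!\!\int_0^1 D^2_x f(x_2+sa+tb)(a,b)\,\dd s\,\dd t
\end{equation*}
(obtained by applying the fundamental theorem of calculus twice, first in the $b$-direction and then in the $a$-direction) yields $|J_1|\leq \|D^2_x f\|_{L^\infty_x}\,|a|\,|b| = \|D^2_x f\|_{L^\infty_x}\,|x_1-x_2|\,|x_2-x_4|$, the second term in the claim.

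I do not foresee any real obstacle; the only slightly non-obvious step is identifying the correct auxiliary point $x_1+x_4-x_2$ so that the residual first-order increment is exactly $x_1-x_2-x_3+x_4$ and the leftover quartet $(x_1,x_2,x_1+x_4-x_2,x_4)$ has the parallelogram structure required for the mixed-difference identity. Everything else is standard calculus, and the proof compresses to a few lines.
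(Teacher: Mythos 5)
Your proposal is correct and follows essentially the same route as the paper: the paper also introduces the auxiliary point $\tilde x_3 := x_1 - x_2 + x_4$, bounds $|f(x_3)-f(\tilde x_3)|$ by $\|D_xf\|_{L^\infty_x}\,|x_1-x_2-x_3+x_4|$, and controls the parallelogram quartet $(x_1,x_2,\tilde x_3,x_4)$ via the second derivative. The only cosmetic difference is that you use the double-integral mixed-difference identity for $J_1$, while the paper writes a single mean-value integral of $D_xf(x_2+\lambda(x_1-x_2)) - D_xf(x_4+\lambda(x_1-x_2))$ and then applies the Lipschitz bound on $D_xf$; these are the same estimate.
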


\begin{proof}
	Set $\tilde x_3:= x_1-x_2+x_4$; then by mean value theorem, it holds
	\begin{align*}
		|f(x_1)-f(x_2)-f(\tilde x_3)-f(x_4)|
		& \leq \int_0^1 |D_x f(x_2+\lambda (x_1-x_2)) - D_xf(x_4 + \lambda (x_1-x_2))| |x_1-x_2| \dd \lambda\\
		& \leq \| D^2_x f\|_{L^\infty_x} |x_1-x_2| |x_2-x_4|.
	\end{align*}
	On the other hand, $|f(x_3)-f(\tilde x_3)| \leq \| D_xf\|_{L^\infty_x} |x_1-x_2-x_3+x_4|$. Combining the above estimates with triangular inequality gives the claim.
\end{proof}

The next statement, Lemma~\ref{lem:moments_martingale_type2}, is a martingale inequality which is often useful in establishing quantitative convergence rates for law of large number type results on Banach spaces $E$.
The lemma requires a special class of spaces $E$, namely those of martingale type $2$ (M-type $2$ for short); we refer to \cite{Hytonen2016} for the precise definition and an overview of their properties.
Here we will only need the following:
\begin{itemize}
	\item The Euclidean space $\RR^\ell$ (in fact, any Hilbert space) is of M-type $2$.
	\item By \cite[Prop.~7.1.4]{Hytonen2016}, if $(S,\cA,\mu)$ is an arbitrary measure space and $E$ is of M-type $2$, then $L^q(S,\cA,\mu;E)$ is of M-type $2$ for any $q\in [2,\infty)$.
	\item For any $\beta\in (0,1)$ and $q\in [2,\infty)$, $W^{\beta,q}([0,T];\RR^\ell)$ is of M-type $2$; this is because $f\in W^{\beta,q}([0,T];\RR^\ell)$ if and only if
	\begin{align*}
		f\in L^q([0,T];\RR^\ell), \quad (s,t)\mapsto \frac{f_t-f_s}{|t-s|^{\frac{1}{q}+\beta}}\in L^q([0,T]^2;\RR^\ell),
	\end{align*}
	so that $W^{\beta,q}([0,T];\RR^\ell)$ can be identified with a closed subspace of $L^q([0,T]^3;\RR^\ell)$.
\end{itemize}

\begin{lem}\label{lem:moments_martingale_type2}
	Let $E$ be a Banach space of M-type $2$ and $\{Y_j\}_{j=0}^n$ be an $E$-valued martingale sequence with $Y_0=0$. Then for any $m\in [2,\infty)$ there exists a constant $C\coloneqq C(m,E)>0$ such that for any $n\in\NN$ it holds
	\begin{equation*}
		\| Y_n \|_{L^m_\omega E} \leq C \bigg( \sum_{j=0}^{n-1} \| Y_{j+1}-Y_j\|_{L^m_\omega E}^2\bigg)^{1/2}.
	\end{equation*}
\end{lem}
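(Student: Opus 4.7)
The plan is to reduce the statement to the standard characterization of martingale type $2$ Banach spaces and then upgrade from the $L^2$-level to the $L^m$-level by Minkowski's inequality. Throughout, write $d_j = Y_{j+1}-Y_j$ for the martingale differences, so that $Y_n = \sum_{j=0}^{n-1} d_j$.

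The first step would be to invoke the Burkholder--Davis--Gundy type inequality available in M-type $2$ Banach spaces (a theorem due to Pisier, see e.g. \cite[Thm.~3.5.4]{Hytonen2016} or the analogous statement in the discrete setting): for any $m\in[2,\infty)$ there is a constant $C_m = C_m(m,E)>0$ such that, for every $E$-valued martingale $(Y_j)_{j=0}^n$ with $Y_0=0$,
\begin{equation*}
	\|Y_n\|_{L^m_\omega E} \le C_m \left\|\Big(\sum_{j=0}^{n-1}\|d_j\|_E^2\Big)^{1/2}\right\|_{L^m_\omega}.
\end{equation*}
This is precisely the content of the M-type $2$ property, combined with the fact that the defining $L^2$-estimate self-improves to an $L^m$-estimate for all $m\ge 2$; in the discrete, finite-sum setting used here there are no measurability or integrability subtleties to worry about.

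The second step would be to apply Minkowski's inequality in the Bochner space $L^{m/2}_\omega$ (permissible because $m/2 \ge 1$) to the scalar-valued, nonnegative sum of random variables $\sum_j \|d_j\|_E^2$:
\begin{equation*}
	\left\|\Big(\sum_{j=0}^{n-1}\|d_j\|_E^2\Big)^{1/2}\right\|_{L^m_\omega}^2
	= \left\|\sum_{j=0}^{n-1}\|d_j\|_E^2\right\|_{L^{m/2}_\omega}
	\le \sum_{j=0}^{n-1} \left\|\|d_j\|_E^2\right\|_{L^{m/2}_\omega}
	= \sum_{j=0}^{n-1}\|d_j\|_{L^m_\omega E}^2.
\end{equation*}
Combining the two displays and setting $C := C_m$ gives the claimed inequality.

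There is essentially no hard step: the whole content is packed into the Pisier/BDG characterization of M-type $2$, which is cited as a black box. The only mild subtlety is ensuring $m\ge 2$ so that Minkowski's inequality applies in $L^{m/2}_\omega$ and so that the BDG extension is valid; both are guaranteed by the hypothesis $m\in[2,\infty)$. The constant depends on $m$ and on the M-type $2$ constant of $E$, hence on $E$ itself, as stated.
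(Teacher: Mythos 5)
Your proof is correct and follows essentially the same route as the paper's: both first invoke the martingale square-function inequality in M-type $2$ spaces to bound $\|Y_n\|_{L^m_\omega E}$ by the $L^m$-norm of the square function $\big(\sum_j \|d_j\|_E^2\big)^{1/2}$ (the paper cites \cite[Prop.~3.5.27]{Hytonen2016} for this step), and both then finish with Minkowski's inequality in $L^{m/2}_\omega$, which is where the hypothesis $m\ge 2$ is used. The only cosmetic difference is that the paper phrases the Minkowski step as the embedding $L^m_\omega \ell^2 \hookrightarrow \ell^2 L^m_\omega$, while you spell it out by squaring; the substance is identical.
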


\begin{proof}
	Set $\delta y_j= \| Y_{j+1}-Y_j\|_E$; since $E$ is of martingale type $2$, by \cite[Prop.~3.5.27]{Hytonen2016}, for any $n\in\NN$ it holds
	\begin{align*}
		\| Y_n \|_{L^m_\omega E}
		\leq C \bigg\| \Big( \sum_{j=0}^{n-1} \| Y_{j+1}-Y_j \|_E^2\Big)^{1/2} \bigg\|_{L^m_\omega} = C \big\| \{\delta y_\cdot\}_{j=0}^{n-1} \big\|_{L^m_\omega \ell^2}.
	\end{align*}
	Since $m\geq 2$, by Minkowski's inequality $\| \cdot \|_{L^m_\omega \ell^2} \leq \| \cdot \|_{\ell^2 L^m_\omega}$, which yields the conclusion.
\end{proof}

\section{Results on Besov--H\"older Spaces}\label{app:holder_besov}
We provide here some results related to the examples of singular interactions $b$ covered by our main result, Theorem~\ref{thm:main}. Although classical, in the absence of a concise and self-contained reference, we have included them here for the sake of completeness.

\begin{lem}\label{lem:besov_dimension_reduction}
	Let $\alpha\in \RR$ and $f \in B^\alpha_{\infty,\infty}(\mbR^d;\mbR^\ell)$.
	Then $b^1(x,y)\coloneq f(x)$ and $b^2(x,y)\coloneq f(y)$ are well defined elements of $B^\alpha_{\infty,\infty}(\mbR^{2d};\mbR^\ell)$ and $\| b^i\|_{B^\alpha_{\infty,\infty}} \lesssim \| f\|_{B^\alpha_{\infty,\infty}}$.
	Similarly, $b^3(x,y)\coloneq f(x-y)$ is a well defined element of $B^\alpha_{\infty,\infty}(\mbR^{2d};\mbR^\ell)$, with $\| b^3\|_{B^\alpha_{\infty,\infty}} \lesssim \| f\|_{B^\alpha_{\infty,\infty}}$.
\end{lem}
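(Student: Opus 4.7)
The strategy is to invoke the heat semigroup characterization of $B^\alpha_{\infty,\infty}$, which for any integer $m$ with $2m > \alpha$ reads
\begin{equation*}
	\|g\|_{B^\alpha_{\infty,\infty}(\RR^k)} \sim \|G_1 g\|_{L^\infty_x} + \sup_{t\in (0,1]} t^{m-\alpha/2}\|\partial_t^m G_t g\|_{L^\infty_x},
\end{equation*}
see e.g. \cite[Thm.~2.34]{BahCheDan}. For each $b^i$ the plan is to reduce the action of $G^{2d}_t$ on $b^i$ to that of $G^d_{\,\cdot\,}$ on $f$; the asserted bounds should then follow immediately by reading off norms.

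Concretely, the tensor-product form $g^{2d}_t(p,q) = g^d_t(p) g^d_t(q)$ combined with $\int_{\RR^d} g^d_t(z) \dd z = 1$ would yield the pointwise identities
\begin{equation*}
	G^{2d}_t b^1(x,y) = G^d_t f(x), \qquad G^{2d}_t b^2(x,y) = G^d_t f(y).
\end{equation*}
For $b^3$, I would perform the substitution $(u,v) \mapsto (u, u-v)$ in the defining convolution (unit Jacobian); the inner $u$-integral reduces via the semigroup identity $g^d_t \ast g^d_t = g^d_{2t}$, producing
\begin{equation*}
	G^{2d}_t b^3(x,y) = G^d_{2t} f(x-y).
\end{equation*}
Differentiation in $t$ is straightforward thanks to $\partial_t G^k_t = \Delta^k G^k_t$: for $b^1$ one gets $\partial_t^m G^{2d}_t b^1 = \partial_t^m G^d_t f$, while for $b^3$ the chain rule gives $\partial_t^m G^{2d}_t b^3(x,y) = 2^m (\partial_s^m G^d_s f)|_{s=2t}(x-y)$.

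Feeding these into the characterization and taking $L^\infty$-norms, for $b^3$ one obtains (after the change of variables $s = 2t$) a bound of the form
\begin{equation*}
	\|b^3\|_{B^\alpha_{\infty,\infty}(\RR^{2d})} \lesssim \|G^d_2 f\|_{L^\infty_x} + 2^{\alpha/2}\sup_{s \in (0,2]} s^{m-\alpha/2}\|\partial_s^m G^d_s f\|_{L^\infty_x},
\end{equation*}
with the analogous (simpler) bound for $b^1$, $b^2$. The contribution of $s \in (0,1]$ is directly $\lesssim \|f\|_{B^\alpha_{\infty,\infty}}$ by the characterization. The only mild subtlety, and the place I anticipate needing a small extra argument, is handling the contribution for $s \in [1,2]$: here I would write $\Delta^m G^d_s = (\Delta^m G^d_{1/2}) \circ G^d_{s-1/2}$ and $G^d_2 = G^d_1 \circ G^d_1$, using that $\Delta^m G^d_{1/2}$ is convolution with a Schwartz kernel and $G^d_{s-1/2}$ is a sub-Markovian operator. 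Combined with a Littlewood--Paley bound $\|G^d_{s'} f\|_{L^\infty_x} \lesssim_{s'} \|f\|_{B^\alpha_{\infty,\infty}}$ for any fixed $s' \geq 1/2$ (obtained by summing $\|G^d_{s'} \Delta_j f\|_{L^\infty_x} \leq e^{-c s' 2^{2j}} \|\Delta_j f\|_{L^\infty_x}$ against $\|\Delta_j f\|_{L^\infty_x} \lesssim 2^{-j\alpha}\|f\|_{B^\alpha_{\infty,\infty}}$), the medium-time contributions are controlled, completing the argument.
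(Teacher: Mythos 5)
Your proposal is correct, but it takes a genuinely different route from the paper. The paper works directly with the Littlewood--Paley decomposition: it shows that the blocks of $b^1$ are $\Delta_j f(x)$, whose Fourier supports lie on the product annuli $\operatorname{supp}(\widehat{\Delta_j f})\times\{0\}\subset 2^j\mathcal{A}$ for a fixed annulus $\mathcal{A}\subset\RR^{2d}$, and then invokes \cite[Lem.~2.69]{BahCheDan}; for $b^3$ it observes that composition with the linear isomorphism $(x,y)\mapsto(x-y,x+y)$ preserves $B^\alpha_{\infty,\infty}$ up to a change of annulus. You instead use the thermic (Gaussian semigroup) characterization of $B^\alpha_{\infty,\infty}$, reduce $G^{2d}_t b^i$ to $G^d_{t}f$ or $G^d_{2t}f$ via the tensor-product structure and the semigroup identity $g_t * g_t = g_{2t}$, and then read off the norm. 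Your reduction identities and the change-of-variable bookkeeping for $b^3$ are all correct, as is the observation that the time-dilation $t\mapsto 2t$ forces you to control the range $s\in[1,2]$; your fix, factoring $\Delta^m G^d_s = (\Delta^m G^d_{1/2})\circ G^d_{s-1/2}$ and bounding $\|G^d_{s'}f\|_{L^\infty}\lesssim_{s'}\|f\|_{B^\alpha_{\infty,\infty}}$ by summing the frequency-localized heat-kernel decay, is a standard and valid argument. What each approach buys: the paper's Fourier-support argument is shorter and gives the well-definedness in $\cS'(\RR^{2d})$ for free (it constructs the pairing $\langle b^1,\varphi\rangle = \langle f, \int\varphi(\cdot,y)\,\dd y\rangle$ explicitly and checks continuity of the marginal map); your heat-semigroup route is more self-contained with respect to Fourier analysis but needs the medium-time patch. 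The only genuine thing your write-up glosses over is precisely that first step: before feeding $b^i$ into the thermic characterization you need to know $b^i\in\cS'(\RR^{2d})$ (and that $G^{2d}_t b^i$ is given by the expected convolution formula). For $b^1,b^2$ this is the tensor product $f\otimes 1$; for $b^3$ it is the pullback of $f$ by the linear surjection $(x,y)\mapsto x-y$, equivalently $\langle b^3,\varphi\rangle = \langle f, \psi\rangle$ with $\psi(z)=\int\varphi(z+y,y)\,\dd y$, which is easily seen to map $\cS(\RR^{2d})$ continuously into $\cS(\RR^d)$. This is a one-line insertion, after which your argument is complete.
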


\begin{proof}
	We first prove the statement for $b^1$, $b^2$ being similar. Up to reasoning componentwise, we may assume $\ell=1$.
	Given $\varphi\in \cS(\RR^{2d})$, we may define the pairing with $b^1$ by
	\begin{align*}
		\langle b^1,\varphi \rangle
		= \int_{\mbR^{2d}} b^1(x,y) \varphi(x,y)\dd x \dd y
		= \int_{\mbR^{d}} f(x) \bigg( \int_{\RR^d} \varphi(x,y) \dd y\bigg) \dd x
		= \langle f,\tilde\varphi^1 \rangle
	\end{align*}
	where we set $\tilde\varphi^1(x)\coloneq \int \varphi(x,y) \dd y$.
	It's easy to check that the linear map $\varphi\mapsto \tilde\varphi^1$ is continuous from $\cS(\RR^{2d})$ to $\cS(\RR^d)$, which in turn shows that $b^1$ is a well-defined element of $\cS'(\RR^{2d})$.
	
	Next, observe that, since $b^1(x,y)=f(x)\cdot 1$, its Fourier transform is given by $\hat b^1(\xi,\eta) = \hat f(\xi) \delta_0(\dd \eta)$.
	Similarly, the Littlewood--Paley (LP) decomposition $f=\sum_{j={-1}}^{\infty} \Delta_j f$ induces a corresponding decomposition
	\begin{equation}\label{eq:besov_proof1}
		b^1=\sum_{j\geq -1}^\infty b_j, \quad \text{for } b_j(x,y)\coloneq \Delta_j f(x).
	\end{equation}
	By the previous observation, $\hat{b}_j(\xi,\eta)= \widehat{\Delta_j f}(x) \delta_0(\dd \eta)$;
	therefore ${\rm supp}\, \hat{b}_j= {\rm supp}(\widehat{\Delta_j f}) \times \{0\}$, which by the definition of LP blocks $\Delta_j f$ implies the existence of an annulus $\cA\subset \RR^{2d}$ such that
	\begin{equation}\label{eq:besov_proof2}
		{\rm supp}\, \hat{b}_j \subset 2^j \cA\quad \forall\, j\geq 0,
	\end{equation}
	as well as a similar result on the support being contained in a ball for $\hat{b}_{-1}$.
	On the other hand, by construction
	\begin{equation}\label{eq:besov_proof3}
		\| b_j\|_{L^\infty(\RR^{2d})} = \| \Delta_j f\|_{L^\infty(\RR^{d})} \lesssim 2^{-j\alpha} \| f\|_{B^\alpha_{\infty,\infty}(\RR^d)}
	\end{equation}
	Combining \eqref{eq:besov_proof1}, \eqref{eq:besov_proof2} and \eqref{eq:besov_proof3} with \cite[Lem.~2.69]{BahCheDan} readily yields the conclusion in this case. 
	
	Consider now $b^3(x,y)=f(x-y)$; the proof that $b^3\in\cS(\RR^{2d})$ is similar to the one for $b^1$.
	Let us define a linear isomorphism of $\RR^{2d}$ by $A:(x,y)\mapsto (u,v)$ with $u=x-y$, $v=x+y$.
	Simple computations show that $A^{-1}=\frac{1}{2} A^T$, $\det A=2^d$, $\det A^{-1}=2^{-d}$.
	Observe that, setting $z=(x,y)$, by construction $b^3(z)=b^1(Az)$. We claim that the composition with the linear isomorphism $A$ leaves the $B^\alpha_{\infty,\infty}$ spaces invariant. Indeed, given $\psi\in\cS(\RR^{2d})$, the Fourier transform of $\psi\circ A$ can be computed explicitly by
	\begin{align*}
		\widehat{\psi\circ A}(\zeta)
		= \int_{\RR^{2d}} \psi(A z) e^{-i\zeta\cdot z} \dd z
		= \det(A^{-1}) \int_{\RR^{2d}} \psi(z) e^{-i (A^{-T}\zeta)\cdot z} \dd z
		= 2^{-d} \hat\psi \Big(A^{-T}\zeta \Big);
	\end{align*}
	by duality, the same relation then holds for $\psi\in \cS'(\RR^{2d})$ as well. In particular, if $\hat\psi$ was supported on an annulus $\cA$, then $\widehat{\psi\circ A}$ is supported on the new annulus $\tilde \cA=A^T\cA$.
	
	Going through the same argument as before, we can then relate the LP decomposition of $b^1$ to a decomposition for $b^3$ of the form
	\begin{align*}
		b^3=\sum_{j\geq -1} u_j, \quad
		u_j(z) \coloneq \Delta b^1_j(A z),
		\quad {\rm supp }\, \hat{u}_j \subset 2^j \tilde \cA\quad \forall\, j\geq 0,
	\end{align*}
	and use the fact that
	\begin{align*}
		\| u_j\|_{L^\infty(\RR^{2d})}
		= \| \Delta b^1_j\|_{L^\infty(\RR^{2d})}
		\leq 2^{-\alpha j} \| b^1\|_{B^\alpha_{\infty,\infty}(\RR^{2d})}
		\lesssim 2^{-\alpha j} \| f\|_{B^\alpha_{\infty,\infty}(\RR^{d})}
	\end{align*}		
	to again achieve the conclusion by invoking \cite[Lem.~2.69]{BahCheDan}.
\end{proof}

In support of Example~\ref{ex:singular_riesz}, we need to establish a result on the Besov regularity for candidate distributions of the form
\begin{equation}\label{eq:candidate_distribution}
	K(x) = g\left(\frac{x}{|x|} \right) |x|^{-s}
\end{equation}
for some smooth function $g:\RR^d\to\RR$ and some $s\geq d$; observe that since $K$ is not integrable in $0$, a priori it is not obvious that it defines a tempered distribution.

To this end, we need to introduce some terminology from \cite{hormander_03_analysis_I}.
In the next statement, $\cS(\RR^d\setminus\{0\})$ denotes the set of smooth functions $\varphi$ such that $\varphi(x)$ and all its derivatives decay to zero faster than any polynomial, both for $|x|\to 0$ and  $|x|\to \infty$; $\cS'(\RR^d\setminus\{0\})$ denotes its dual.

\begin{defn}
	We say that a distribution $K \in \mcS'(\mbR^d\setminus \{0\})$ is homogeneous of degree $\sigma \in \mbR$ if for all $\psi\in \cS(\RR^d\setminus\{0\})$ and all $\lambda>0$ it holds that
	\begin{equation*}
		\langle K,\psi^\lambda\rangle = \lambda^{-\sigma} \langle K,\psi\rangle, \quad \text{where }\psi^\lambda(x)\coloneq \lambda^d \psi(\lambda x).
	\end{equation*}
	A similar definition holds for $K \in \mcS'(\mbR^d)$.
\end{defn}
In the next statement, $\dot B^{-s}_{\infty,\infty}$ denote homogeneous Besov--H\"older spaces, see \cite{BahCheDan}.
\begin{lem}\label{lem:non_int_homogeneous}
	Let $s\in (d,\infty)\setminus\mbN$, $g$ be smooth and $K\in \mcS'(\mbR^d\setminus \{0\})$ be defined by \eqref{eq:candidate_distribution}.
	Then there exists a unique $(-s)$-homogeneous extension $\tilde{K}\in \mcS'(\mbR^d)$ of $K$; furthermore, $\tilde K\in \dot B^{-s}_{\infty,\infty}\hookrightarrow B^{-s}_{\infty,\infty}$.
\end{lem}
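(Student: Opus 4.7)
The plan is to obtain existence and uniqueness of the extension from classical theory of homogeneous distributions, then exploit the $(-s)$-homogeneity of $\tilde K$ to reduce the Besov estimate to a single $L^\infty$-bound on one Littlewood--Paley block, which I would finally establish by splitting the convolution into pieces near and far from the origin.

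For existence and uniqueness, I would invoke Theorems~3.2.3--3.2.4 of \cite{hormander_03_analysis_I}: any homogeneous distribution of degree $\sigma$ on $\mbR^d\setminus\{0\}$ admits a unique homogeneous extension of degree $\sigma$ to $\mbR^d$, provided $\sigma\notin\{-d,-d-1,-d-2,\ldots\}$. The hypothesis $s\in(d,\infty)\setminus\mbN$ ensures $-s$ avoids this critical set, yielding the extension $\tilde K\in\mcS'(\mbR^d)$. Setting $k\coloneqq\lfloor s-d\rfloor$ and choosing any $\chi\in C^\infty_c(\mbR^d)$ with $\chi\equiv1$ near zero, this extension can be realised concretely as
\[
\langle \tilde K,\psi\rangle=\int_{\mbR^d} K(y)\,\Big(\psi(y)-\chi(y)\sum_{|\alpha|\leq k}\tfrac{\partial^\alpha\psi(0)}{\alpha!}\,y^\alpha\Big)\,\dd y,\qquad \psi\in\mcS(\mbR^d),
\]
the integrand being locally integrable thanks to the Taylor remainder $O(|y|^{k+1})$ at zero and $s-d-k-1<0$ (which holds because $s\notin\mbN$), and uniqueness making the construction independent of $\chi$.

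Next, I would introduce the homogeneous Littlewood--Paley projectors $\{\dot\Delta_j\}_{j\in\mbZ}$, where $\dot\Delta_0=\check\varphi\ast\,\cdot$ for a Schwartz function $\check\varphi$ whose Fourier transform is supported in an annulus around $|\xi|\sim 1$; in particular all moments of $\check\varphi$ vanish. A direct change of variables, combined with the homogeneity $\tilde K(\lambda\,\cdot\,)=\lambda^{-s}\tilde K$, yields
\[
(\dot\Delta_j\tilde K)(x)=2^{js}(\dot\Delta_0\tilde K)(2^j x),
\]
so that $2^{-js}\|\dot\Delta_j\tilde K\|_{L^\infty_x}=\|\dot\Delta_0\tilde K\|_{L^\infty_x}$ uniformly in $j\in\mbZ$. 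It therefore suffices to show $\|\dot\Delta_0\tilde K\|_{L^\infty_x}<\infty$.

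To estimate $\dot\Delta_0\tilde K(x)=\langle\tilde K,\check\varphi(x-\cdot)\rangle$, I would split the test function as $\check\varphi(x-\cdot)=(1-\chi)\check\varphi(x-\cdot)+\chi\check\varphi(x-\cdot)$. The far piece yields $\int K(y)(1-\chi(y))\check\varphi(x-y)\,\dd y$, bounded uniformly in $x$ by $\|g\|_{L^\infty(S^{d-1})}$ times the convolution of the Schwartz function $\check\varphi$ with the $L^1$-function $|y|^{-s}\mathbf{1}_{|y|\gtrsim 1}$, which is finite since $s>d$. For the near piece, I would use the renormalised expression above with test function $\chi\check\varphi(x-\cdot)$ and Taylor-expand $y\mapsto\check\varphi(x-y)$ at $y=0$ to order $k$; the resulting integrand is dominated by $C\|\partial^{k+1}\check\varphi\|_{L^\infty}|y|^{k+1-s}\mathbf{1}_{|y|\lesssim 1}$, which is integrable because $k+1-s>-d$, with the estimate uniform in $x$. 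Combining the two contributions yields $\|\dot\Delta_0\tilde K\|_{L^\infty_x}\lesssim\|g\|_{C^{k+1}(S^{d-1})}<\infty$, which is finite by smoothness of $g$. The embedding $\dot B^{-s}_{\infty,\infty}\hookrightarrow B^{-s}_{\infty,\infty}$ is then standard for $s>0$: the low-frequency term is controlled via the summability $\sum_{j\leq 0}\|\dot\Delta_j\tilde K\|_{L^\infty_x}\lesssim\sum_{j\leq 0}2^{js}<\infty$, while the high-frequency blocks of the two norms coincide. The main obstacle is the Taylor-expansion bookkeeping in the near piece: one must justify the distributional pairing of $\tilde K$ with $\chi\check\varphi(x-\cdot)$ in a way that is uniform in the parameter $x$, carefully combining the cancellations inherent in the renormalised definition of $\tilde K$ with the Schwartz behaviour of $\check\varphi$.
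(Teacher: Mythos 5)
Your proposal is correct and follows essentially the same route as the paper: appeal to H\"ormander for the unique $(-s)$-homogeneous extension, write it concretely as a Taylor-renormalised pairing, use the scaling identity $\dot\Delta_j\tilde K(x)=2^{js}\dot\Delta_0\tilde K(2^jx)$ to reduce the homogeneous Besov norm to a single block, and then bound $\|\dot\Delta_0\tilde K\|_{L^\infty}$ uniformly in the shift $x$. The paper's renormalisation omits the cutoff $\chi$ (convergence of the integral at infinity holds outright because $s\notin\mbN$, so $|K(x)\,x^{\otimes k}|\lesssim|x|^{-\{s\}-d}$ is integrable), and it replaces your explicit near/far splitting with a shorter appeal to the translation-invariance of a $C_b^{\lfloor s\rfloor}$-type bound on $\varphi(x-\cdot)$; your splitting is slightly more careful about justifying convergence of the far part, which the paper leaves implicit. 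One small slip at the end: the $(k+1)$-th derivatives in your near-piece estimate fall on $\check\varphi$, not on $g$, so the constant should read $\|g\|_{L^\infty(S^{d-1})}\,\|\check\varphi\|_{C^{k+1}}$ rather than $\|g\|_{C^{k+1}(S^{d-1})}$ -- immaterial for finiteness.
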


\begin{proof}
	By the structural assumption \eqref{eq:candidate_distribution}, it's easy to check that $K$ is a well-defined, $(-s)$-homogeneous element of $\cS'(\RR^d\setminus\{0\})$.
	We can therefore invoke \cite[Thm.~3.2.3]{hormander_03_analysis_I}, which guarantees that $K$ has a unique $(-s)$-homogeneous extension $\tilde K\in \cS'(\RR^d)$.
	On the other hand, one can construct such an extension by hand; indeed, for $\psi \in \mcS(\mbR^d)$, let us set
	\begin{equation}\label{eq:candidate_extension}
		\langle \tilde{K} , \psi\rangle \coloneqq \int_{\RR^d} K(x) \bigg( \psi(x)- \sum_{k=0}^{\floor{s}-d} \frac{1}{k!} D^k_x \psi(0): x^{\otimes k} \bigg) \dd x.
	\end{equation}
	By Taylor expansion, it's clear that the map
	\begin{align*}
		\psi(x)- \sum_{k=0}^{\floor{s}-d} \frac{1}{k!} D^k_x \psi(0): x^{\otimes k}
	\end{align*}
	decays at least like $|x|^{\floor{s}-d+1}$ for $x\sim 0$, which combined with \eqref{eq:candidate_distribution} implies that the integral appearing in \eqref{eq:candidate_extension} is finite around $0$.
	On the other hand, since $|K|\lesssim |x|^{-s}$ and we are multiplying by polynomials of degree $k\leq \floor{s}-d$, we see that $|K(x)\, x^{\otimes k}| \lesssim |x|^{ -\{s\}-d}$, which implies integrability as $|x|\to +\infty$ since $s$ is not an integer. Overall, this shows that the integral in \eqref{eq:candidate_extension} is well-defined.
	
	Verifying that $\tilde K\in\cS'(\RR^d)$, in the sense that it is linear and continuous, follows the same lines.
	Moreover, if $\psi\in\cS(\RR^d\setminus\{0\})$, then $D^k_x\psi(0)=0$ for all $k$ and thus $\langle \tilde K,\psi\rangle= \langle K,\psi\rangle$, so $\tilde K$ is indeed an extension of $K$.
	To verify $(-s)$-homogeneity, first observe that for any $\psi\in \cS(\RR^d)$, $D^k_x \psi^\lambda(0)=\lambda^{d+k} D^k_\psi(0)$; using formula \eqref{eq:candidate_distribution}, we have that
	\begin{align*}
		\langle \tilde K, \psi^\lambda \rangle
		& = \int_{\RR^d} K(x) \bigg( \psi^\lambda(x)- \sum_{k=0}^{\floor{s}-d} \frac{1}{k!} D^k_x \psi^\lambda(0): x^{\otimes k} \bigg) \dd x\\
		& = \int_{\RR^d} g\Big( \frac{x}{|x|} \Big) |x|^{-s} \lambda^d \bigg( \psi(\lambda x)- \sum_{k=0}^{\floor{s}-d} \frac{1}{k!} D^k_x \psi(0): (\lambda x)^{\otimes k} \bigg) \dd x\\
		& = \lambda^s \int_{\RR^d} g\Big( \frac{y}{|y|} \Big) |y|^{-s} \bigg( \psi(y)- \sum_{k=0}^{\floor{s}-d} \frac{1}{k!} D^k_x \psi(0): y^{\otimes k} \bigg) \dd y = \lambda^s \langle \tilde K, \psi\rangle.
	\end{align*}
	It remains to show the desired Besov regularity; to this end, recall that the homogeneous LP blocks $\{\dot \Delta_j\}_{j\in\ZZ}$ are obtained by setting $\dot \Delta_j f = \varphi_j\ast f$, where $\varphi_j(x)= 2^{-jd} \varphi_0(2^{-j} x)$, and $\varphi_0=\varphi$ is a Schwartz function with Fourier transform $\hat \varphi$ supported on an annulus $\cA$.
	Combining the scaling of $\varphi_j$ with the $(-s)$-homogeneity of $\tilde K$, one can see that
	\begin{align*}
		\dot \Delta_j \tilde K(x)
		= \langle \tilde K,\varphi_j(x-\cdot)\rangle
		= 2^{js} \langle \tilde K,\varphi(2^{-j} x-\cdot)\rangle
		= 2^{js} \dot \Delta_0 K(2^{-j} x)\quad \forall\, j\in\ZZ;
	\end{align*}
	as a consequence,
	\begin{align*}
		\| \tilde K\|_{\dot B^{-s}_{\infty,\infty}}
		= \sup_{j\in\ZZ} 2^{-js} \| \dot\Delta_j K\|_{L^\infty_x}
		= \| \dot\Delta_0 K\|_{L^\infty_x}
		= \sup_{x\in \RR^d} |\langle \tilde K, \varphi(x-\cdot) \rangle|.
	\end{align*}
	In particular, we only need to show that the last quantity is finite.
	To this end, note that the Taylor expansion argument that led to the meaningfulness of definition \eqref{eq:candidate_extension}, actually works for any $\psi\in C^{\floor{s}}_b$; since the associated norm is translation invariant, we have that
	\begin{align*}
		\sup_{x\in \RR^d} |\langle \tilde K, \varphi(x-\cdot) \rangle|
		\lesssim \sup_{x\in \RR^d} \| \varphi(x-\cdot)\|_{C^{\floor{s}}_b}
		= \| \varphi\|_{C^{\floor{s}}_b} <\infty.
	\end{align*}
	Concerning the inhomogeneous Besov regularity, we recall that $\dot\Delta_j = \Delta_j$ for all $j\geq 0$, while $\Delta_0=\sum_{j\geq 1} \dot\Delta_{-j}$; therefore we only need to additionally bound $\Delta_0 \tilde K$. Since $s>0$, it holds that
	\begin{align*}
		\| \Delta_0 \tilde K\|_{L^\infty_x}
		\leq \sum_{j=1}^{\infty} \| \dot\Delta_{-j} \tilde K\|_{L^\infty_x}
		\leq \| \tilde K\|_{\dot B^{-s}_{\infty,\infty}} \sum_{j=1}^{\infty} 2^{-js}
		\lesssim_s \| \tilde K\|_{\dot B^{-s}_{\infty,\infty}}
	\end{align*}
	which gives the conclusion.
\end{proof}
\begin{rem}\label{rem:int_homogeneous}
	Lemma~\ref{lem:non_int_homogeneous} does not cover the case of integer $s \in [d,\infty)\cap \mbN$, for good reasons.
	On one hand, \cite[Thm.~3.2.4]{hormander_03_analysis_I} still guarantees that homogeneous distributions $K\in\cS'(\RR^d\setminus\{0\})$ can be extended to bona fide distributions, $\tilde K\in \cS'(\RR^d)$.
	However, such extension may not be unique, nor $(-s)$-homogeneous, the standard example being $1/x$ on $\RR$.
	We refer to \cite[Section~3.3.2]{hormander_03_analysis_I} for further discussion on the topic.
\end{rem}
\end{appendix}

\section*{Acknowledgments}

LG was supported by the SNSF Grant 182565 and by the Swiss State Secretariat for Education, Research and lnnovation (SERI) under contract number MB22.00034 through the project TENSE. AM was supported through the  Simons Foundation (Award ID 316017), EPSRC (grant number EP/R014604/1) and DFG research unit FOR2402.

\bibliographystyle{alpha} 
\bibliography{all}       

\end{document}